\providecommand{\tabularnewline}{\\}
\numberwithin{equation}{section}
\numberwithin{figure}{section}
\theoremstyle{plain}
\newtheorem{thm}{\protect\theoremname}[section]
  \theoremstyle{plain}
  \newtheorem{lem}[thm]{\protect\lemmaname}
  \theoremstyle{plain}
  \newtheorem{cor}[thm]{\protect\corollaryname}
  \theoremstyle{definition}
  \newtheorem{defn}[thm]{\protect\definitionname}
  \theoremstyle{remark}
  \newtheorem{rem}[thm]{\protect\remarkname}
  \theoremstyle{definition}
  \newtheorem{example}[thm]{\protect\examplename}
  \theoremstyle{plain}
  \newtheorem{prop}[thm]{\protect\propositionname}
\providecommand{\MR}[1]{}
\theoremstyle{remark}
\renewcommand{\section}{%
\@startsection{section}{1}%
  \z@{.7\linespacing\@plus\linespacing}{.5\linespacing}%
  {\normalfont\scshape\centering\bfseries}}
\renewcommand{\subsection}{%
\@startsection{subsection}{2}%
  \z@{.5\linespacing\@plus.7\linespacing}{.5\linespacing}%
  {\normalfont\bfseries}}
  \providecommand{\corollaryname}{Corollary}
  \providecommand{\definitionname}{Definition}
  \providecommand{\examplename}{Example}
  \providecommand{\lemmaname}{Lemma}
  \providecommand{\propositionname}{Proposition}
  \providecommand{\remarkname}{Remark}
\providecommand{\theoremname}{Theorem}
\begin{document}
\subjclass[2010]{47L60, 47A25, 47B25, 35F15, 42C10, 34L25, 35Q40, 81Q35, 81U35, 46L45, 46F12.}

\title{Restrictions and Extensions of Semibounded Operators}

\author{Palle Jorgensen, Steen Pedersen, and Feng Tian}

\address{(Palle E.T. Jorgensen) Department of Mathematics, The University
of Iowa, Iowa City, IA 52242-1419, U.S.A. }

\email{jorgen@math.uiowa.edu }

\urladdr{http://www.math.uiowa.edu/\textasciitilde{}jorgen/}

\address{(Steen Pedersen) Department of Mathematics, Wright State University,
Dayton, OH 45435, U.S.A. }

\email{steen@math.wright.edu }

\urladdr{http://www.wright.edu/\textasciitilde{}steen.pedersen/}

\address{(Feng Tian) Department of Mathematics, Wright State University, Dayton,
OH 45435, U.S.A. }

\email{feng.tian@wright.edu }

\urladdr{http://www.wright.edu/\textasciitilde{}feng.tian/}

\keywords{Unbounded operators, deficiency-indices, Hilbert space, reproducing
kernels, boundary values, unitary one-parameter group, scattering
theory, quantum states, quantum-tunneling, Lax-Phillips, spectral
representation, spectral transforms, scattering operator, Poisson-kernel,
exponential polynomials, Shannon kernel, discrete spectrum, scattering
poles, Hurwitz zeta-function, Hilbert transform, Hardy space, analytic
functions, Szegö kernel, semibounded operator, extension, quadratic
form, Friedrichs, Krein,  Fourier analysis.}

\maketitle
\begin{center}
To the memory of William B. Arveson.
\par\end{center}
\begin{abstract}
We study restriction and extension theory for semibounded Hermitian
operators in the Hardy space $\mathscr{H}_{2}$ of analytic functions
on the disk $\mathbb{D}$. Starting with the operator ${\displaystyle z\frac{d}{dz}}$,
we show that, for every choice of a closed subset $F\subset\mathbb{T}=\partial\mathbb{D}$
of measure zero, there is a densely defined Hermitian restriction
of ${\displaystyle z\frac{d}{dz}}$ corresponding to boundary functions
vanishing on $F$. For every such restriction operator, we classify
all its selfadjoint extension, and for each we present a complete
spectral picture. 

We prove that different sets $F$ with the same cardinality can lead
to quite different boundary-value problems, inequivalent selfadjoint
extension operators, and quite different spectral configurations.
As a tool in our analysis, we prove that the von Neumann deficiency
spaces, for a fixed set $F$, have a natural presentation as reproducing
kernel Hilbert spaces, with a Hurwitz zeta-function, restricted to
$F\times F$, as reproducing kernel. 
\end{abstract}
\tableofcontents{}

\section{Introduction\label{sec:Intro}}

In this paper, we study a model for families of semibounded but unbounded
selfadjoint operators in Hilbert space. It is of interest in our understanding
of the spectral theory of selfadjoint operators arising from extension
of a single semibounded Hermitian operator with dense domain. Up to
unitary equivalence, the model represents a variety of spectral configurations
of interest in the study of special functions, in the theory of Toeplitz
operators, and in applications to quantum mechanics, and to signal
processing.

We study a duality between restrictions and extensions of semibounded
(unbounded) operators. Since the spectrum of a selfadjoint semibounded
operator is contained in a half-line, it is useful to work with spaces
of analytic functions. To narrow the field down to a manageable scope,
we pick as Hilbert space the familiar Hardy space $\mathscr{H}_{2}$
of analytic functions on the complex disk $\mathbb{D}$ with square-summable
coefficients. (As a Hilbert space, of course $\mathscr{H}_{2}$ is
a copy of $l^{2}(\mathbb{N}_{0})$, but $l^{2}$ does not capture
all the harmonic analysis of the Hardy space $\mathscr{H}_{2}$.) 

Traditionally, the study of semibounded operators is viewed merely
as a special case of the wider context of Hermitian, or selfadjoint
operators. But \emph{semibounded} does suggest that some measure (in
this case, a spectral resolution) is supported in a halfline, say
$[0,\infty)$, and thus, this in turn suggests analytic continuation,
and Hilbert spaces of analytic functions (see, e.g., \cite{ABK02}).
In this paper, we take seriously this idea.

Now the operator ${\displaystyle H:=z\frac{d}{dz}}$ is selfadjoint
on its natural domain in $\mathscr{H}_{2}$ with spectrum $\mathbb{N}_{0}$
(the natural numbers including $0$). We show (section \ref{sec:Hardy})
that, for every measure-zero closed subset $F$ of the circle $\mathbb{T}=\partial\mathbb{D}$,
$H\:(={\displaystyle z\frac{d}{dz}})$ in $\mathscr{H}_{2}$ has a
well defined and densely defined Hermitian restriction operator $L_{F}$,
and we find its selfadjoint extensions. They are indexed by the unitary
operators in a \emph{reproducing kernel Hilbert space} (RKHS) of functions
on $F$, where the reproducing kernel in turn is a restriction of
Hurwitz\textquoteright{}s zeta-function.

This RKHS-feature in the boundary analysis is one way that the boundary-value
problems in Hilbert spaces of analytic functions are different from
more traditional two sided boundary-value problems; see e.g., \cite{JPT11-1,JPT11-2}.
Two-sided boundary-value problems may be attacked with an integration
by parts, or, in higher real dimensions, with Greens-Gauss-Stokes.
By contrast, boundary-value theory is Hilbert spaces of analytic functions
must be studied with the use of different tools.

While it is possible, for every subset $F$ of $\mathbb{T}$ (closed,
measure zero), to write down parameters for all the selfadjoint extensions
of $L_{F}$, to find more explicit formulas, and to compute spectra,
it is helpful to first analyze the special case when the set $F$
is finite. Even the special case when $F$ is a singleton is of interest.
We begin with a study of this in sections \ref{sec:sbdd} and \ref{sec:spH}
below. 

More generally, we show in section \ref{sec:Hardy} that, when $F$
is finite, that then the corresponding Hermitian restriction operator
$L_{F}$ has deficiency indices $(m,m)$ where $m$ is the cardinality
of $F$. Moreover we prove that the variety of all selfadjoint extensions
of $L_{F}$ will then be indexed (bijectively) by a compact Lie group
$G(F)$ depending on $F$.

There is a number of differences between boundary value problems in
$L^{2}$-spaces of functions in real domains, and in Hilbert spaces
of analytic functions. 

Our present study is confined here to one complex variable, and to
the Hardy-space $\mathscr{H}_{2}$ on the disk $\mathbb{D}$ in the
complex plane. The occurrence of the above mentioned family of compact
Lie groups is one way our analysis of extensions and restrictions
of operators is different in the complex domain.

There are others (see section \ref{sec:Hardy} for details). Here
we outline one such striking difference:

Recall that for a Hermitian partial differential operators $P$ acting
on test functions in bounded open domains $\Omega\subset\mathbb{R}^{n}$,
the adjoint operators will again act as PDOs. 

Specifically, in studying boundary value problems in the Hilbert space
$L^{2}(\Omega)$, initially one takes a given Hermitian $P$ to be
defined on a Schwartz space of functions on $\Omega$, vanishing with
all their derivatives on $\partial\Omega$. This realization of $P$
is called the minimal operator, $P_{min}$ for specificity, and it
is $L^{2}$-Hermitian with dense domain in $L^{2}(\Omega)$. 

The adjoint of $P_{min}$ is denoted $P^{*}$ and it is defined relative
to the inner product in the Hilbert space $L^{2}(\Omega)$. And this
adjoint is the maximal operator $P_{max}$. The crucial fact is that
$P_{max}$ is again a (partial) differential operator. Indeed it is
the operator $P$ acting on the domain of functions $f\in L^{2}(\Omega)$
such that $Pf$ is again in $L^{2}(\Omega)$, where the meaning of
$Pf$ in \emph{the weak sense} of distributions. This follows conventions
of L. Schwartz, and K. O. Friedrichs; see e.g., \cite{DS88b,Gru09,AB09}.

Turning now to the complex case, we study here the operator ${\displaystyle Q:=z\frac{d}{dz}}$
in the Hardy-space $\mathscr{H}_{2}$ on the disk $\mathbb{D}$, and
its realization as a Hermitian operator with domain equal to one in
a family of suitable dense linear subspaces in $\mathscr{H}_{2}$.
These Hermitian operators will have equal deficiency spaces (in the
sense of von Neumann), and they will depend on conditions assigned
on chosen closed subsets $F\subset\partial\mathbb{D}$), of (angular)
measure 0.

When such a subset $F$ is given, let $Q_{F}$ be the corresponding
restriction operator. But now the adjoint operators $Q_{F}^{*}$,
defined relative to the inner product in $\mathscr{H}_{2}$, turn
out no longer to be differential operators; they are different operators.
The nature of these operators $Q_{F}^{*}$ is studied in sect \ref{sub:LadH}.

In section \ref{sec:Hardy}, we study the operator ${\displaystyle z\frac{d}{dz}}$
in the Hardy space $\mathscr{H}_{2}$ of the disk $\mathbb{D}$, one
such Hermitian operator $L_{F}$ for each closed subset $F\subset\partial\mathbb{D}$
of zero angular measure. For functions $f\in\mathscr{D}(L_{F}^{*})$
we have boundary values $\tilde{f}$, i.e., extensions from $\mathbb{D}$
to $\overline{\mathbb{D}}$ (= closure). But then the function ${\displaystyle z\frac{d}{dz}f}$
may have simple poles at the points $\zeta\in F$. We show (Corollary
\ref{cor:BadHF}) that the contribution to $\boldsymbol{B}_{F}(f,f)$
(see (\ref{eq:BoundaryForm})) is
\begin{equation}
\boldsymbol{B}_{F}(f,f)_{\zeta}=\Im\left(C_{\zeta}(f)\overline{\tilde{f}(\zeta)}\right)\label{eq:BFLad}
\end{equation}
where $\Im$ stands for imaginary part, and $C_{\zeta}(f)$ is the
residue of the meromorphic function ${\displaystyle z\mapsto z\left(\frac{d}{dz}f\right)\left(z\right)}$
at the pole $z=\zeta$ ($\in F\subset\partial\mathbb{D}$.)

Note that (\ref{eq:BFLad}), for the boundary form in the study of
deficiency indices in the Hardy space $\mathscr{H}_{2}$, contrasts
sharply with the more familiar analogous formula for the boundary
value problems in the real case, i.e., on an interval.

In section \ref{sec:hd}, we consider a higher dimensional version
of the Hardy-Hilbert space $\mathscr{H}_{2}$, the Arveson-Drury space
$\mathscr{H}_{d}^{(AD)}$, $d>1$. While the case $d>1$ does have
a number of striking parallels with $d=1$, there are some key differences.
The reason for the parallels is that the reproducing kernel, the Szegö
kernel for $\mathscr{H}_{2}$ extends from one complex dimension to
$d>1$ almost verbatim, \cite{Arv98,Dru78}.

A central theme in our paper ($d=1$) is showing that the study of
von Neumann boundary theory for Hermitian operators translates into
a geometric analysis on the boundary of the disk $\mathbb{D}$ in
one complex dimension, so on the circle $\partial\mathbb{D}$. 

We point out in sect \ref{sec:hd} that multivariable operator theory
is more subtle. Indeed, Arveson proved \cite[Coroll 2]{Arv98} that
the Hilbert norm in $\mathscr{H}_{d}^{(AD)}$, $d>1$, cannot be represented
by a Borel measure on $\mathbb{C}^{d}$. So, in higher dimension,
the question of \textquotedbl{}geometric boundary\textquotedbl{} is
much more subtle.

\subsection{Unbounded Operators}

Before passing to the main theorems in the paper, we recall a classification
theorem of von Neumann which will be used. Starting with a fixed  Hermitian
operator with dense domain in Hilbert space, and equal indices, von
Neumann\textquoteright{}s classification (\cite{vN32a,vN32b,AG93,DS88b,Kat95})
shows that the variety of all selfadjoint extensions, and their spectral
theory, can be understood and classified in the language of an associated
boundary form, and the set of all partial isometries between a pair
of deficiency-spaces. It has numerous applications. One significance
of the result lies in the fact that the two deficiency-spaces typically
have a small dimension, or allow for a reduction, reflect an underlying
geometry, and they are computable.
\begin{lem}[see e.g. \cite{DS88b}]
\label{lem:vN def-space}Let $L$ be a \textcolor{black}{closed}
Hermitian operator with dense domain $\mathscr{D}_{0}$ in a Hilbert
space. Set 
\begin{alignat}{1}
\mathscr{D}_{\pm} & =\{\psi_{\pm}\in dom(L^{*})\left.\right|L^{*}\psi_{\pm}=\pm i\psi_{\pm}\}\nonumber \\
\mathscr{C}(L) & =\{U:\mathscr{D}_{+}\rightarrow\mathscr{D}_{-}\left.\right|U^{*}U=P_{\mathscr{D}_{+}},UU^{*}=P_{\mathscr{D}_{-}}\}\label{eq:vN1}
\end{alignat}
where $P_{\mathscr{D}_{\pm}}$ denote the respective projections.
Set 
\[
\mathscr{E}(L)=\{S\left.\right|L\subseteq S,S^{*}=S\}.
\]
Then there is a bijective correspondence between $\mathscr{C}(L)$
and $\mathscr{E}(L)$, given as follows: 

If $U\in\mathscr{C}(L)$, and let $L_{U}$ be the restriction of $L^{*}$
to 
\begin{equation}
\{\varphi_{0}+f_{+}+Uf_{+}\left.\right|\varphi_{0}\in\mathscr{D}_{0},f_{+}\in\mathscr{D}_{+}\}.\label{eq:vN2}
\end{equation}
Then $L_{U}\in\mathscr{E}(L)$, and conversely every $S\in\mathscr{E}(L)$
has the form $L_{U}$ for some $U\in\mathscr{C}(L)$. With $S\in\mathscr{E}(L)$,
take 
\begin{equation}
U:=(S-iI)(S+iI)^{-1}\left.\right|_{\mathscr{D}_{+}}\label{eq:vN3}
\end{equation}
and note that 
\begin{enumerate}
\item $U\in\mathscr{C}(L)$, and
\item $S=L_{U}$.
\end{enumerate}

Vectors $f\in dom(L^{*})$ admit a unique decomposition 
\begin{equation}
f=\varphi_{0}+f_{+}+f_{-}\label{eq:vNdep}
\end{equation}
where $\varphi_{0}\in dom(L)$, and $f_{\pm}\in\mathscr{D}_{\pm}$.
For the boundary-form $\mathbf{B}(\cdot,\cdot)$, we have
\begin{alignat}{1}
\mathbf{B}(f,f) & =\frac{1}{2i}\left(\left\langle L^{*}f,f\right\rangle -\left\langle f,L^{*}f\right\rangle \right)\nonumber \\
 & =\left\Vert f_{+}\right\Vert ^{2}-\left\Vert f_{-}\right\Vert ^{2}.\label{eq:BoundaryForm}
\end{alignat}
\uline{Note}, the sesquilinear form $\boldsymbol{B}$ in (\ref{eq:BoundaryForm})
is the boundary form referenced in (\ref{eq:BFLad}) above.

\end{lem}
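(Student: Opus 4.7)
The plan is to base everything on the fundamental von Neumann decomposition
\[
\text{dom}(L^{*})\;=\;\text{dom}(L)\;\oplus_{\Gamma}\;\mathscr{D}_{+}\;\oplus_{\Gamma}\;\mathscr{D}_{-},
\]
with orthogonality taken in the graph inner product $\langle f,g\rangle_{\Gamma}:=\langle f,g\rangle+\langle L^{*}f,L^{*}g\rangle$. First I would establish this: closedness of $L$ makes $\text{dom}(L)$ closed in $(\text{dom}(L^{*}),\langle\cdot,\cdot\rangle_{\Gamma})$; the identities $\mathscr{D}_{+}=\text{range}(L+iI)^{\perp}$ and $\mathscr{D}_{-}=\text{range}(L-iI)^{\perp}$ (from $\ker(L^{*}\mp iI)=\text{range}(L\pm iI)^{\perp}$) yield by direct computation that $\text{dom}(L)\perp_{\Gamma}\mathscr{D}_{\pm}$, and $\mathscr{D}_{+}\perp_{\Gamma}\mathscr{D}_{-}$ because $L^{*}$ acts on them by the distinct scalars $\pm i$. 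Exhaustion is checked by verifying that a vector $f\in\text{dom}(L^{*})$ graph-orthogonal to $\mathscr{D}_{+}\oplus\mathscr{D}_{-}$ satisfies $(L^{*}-iI)f\in\text{range}(L-iI)$, forcing $f\in\text{dom}(L)$. This gives (\ref{eq:vNdep}) uniquely.

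The boundary form (\ref{eq:BoundaryForm}) is then a direct calculation. Expanding $f=\varphi_{0}+f_{+}+f_{-}$ inside $\langle L^{*}f,f\rangle-\langle f,L^{*}f\rangle$, the diagonal $\varphi_{0}$-term vanishes by symmetry of $L$; the cross terms with $\varphi_{0}$ cancel pairwise via $\langle(L\pm iI)\varphi_{0},f_{\pm}\rangle=0$; the mixed $f_{+}$-$f_{-}$ terms cancel because they contribute identical quantities on both sides of the subtraction; and the diagonal $f_{\pm}$-terms produce $2i(\|f_{+}\|^{2}-\|f_{-}\|^{2})$. Dividing by $2i$ yields the stated formula, and also identifies $\mathbf{B}$ as the obstruction to symmetry of putative extensions.

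For the bijection $\mathscr{C}(L)\leftrightarrow\mathscr{E}(L)$, I would run the Cayley transform in both directions. Given $U\in\mathscr{C}(L)$, define $L_{U}$ as the restriction of $L^{*}$ to the subspace (\ref{eq:vN2}); symmetry is immediate from the boundary form, since $\mathbf{B}(f,f)=\|f_{+}\|^{2}-\|Uf_{+}\|^{2}=0$ because $U$ is isometric on $\mathscr{D}_{+}$. For self-adjointness I would observe that the Cayley transform $V_{L}$ of $L$ is a partial isometry $\text{range}(L+iI)\to\text{range}(L-iI)$; adjoining $U:\mathscr{D}_{+}\to\mathscr{D}_{-}$ on the orthogonal complements produces a \emph{full} unitary $V$ on $\mathscr{H}$, and the inverse Cayley transform of such a unitary is self-adjoint and coincides with $L_{U}$. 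Conversely, any $S\in\mathscr{E}(L)$ has a fully unitary Cayley transform $V_{S}=(S-iI)(S+iI)^{-1}$ which, because $S\supseteq L$, contains $V_{L}$; restriction $U:=V_{S}|_{\mathscr{D}_{+}}$ lands in $\mathscr{D}_{-}$ and defines an element of $\mathscr{C}(L)$. The standard identity $\text{dom}(S)=(I-V_{S})\mathscr{H}$ then unpacks on each summand to the form (\ref{eq:vN2}), confirming the two constructions are mutual inverses.

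The most delicate step is the self-adjointness (as opposed to mere maximal symmetry) of $L_{U}$. The cleanest way to clinch it is through the Cayley reformulation just described: symmetric extensions of $L$ correspond bijectively to partial isometric extensions of $V_{L}$, and among these the self-adjoint ones are precisely those for which the extension is a full unitary on $\mathscr{H}$ — this is exactly the condition $U^{*}U=P_{\mathscr{D}_{+}}$, $UU^{*}=P_{\mathscr{D}_{-}}$ built into the definition of $\mathscr{C}(L)$. This is also where the hypothesis of equal deficiency indices (implicit in the existence of any $U\in\mathscr{C}(L)$) is decisive: were $\dim\mathscr{D}_{+}\neq\dim\mathscr{D}_{-}$, then $\mathscr{C}(L)$ would be empty and the bijection would be the empty statement, reflecting the fact that no self-adjoint extension exists in that case.
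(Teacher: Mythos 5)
Your proposal is correct and follows essentially the route the paper intends: the paper's own ``proof'' is only a sketch that defers the decomposition and the Cayley-transform bijection to the cited references and verifies just the boundary-form identity by the same cancellation computation you describe. The standard details you supply (graph-orthogonality of the three summands, exhaustion via closedness of $\mathrm{range}(L\pm iI)$, and extending the Cayley transform of $L$ by $U$ to a full unitary) are exactly what those references contain, modulo the sign conventions in (\ref{eq:vN2})--(\ref{eq:vN3}) that the paper itself leaves ambiguous.
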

\begin{proof}[Proof sketch (Lemma \ref{lem:vN def-space})]
 We refer to the cited references for details. The key step in the
verification of formula (\ref{eq:BoundaryForm}) for the boundary
form $\boldsymbol{B}(f,f)$, $f\in dom(L^{*})$, is as follows: Let
$f=\varphi_{0}+f_{+}+f_{-}$. After each of the two terms $\left\langle L^{*}f,f\right\rangle $
and $\left\langle f,L^{*}f\right\rangle $ are computed, we find cancellation
upon subtraction, and only the two $\left\Vert f_{\pm}\right\Vert ^{2}$
terms survive; specifically: 
\begin{eqnarray*}
 &  & \left\langle L^{*}f,f\right\rangle -\left\langle f,L^{*}f\right\rangle \\
 & = & i\left(\left\Vert f_{+}\right\Vert ^{2}-\left\Vert f_{-}\right\Vert ^{2}\right)-(-i)\left(\left\Vert f_{+}\right\Vert ^{2}-\left\Vert f_{-}\right\Vert ^{2}\right)\\
 & = & 2i\left(\left\Vert f_{+}\right\Vert ^{2}-\left\Vert f_{-}\right\Vert ^{2}\right).
\end{eqnarray*}

\end{proof}
While there are earlier studies of boundary forms (in the sense of
(\ref{eq:BoundaryForm})) in the context of Sturm-Liouville operators,
and Hermitian PDOs in bounded domains in $\mathbb{R}^{n}$, e.g.,
\cite{BMT11,BL10}, there appear not to be prior analogues of this
in Hilbert spaces of analytic functions in bounded complex domains.

\textbf{Terminology. }We shall refer to eq. (\ref{eq:vNdep}) as the
von \emph{Neumann decomposition}; and to the classification of the
family of all selfadjoint extensions (see (\ref{eq:vN2}) \& (\ref{eq:vN3}))
as the \emph{von Neumann classification}. 
\begin{lem}
\label{lem:vNext}~
\begin{enumerate}
\item \label{enu:vNa}Consider the von Neumann decomposition 
\begin{equation}
\mathscr{D}(L^{*})=\mathscr{D}(L)+\mathscr{D}_{+}+\mathscr{D}_{-}\label{eq:DLs}
\end{equation}
in (\ref{eq:vNdep}); then the boundary form $\boldsymbol{B}(f,g)$
vanishes if one of the vectors $f$ or $g$ from $\mathscr{D}(L^{*})$
is in $\mathscr{D}(L)$. 
\item \label{enu:vNb}Every subspace $S\subset\mathscr{D}(L^{*})$ such
that $\boldsymbol{B}(f,f)=0$, $\forall f\in S$, is the graph of
a partial isometry from $\mathscr{D}_{+}$ into $\mathscr{D}_{-}$.
\item \label{enu:vNc}Introducing the inner product
\begin{equation}
\left\langle f,g\right\rangle _{*}=\left\langle f,g\right\rangle +\left\langle L^{*}f,L^{*}g\right\rangle \label{eq:gInner}
\end{equation}
$f,g\in\mathscr{D}(L^{*})$, we note that the three terms in the decomposition
(\ref{eq:DLs}) are mutually orthogonal w.r.t. the inner product $\left\langle \cdot,\cdot\right\rangle _{*}$
in (\ref{eq:gInner}).
\end{enumerate}
\end{lem}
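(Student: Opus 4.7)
The plan is to derive all three items from two repeatedly used ingredients: since $L$ is Hermitian, $L \subset L^*$ and the adjoint relation gives $\langle L\varphi_{0}, g\rangle = \langle \varphi_{0}, L^{*}g\rangle$ for $\varphi_{0} \in \mathscr{D}(L)$ and $g \in \mathscr{D}(L^{*})$; and by definition of $\mathscr{D}_\pm$, $L^{*}f_\pm = \pm i f_\pm$. Item (\ref{enu:vNa}) falls out directly: substituting $f = \varphi_{0} \in \mathscr{D}(L)$ into the definition of $\boldsymbol{B}$ and replacing $L^{*}\varphi_{0}$ by $L\varphi_{0}$ makes the two inner products in $\boldsymbol{B}(\varphi_{0}, g)$ equal, so $\boldsymbol{B}(\varphi_{0}, g) = 0$. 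The case $g \in \mathscr{D}(L)$ is symmetric.

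For (\ref{enu:vNb}), the key step is polarization. Using (\ref{enu:vNa}) to eliminate the cross-terms between $\mathscr{D}(L)$ and $\mathscr{D}_\pm$, the diagonal identity $\boldsymbol{B}(f,f) = \|f_{+}\|^{2} - \|f_{-}\|^{2}$ from Lemma \ref{lem:vN def-space} polarizes to the sesquilinear form
\begin{equation*}
\boldsymbol{B}(f, g) = \langle f_{+}, g_{+}\rangle - \langle f_{-}, g_{-}\rangle, \qquad f, g \in \mathscr{D}(L^{*}).
\end{equation*}
If $\boldsymbol{B}(f,f) \equiv 0$ on a complex subspace $S$, the standard polarization identity for complex sesquilinear forms upgrades this to $\boldsymbol{B}(f,g) = 0$ for all $f, g \in S$, i.e.\ $\langle f_{+}, g_{+}\rangle = \langle f_{-}, g_{-}\rangle$ there. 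I would then define $V f_{+} := f_{-}$ on $D := \{f_{+} : f \in S\} \subset \mathscr{D}_{+}$; well-definedness follows because $f_{+} = 0$ together with $\boldsymbol{B}(f,f) = 0$ forces $\|f_{-}\|^{2} = 0$, and the polarized identity yields $\|V f_{+}\| = \|f_{+}\|$. Extending $V$ by $0$ on the orthogonal complement of $\overline{D}$ in $\mathscr{D}_{+}$ produces the partial isometry $U : \mathscr{D}_{+} \to \mathscr{D}_{-}$, and modulo $\mathscr{D}(L)$, on which $\boldsymbol{B}$ vanishes by (\ref{enu:vNa}), the subspace $S$ is precisely the graph of $U$.

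For (\ref{enu:vNc}), each pairwise orthogonality with respect to $\langle\cdot,\cdot\rangle_{*}$ is a short computation using the eigenvalue equations $L^{*}f_\pm = \pm i f_\pm$. For instance, $\langle \varphi_{0}, f_{+}\rangle_{*} = \langle\varphi_{0}, f_{+}\rangle + \langle L\varphi_{0}, L^{*}f_{+}\rangle$; moving $L$ across via the adjoint relation and using $L^{*}(i f_{+}) = i \cdot i f_{+} = -f_{+}$ turns the second term into $-\langle\varphi_{0}, f_{+}\rangle$, so the sum vanishes. The pairs $(\varphi_{0}, f_{-})$ and $(f_{+}, f_{-})$ are handled identically, with the sign flip $L^{*}f_{-} = -i f_{-}$ producing the same cancellation.

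I expect the only substantive step to be the polarization argument in (\ref{enu:vNb}), which is what simultaneously guarantees well-definedness and the isometry property of $V$; the rest reduces to bookkeeping with the $\pm i$ eigenvalue factors, and so poses no real obstacle.
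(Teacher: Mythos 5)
Your proposal is correct and follows exactly the route the paper intends: the paper's own proof is the one-line remark that all three assertions ``follow from a direct computation, and use of the definitions,'' and your write-up is precisely that computation, with the adjoint relation $\left\langle L\varphi_{0},g\right\rangle =\left\langle \varphi_{0},L^{*}g\right\rangle $ handling (1) and (3), and polarization of $\boldsymbol{B}$ plus the norm identity $\left\Vert f_{+}\right\Vert =\left\Vert f_{-}\right\Vert $ handling (2). No gaps; the only cosmetic caveat is that the sign of the polarized form $\boldsymbol{B}(f,g)=\left\langle f_{+},g_{+}\right\rangle -\left\langle f_{-},g_{-}\right\rangle $ depends on which variable the inner product is linear in, which is immaterial to the argument.
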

\begin{proof}
All assertions (\ref{enu:vNa})-(\ref{enu:vNc}) follow from a direct
computation, and use of the definitions. 

It follows from (\ref{enu:vNb}) in Lemma \ref{lem:vNext}, that the
boundary form $\boldsymbol{B}(\cdot,\cdot)$ passes to the quotient
$\left(\mathscr{D}(L^{*})/\mathscr{D}(L)\right)\times\left(\mathscr{D}(L^{*})/\mathscr{D}(L)\right)$. 
\end{proof}

\subsection{Graphs of Partial Isometries $\mathscr{D}_{+}\longrightarrow\mathscr{D}_{-}$}

In section \ref{sec:Hardy}, we will compute the partial isometries
between defect spaces in a Hardy space $\mathscr{H}_{2}$ in terms
of boundary values and residues. But we begin with axioms of the underlying
geometry in Hilbert space.
\begin{lem}
\label{lem:Y23}Let $L$ be a Hermitian symmetric operator with dense
domain in a Hilbert space $\mathscr{H}$, and let $x_{\pm}\in\mathscr{D}_{\pm}$
be a pair of vectors in the respective deficiency-spaces. Suppose
\begin{equation}
\left\Vert x_{+}\right\Vert =\left\Vert x_{-}\right\Vert >0.\label{eq:defnorm}
\end{equation}
Then the system
\begin{equation}
\begin{cases}
{\displaystyle y_{2}:=\frac{1}{2i}\left(x_{+}-x_{-}\right)}\\
\\
{\displaystyle y_{3}:=\frac{1}{2}\left(x_{+}+x_{-}\right)}
\end{cases}\label{eq:defsys}
\end{equation}
satisfy $y_{i}\in\mathscr{D}(L^{*})$, $i=2,3,$ and
\begin{equation}
L^{*}y_{2}=y_{3},\mbox{ and }L^{*}y_{3}=-y_{2}.\label{eq:YY23}
\end{equation}
Conversely, if $\left\{ y_{2},y_{3}\right\} $ is a pair of non-zero
vectors in $\mathscr{D}(L^{*})$ such that (\ref{eq:YY23}) holds;
then
\begin{equation}
\begin{cases}
x_{+}=y_{3}+iy_{2}\in\mathscr{D}_{+} & \mbox{and}\\
x_{-}=y_{3}-iy_{2}\in\mathscr{D}_{-},
\end{cases}\label{eq:xpm}
\end{equation}
and (\ref{eq:defnorm}) holds.\end{lem}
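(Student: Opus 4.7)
My plan is to verify the correspondence in both directions by direct computation, exploiting the linearity of $L^{*}$ together with the defining eigenvalue equations $L^{*}x_{\pm}=\pm i x_{\pm}$ of the deficiency spaces, and then invoking the boundary-form identity of Lemma \ref{lem:vN def-space} to handle the normalization claim.

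First I would dispatch the forward direction. Since $\mathscr{D}_{\pm}\subset\mathscr{D}(L^{*})$ is a complex linear subspace, membership $y_{2},y_{3}\in\mathscr{D}(L^{*})$ is automatic from (\ref{eq:defsys}). Applying $L^{*}$ term-by-term and substituting $L^{*}x_{\pm}=\pm ix_{\pm}$ then yields
\begin{align*}
L^{*}y_{2} & =\tfrac{1}{2i}\bigl(ix_{+}+ix_{-}\bigr)=\tfrac{1}{2}(x_{+}+x_{-})=y_{3},\\
L^{*}y_{3} & =\tfrac{1}{2}\bigl(ix_{+}-ix_{-}\bigr)=\tfrac{i}{2}(x_{+}-x_{-})=-y_{2},
\end{align*}
which is (\ref{eq:YY23}).

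For the converse, I would start from $y_{2},y_{3}\in\mathscr{D}(L^{*})\setminus\{0\}$ satisfying (\ref{eq:YY23}), define $x_{\pm}:=y_{3}\pm iy_{2}$, and compute
\[
L^{*}x_{+}=L^{*}y_{3}+iL^{*}y_{2}=-y_{2}+iy_{3}=i(y_{3}+iy_{2})=ix_{+},
\]
with the sign-flipped version giving $L^{*}x_{-}=-ix_{-}$; thus $x_{\pm}\in\mathscr{D}_{\pm}$. Once this is known, the identity $y_{2}=\frac{1}{2i}(x_{+}-x_{-})$ already realizes the von Neumann decomposition (\ref{eq:vNdep}) of $y_{2}$ with vanishing $\mathscr{D}(L)$-component, so by (\ref{eq:BoundaryForm}),
\[
\mathbf{B}(y_{2},y_{2})=\tfrac{1}{4}\bigl(\|x_{+}\|^{2}-\|x_{-}\|^{2}\bigr),
\]
while on the other hand $\mathbf{B}(y_{2},y_{2})=\frac{1}{2i}\bigl(\langle L^{*}y_{2},y_{2}\rangle-\langle y_{2},L^{*}y_{2}\rangle\bigr)=\Im\langle y_{3},y_{2}\rangle$ by the relation $L^{*}y_{2}=y_{3}$.

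The main obstacle will be establishing $\Im\langle y_{3},y_{2}\rangle=0$, which is precisely the norm equality $\|x_{+}\|=\|x_{-}\|$ rewritten. The strategy I have in mind is to observe that (\ref{eq:YY23}) forces $(L^{*})^{2}y_{2}=-y_{2}$, so $y_{2}$ lies in the joint eigenspace $\mathscr{D}_{+}\oplus\mathscr{D}_{-}$ of $(L^{*})^{2}$ for eigenvalue $-1$, and then to exploit the $\langle\,\cdot\,,\,\cdot\,\rangle_{*}$-orthogonality of $\mathscr{D}_{+}$ and $\mathscr{D}_{-}$ provided by Lemma \ref{lem:vNext}(\ref{enu:vNc}) to express $\mathbf{B}(y_{2},y_{2})$ purely in terms of the two $\mathscr{D}_{\pm}$-components of $y_{2}$; matching this with the formula above then forces the norm equality. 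Finally, nonvanishing of $x_{\pm}$ follows from the invertibility of the linear map (\ref{eq:xpm}) (the pair $(y_{2},y_{3})$ is recovered from $(x_{+},x_{-})$ by $y_{2}=\frac{1}{2i}(x_{+}-x_{-})$, $y_{3}=\frac{1}{2}(x_{+}+x_{-})$), so $y_{2},y_{3}\neq 0$ rules out $x_{+}=x_{-}=0$.
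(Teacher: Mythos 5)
Your forward computation, and the verification in the converse that $x_{\pm}=y_{3}\pm iy_{2}$ satisfy $L^{*}x_{\pm}=\pm ix_{\pm}$ and hence lie in $\mathscr{D}_{\pm}$, are correct and complete; this is essentially all the paper supplies, since its proof is a one-line appeal to Lemmas \ref{lem:vN def-space} and \ref{lem:vNext} plus ``a simple computation.'' The difficulty is the final step of the converse, the norm identity (\ref{eq:defnorm}), which you correctly isolate as the main obstacle but do not close.

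The proposed ``matching'' is circular. The two identities you intend to match, $\mathbf{B}(y_{2},y_{2})=\tfrac{1}{4}\left(\|x_{+}\|^{2}-\|x_{-}\|^{2}\right)$ (from the von Neumann decomposition of $y_{2}$) and $\mathbf{B}(y_{2},y_{2})=\Im\langle y_{3},y_{2}\rangle$ (from the definition of $\mathbf{B}$ and $L^{*}y_{2}=y_{3}$), are two evaluations of the same quantity, and their agreement carries no information: expanding $\|y_{3}\pm iy_{2}\|^{2}$ directly gives $\|x_{+}\|^{2}-\|x_{-}\|^{2}=4\,\Im\langle y_{3},y_{2}\rangle$ identically, so equating the two expressions returns $\Im\langle y_{3},y_{2}\rangle=\Im\langle y_{3},y_{2}\rangle$ rather than $\Im\langle y_{3},y_{2}\rangle=0$. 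Moreover, no argument can repair this from the stated hypotheses, because the implication fails: take unit vectors $u\in\mathscr{D}_{+}$, $w\in\mathscr{D}_{-}$ and set $y_{2}=\tfrac{1}{2i}(u-2w)$, $y_{3}=\tfrac{1}{2}(u+2w)$; these are non-zero and satisfy (\ref{eq:YY23}), yet $x_{+}=y_{3}+iy_{2}=u$ and $x_{-}=y_{3}-iy_{2}=2w$ have norms $1$ and $2$. Condition (\ref{eq:YY23}) is equivalent only to $y_{3}\pm iy_{2}\in\mathscr{D}_{\pm}$; the extra conclusion (\ref{eq:defnorm}) is equivalent to $\mathbf{B}(y_{2},y_{2})=0$, i.e.\ to $(x_{+},x_{-})$ lying on the graph of a partial isometry (Lemma \ref{lem:vNext}(\ref{enu:vNb}) and the Corollary following the present lemma), and must be imposed as an additional hypothesis rather than derived. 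A smaller instance of the same problem is your closing remark: ruling out $x_{+}=x_{-}=0$ only shows that at least one of $x_{\pm}$ is non-zero; with $y_{2}=w\in\mathscr{D}_{-}$ and $y_{3}=-iw$ one has (\ref{eq:YY23}) with $y_{2},y_{3}\neq0$ but $x_{+}=0$ and $x_{-}=-2iw\neq0$.
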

\begin{proof}
The result follows from Lemmas \ref{lem:vN def-space} and \ref{lem:vNext}
together with a simple computation.\end{proof}
\begin{cor}
Let $L$ and $\mathscr{D}_{\pm}$ (deficiency-spaces) be as in the
lemma. Let $U:\mathscr{D}_{+}\longrightarrow\mathscr{D}_{-}$ be a
partial isometry, and let $x_{+}$ be in the initial space of $U$.
Then the Hermitian extension $H$ of $L$ given by 
\begin{equation}
H\left(x_{+}+Ux_{+}\right)=i\left(x_{+}-Ux_{+}\right)\label{eq:pi}
\end{equation}
is specified equivalently by the two vectors $y_{2}$ and $y_{3}$
as follows:
\begin{equation}
y_{2}\in\mathscr{D}(H)\;\mbox{and}\ Hy_{2}=y_{3};\;\mbox{or }y_{3}\in\mathscr{D}(H)\:\mbox{and }Hy_{3}=-y_{2}\label{eq:HY23}
\end{equation}
where
\begin{equation}
\begin{cases}
y_{2}={\displaystyle \frac{1}{2i}\left(x_{+}-Ux_{+}\right)}, & \mbox{and}\\
\\
y_{3}={\displaystyle \frac{1}{2}\left(x_{+}+Ux_{+}\right)}.
\end{cases}\label{eq:y232}
\end{equation}
In particular, for the boundary form $\boldsymbol{B}(\cdot,\cdot)$
on $\mathscr{D}(L^{*})$ from (\ref{eq:BoundaryForm}), we have $\boldsymbol{B}(y_{i},y_{i})=0$
for $i=2,3$.\end{cor}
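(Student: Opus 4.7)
The plan is to derive the corollary from Lemma~\ref{lem:Y23} applied with $x_-:=U x_+$, then use the von Neumann parametrization (Lemma~\ref{lem:vN def-space}) to place $y_3$ inside $\mathscr{D}(H)$, and finally deduce vanishing of the boundary form by a one-line computation.

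First, since $U$ is a partial isometry and $x_+$ lies in its initial space, $\|Ux_+\|=\|x_+\|$, so the pair $(x_+,\,x_-:=Ux_+)$ satisfies the normalization (\ref{eq:defnorm}). Lemma~\ref{lem:Y23} therefore produces $y_2,y_3\in\mathscr{D}(L^*)$ with the formulas (\ref{eq:defsys}), which become exactly (\ref{eq:y232}) after the substitution $x_-=Ux_+$, together with $L^* y_2=y_3$ and $L^* y_3=-y_2$.

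Next I would verify membership and action. By the von Neumann description, $H$ is the restriction of $L^*$ to $\{\varphi_0+f_++Uf_+\}$. The vector $y_3=\tfrac12(x_++Ux_+)$ is of this form with $\varphi_0=0$ and $f_+=\tfrac12 x_+$, so $y_3\in\mathscr{D}(H)$, and $Hy_3=L^* y_3=-y_2$. Scaling by $2$ recovers (\ref{eq:pi}), which is the sense in which the two specifications of $H$ are equivalent. The alternative clause in (\ref{eq:HY23})---\textquotedblleft$y_2\in\mathscr{D}(H)$ and $Hy_2=y_3$\textquotedblright---is obtained from exactly the same argument with $U$ replaced by $-U$, and describes the partner self-adjoint extension associated with the same pair $(y_2,y_3)$.

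For the last assertion, since $H$ is self-adjoint and $y_3\in\mathscr{D}(H)$, symmetry of $H$ gives $\langle L^* y_3,y_3\rangle=\langle Hy_3,y_3\rangle=\langle y_3,Hy_3\rangle=\langle y_3,L^* y_3\rangle$, hence $\boldsymbol{B}(y_3,y_3)=0$ by (\ref{eq:BoundaryForm}). The same reasoning applied to the $-U$-extension yields $\boldsymbol{B}(y_2,y_2)=0$; equivalently, substituting the von Neumann components $(\tfrac12 x_+,\tfrac12 Ux_+)$ for $y_3$ and $(-\tfrac{i}{2}x_+,\tfrac{i}{2}Ux_+)$ for $y_2$ into $\boldsymbol{B}(f,f)=\|f_+\|^2-\|f_-\|^2$ and invoking $\|Ux_+\|=\|x_+\|$ gives the same conclusion directly. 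The only non-mechanical step is deciding how to read the \textquotedblleft or\textquotedblright{} in (\ref{eq:HY23}); once one accepts that it lists two alternative parametrizations of selfadjoint extensions (rather than two consequences of a single choice of $U$, which would force $x_+$ and $Ux_+$ separately into $\mathscr{D}(H)$ and thus collapse $H$ to $L^*$), the remainder is a short unwinding of definitions.
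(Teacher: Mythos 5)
Your proof is correct and follows the same route as the paper: the paper's own argument is a one-line citation of Lemma~\ref{lem:Y23}, resting on the observation that $x_{+}$ lying in the initial space of the partial isometry $U$ gives $\left\Vert Ux_{+}\right\Vert =\left\Vert x_{+}\right\Vert$, i.e.\ condition (\ref{eq:defnorm}) with $x_{-}:=Ux_{+}$. You simply make explicit the routine verifications the paper leaves implicit (domain membership via the von Neumann parametrization, the vanishing of $\boldsymbol{B}(y_{i},y_{i})$, and the correct reading of the ``or'' in (\ref{eq:HY23}) as indexing the extensions for $U$ and $-U$), all of which are sound.
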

\begin{proof}
This follows from the lemma since a pair of vectors $x_{\pm}\in\mathscr{D}_{\pm}$
is in the graph of a partial isometry $\mathscr{D}_{+}\longrightarrow\mathscr{D}_{-}$
if and only if (\ref{eq:defnorm}) holds.
\end{proof}

\subsection{Prior Literature}

There are related investigations in the literature on spectrum and
deficiency indices. For the case of indices $(1,1)$, see for example
\cite{ST10,Ma11}. For a study of odd-order operators, see \cite{BH08}.
Operators of even order in a single interval are studied in \cite{Oro05}.
The paper \cite{BV05} studies matching interface conditions in connection
with deficiency indices $(m,m)$. Dirac operators are studied in \cite{Sak97}.
For the theory of selfadjoint extensions operators, and their spectra,
see \cite{Smu74,Gil72}, for the theory; and \cite{Naz08,VGT08,Vas07,Sad06,Mik04,Min04}
for recent papers with applications. For applications to other problems
in physics, see e.g., \cite{AH11,PoRa76,Ba49,MK08}. And \cite{Ch11}
on the double-slit experiment. For related problems regarding spectral
resolutions, but for fractal measures, see e.g., \cite{DJ07,DJ09,DJ11}.

The study of deficiency indices $(n,n)$ has a number of additional
ramifications in analysis: Included in this framework is Krein's analysis
of Voltera operators and strings; and the determination of the spectrum
of inhomogenous strings; see e.g., \cite{DS01,KN89,Kr70,Kr55}.

Also included is their use in the study of de Branges spaces, see
e.g., \cite{Ma11}, where it is shown that any regular simple symmetric
operator with deficiency indices $(1,1)$ is unitarily equivalent
to the operator of multiplication in a reproducing kernel Hilbert
space of functions on the real line with a sampling property). Further
applications include signal processing, and de Branges-Rovnyak spaces:
Characteristic functions of Hermitian symmetric operators apply to
the cases unitarily equivalent to multiplication by the independent
variable in a de Branges space of entire functions.

\subsection{Organization of the Paper}

The central themes in our paper are presented, in their most general
form, in sections \ref{sec:Hardy} and \ref{sec:Fried}. However,
in sections \ref{sec:model} through \ref{sec:qua}, we are preparing
the ground leading up to section \ref{sec:Hardy}, beginning with
some lemmas on unbounded operators in section \ref{sec:model}.

Further in sections \ref{sec:sbdd} and \ref{sec:spH} (before introducing
harmonic analysis in the Hardy space $\mathscr{H}_{2}$), we begin
with an analysis of model operators in the Hilbert space $l^{2}(\mathbb{N}_{0})$.
In sections \ref{sec:sbdd} and \ref{sec:spH}, it will be helpful
to restrict our analysis to the case of deficiency indices $(1,1)$.

The reason for beginning with the Hilbert space $l^{2}(\mathbb{N}_{0})$
is that some computations are presented more clearly there. But they
will then be used in section \ref{sec:Hardy} where we introduce operators
in the Hardy space $\mathscr{H}_{2}$ (of analytic functions on the
disk $\mathbb{D}$ with $l^{2}$-coefficients.)

It is only with the use of kernel theory for $\mathscr{H}_{2}$, and
its subspaces, that we are able to make precise our results for the
case of deficiency indices $(m,m)$ where $m$ can be any number in
$\mathbb{N}\cup\{\infty\}$. For the case when our operators have
indices $(m,m)$, $m>1$, the possibilities encompass a rich variety.
Indeed, there is a boundary value problem for every choice of a closed
subset $F\subset\mathbb{T}=\partial\mathbb{D}$ of measure zero. In
fact we prove that even different sets $F$ with the same cardinality
can lead to quite different boundary-value problems, inequivalent
extension operators, and quite different spectral configurations for
the selfadjoint extensions.

\section{\label{sec:model}Restrictions of Selfadjoint Operators}

The general setting here is as sketched above; see especially Lemma
\ref{lem:vN def-space}, a statement of von Neumann\textquoteright{}s
theorem yielding a classification of the selfadjoint extensions of
a fixed Hermitian operator $L$ with dense domain in a given Hilbert
space $\mathscr{H}$, and $L$ having equal deficiency indices. 

Below we will be concerned with the converse question: Given an unbounded
selfadjoint operator in $\mathscr{H}$; what are the parameters for
the variety of \emph{all} closed Hermitian restrictions having dense
domain in $\mathscr{H}$. The answer is given below, where we further
introduce the restriction on the possibilities by the added requirement
of semi-boundedness.

Our main reference regarding unbounded operators in Hilbert space
will be \cite{DS88b}, but we will be relying too on results from
\cite{AG93} on spectral theory in the case of indices $(m,m)$ with
$m$ finite, \cite{Kat95} on closed quadratic forms, and \cite{Gru09}
for distribution theory and semibounded operators.

\subsection{Conventions and Notation.}
\begin{itemize}
\item $\mathscr{H}$ - a complex Hilbert space;
\item $H$ - selfadjoint operator in $\mathscr{H}$;
\item $\mathscr{D}(H)$ - domain of $H$, dense in $\mathscr{H}$; 
\item For $\xi\in\mathbb{C}$, $\Im(\xi)\neq0$, the resolvent operator
\[
R(\xi)=(\xi I-H)^{-1}
\]
is well defined; it is bounded, i.e.,
\begin{equation}
\left\Vert R(\xi)\right\Vert _{\mathscr{H}\rightarrow\mathscr{H}}\leq\left|\Im(\xi)\right|^{-1},\;\mbox{and}\label{eq:res}
\end{equation}
\begin{equation}
R(\xi)^{*}=R(\overline{\xi})\label{eq:res-1}
\end{equation}

\item $\perp$ - orthogonal complement.
\end{itemize}
\textbf{Question:} What are the closed restriction operators $L$
for $H$, such that $\mathscr{D}(L)$ is dense, where $\mathscr{D}(L)$
is the domain of $L$?

The answer is given in the following lemma:
\begin{lem}
\label{lem:subsp}Let $\mathscr{H}$, $H$, and $\xi$, be as above,
in particular, $\Im(\xi)\neq0$ is fixed. Then there is a bijective
correspondence between \emph{(\ref{enu:a})} and \emph{(\ref{enu:b})}
as follows:\end{lem}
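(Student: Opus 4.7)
The paragraph preceding the lemma sets up the question of parametrizing the closed, densely defined restrictions $L\subset H$, so I expect (a) to list such restriction operators $L$ and (b) to list closed subspaces $\mathscr{K}\subset\mathscr{H}$ satisfying $\mathscr{K}\cap\mathscr{D}(H)=\{0\}$, with the correspondence
\[
\mathscr{K}=\ker(L^{*}-\bar{\xi}),\qquad \mathscr{D}(L)=R(\xi)\mathscr{K}^{\perp}.
\]
The plan is to construct both maps explicitly and verify they invert each other, using only (\ref{eq:res})--(\ref{eq:res-1}) and the fact that $R(\xi):\mathscr{H}\to\mathscr{D}(H)$ is a topological isomorphism when $\mathscr{D}(H)$ is equipped with its graph norm.

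\emph{From (b) to (a).} Given $\mathscr{K}$ as above, set $\mathscr{D}(L):=R(\xi)\mathscr{K}^{\perp}$ and $L:=H|_{\mathscr{D}(L)}$. Since $R(\xi)$ is a graph-norm isomorphism, $\mathscr{D}(L)$ is graph-closed, so $L$ is a closed operator. Density of $\mathscr{D}(L)$ in $\mathscr{H}$ should follow from a short orthogonality computation: if $g\in\mathscr{H}$ and $\langle g,R(\xi)k'\rangle=0$ for every $k'\in\mathscr{K}^{\perp}$, then (\ref{eq:res-1}) gives $\langle R(\bar{\xi})g,k'\rangle=0$, so $R(\bar{\xi})g\in\mathscr{K}\cap\mathscr{D}(H)=\{0\}$, forcing $g=0$. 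As a restriction of the self-adjoint $H$ to a dense subspace, $L$ is automatically Hermitian.

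\emph{From (a) to (b).} For a closed, densely defined Hermitian restriction $L\subset H$, set $\mathscr{K}:=\ker(L^{*}-\bar{\xi})$, which is closed. If $\phi\in\mathscr{K}\cap\mathscr{D}(H)$, then $L\subset H$ gives $H=H^{*}\subset L^{*}$, so $L^{*}\phi=H\phi=\bar{\xi}\phi$; because $\bar{\xi}$ lies in the resolvent set of $H$, this forces $\phi=0$. To show the two constructions are mutually inverse, I would use the standard estimate $\|(\xi I-L)f\|\ge|\Im\xi|\,\|f\|$, valid for any closed Hermitian $L$, which implies $\xi I-L$ has closed range, together with the identity
\[
\mathcal{R}(\xi I-L)=\ker(\bar{\xi}I-L^{*})^{\perp}=\mathscr{K}^{\perp}.
\]
Composing the two maps then collapses to the identity via $R(\xi)(\xi I-H)=I$ on $\mathscr{D}(H)$.

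The technical step I expect to require the most care is the density of $\mathscr{D}(L)=R(\xi)\mathscr{K}^{\perp}$: one has to transport orthogonality in $\mathscr{H}$ through $R(\bar{\xi})$ into $\mathscr{D}(H)$, which is where the hypothesis $\mathscr{K}\cap\mathscr{D}(H)=\{0\}$ can finally be brought to bear. Everything else reduces to the resolvent identities (\ref{eq:res})--(\ref{eq:res-1}) and the closed-range property of Hermitian operators off the real axis.
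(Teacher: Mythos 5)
Your proposal is correct and follows essentially the same route as the paper: the deficiency space $\ker(L^{*}-\bar{\xi})$ (the paper uses the eigenvalue $\xi$, an immaterial relabeling) paired with the condition $\mathscr{K}\cap\mathscr{D}(H)=\{0\}$, and the density of $\mathscr{D}(L)$ established by transporting orthogonality through $R(\bar{\xi})=R(\xi)^{*}$ into $\mathscr{D}(H)$ exactly as in the paper's key step. Your two minor departures --- deducing $\mathscr{K}\cap\mathscr{D}(H)=\{0\}$ from $\bar{\xi}$ lying in the resolvent set of $H$ rather than from the reality of $\left\langle \psi,H\psi\right\rangle$, and spelling out via the closed-range identity $\mathcal{R}(\xi I-L)=\ker(\bar{\xi}I-L^{*})^{\perp}$ that the two maps are mutually inverse (a point the paper leaves implicit) --- are harmless and, if anything, make the bijectivity claim more complete.
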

\begin{enumerate}
\item \label{enu:a}$L$ is a closed restriction of $H$ with dense domain
$\mathscr{D}(L)$ in $\mathscr{H}$; and 
\item \label{enu:b}$\mathfrak{M}$ is a closed subspace in $\mathscr{H}$
such that 
\begin{equation}
\mathfrak{M}\cap\mathscr{D}(H)=0;\label{eq:csub}
\end{equation}
where the correspondence $L$ to $\mathfrak{M}$ is
\begin{equation}
\mathfrak{M}=\{\psi\in\mathscr{H}\:\big|\:\psi\in\mathscr{D}(L^{*}),L^{*}\psi=\xi\psi\};\label{eq:csub-1}
\end{equation}
while, from $\mathfrak{M}$ to $L$, it is:
\begin{equation}
\mathscr{D}(L)=\left(R(\overline{\xi})\:\mathfrak{M}\right)^{\perp}.\label{eq:csub-2}
\end{equation}
\end{enumerate}
\begin{proof}
\textbf{From}\textbf{\emph{ }}\textbf{(\ref{enu:a})}\textbf{\emph{
}}\textbf{to (\ref{enu:b}).} Let $L$ be a restriction operator as
in (\ref{enu:a}), i.e., with $\mathscr{D}(L)$ dense in $\mathscr{H}$.
Then $L$ is Hermitian, and therefore, 
\begin{equation}
L\subset L^{*}.\label{eq:resL}
\end{equation}
It follows in particular that $\mathscr{D}(L^{*})$ is also dense. 

By von Neumann's theory, \cite{DS88b}, we conclude that, for every
$\xi\in\mathbb{C}$ s.t. $\Im(\xi)\neq0$, the subspaces $\mathfrak{M}_{\xi}$
in (\ref{eq:csub-1}) have the same dimension. In particular, $L$
has deficiency indices $(n,n)$ where $n=\mbox{dim }\mathfrak{M}$.
We pick a fixed $\xi\in\mathbb{C}$, $\Im(\xi)\neq0$. It further
follows from \cite{DS88a} that $\mathfrak{M}$ is closed. 

We now prove that $\mathfrak{M}$ satisfies (\ref{eq:csub}). If $\mathfrak{M}=0$,
there is nothing to prove. Now suppose $\psi\in\mathfrak{M}$ and
$\psi\neq0$. Since $L\subseteq H$, from (\ref{eq:resL}) we get
\begin{equation}
L\subset H\subset L^{*},\label{eq:LH}
\end{equation}
and therefore if $\psi\in\mathscr{D}(H)$, it follows that
\[
\left\langle \psi,L^{*}\psi\right\rangle =\left\langle \psi,H\psi\right\rangle \in\mathbb{R}.
\]
But (\ref{eq:csub-1}) implies:

\begin{equation}
\left\langle \psi,L^{*}\psi\right\rangle =\xi\left\langle \psi,\psi\right\rangle =\xi\left\Vert \psi\right\Vert ^{2}.\label{eq:psi}
\end{equation}
Since $\Im(\xi)\neq0$, we have a contradiction. Hence (\ref{eq:csub})
must hold.

\textbf{From (\ref{enu:b}) to (\ref{enu:a}).} Let $\mathfrak{M}$
be a closed subspace satisfying (\ref{eq:csub}). Then define the
subspace $\mathscr{D}(L)$ as in (\ref{eq:csub-2}). Let $L:=H\big|_{\mathscr{D}(L)}$,
i.e., defined to be the restriction of $H$ to this subspace. 

The key step in the argument is the assertion that $\mathscr{D}(L)$,
so defined, is \emph{dense }in $\mathscr{H}$. We will prove the implication:
\begin{equation}
\psi\in\mathscr{D}(L)^{\perp}\Longrightarrow\psi=0\label{eq:perp}
\end{equation}
By (\ref{eq:csub-2}), we have
\[
\mathscr{D}(L)=\left(R(\overline{\xi})\:\mathfrak{M}\right)^{\perp}=R(\xi)\left(\mathfrak{M}^{\perp}\right).
\]
Hence, $\psi\in\mathscr{D}(L)^{\perp}$ $\Longleftrightarrow$
\[
\left\langle R(\overline{\xi})\psi,m^{\perp}\right\rangle =\left\langle \psi,\underset{\in\mathscr{D}(L)}{\underbrace{R(\xi)m^{\perp}}}\right\rangle =0,\:\forall m^{\perp}\in\mathfrak{M}^{\perp}.
\]
But $\mathfrak{M}=\mathfrak{M}^{\perp\perp}$, since $\mathfrak{M}$
is closed. Hence $R(\overline{\xi})\psi\in\mathfrak{M}\cap\mathscr{D}(H)=0$;
and therefore $\psi=0$; which proves (\ref{eq:perp}).
\end{proof}

\subsection{The Case When $H$ is Semibounded}

There is an extensive general theory of semibounded Hermitian operators
with dense domain in Hilbert space, \cite{AG93,DS88b,Kat95}. One
starts with a fixed semibounded Hermitian operator $L$, and then
passes to a corresponding quadratic form $q_{L}$ \cite{Kat95}. The
lower bound for $L$ is defined from $q_{L}$. 

Now, the initial operator $L$ will automatically have equal deficiency
indices, and, in the general case (Lemma \ref{lem:subsp}), there
is therefore a rich variety of possibilities for the selfadjoint extensions
of $L$ . In this paper, we will be concerned with particular model
examples of semibounded operators, typically have much more restricted
parameters for their selfadjoint extensions than what is possible
for more general semibounded operators. Nonetheless, there are many
instances of operators arising in applications which are unitarily
equivalent to the \textquotedblleft{}simple\textquotedblright{} model.
Some will be discussed in detail in sections \ref{sec:qua} and \ref{sec:Hardy}
below.
\begin{defn}
We say that a Hermitian operator $L$ with dense domain $\mathscr{D}(L)$
in a fixed Hilbert space $\mathscr{H}$ is semibounded if there is
a number $b>-\infty$ such that
\begin{equation}
\left\langle x,Lx\right\rangle \geq b\left\Vert x\right\Vert ^{2},\:\mbox{for all }x\in\mathscr{D}(L).\label{eq:semibdd}
\end{equation}
Then the best constant $b$, valid for all $x$ in (\ref{eq:semibdd}),
will be called the greatest lower bound (GLB.)\end{defn}
\begin{lem}
\label{lem:semibdd}Suppose a selfadjoint operator $H$ has a lower
bound $b$. If $c\in\mathbb{R}$ satisfies $-\infty<c<b$, then, in
the parameterization from Lemma \ref{lem:subsp}, we may take
\begin{equation}
\mathfrak{M}=\{\psi\in\mathscr{H}\:\big|\:\psi\in\mathscr{D}(L^{*}),\: L^{*}\psi=c\,\psi\},\label{eq:M}
\end{equation}
and, in the reverse direction, 
\begin{equation}
\mathscr{D}(L)=\left((H-cI)^{-1}\mathfrak{M}\right)^{\perp}.\label{eq:L-semi}
\end{equation}
This will again be a bijective correspondence between: 
\begin{enumerate}
\item all the closed and densely defined restrictions $L$ of $H$, and 
\item all the closed subspaces $\mathfrak{M}$ in $\mathscr{H}$ satisfying
\begin{equation}
\mathfrak{M}\cap\mathscr{D}(H)=\{0\}.\label{eq:int}
\end{equation}

\end{enumerate}
\end{lem}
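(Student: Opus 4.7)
The plan is to mimic the proof of Lemma \ref{lem:subsp} almost verbatim, replacing the nonreal spectral parameter $\xi$ (with $\Im\xi\neq 0$) by the real number $c<b$. The pivotal fact that makes this substitution work is that the hypothesis $c<b$ places $c$ strictly below the spectrum of $H$, so $c\in\rho(H)$ and $(H-cI)^{-1}$ exists as a bounded self-adjoint operator on $\mathscr{H}$. This operator plays the role formerly played by the resolvent $R(\xi)$; notice that $(H-cI)^{-1}$ is its own adjoint, so the distinction between $R(\xi)$ and $R(\overline{\xi})$ in (\ref{eq:csub-2}) collapses, which is why the formula (\ref{eq:L-semi}) no longer carries a complex conjugate.

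For the direction from $L$ to $\mathfrak{M}$, given a closed densely defined Hermitian restriction $L\subseteq H$, define $\mathfrak{M}$ by (\ref{eq:M}). Because $L^{*}-cI$ is closed, $\mathfrak{M}$ is a closed subspace. To verify (\ref{eq:int}), suppose $\psi\in\mathfrak{M}\cap\mathscr{D}(H)$ with $\psi\neq 0$. Since $L\subseteq H$ implies $H=H^{*}\subseteq L^{*}$, we have $H\psi=L^{*}\psi=c\psi$, so
\begin{equation*}
\langle\psi,H\psi\rangle=c\Vert\psi\Vert^{2}.
\end{equation*}
But the semiboundedness of $H$ yields $\langle\psi,H\psi\rangle\geq b\Vert\psi\Vert^{2}$, forcing $c\geq b$, contradicting the choice $c<b$. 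Hence (\ref{eq:int}) holds. This is the only place where $\Im\xi\neq 0$ from Lemma \ref{lem:subsp} gets replaced, and the replacement is cleaner because we can now argue by a direct eigenvalue obstruction rather than by an imaginary-part argument.

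For the reverse direction, given a closed $\mathfrak{M}$ satisfying (\ref{eq:int}), set $\mathscr{D}(L):=\bigl((H-cI)^{-1}\mathfrak{M}\bigr)^{\perp}$ and $L:=H\big|_{\mathscr{D}(L)}$. The density argument proceeds exactly as in the proof of Lemma \ref{lem:subsp}: if $\psi\in\mathscr{D}(L)^{\perp}$, then since $\mathscr{D}(L)=(H-cI)^{-1}(\mathfrak{M}^{\perp})$, we find that $(H-cI)^{-1}\psi$ is orthogonal to $\mathfrak{M}^{\perp}$, hence lies in $\mathfrak{M}$. But $(H-cI)^{-1}\psi\in\mathscr{D}(H)$ automatically, so $(H-cI)^{-1}\psi\in\mathfrak{M}\cap\mathscr{D}(H)=\{0\}$ by (\ref{eq:int}), giving $\psi=0$. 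Thus $\mathscr{D}(L)$ is dense. That $L$ so obtained is closed follows from closedness of $H$ and of $\mathscr{D}(L)$ in the graph norm.

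The remaining task, which I expect to be the most bookkeeping-heavy but not a real obstacle, is to check that the two assignments are mutually inverse. One direction amounts to showing that if $L$ is constructed from $\mathfrak{M}$ as above, then the eigenspace $\{\psi\in\mathscr{D}(L^{*}):L^{*}\psi=c\psi\}$ recovers $\mathfrak{M}$; this uses that $(H-cI)^{-1}$ is a bijection of $\mathscr{H}$ onto $\mathscr{D}(H)$ together with the identity $\mathscr{D}(L)^{\perp}=(H-cI)^{-1}\mathfrak{M}$ and a standard integration-by-parts-type check that $L^{*}\psi=c\psi$ for $\psi\in\mathfrak{M}$. The converse direction, starting from $L$, recovers $\mathscr{D}(L)$ from $\mathfrak{M}$ via the same chain of orthogonality relations. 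These verifications are direct translations of the corresponding steps in Lemma \ref{lem:subsp}, with $(H-cI)^{-1}$ replacing $R(\xi)$ throughout.
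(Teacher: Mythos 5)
Your proposal is correct and follows exactly the route the paper takes: the paper's own proof of Lemma \ref{lem:semibdd} consists of the single remark that the argument of Lemma \ref{lem:subsp} carries over \emph{mutatis mutandis}, and your write-up supplies precisely those mutanda --- the boundedness and self-adjointness of $(H-cI)^{-1}$ coming from $c<b\leq\inf\mathrm{spec}(H)$, and the replacement of the imaginary-part contradiction by the semiboundedness contradiction $c\geq b$. Nothing further is needed.
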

\begin{proof}
The argument is the same as that used in the proof of Lemma \ref{lem:subsp},
\emph{mutatis mutandis}. 
\end{proof}

\section{\label{sec:sbdd}Semibounded Operators}

Below we consider the particular semibounded Hermitian operator $L$
with its dense domain in the Hilbert space $l^{2}(\mathbb{N}_{0})$
of square-summable one-sided sequences. (Our justification for beginning
with $l^{2}$ is the natural and known isometric isomorphism $l^{2}\simeq\mathscr{H}_{2}$
(with the Hardy space); see \cite{Rud87} and section \ref{sec:Hardy}
below for details.) It is specified by a single linear condition,
see (\ref{eq:domL}) below, and is obtained as a restriction of a
selfadjoint operator $H$ in $l^{2}(\mathbb{N}_{0})$ having spectrum
$\mathbb{N}_{0}$. While $0$ is in the bottom of the spectrum of
$H$, it is not \emph{a priori} clear that the greatest lower bound
for its restriction $L$ will also be $0$, (see sect. \ref{sec:qua}
for details.) After all, finding the lower bound for $L$ is a quadratic
optimization problem with constraints. Nonetheless we prove (Lemma
\ref{lem:lbdd}) that $L$ also has $0$ as its lower bound.

For $p>0$, let $\ell^{p}$ be the set of complex sequence ${\displaystyle x=(x_{k})_{k=0}^{\infty}}$
such that ${\displaystyle \sum\left|x_{k}\right|^{p}}<\infty$. The
operator ${\displaystyle H\left(x_{k}\right)=\left(kx_{k}\right)}$
with domain 
\begin{equation}
{\displaystyle \mathscr{D}(H)=\{x\in\ell^{2}:Hx\in\ell^{2}\}}\label{eq:Hsa}
\end{equation}
is selfadjoint in the Hilbert space $l^{2}$. 
\begin{lem}
\label{lem:Hdom}$\mathscr{D}\left(H\right)$ is a subspace of $\ell^{1}.$
In particular, $\sum x_{k}$ is absolutely convergent for all $x$
in $\mathscr{D}\left(H\right)$. More generally, for $m\in\mathbb{N}_{0}$,
we have:
\begin{equation}
\mathscr{D}(H^{m})\subset l^{p}\quad\mbox{if}\quad p>\frac{2}{2m+1}.\label{eq:lp}
\end{equation}
\end{lem}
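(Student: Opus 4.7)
The plan is to read off $\mathscr{D}(H^{m})$ as the set of $x\in\ell^{2}$ with $\sum_{k}k^{2m}|x_{k}|^{2}<\infty$ (equivalently $\sum_{k}(1+k)^{2m}|x_{k}|^{2}<\infty$, since $\sum|x_{k}|^{2}$ is already finite), and then derive the $\ell^{p}$ membership by a Hölder/Cauchy--Schwarz estimate in which one splits $|x_{k}|^{p}$ into a product of a pure power of $(1+k)$ and the weighted square $(1+k)^{mp}|x_{k}|^{p}$.

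For the first assertion ($m=1$, $p=1$) I would just write
\[
\sum_{k=0}^{\infty}|x_{k}|=|x_{0}|+\sum_{k=1}^{\infty}\tfrac{1}{k}\cdot k|x_{k}|,
\]
apply Cauchy--Schwarz to the tail, and conclude using $\sum_{k\ge 1}k^{-2}=\pi^{2}/6<\infty$ together with $\sum_{k\ge 1}k^{2}|x_{k}|^{2}=\|Hx\|^{2}<\infty$. Absolute convergence of $\sum x_{k}$ is then immediate.

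For the general bound I would split into the trivial range $p\ge 2$, where $\mathscr{D}(H^{m})\subset\ell^{2}\subset\ell^{p}$ by the usual containment of sequence spaces, and the interesting range $p\in(2/(2m+1),2)$. In the latter case I apply Hölder with conjugate exponents $r=2/(2-p)$ and $s=2/p$ to the factorization $|x_{k}|^{p}=(1+k)^{-mp}\cdot(1+k)^{mp}|x_{k}|^{p}$, obtaining
\[
\sum_{k=0}^{\infty}|x_{k}|^{p}\le\Bigl(\sum_{k=0}^{\infty}(1+k)^{-2mp/(2-p)}\Bigr)^{(2-p)/2}\Bigl(\sum_{k=0}^{\infty}(1+k)^{2m}|x_{k}|^{2}\Bigr)^{p/2}.
\]
The second factor is finite because $x\in\mathscr{D}(H^{m})$. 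The first factor is finite precisely when $2mp/(2-p)>1$, and this rearranges to $p(2m+1)>2$, which is the hypothesis $p>2/(2m+1)$.

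The only real obstacle is calibrating the Hölder exponents so that the threshold $p>2/(2m+1)$ emerges sharply; once the exponents $r=2/(2-p)$, $s=2/p$ and the weight $(1+k)^{mp}$ are chosen, the bound is automatic. Ancillary remarks I would include are that the weighted $\ell^{2}$-characterization of $\mathscr{D}(H^{m})$ used above follows directly from the spectral (multiplication) form of $H$ in $\ell^{2}(\mathbb{N}_{0})$, and that at the endpoint $p=2/(2m+1)$ the first sum diverges (logarithmically after scaling), showing the range of $p$ in \eqref{eq:lp} is best possible in this scale of weights.
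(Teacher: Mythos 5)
Your proposal is correct and follows essentially the same route as the paper: Cauchy--Schwarz for the $\ell^{1}$ statement and H\"older's inequality for the general inclusion (\ref{eq:lp}); the paper merely states that the second part ``follows from an application of H\"older's inequality,'' whereas you supply the explicit exponents $r=2/(2-p)$, $s=2/p$ and the weight $(1+k)^{mp}$, and verify that the threshold $p>2/(2m+1)$ is exactly the convergence condition for $\sum(1+k)^{-2mp/(2-p)}$. Your handling of the $k=0$ term and the trivial range $p\geq 2$ is a sensible tidying of details the paper leaves implicit.
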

\begin{proof}
By Cauchy-Schwarz, we have 
\[
\sum\left|x_{k}\right|=\sum\frac{1}{k}\left|kx_{k}\right|\leq\left(\sum\frac{1}{k^{2}}\right)^{1/2}\left(\sum\left|kx_{k}\right|^{2}\right)^{1/2}.
\]
The second part (\ref{eq:lp}) follows from an application of Hölder's
inequality.\end{proof}
\begin{rem}
The proof shows that $\phi:\left(x_{k}\right)\to\sum x_{k}$ is a
continuous functional $\mathscr{D}(H)\to\mathbb{C},$ when $\mathscr{D}(H)$
is equipped with the graph norm. 
\end{rem}
Consider the operator $L$ on $\ell^{2}$ determined by $\left(Lx\right)_{k}=k\, x_{k}$
with domain 
\begin{equation}
\mathscr{D}(L)=\mathbb{D}_{0}=\left\{ x\in\mathscr{D}\left(H\right):\sum x_{k}=0\right\} .\label{eq:domL}
\end{equation}
Note $\mathscr{D}(L)$ has co-dimension one as a subspace of $\mathscr{D}(H).$

\begin{lem}
\label{lem:densedom}$\mathbb{D}_{0}$ is dense in $\ell^{2}.$ \end{lem}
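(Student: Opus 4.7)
The plan is to show that the orthogonal complement $\mathbb{D}_0^{\perp}$ in $\ell^2$ is trivial, which by the closed-graph/projection theorem is equivalent to $\mathbb{D}_0$ being dense. The key observation is that $\mathbb{D}_0$ is large enough to contain many finitely supported sequences, which can be used to test orthogonality coordinate-by-coordinate.

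First I would note that the space $c_{00}$ of finitely supported sequences lies in $\mathscr{D}(H)$, since for such a sequence the sum $\sum k^2 |x_k|^2$ has only finitely many nonzero terms. In particular, for any distinct indices $i,j \in \mathbb{N}_0$, the vector $e_i - e_j$ belongs to $\mathscr{D}(H)$, and its coordinate sum is $1 - 1 = 0$, so $e_i - e_j \in \mathbb{D}_0$.

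Next, suppose $y = (y_k) \in \ell^2$ is orthogonal to $\mathbb{D}_0$. Then for every $i \neq j$,
\begin{equation*}
0 = \langle e_i - e_j, y\rangle = \overline{y_i} - \overline{y_j},
\end{equation*}
so all coordinates $y_k$ are equal to some common constant $c \in \mathbb{C}$. Since $y \in \ell^2$ forces $\sum |c|^2 < \infty$, we must have $c = 0$, and hence $y = 0$. Therefore $\mathbb{D}_0^{\perp} = \{0\}$, and $\mathbb{D}_0$ is dense in $\ell^2$.

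There is essentially no obstacle here; the only subtle point is making sure that the test vectors $e_i - e_j$ actually lie in $\mathbb{D}_0$, which requires both the finite-support observation (placing them in $\mathscr{D}(H)$) and the trivial verification that their sum vanishes. One could alternatively phrase the argument via Lemma \ref{lem:subsp}: the functional $\phi(x) = \sum x_k$ on $\mathscr{D}(H)$ corresponds, under the parametrization there, to a one-dimensional subspace $\mathfrak{M}$ of $\ell^2$, and verifying $\mathfrak{M} \cap \mathscr{D}(H) = 0$ is the same density statement; but the direct orthogonality argument above is the most economical.
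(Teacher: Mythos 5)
Your argument is correct: $e_i-e_j$ is finitely supported, hence in $\mathscr{D}(H)$, and has vanishing coordinate sum, hence lies in $\mathbb{D}_0$; orthogonality to all such differences forces all coordinates of $y$ to coincide, and membership in $\ell^2$ then forces $y=0$. (With the paper's convention that the inner product is linear in the second variable you would get $y_i-y_j=0$ rather than $\overline{y_i}-\overline{y_j}=0$, but the conclusion is identical.) This is, however, a genuinely different route from the paper's. The paper notes that density follows abstractly from Lemma \ref{lem:semibdd} and then gives a \emph{constructive} proof: starting from an arbitrary finitely supported $x$ with $A=\sum x_k$, it builds $y_m\in\mathbb{D}_0$ by spreading the correction $-A$ uniformly over a block of $m$ far-out coordinates, so that $\left\Vert x-y_m\right\Vert^2=\left|A\right|^2/m\to0$. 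The paper explicitly keeps this version because the same ``spread the mass thinly'' device is reused later (for instance in Lemma \ref{lem:lbdd}, where explicit trial sequences in $\mathbb{D}_0$ are needed to show the quadratic form has greatest lower bound zero); your duality argument, while shorter and perfectly adequate for the density statement itself, produces no approximating sequences and so would not serve that later purpose. It is worth noting that your choice of test vectors $e_k-e_{k+1}$ is exactly the device the paper uses in the proof of Lemma \ref{lem:L*-eigenvalues}, so the two approaches are complementary rather than in tension.
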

\begin{proof}
The assertion follows from Lemma \ref{lem:semibdd} above, but we
include a direct proof as well, as this argument will be used later.

The set of sequences $\ell_{\mathrm{fin}}$ with only a finite number
of non-zero terms is dense in $\ell^{2}.$ Suppose $x=\left(x_{k}\right),$
$x_{k}=0$ for $k>n,$ and $A=\sum x_{k}.$ Let $y_{m}=\left(y_{m,k}\right)_{k=0}^{\infty}$
in $\ell_{\mathrm{fin}}$ be determined by 
\[
y_{m,k}=\begin{cases}
x_{k} & \text{if }k<n\\
-\frac{A}{m} & \text{if }m\, n\leq k<m(n+1)
\end{cases}.
\]
Then 
\[
\left\Vert x-y_{m}\right\Vert ^{2}=\sum_{k=0}^{\infty}\left|x_{k}-y_{m,k}\right|^{2}=\sum_{k=m\, n}^{m(n+1)-1}\frac{\left|A\right|^{2}}{m^{2}}=\frac{\left|A\right|^{2}}{m}\to0
\]
as $m\to\infty.$
\end{proof}
Since $\phi$ is continuous $L$ is a closed operator. Furthermore,
\begin{equation}
L\subset H\subset L^{*}\label{eq:LHL}
\end{equation}
since $L\subset H$ and $H$ is selfadjoint. 

Here, containment in (\ref{eq:LHL}) for pairs of operators means
containment of the respective graphs.
\begin{lem}
\label{lem:L*-eigenvalues}Every complex number is an eigenvalue of
$L^{*}$ of multiplicity one. \end{lem}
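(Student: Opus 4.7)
The plan is to translate the eigenvalue equation $L^{*}\psi=\lambda\psi$ into a pointwise relation on the coordinates $\psi_{k}$ by testing it against a rich family of vectors in $\mathscr{D}(L)$. Since $L$ acts by $(Lx)_{k}=kx_{k}$, the defining identity for $L^{*}$ reads
\[
\sum_{k}(k-\overline{\lambda})\,x_{k}\,\overline{\psi_{k}}=0\qquad\text{for every }x\in\mathscr{D}(L).
\]
I would feed into this identity the finitely supported test vectors $x=e_{k}-e_{j}\in\mathscr{D}(L)$ (with $e_{k}$ the standard basis vector; $\sum x_{i}=0$, so $x\in\mathscr{D}(L)$). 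This gives $(k-\overline{\lambda})\overline{\psi_{k}}=(j-\overline{\lambda})\overline{\psi_{j}}$ for all $k,j$, so the sequence $(k-\overline{\lambda})\overline{\psi_{k}}$ is a constant $c\in\mathbb{C}$.

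From here the case analysis is forced. If $\lambda\notin\mathbb{N}_{0}$, then
\[
\psi_{k}=\frac{\overline{c}}{k-\lambda},
\]
so $|\psi_{k}|^{2}=O(k^{-2})$ and $\psi\in\ell^{2}$; the eigenspace is spanned by the single vector $\psi^{(\lambda)}_{k}:=(k-\lambda)^{-1}$. If $\lambda=n\in\mathbb{N}_{0}$, setting $k=n$ forces $c=0$, whence $\psi_{k}=0$ for $k\neq n$ and $\psi_{n}$ is free; the eigenspace is spanned by $e_{n}$. In either case the multiplicity is exactly one.

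It remains to verify the converse: that the candidate vector actually lies in $\mathscr{D}(L^{*})$ and satisfies $L^{*}\psi=\lambda\psi$. For $\lambda=n\in\mathbb{N}_{0}$ this is immediate since $H\subset L^{*}$ and $He_{n}=ne_{n}$. For $\lambda\notin\mathbb{N}_{0}$ and $\psi=\psi^{(\lambda)}$, one checks directly using the identity $\tfrac{k}{k-\overline{\lambda}}=1+\tfrac{\overline{\lambda}}{k-\overline{\lambda}}$ that, for every $x\in\mathscr{D}(L)$,
\[
\langle Lx,\psi\rangle=\sum_{k}\frac{k\,x_{k}}{k-\overline{\lambda}}=\sum_{k}x_{k}+\overline{\lambda}\sum_{k}\frac{x_{k}}{k-\overline{\lambda}}=\overline{\lambda}\,\langle x,\psi\rangle,
\]
where the first sum vanishes by definition of $\mathscr{D}(L)$. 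Absolute convergence of both sums is supplied by Lemma \ref{lem:Hdom} (which gives $x\in\ell^{1}$ and $(kx_{k})\in\ell^{2}$) together with $\psi\in\ell^{2}$ and Cauchy--Schwarz.

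The only mildly delicate point is handling the test-vector step carefully: $e_{k}-e_{j}$ belongs to $\mathscr{D}(L)$ because it has finite support and sums to zero, so no convergence issue arises, and this is what rigorously justifies concluding that $(k-\overline{\lambda})\overline{\psi_{k}}$ is constant rather than only \emph{almost} constant. I would expect the write-up to be short; there is no real obstacle, merely the need to treat the exceptional values $\lambda\in\mathbb{N}_{0}$ (where the ``eigenvector by formula'' has a pole) separately from the generic case.
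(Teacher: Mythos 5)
Your proposal is correct and follows essentially the same route as the paper: both reduce $L^{*}\psi=\lambda\psi$ to the orthogonality condition $\sum_{k}(k-\overline{\lambda})x_{k}\overline{\psi_{k}}=0$ for all $x\in\mathscr{D}(L)$, test it against finitely supported zero-sum vectors (the paper uses $e_{k}-e_{k+1}$, you use $e_{k}-e_{j}$) to force $(k-\overline{\lambda})\overline{\psi_{k}}$ constant, and then split into the cases $\lambda\notin\mathbb{N}_{0}$ and $\lambda=n\in\mathbb{N}_{0}$. Your explicit verification that the candidate vectors really lie in $\mathscr{D}(L^{*})$ is a welcome extra step that the paper leaves implicit in its chain of equivalences.
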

\begin{proof}
The case when $\xi$ has non-zero imaginary part is covered by Lemma
\ref{lem:subsp}. Since $\mathscr{D}(L)$ is dense in $\ell^{2}$
we have 
\begin{align*}
L^{*}y=\xi y & \iff\left\langle \left(L^{*}-\xi\right)y,x\right\rangle =0,\forall x\in\mathscr{D}(L)\\
 & \iff\left\langle y,\left(L-\overline{\xi}\right)x\right\rangle =0,\forall x\in\mathscr{D}(L)\\
 & \iff\sum_{0}^{\infty}\overline{y_{k}}(k-\overline{\xi})x_{k}=0,\forall x\in\mathscr{D}(L).
\end{align*}
Considering $x_{k}=-x_{k+1}=1$ and $x_{j}=0$ for all $j\neq k,k+1$
we conclude 
\[
\overline{y_{0}}\overline{\xi}=\overline{y_{1}}\left(1-\overline{\xi}\right)=y_{2}\left(2-\overline{\xi}\right)=\cdots=\overline{y_{k}}\left(k-\overline{\xi}\right)=\cdots
\]
Hence, if $k-\overline{\xi}\neq0$ for all $k,$ then 
\[
y_{k}=\frac{y_{0}\,\overline{\xi}}{k-\overline{\xi}},\forall k\geq1.
\]
And, if $k_{0}-\overline{\xi}=0,$ then $y_{k}=0$ for all $k\neq k_{0}.$ 
\end{proof}

\subsection{Selfadjoint Extensions}

As a consequence of the lemma, $L$ has deficiency indices $(1,1)$
and the corresponding defect spaces are 
\[
\mathscr{D}_{\pm}=\mathbb{C}x_{\pm}
\]
where 
\begin{equation}
\left(x_{\pm}\right)_{k}=\frac{1}{k\mp i},\quad k\geq0.\label{eq:defv}
\end{equation}
In particular, we have the von Neumann formula (eq. (\ref{eq:DLs})
in Lemma \ref{lem:vNext}; and also see \cite[pg 1227, Lemma 10]{DS88b})
\begin{equation}
\mathscr{D}(L^{*})=\mathscr{D}(L)\oplus\mathbb{C}x_{+}\oplus\mathbb{C}x_{-}.\label{eq:vNext}
\end{equation}
By von Neumann (Lemma \ref{lem:vN def-space}), any selfadjoint extension
of $L$ is of the form 
\begin{align*}
\mathscr{D}(L_{\theta}) & =\mathscr{D}(L)+\mathbb{C}\left(x_{+}+e(\theta)x_{-}\right)\\
L_{\theta}\left(x+ax_{+}+a\, e(\theta)x_{-}\right) & =Lx+aix_{+}-a\, e(\theta)ix_{-},\quad x\in\mathscr{D}(L),a\in\mathbb{C}.
\end{align*}

Alternatively, if $\zeta=e(\theta)=e^{i2\pi\theta}$, let 
\begin{equation}
\left(y_{2}\right)_{k}=\frac{1}{1+k^{2}}\label{eq:Y2}
\end{equation}
and 
\begin{equation}
\left(y_{3}\right)_{k}=\frac{k}{1+k^{2}}\label{eq:Y3}
\end{equation}
where $L^{*}y_{2}=y_{3}$, and $L^{*}y_{3}=-y_{2}$. This is an application
of Lemma \ref{lem:Y23}. 

Then 
\[
\frac{1}{k-i}+\frac{\zeta}{k+i}=(1+\zeta)\frac{k}{1+k^{2}}+i(1-\zeta)\frac{1}{1+k^{2}}.
\]
Hence 
\[
x_{+}+\zeta x_{-}=(1+\zeta)y_{3}+i(1-\zeta)y_{2}.
\]
Similarly, the selfadjoint extension operators $L_{\theta}=H_{e(\theta)}$
satisfies
\[
L_{\theta}(x_{+}+\zeta x_{-})=-(1+\zeta)y_{2}+i(1-\zeta)y_{3}.
\]
Consequently, if $H_{\zeta}=L_{\theta},$ then
\begin{equation}
\mathscr{D}(H_{\zeta})=\mathscr{D}(L)+\mathbb{C}(1+\zeta)y_{3}+i(1-\zeta)y_{2}\label{eq:ext}
\end{equation}
and 
\begin{equation}
H_{\zeta}(x+a((1+\zeta)y_{3}+i(1-\zeta)y_{2}))=Lx+a\left(-(1+\zeta)y_{2}+i(1-\zeta)y_{3}\right).\label{eq:ext-1}
\end{equation}

\begin{rem}
\label{rem:H}$y_{2}$ is in $\mathscr{D}(H)$ and not in $\mathscr{D}(L)$,
hence $\mathscr{D}(H)=\mathscr{D}\left(H_{-1}\right).$ Since both
$H$ and $H_{-1}$ are restrictions of $L^{*}$ we conclude that $H=H_{-1}=L_{1/2}.$ 
\end{rem}
We say an operator has \emph{discrete spectrum} if it has empty essential
spectrum. 
\begin{thm}
\label{thm:Spectrum-L_t-discreet}Every selfadjoint extention of $L$
has discrete spectrum of uniform multiplicity one. \end{thm}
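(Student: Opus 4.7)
The plan is to show that every selfadjoint extension $H_\zeta$ has compact resolvent; discreteness of the spectrum will then follow from the spectral theorem for compact selfadjoint operators, and uniform multiplicity one will be immediate from Lemma~\ref{lem:L*-eigenvalues}.

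First I would verify that the distinguished extension $H = H_{-1}$ itself has compact resolvent. Since $H$ acts diagonally in the standard basis of $\ell^{2}(\mathbb{N}_{0})$ with eigenvalues $\{0,1,2,\dots\}$, for any $\xi\notin\mathbb{N}_{0}$ the resolvent $(H-\xi)^{-1}$ is diagonal with entries $(k-\xi)^{-1}\to 0$; it is therefore the norm limit of finite-rank truncations, hence compact.

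Next I would argue that for every $\zeta\in\mathbb{T}$, the resolvent difference $(H_{\zeta}-\xi)^{-1} - (H-\xi)^{-1}$ has rank at most one. This is the content of Krein's resolvent formula in the deficiency $(1,1)$ setting: two selfadjoint extensions of a symmetric operator with deficiency indices $(1,1)$ differ, at the level of resolvents, by a rank-one operator whose range sits inside a deficiency subspace of $L^{*}$. A direct verification uses the von~Neumann parametrization (\ref{eq:vNext})--(\ref{eq:ext-1}): given $\varphi\in\ell^{2}$, both $(H_{\zeta}-\xi)^{-1}\varphi$ and $(H-\xi)^{-1}\varphi$ solve $(L^{*}-\xi)\psi = \varphi$, so their difference lies in $\ker(L^{*}-\xi)$, which by Lemma~\ref{lem:L*-eigenvalues} is one-dimensional; the correction coefficient is a bounded linear functional of $\varphi$. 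Consequently $(H_{\zeta}-\xi)^{-1}$ is a compact operator plus a rank-one operator, hence compact, and so $H_{\zeta}$ has purely discrete spectrum.

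For the multiplicity claim, any eigenvector $\psi$ of $H_{\zeta}$ at an eigenvalue $\lambda\in\mathbb{R}$ satisfies $L^{*}\psi = \lambda\psi$ since $H_{\zeta}\subset L^{*}$, so $\psi$ lies in the one-dimensional space $\ker(L^{*}-\lambda)$ from Lemma~\ref{lem:L*-eigenvalues}; hence every eigenvalue of $H_{\zeta}$ has multiplicity exactly one. I expect the main obstacle to be a clean derivation of the rank-one resolvent difference from the explicit parametrization (\ref{eq:ext})--(\ref{eq:ext-1}), rather than invoking Krein's formula as a black box; this will amount to writing $(H_{\zeta}-\xi)^{-1}\varphi = \psi_{0} + c\,v_{\zeta}$ with $\psi_{0}\in\mathscr{D}(L)$ and $v_{\zeta} = (1+\zeta)y_{3} + i(1-\zeta)y_{2}$, and identifying the scalar $c$ as a bounded functional of $\varphi$ via the single boundary condition encoded by $\zeta$.
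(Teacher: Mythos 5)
Your proposal is correct and follows essentially the same route as the paper: the paper also deduces discreteness for all extensions from the discreteness of the spectrum of $H$ together with the finiteness of the deficiency indices (citing \cite[Section 11.6]{dO09} for that stability result), and obtains uniform multiplicity one exactly as you do, from $H_{\zeta}\subset L^{*}$ and Lemma \ref{lem:L*-eigenvalues}. The only difference is that you unpack the cited stability result by proving the rank-one resolvent difference directly, which is a sound and self-contained substitute for the reference.
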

\begin{proof}
Since $L$ has finite deficiency indices and one of its selfadjoint
extentions has discrete spectrum, so does every selfadjoint extension
of $L,$ see e.g. \cite[Section 11.6]{dO09}. 

Consider some selfadjoint extension $H_{\zeta}$ of $L.$ If $\lambda$
is an eigenvalue for $H_{\zeta}$ and $x$ is a corresponding eigenvector,
then 
\[
L^{*}x=H_{\zeta}x=\lambda x
\]
since $H_{\zeta}$ is a restriction of $L^{*}.$ Hence the multiplicity
claim follows from Lemma \ref{lem:L*-eigenvalues}. 
\end{proof}
In conclusion, we add that an application of Lemma \ref{lem:subsp}
to $\mathscr{H}=l^{2}(\mathbb{N}_{0})$ and the above results (sect
\ref{sec:sbdd}) yield the following: 
\begin{cor}
\label{cor:bddsq}Any bounded sequence $\alpha=(a_{n})\notin l^{2}$
induces a densely defined restriction operator $L_{\alpha}$ with
domain 
\begin{equation}
\left\{ x=(x_{n})\in\mathscr{D}(H)\; s.t.\;\mbox{the boundary condition }\sum_{n\in\mathbb{N}_{0}}a_{n}x_{n}=0\;\mbox{holds}\right\} .\label{eq:bddsq}
\end{equation}
 \end{cor}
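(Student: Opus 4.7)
The plan is to verify directly the three properties implicit in the corollary: (i) the boundary functional $\phi_\alpha(x):=\sum_{n} a_n x_n$ is well defined on $\mathscr{D}(H)$; (ii) $\phi_\alpha$ is continuous in the graph norm of $H$, so that $L_\alpha:=H\big|_{\ker\phi_\alpha\cap\mathscr{D}(H)}$ is a closed Hermitian restriction of the selfadjoint $H$; and (iii) the domain $\mathscr{D}(L_\alpha)=\ker\phi_\alpha\cap\mathscr{D}(H)$ is dense in $\ell^2$. Once these are in hand, Lemma \ref{lem:subsp} applies in its reverse direction to produce the associated closed subspace $\mathfrak{M}\subset\ell^2$ with $\mathfrak{M}\cap\mathscr{D}(H)=\{0\}$, placing $L_\alpha$ inside the general parametrization.

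Step (i) is immediate from Lemma \ref{lem:Hdom}: since $\mathscr{D}(H)\subset\ell^1$ and $\alpha\in\ell^\infty$, one has $\sum|a_n x_n|\le\|\alpha\|_\infty\sum|x_n|<\infty$. For step (ii), I would apply Cauchy--Schwarz with the weight $(1+n^2)^{-1/2}$:
\begin{equation*}
|\phi_\alpha(x)|\le\|\alpha\|_\infty\sum_n|x_n|\le\|\alpha\|_\infty\Bigl(\sum_n\tfrac{1}{1+n^2}\Bigr)^{1/2}\Bigl(\sum_n(1+n^2)|x_n|^2\Bigr)^{1/2}\le C_\alpha\|x\|_H,
\end{equation*}
where $\|x\|_H^2=\|x\|^2+\|Hx\|^2$. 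Hence $\ker\phi_\alpha\cap\mathscr{D}(H)$ is closed in the graph norm, and since $H$ is closed this makes $L_\alpha$ closed; being a restriction of a selfadjoint operator it is automatically Hermitian.

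The substantive step is (iii), and this is where the hypothesis $\alpha\notin\ell^2$ is essential (otherwise $\phi_\alpha$ would extend to a nonzero bounded functional on $\ell^2$ and $\ker\phi_\alpha$ would be a proper closed hyperplane, not dense). I would mimic the construction in Lemma \ref{lem:densedom}. Given a finitely supported $x\in\ell_{\mathrm{fin}}$ with support in $[0,n_0]$, set $A:=\phi_\alpha(x)$; choose $N>n_0$, $M\in\mathbb{N}$, and define
\begin{equation*}
x^{(N,M)}_k=\begin{cases}x_k,&k\le n_0,\\[2pt]-\dfrac{A}{S_{N,M}}\overline{a_k},&N\le k<N+M,\\[2pt]0,&\text{otherwise},\end{cases}
\qquad S_{N,M}:=\sum_{k=N}^{N+M-1}|a_k|^2.
\end{equation*}
A direct computation gives $\phi_\alpha(x^{(N,M)})=A-A\cdot S_{N,M}/S_{N,M}=0$ and $\|x-x^{(N,M)}\|_{\ell^2}^2=|A|^2/S_{N,M}$. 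Because $\alpha\notin\ell^2$, we have $S_{N,M}\to\infty$ as $M\to\infty$ for every $N$; since $x^{(N,M)}\in\ell_{\mathrm{fin}}\subset\mathscr{D}(H)$ and lies in $\ker\phi_\alpha$, it belongs to $\mathscr{D}(L_\alpha)$ and approximates $x$ in $\ell^2$. Density of $\ell_{\mathrm{fin}}$ in $\ell^2$ then finishes the argument.

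The only real obstacle is the last step: finding approximants that simultaneously lie in $\mathscr{D}(H)$ and in the kernel of an $\ell^2$-\emph{unbounded} functional. The tail-absorption trick above is tailored precisely to this, exploiting the divergence of $\sum|a_k|^2$ to make the correcting term small in $\ell^2$ while forcing the boundary condition to hold exactly. Once (i)--(iii) are established, the correspondence in Lemma \ref{lem:subsp} identifies the defect geometry of $L_\alpha$ and completes the proof.
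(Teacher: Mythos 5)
Your proof is correct, and it takes a genuinely different route from the paper's. The paper disposes of this corollary in one line, invoking the abstract machinery of Lemmas \ref{lem:L*-eigenvalues} and \ref{lem:subsp} \emph{mutatis mutandis}: one rewrites the boundary condition as $\sum_n a_n x_n=\left\langle \psi,(H-\xi I)x\right\rangle$ with $\psi_n=\overline{a_n}/(n-\overline{\xi})\in l^2$, takes $\mathfrak{M}=\mathbb{C}\psi$, and observes that $\mathfrak{M}\cap\mathscr{D}(H)=\{0\}$ precisely because $(n\psi_n)\sim(a_n)\notin l^2$; density of the domain then comes for free from the general correspondence. You instead verify everything by hand: graph-norm continuity of $\phi_\alpha$ (which gives closedness of $L_\alpha$, a point the paper leaves implicit), and density via a tail-absorption construction that generalizes the direct proof of Lemma \ref{lem:densedom} from the constant sequence $a_n\equiv 1$ to a general bounded $\alpha$ by weighting the correcting block with $\overline{a_k}$. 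Your computation $\|x-x^{(N,M)}\|^2=|A|^2/S_{N,M}\to 0$ is exactly right and makes completely explicit where $\alpha\notin l^2$ enters, whereas the paper's route hides that divergence inside the abstract condition $\mathfrak{M}\cap\mathscr{D}(H)=\{0\}$. The trade-off is the usual one: your argument is longer but self-contained and constructive (it exhibits the approximating vectors), while the paper's is shorter but presupposes the reader can correctly identify the subspace $\mathfrak{M}$ and the role of the resolvent weight. Both are complete proofs; the only cosmetic remark is that your final appeal to Lemma \ref{lem:subsp} to ``identify the defect geometry'' is not needed for the statement as posed, since the corollary only asserts that $L_\alpha$ is a densely defined restriction.
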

\begin{proof}
One uses the arguments from Lemmas \ref{lem:L*-eigenvalues} and \ref{lem:subsp},
\emph{mutatis mutandis}.\end{proof}
\begin{rem}
There are densely defined $(1,1)$ restrictions for $H$ not accounted
for in Corollary \ref{cor:bddsq}. 

To see this, recall (Lemma \ref{lem:subsp}) that all the $(1,1)$
restrictions $L_{\alpha}$ of $H$ are defined from a boundary condition
\begin{equation}
\sum_{k\in\mathbb{N}_{0}}(1+k)y_{k}x_{k}=0,\label{eq:bdres}
\end{equation}
where $y\in l^{2}\backslash\mathscr{D}(H)$. So that $\alpha=\left(\left(1+k\right)y_{k}\right)_{k}$
as in (\ref{eq:bddsq}). If every one of these $\left(\left(1+k\right)y_{k}\right)_{k}$
were in $l^{\infty}$, (by the uniform boundedness principle) we would
get $\left\{ y\in l^{2}\:\big|\:\left\Vert y\right\Vert _{2}\leq1\right\} $
contained in the Hilbert cube, which is a contradiction. (Life outside
the Hilbert cube.)
\end{rem}

\section{\label{sec:spH}The Spectrum of $H_{\zeta}.$}

In this section we analyze the spectrum of each of the selfadjoint
extensions of the basic Hermitian operator $L$ from section \ref{sec:sbdd}.
Since $L$ has deficiency indices $(1,1)$, it follows from Lemma
2.1 that the selfadjoint extensions are parameterized bijectively
by the circle group $\mathbb{T}=\{z\in\mathbb{C}\:|\:\left|z\right|=1\}$.

While the case of indices $(1,1)$ may seem overly special, we show
in section \ref{sec:Hardy} below that our detailed analysis of the
$(1,1)$ case has direct implication for the general configuration
of deficiency indices $(m,m)$, even including $m=\infty$. 

For the $(1,1)$ case, we have a one-parameter family of selfadjoint
extensions of the initial Hermitian operator $L$. These are indexed
by $\zeta\in\mathbb{T}$, or equivalently by $\mathbb{R}/\mathbb{Z}$
via the rule $\zeta=e(t)=e^{i2\pi t}$, $t\in\mathbb{R}/\mathbb{Z}$. 

Now fix $t\in\mathbb{R}/\mathbb{Z}$, say $-\frac{1}{2}<t\leq\frac{1}{2}$;
let $K:=(1+\pi\coth(\pi))/2$; let $\gamma$ be the Euler's constant;
set 
\begin{equation}
\psi(z):=-\gamma+(z-1)\sum_{k=0}^{\infty}\frac{1}{\left(k+1\right)\left(k+z\right)};\label{eq:psipsi}
\end{equation}
and set
\begin{equation}
G(z):=\Re\{\psi(i)\}-\psi(-z),\quad z\in\mathbb{C}.\label{eq:Gz}
\end{equation}
We will use $\Re$ and $\Im$ to denote real part, and imaginary part,
respectively. The function $\psi$ in (\ref{eq:psipsi}) is the digamma
function; see e.g., \cite{AS92,CSL11}.

For fixed $t$, we then show that the spectrum of the selfadjoint
extension $L_{t}:=H_{\zeta}$, $\zeta=e(t)$, is the set of solutions
$\lambda=\lambda_{n}(t)\in\mathbb{R}$ to the equation 
\begin{equation}
G(\lambda)=K\tan(\pi t).\label{eq:eeqn}
\end{equation}
See Figure \ref{fig:eigen}. We prove in Lemma \ref{lem:psireim}
that $K=\Im(\psi(i))$.
\begin{thm}
\label{thm:eigEqn}Let $H_{\zeta}$ be the selfadjoint extension in
(\ref{eq:ext-1}) with domain $\mathscr{D}(H_{\zeta})$ in (\ref{eq:ext}).
Let $\lambda\in\mathbb{R}$ be an eigenvalue of $H_{\zeta}$ with
the corresponding eigenfunction $x\in\mathscr{D}(H_{\zeta})$, i.e.,
\[
H_{\zeta}x=\lambda x,\: x\in\mathscr{D}(H_{\zeta}).
\]
Then $\lambda$ is a zero of the function 
\begin{equation}
F(\lambda):=\sum_{k=0}^{\infty}\frac{\lambda(1+\zeta)k+i\lambda(1-\zeta)+(1+\zeta)-i(1-\zeta)k}{\left(k-\lambda\right)\left(1+k^{2}\right)}.\label{eq:F-1}
\end{equation}
Conversely, every eigenvalue of $H_{\zeta}$ arises this way. \end{thm}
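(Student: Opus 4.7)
The plan is to exploit the containment $H_\zeta \subset L^*$, which forces every eigenvector of $H_\zeta$ to be an eigenvector of $L^*$ at the same eigenvalue. By Lemma \ref{lem:L*-eigenvalues}, for $\lambda \in \mathbb{C} \setminus \mathbb{N}_0$ the $\lambda$-eigenspace of $L^*$ is one-dimensional and spanned by the explicit sequence $x_k = 1/(k-\lambda)$, $k\geq 0$; so the problem reduces to deciding precisely when this $x$ lies in $\mathscr{D}(H_\zeta)$.

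Using the von Neumann decomposition $\mathscr{D}(H_\zeta) = \mathscr{D}(L) + \mathbb{C}\,v_\zeta$ with $v_\zeta := (1+\zeta)y_3 + i(1-\zeta)y_2$ from (\ref{eq:ext}), membership amounts to finding $a \in \mathbb{C}$ with $x_0 := x - a\,v_\zeta \in \mathscr{D}(L)$; this in turn requires both $x_0 \in \mathscr{D}(H)$ and the boundary condition $\sum_k (x_0)_k = 0$ that defines $\mathscr{D}(L)$ inside $\mathscr{D}(H)$. A direct expansion gives $k x_k = 1 + \lambda/(k-\lambda)$ and $k(v_\zeta)_k = (1+\zeta) + O(1/k)$, so the constant tail of $k(x_k - a\,v_{\zeta,k})$ vanishes precisely when $a = 1/(1+\zeta)$; the remaining $O(1/k)$ terms then lie in $\ell^2$ automatically, pinning $a$ down uniquely.

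With $a = 1/(1+\zeta)$ fixed, the surviving boundary condition $\sum_k (x_0)_k = 0$ is evaluated by placing all terms over the common denominator $(k-\lambda)(1+k^2)$ and multiplying through by $(1+\zeta)$; a routine algebraic simplification identifies the numerator as $\lambda(1+\zeta)k + i\lambda(1-\zeta) + (1+\zeta) - i(1-\zeta)k$, matching exactly the summand in $F(\lambda)$. Hence for real $\lambda \notin \mathbb{N}_0$, the eigenvalue equation is precisely $F(\lambda) = 0$. Conversely, given any such $\lambda$ with $F(\lambda)=0$, the explicit vector $x_k = 1/(k-\lambda)$ together with $a = 1/(1+\zeta)$ exhibits $x \in \mathscr{D}(H_\zeta)$ satisfying $H_\zeta x = \lambda x$.

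The main obstacles are technical rather than conceptual. First, absolute convergence of $\sum_k (x_0)_k$ must be justified, which uses the inclusion $\mathscr{D}(L) \subset \ell^1$ from Lemma \ref{lem:Hdom} together with the $1/k$ cancellation noted above. Second, the degenerate parameter values $\zeta \in \{\pm 1\}$, where $v_\zeta$ collapses onto a multiple of $y_3$ or $y_2$, and the possibility $\lambda \in \mathbb{N}_0$, where the $L^*$-eigenvector is instead the standard basis vector $e_\lambda$ and membership in $\mathscr{D}(H_\zeta)$ requires a direct check, must be treated separately. The identification $H_{-1} = H$ from Remark \ref{rem:H}, together with the fact that the spectrum of $H$ is $\mathbb{N}_0$, provides a useful consistency check on the equation $F(\lambda)=0$ at $\zeta = -1$.
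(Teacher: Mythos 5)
Your argument is correct, and at bottom it performs the same computation as the paper, just entered from the opposite end. The paper writes a general vector of $\mathscr{D}(H_{\zeta})$ in the von Neumann form $x+a\,v_{\zeta}$ with $x\in\mathscr{D}(L)$ and $v_{\zeta}=(1+\zeta)y_{3}+i(1-\zeta)y_{2}$, imposes $H_{\zeta}(x+av_{\zeta})=\lambda(x+av_{\zeta})$ coordinatewise, and solves for $x_{k}$; the resulting sequence is exactly your $x_{0}=x-\tfrac{1}{1+\zeta}v_{\zeta}$ up to normalization, and the defining condition $\sum_{k}x_{k}=0$ of $\mathscr{D}(L)$ is then read off as $F(\lambda)=0$. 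You instead invoke $H_{\zeta}\subset L^{*}$ and Lemma \ref{lem:L*-eigenvalues} to pin the candidate eigenvector down to $\left(1/(k-\lambda)\right)_{k}$ in advance and then test membership in $\mathscr{D}(H_{\zeta})$. This costs you the extra step of determining $a=1/(1+\zeta)$ from the $\ell^{2}$ tail condition (your asymptotics $kx_{k}=1+O(1/k)$ and $k(v_{\zeta})_{k}=(1+\zeta)+O(1/k)$ are right), but it buys an immediate explanation of why each eigenvalue has multiplicity one, and it makes explicit two points the paper's proof passes over silently: the degenerate value $\zeta=-1$, where $1+\zeta=0$ and indeed $F\equiv-2iK$ has no zeros, consistent with $H_{-1}=H$ having spectrum $\mathbb{N}_{0}$ by Remark \ref{rem:H}; and the case $\lambda\in\mathbb{N}_{0}$, where the summand of $F$ in (\ref{eq:F-1}) is formally $0/0$ and a separate check is needed (resolved a posteriori by Theorem \ref{thm:Spectrum-L_t}). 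Your common-denominator simplification reproducing the numerator $\lambda(1+\zeta)k+i\lambda(1-\zeta)+(1+\zeta)-i(1-\zeta)k$ checks out, so both directions of the equivalence go through.
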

\begin{proof}
Suppose $H_{\zeta}x=\lambda x$, i.e., 
\[
Lx+a\left(-(1+\zeta)y_{2}+i(1-\zeta)y_{3}\right)=\lambda x+\lambda a((1+\zeta)y_{3}+i(1-\zeta)y_{2})
\]
in term of coordinates 
\[
kx_{k}+a\frac{-(1+\zeta)+i(1-\zeta)k}{1+k^{2}}=\lambda x_{k}+\lambda a\frac{(1+\zeta)k+i(1-\zeta)}{1+k^{2}}.
\]
Solving for $x_{k}$ we get 
\[
(k-\lambda)x_{k}=a\frac{\lambda\left((1+\zeta)k+i(1-\zeta)\right)+(1+\zeta)-i(1-\zeta)k}{1+k^{2}}
\]
hence 
\[
x_{k}=a\frac{\lambda(1+\zeta)k+i\lambda(1-\zeta)+(1+\zeta)-i(1-\zeta)k}{\left(k-\lambda\right)\left(1+k^{2}\right)}.
\]
Hence $\lambda$ is an eigenvalue for $H_{\zeta}$ iff $F(\lambda)=0,$
where $F(\lambda)$ is given in (\ref{eq:F-1}). \end{proof}
\begin{rem}
Considering now a $2\pi$-periodic interval, and setting $\zeta=\cos(t)+i\sin(t)$
we see that 
\begin{align}
F(\lambda) & =(1+\cos(t))\sum_{k=0}^{\infty}\frac{1+\lambda k}{\left(k-\lambda\right)\left(1+k^{2}\right)}-\sin(t)\sum_{k=0}^{\infty}\frac{1}{1+k^{2}}\nonumber \\
 & +i\left(\left(\cos(t)-1\right)\sum_{k=0}^{\infty}\frac{1}{\left(1+k^{2}\right)}+\sin(t)\sum_{k=0}^{\infty}\frac{1+\lambda k}{\left(k-\lambda\right)\left(1+k^{2}\right)}\right)\label{eq:F}
\end{align}
Note if $\cos(t)=-1$, then $L_{t}=H$ is selfadjoint with $spec(H)=\mathbb{N}_{0}$;
see eq. (\ref{eq:Hsa}).

If $\cos(t)\neq-1$, then $\lambda$ is an eigenvalue iff
\begin{equation}
\sum_{k=0}^{\infty}\frac{1+\lambda k}{\left(k-\lambda\right)\left(1+k^{2}\right)}=\frac{\sin(t)}{1+\cos(t)}\sum_{k=0}^{\infty}\frac{1}{1+k^{2}}\label{eq:eigen}
\end{equation}

\end{rem}

\begin{rem}
\label{rem:K}In fact, 
\[
\sum_{k=1}^{\infty}\frac{\cos(k\, x)}{k^{2}+a^{2}}=\frac{\pi}{2a}\frac{\cosh(a(\pi-x))}{\sinh(a\pi)}-\frac{1}{2a^{2}},\qquad0\leq x\leq2\pi.
\]
Hence 
\begin{equation}
K:=\sum_{k=0}^{\infty}\frac{1}{1+k^{2}}=\frac{\pi}{2}\frac{\cosh(\pi)}{\sinh(\pi)}+\frac{1}{2}=\frac{\pi}{2}\coth(\pi)+\frac{1}{2}\simeq18.708598.\label{eq:conK}
\end{equation}
\end{rem}
\begin{thm}
\label{thm:Spectrum-L_t} If $\cos(t)\neq-1$, then the selfadjoint
operator $L_{t}=H_{e(t)}$ corresponding to $\zeta=e(t)$ via von
Neumann's formula (\ref{eq:vNext}) has pure point spectrum of the
following form 
\begin{equation}
\mbox{spectrum}(L_{t})=\{\lambda_{n}(t)\}_{n\in\mathbb{N}_{0}}\quad\mbox{where}\label{eq:spHt}
\end{equation}
\begin{equation}
\lambda_{0}(t)<0,\;\mbox{and }n-1<\lambda_{n}(t)<n,\:\mbox{ for }n\in\mathbb{N};\label{eq:spHt-1}
\end{equation}
see Figure \ref{fig:eigen}.\end{thm}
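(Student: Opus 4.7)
First I would reduce the spectral problem to a single real scalar equation. By Theorem \ref{thm:eigEqn} combined with Theorem \ref{thm:Spectrum-L_t-discreet}, the spectrum of $L_t=H_{\zeta}$ is discrete with uniform multiplicity one, and consists precisely of the real zeros of $F(\lambda)$ in (\ref{eq:F-1}). When $\cos(t)\neq -1$, setting $F(\lambda)=0$ in the decomposition (\ref{eq:F}) and separating real from imaginary parts yields two conditions on $S(\lambda):=\sum_{k}(1+\lambda k)/((k-\lambda)(1+k^{2}))$, each of which collapses to (\ref{eq:eigen}) via the half-angle identity $\tan(t/2)=\sin(t)/(1+\cos(t))=(1-\cos(t))/\sin(t)$. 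Thus the eigenvalue condition becomes the single real equation $\phi(\lambda)=c(t)$, where $\phi:=S$ and $c(t)$ is a real constant (a multiple of $K$) that sweeps out all of $\mathbb{R}$ as $t$ ranges over the allowed parameter set.

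Next I would rewrite $\phi$ to expose its singularities. The algebraic identity $1+\lambda k=(1+k^{2})+k(\lambda-k)$ collapses the summand:
\[
\phi(\lambda)\;=\;\sum_{k=0}^{\infty}\left(\frac{1}{k-\lambda}-\frac{k}{1+k^{2}}\right).
\]
Each summand is $O(k^{-2})$ for large $k$, so the series converges absolutely and locally uniformly on $\mathbb{C}\setminus\mathbb{N}_{0}$, and $\phi$ extends meromorphically to $\mathbb{C}$ with simple poles exactly at $\mathbb{N}_{0}$. Up to an additive constant, $\phi(\lambda)=-\psi(-\lambda)+\mathrm{const}$ in the notation of (\ref{eq:psipsi}).

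Then I would establish monotonicity and the boundary behavior. Term-by-term differentiation (justified by locally uniform convergence away from $\mathbb{N}_{0}$) gives
\[
\phi'(\lambda)\;=\;\sum_{k=0}^{\infty}\frac{1}{(k-\lambda)^{2}}\;>\;0\qquad(\lambda\in\mathbb{R}\setminus\mathbb{N}_{0}),
\]
so $\phi$ is strictly increasing on every connected component of $\mathbb{R}\setminus\mathbb{N}_{0}$. From the simple pole at $\lambda=n$ one reads $\phi(\lambda)\to+\infty$ as $\lambda\uparrow n$ and $\phi(\lambda)\to-\infty$ as $\lambda\downarrow n$ for each $n\in\mathbb{N}_{0}$; moreover $\phi(\lambda)\to-\infty$ as $\lambda\to-\infty$, either from the digamma identity above or from the estimate $\phi'(\lambda)\asymp 1/|\lambda|$ for $\lambda\to-\infty$, which integrates to a logarithmic divergence. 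The intermediate value theorem then produces, for each real $c(t)$, exactly one solution $\lambda_{n}(t)\in(n-1,n)$ for every $n\in\mathbb{N}$ and exactly one $\lambda_{0}(t)\in(-\infty,0)$, yielding (\ref{eq:spHt})--(\ref{eq:spHt-1}).

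Finally, I would rule out the integers as eigenvalues in this regime: substituting $\lambda=n$ into the coordinate formula for an eigenvector from the proof of Theorem \ref{thm:eigEqn}, the $k=n$-term has vanishing denominator while its numerator equals $(1+n^{2})(1+\zeta)\neq 0$ whenever $\zeta\neq -1$, so no $\ell^{2}$ eigenvector exists at $\lambda=n$; this fixes the open-interval separation in (\ref{eq:spHt-1}). The main obstacle is the $\lambda\to-\infty$ limit, since the series defining $\phi$ cannot be taken to zero term by term while the ``counter-terms'' $-k/(1+k^{2})$ do not themselves sum; this must be handled via the digamma comparison or the integral bound on $\phi'$ indicated above, after which the rest of the argument is routine monotonicity and the intermediate value theorem.
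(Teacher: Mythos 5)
Your proposal is correct and follows essentially the same route as the paper: reduce to the scalar equation $G(\lambda)=\tan(t/2)K$, use $G'(\lambda)=\sum_{k}(k-\lambda)^{-2}>0$ together with the sign of the simple pole at each $k\in\mathbb{N}_{0}$ and the divergence $G(\lambda)\to-\infty$ as $\lambda\to-\infty$, and conclude by the intermediate value theorem that each of $(-\infty,0)$ and $(n,n+1)$, $n\in\mathbb{N}_{0}$, carries exactly one solution. The only (harmless) variations are technical: your partial-fraction rewriting $\frac{1+\lambda k}{(k-\lambda)(1+k^{2})}=\frac{1}{k-\lambda}-\frac{k}{1+k^{2}}$ and the integral bound $G'(\lambda)\asymp|\lambda|^{-1}$ replace the paper's term-isolation plus Monotone Convergence argument for the behavior at $-\infty$, and your explicit exclusion of integer eigenvalues via the eigenvector formula makes precise a point the paper leaves implicit in the pole analysis.
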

\begin{proof}
To begin with, we take a closer look at formula (\ref{eq:eigen}).
Set 
\[
G(\lambda)=\sum_{k=0}^{\infty}\frac{1+\lambda\, k}{\left(k-\lambda\right)\left(1+k^{2}\right)}
\]
and note that ${\displaystyle \frac{\sin\left(t\right)}{1+\cos\left(t\right)}=\tan\left(t/2\right)}$.
We see that (\ref{eq:eigen}) is equivalent to the following equation
\begin{equation}
G(\lambda)=\tan\left(\frac{t}{2}\right)K\label{eq:G}
\end{equation}
(using now a $2\pi$-periodic-interval), where ${\displaystyle K=\sum_{k=0}^{\infty}\frac{1}{1+k^{2}}}$,
see Remark \ref{rem:K} and (\ref{eq:conK}).

To solve (\ref{eq:G}), note that $\frac{d}{d\lambda}G(\lambda)$
may be computed via a differentiation under the $\sum_{k\in\mathbb{N}_{0}}$-
summation. We then get
\begin{equation}
G'(\lambda)=\sum_{k=0}^{\infty}\frac{1}{(k-\lambda)^{2}},\quad\text{ and }G''(\lambda)=\sum_{k=0}^{\infty}\frac{2}{\left(k-\lambda\right)^{3}}.\label{eq:dG}
\end{equation}
In particular, $G'(\lambda)>0$ when $\lambda\notin\mathbb{N}_{0}$
and $G''(\lambda)>0$ when $\lambda<0.$ 

Let $k_{0}\geq0$ be an integer. Write 
\[
G(\lambda)=\frac{1+\lambda\, k_{0}}{\left(k_{0}-\lambda\right)\left(1+k_{0}^{2}\right)}+\sum_{\begin{array}{c}
k=0\\
k\neq k_{0}
\end{array}}^{\infty}\frac{1+\lambda\, k}{\left(k-\lambda\right)\left(1+k^{2}\right)}.
\]
 By the Weierstrass \emph{M}-test, the sum
\[
h(\lambda)=\sum_{\begin{array}{c}
k=0\\
k\neq k_{0}
\end{array}}^{\infty}\frac{1+\lambda\, k}{\left(k-\lambda\right)\left(1+k^{2}\right)}
\]
is absolutely convergent on any compact set not containing $k_{0}.$
In particular, $h(\lambda)$ is bounded on any compact set not containing
$k_{0}.$ On the other hand 
\[
\lim_{\lambda\searrow k_{0}}\frac{1+\lambda\, k_{0}}{\left(k_{0}-\lambda\right)\left(1+k_{0}^{2}\right)}=-\infty\text{ and }\lim_{\lambda\nearrow k_{0}}\frac{1+\lambda\, k_{0}}{\left(k_{0}-\lambda\right)\left(1+k_{0}^{2}\right)}=\infty.
\]

We have verified that the graph of $g$ roughly looks like Figure
\ref{fig:G}.

\begin{figure}
\includegraphics{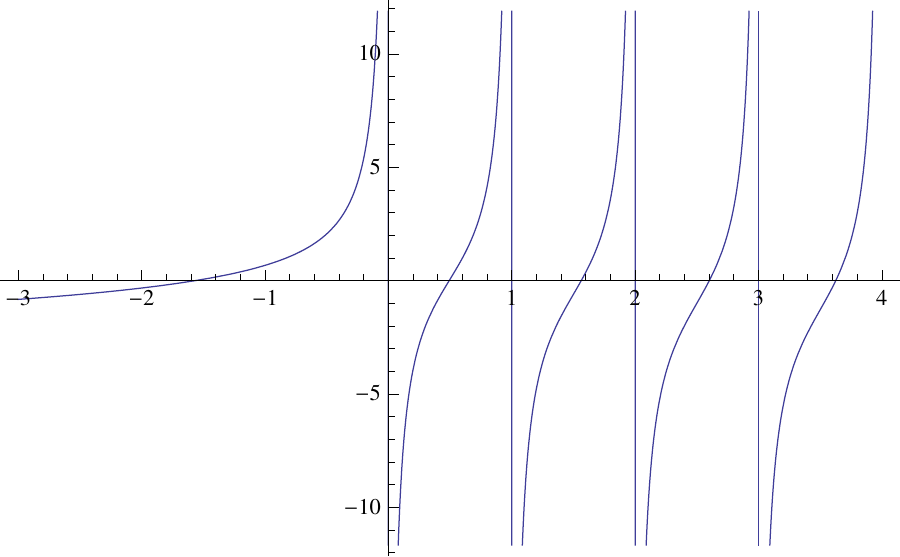}

\caption{\label{fig:G}Graph of $G(\lambda)$, and a root between $-2$ and
$-1$.}
\end{figure}

To finish verifying rigorously that this figure illustrates the behavior
of $G$, it remains to show that there is a negative root; and to
show $G(\lambda)\to-\infty$ as $\lambda\to-\infty.$

To see that there is root between $-1$ and $-2$ we calculate as
follows: 
\begin{align*}
G(-1) & =\sum_{k=0}^{\infty}\frac{1-k}{\left(1+k\right)\left(1+k^{2}\right)}=1-\sum_{k=2}^{\infty}\frac{k-1}{\left(1+k\right)\left(1+k^{2}\right)}\\
G(-2) & =\sum_{k=0}^{\infty}\frac{1-2k}{\left(2+k\right)\left(1+k^{2}\right)}=\frac{1}{2}-\sum_{k=1}^{\infty}\frac{2k-1}{\left(2+k\right)\left(1+k^{2}\right)}
\end{align*}
hence 
\begin{align*}
G(-1) & >1-\sum_{k=2}^{\infty}\frac{k-1}{\left(1+k\right)\left(1+k^{2}\right)}=2-\frac{\pi\coth(\pi)}{2}\approx0.423326\\
G(-2) & <\frac{1}{2}-\left(\frac{1}{3\cdot2}+\frac{3}{4\cdot5}+\frac{5}{5\cdot10}+\frac{7}{6\cdot17}+\frac{9}{7\cdot26}\right)=-\frac{215}{6188}.
\end{align*}
Hence there is a root between $-2$ and $-1$, by the intermediate
value theorem.

Finally, it remains to show that $G(\lambda)\to-\infty$ as $\lambda\to-\infty.$ 

Write 
\begin{align*}
G(-\lambda) & =\sum_{k=0}^{\infty}\frac{1-\lambda\, k}{\left(k+\lambda\right)\left(1+k^{2}\right)}
\end{align*}
 for $\lambda>0.$ Since 
\[
0\leq\sum_{k=0}^{\infty}\frac{1}{\left(k+\lambda\right)\left(1+k^{2}\right)}\leq\sum_{k=0}^{\infty}\frac{1}{k\left(1+k^{2}\right)}<\infty,
\]
 we need to show 
\[
\sum_{k=0}^{\infty}\frac{\lambda\, k}{\left(k+\lambda\right)\left(1+k^{2}\right)}\to\infty
\]
as $\lambda\to\infty.$ Observe that 
\[
\frac{d}{d\lambda}\frac{\lambda\, k}{(k+\lambda)\left(1+k^{2}\right)}=\frac{k^{2}}{(k+\lambda)^{2}\left(1+k^{2}\right)}>0.
\]
Hence by Monotone Convergence Theorem 
\begin{align*}
\lim_{\lambda\to\infty}\sum_{k=0}^{\infty}\frac{\lambda\, k}{\left(k+\lambda\right)\left(1+k^{2}\right)} & =\sum_{k=0}^{\infty}\lim_{\lambda\to\infty}\frac{\lambda\, k}{\left(k+\lambda\right)\left(1+k^{2}\right)}\\
 & =\sum_{k=0}^{\infty}\frac{k}{1+k^{2}}=\infty.
\end{align*}
The convergence is logarithmic as illustrated by Figure \ref{fig:Glog}.

Now fix $t$ such that $\cos(t)\neq-1.$ The intersection points in
(\ref{eq:G}) can be constructed as follows:

The first coordinates of the intersection points of the horizontal
line $y=\tan\left(t/2\right)$ and the curve $y=G(\lambda)$ are points
\begin{equation}
S(t):=\{\lambda_{n}(t)\}_{n\in\mathbb{N}_{0}}.\label{eq:St}
\end{equation}
Each of the intervals $(-\infty,0)$, and $(n,n+1)$, $n=0,1,2,\ldots$
contains precisely one point from $S(t)$. Hence, the numbers in $S(t)$
from (\ref{eq:St}) can be assigned an indexing such that the monotone
order relations in (\ref{eq:spHt-1}) are satisfied.
\end{proof}
\begin{figure}
\includegraphics{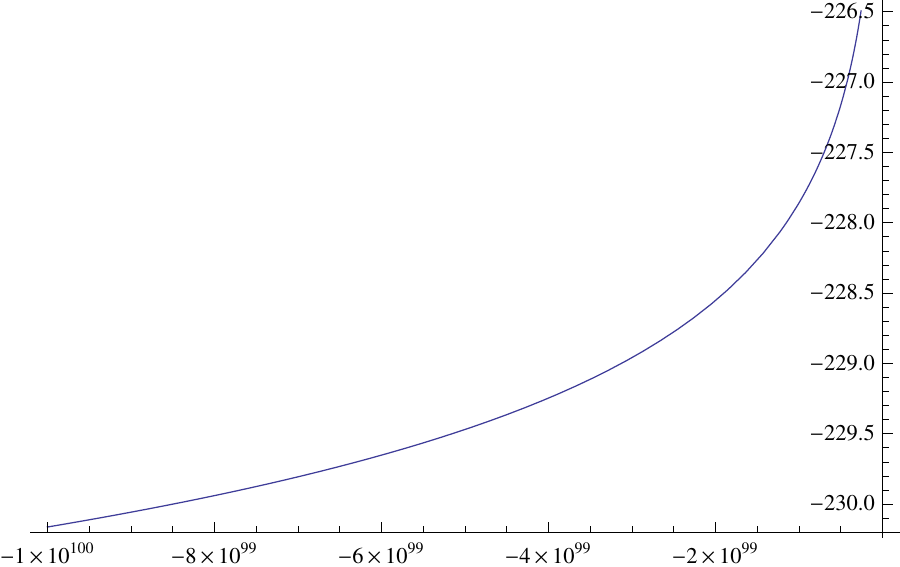}

\caption{\label{fig:Glog}${\displaystyle \lim_{\lambda\rightarrow-\infty}G(\lambda)=-\infty}$}
\end{figure}

\begin{figure}
\includegraphics[scale=0.8]{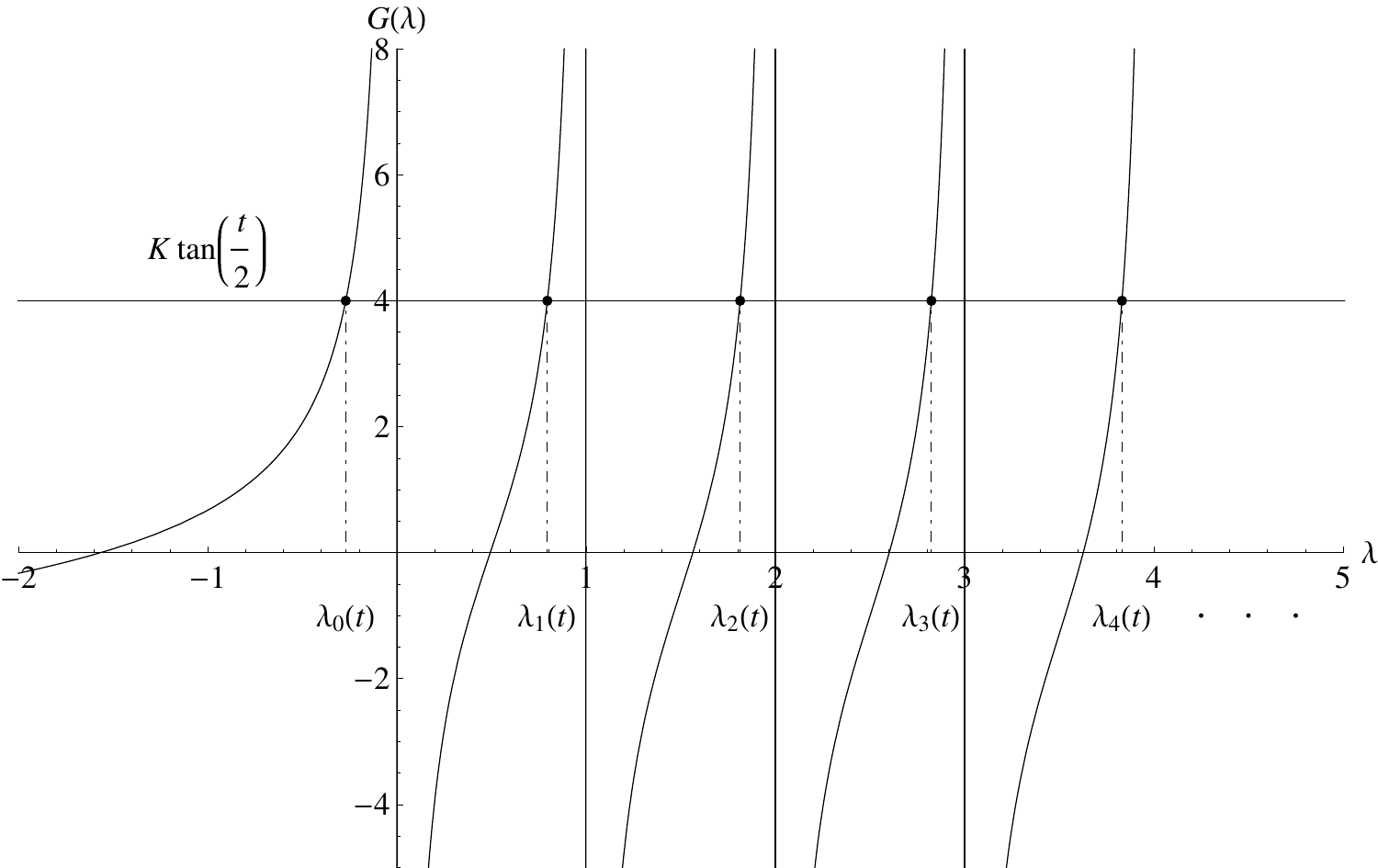}

\caption{\label{fig:eigen}The set $S(t)$ of intersection points.}
\end{figure}

\begin{cor}
\label{cor:bottom}For every $b<0$, there is a unique selfadjoint
extension $H_{b}$ of $L$ such that $b$ is the bottom of the spectrum
of $H_{b}$.
\end{cor}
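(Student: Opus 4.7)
The plan is to read off the corollary directly from Theorem \ref{thm:Spectrum-L_t} by exploiting the monotonicity properties of the function $G$ that were established in its proof, together with the one-parameter parametrization of the selfadjoint extensions of $L$ by $\zeta\in\mathbb{T}$.

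First, I would observe that among the selfadjoint extensions $\{H_\zeta\}_{\zeta\in\mathbb{T}}$ only the unique extension with $\zeta=-1$ (namely $H$ itself, see Remark \ref{rem:H}) has spectrum $\mathbb{N}_0$, and so has nonnegative bottom. Every other extension $L_t=H_{e(t)}$, $\cos(t)\neq-1$, has pure point spectrum $\{\lambda_n(t)\}_{n\in\mathbb{N}_0}$ with $\lambda_0(t)<0<\lambda_1(t)$, and in this case the bottom of the spectrum is exactly $\lambda_0(t)$, the unique negative solution of the eigenvalue equation
\begin{equation*}
G(\lambda)\;=\;K\tan(t/2),
\end{equation*}
where $K=\tfrac{\pi}{2}\coth(\pi)+\tfrac{1}{2}$ as in (\ref{eq:conK}).

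The second step is to verify that the restriction $G\big|_{(-\infty,0)}$ is a continuous bijection onto $\mathbb{R}$. The proof of Theorem \ref{thm:Spectrum-L_t} already supplies the three ingredients: (i) $G'(\lambda)=\sum_{k\ge0}(k-\lambda)^{-2}>0$ on $(-\infty,0)$, so $G$ is strictly increasing there; (ii) $G(\lambda)\to-\infty$ as $\lambda\to-\infty$ was shown by the monotone-convergence argument at the end of the proof of Theorem \ref{thm:Spectrum-L_t}; and (iii) $G(\lambda)\to+\infty$ as $\lambda\nearrow0$, since the $k=0$ term $\frac{1}{-\lambda}$ already blows up to $+\infty$ while the remaining tail is bounded on a neighborhood of $0$ by the Weierstrass $M$-test. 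Continuity of $G$ on $(-\infty,0)$ is immediate from uniform convergence on compact subsets.

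The third step is the explicit inversion. Given $b<0$, the real number $\tau:=G(b)/K$ is well-defined, and since $\tan:(-\pi/2,\pi/2)\to\mathbb{R}$ is a bijection there is a unique $t\in(-\pi,\pi)$ with $\tan(t/2)=\tau$; this $t$ satisfies $\cos(t)\neq-1$. Setting $H_b:=L_t=H_{e(t)}$ and invoking step one gives $\lambda_0(t)=b$, i.e.\ $b$ is the bottom of the spectrum of $H_b$. Uniqueness follows because any selfadjoint extension with bottom $b<0$ must come from some $\zeta=e(t)$ with $\cos(t)\neq-1$, and the two bijections $\lambda_0(\cdot)=G^{-1}(K\tan(\cdot/2))$ and $t\mapsto e(t)$ force $t$, and hence $\zeta$, to be uniquely determined.

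No serious obstacle is anticipated: all the analytic content (strict monotonicity of $G$, the two limits at $0^-$ and $-\infty$, and the parametrization of the extensions) is already in hand from Theorem \ref{thm:Spectrum-L_t} and Lemma \ref{lem:vN def-space}. The only care needed is the bookkeeping to exclude the exceptional value $\zeta=-1$, which accounts for the restriction $b<0$ in the statement.
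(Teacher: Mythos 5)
Your proposal is correct and follows exactly the route the paper intends: the corollary is stated as an immediate consequence of Theorem \ref{thm:Spectrum-L_t}, and your argument simply makes explicit the facts already established there (that $G$ is a strictly increasing continuous bijection of $(-\infty,0)$ onto $\mathbb{R}$, together with the bijective parametrization $t\mapsto H_{e(t)}$ and the exclusion of $\zeta=-1$). Nothing is missing.
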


\begin{cor}
If $\zeta=e(t)\neq-1,$ $\lambda_{n}(t),$ $n=0,1,2,\ldots$ are the
eigenvalues of $L_{t},$ and 
\begin{equation}
y_{n,k}(t)=\frac{\lambda_{n}(t)}{k-\lambda_{n}(t)},\forall k\geq0,\label{eq:ev}
\end{equation}
then $y_{n}(t)=\left(y_{n,k}(t)\right)_{k=0}^{\infty},$ $n=0,1,2,\ldots$
is an orthogonal basis for $\ell^{2}$; in particular,
\[
\sum_{k\in\mathbb{N}_{0}}\frac{1}{\left(k-\lambda_{n}(t)\right)\left(k-\lambda_{m}(t)\right)}=0,\quad n\neq m.
\]
\end{cor}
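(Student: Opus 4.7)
The plan is to derive both statements from the structure already established: identify the eigenvector at $\lambda_n(t)$ as a scalar multiple of $y_n(t)$, then invoke selfadjointness of $H_\zeta$ together with its discrete, simple spectrum to deduce orthogonality, completeness, and the explicit sum.

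First I would identify the eigenvectors. From the proof of Theorem \ref{thm:eigEqn}, the $\mathscr{D}(L)$-component of an eigenvector of $H_\zeta$ with eigenvalue $\lambda$ has coordinates
$$x_k = \frac{a(1+\zeta)(1+\lambda k)}{(k-\lambda)(1+k^2)} - \frac{a\,i(1-\zeta)}{1+k^2}.$$
The full eigenvector $\tilde{x}\in\mathscr{D}(H_\zeta)$ is $\tilde{x} = x + a[(1+\zeta)y_3 + i(1-\zeta)y_2]$, so in coordinates
$$\tilde{x}_k = x_k + a\,\frac{(1+\zeta)k + i(1-\zeta)}{1+k^2}.$$
The two $i(1-\zeta)/(1+k^2)$ terms cancel, and the elementary identity $(1+\lambda k) + k(k-\lambda) = 1+k^2$ collapses the remaining terms to
$$\tilde{x}_k = \frac{a(1+\zeta)}{k-\lambda}.$$
Since $\zeta\ne -1$ and (by Theorem \ref{thm:Spectrum-L_t}) $\lambda_n(t)\ne 0$ for every $n$, multiplying by the scalar $\lambda_n/(a(1+\zeta))$ normalizes $\tilde{x}$ to the sequence $y_{n,k}(t)=\lambda_n(t)/(k-\lambda_n(t))$.

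Next I would establish the basis property. By Lemma \ref{lem:L*-eigenvalues}, every eigenvalue of $L^*$ is simple, so the same is true of its restriction $H_\zeta$. Thus $\mathbb{C}\,y_n(t)$ is the full eigenspace at $\lambda_n(t)$. Since $H_\zeta$ is selfadjoint with real, distinct eigenvalues $\{\lambda_n(t)\}$, eigenvectors at distinct eigenvalues are orthogonal. Completeness is immediate from Theorem \ref{thm:Spectrum-L_t-discreet}: $H_\zeta$ has pure discrete spectrum, and the spectral theorem then guarantees that the eigenvectors span $\ell^2$. Hence $\{y_n(t)\}_{n\ge 0}$ is an orthogonal basis.

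Finally, for the explicit identity, I compute directly using that each $\lambda_n(t)\in\mathbb{R}$:
$$\langle y_n(t), y_m(t)\rangle = \sum_{k\in\mathbb{N}_0}\frac{\lambda_n(t)\,\lambda_m(t)}{(k-\lambda_n(t))(k-\lambda_m(t))} = 0, \quad n\neq m,$$
and dividing by $\lambda_n(t)\lambda_m(t)\ne 0$ yields the stated formula. The only non-routine step is the algebraic collapse of $\tilde{x}_k$ to $a(1+\zeta)/(k-\lambda)$; once that clean closed form is in hand, the remaining assertions are standard consequences of the spectral theorem applied to the selfadjoint operator $H_\zeta$ from Theorems \ref{thm:Spectrum-L_t-discreet} and \ref{thm:Spectrum-L_t}.
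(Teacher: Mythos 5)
Your proof is correct and follows essentially the same route as the paper: discreteness and simplicity of the spectrum (Theorems \ref{thm:Spectrum-L_t-discreet} and \ref{thm:Spectrum-L_t}) together with selfadjointness give the orthogonal basis, and the displayed sum is just the vanishing inner product divided by $\lambda_{n}(t)\lambda_{m}(t)\neq0$. The only difference is cosmetic: you identify the eigenvector as $a(1+\zeta)/(k-\lambda)$ by an algebraic collapse inside the proof of Theorem \ref{thm:eigEqn}, whereas the paper reaches the same form more quickly by noting that $H_{\zeta}\subset L^{*}$, so its eigenvectors are already among the $L^{*}$-eigenvectors $\bigl(y_{0}\overline{\xi}/(k-\overline{\xi})\bigr)_{k}$ computed in the proof of Lemma \ref{lem:L*-eigenvalues}, applied with $\xi=\lambda_{n}(t)$ real and non-integral.
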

\begin{proof}
By Theorem \ref{thm:Spectrum-L_t-discreet} the spectrum of $L_{t}$
is discrete and has multiplicity one. By Theorem \ref{thm:Spectrum-L_t}
no eigenvalue $\lambda_{n}(t)$ is an integer. By, the proof of, Lemma
\ref{lem:L*-eigenvalues} the $y_{n}(t)$ are the eigenvectors for
$L_{t}.$ \end{proof}
\begin{rem}
The norm of $y_{n}(t)$ can be evaluated in terms of gamma functions
\[
\sum_{k=0}^{\infty}\left(\frac{\lambda_{n}(t)}{k-\lambda_{n}(t)}\right)^{2}=\lambda_{n}(t)^{2}\psi'\left(-\lambda_{n}(t)\right)
\]
where $\psi,$ known as the $\psi$ function, is the logarithmic derivative
\[
\psi(t)=\frac{\Gamma'(t)}{\Gamma(t)}
\]
of the gamma function $\Gamma(t)=\int_{0}^{\infty}e^{-x}x^{t-1}dx.$
The derivatives of the $\psi$ function are called polygamma functions
\cite{SB11}.\end{rem}
\begin{thm}
\label{thm:n-asymptotic}Let $t$ be fixed such that $\cos(t)\neq-1.$
Let $\lambda_{n}(t),$ $n\in\mathbb{N}_{0},$ be the eigenvalues of
$L_{t}$ enumerated as in Theorem \ref{thm:Spectrum-L_t}, then 
\[
n-\lambda_{n}(t)\to0
\]
as $n\to\infty.$ \end{thm}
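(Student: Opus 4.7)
My plan is to convert the eigenvalue equation into a closed-form relation for the digamma function $\psi$ and then extract the pole-dominated asymptotics using the recurrence $\psi(z+1) = \psi(z) + 1/z$. Set $\epsilon_n := n - \lambda_n(t)$; Theorem \ref{thm:Spectrum-L_t} guarantees $\epsilon_n \in (0,1)$, so the claim $n - \lambda_n(t) \to 0$ is equivalent to $\epsilon_n \to 0$.

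First, I would use the closed form $G(\lambda) = \Re\psi(i) - \psi(-\lambda)$ from (\ref{eq:Gz}) together with the identification $K = \Im\psi(i)$ (Lemma \ref{lem:psireim}) to rewrite the eigenvalue equation (\ref{eq:G}) at $\lambda = \lambda_n(t)$ as
\begin{equation*}
\psi(-\lambda_n(t)) = C_t, \qquad C_t := \Re\psi(i) - \tan(t/2)\,\Im\psi(i),
\end{equation*}
a finite real constant (finite since $\cos t \neq -1$) independent of $n$.

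Second, I would iterate the recurrence $\psi(z+1) = \psi(z) + 1/z$ exactly $n$ times starting from $z = \epsilon_n - n$, obtaining the telescoping identity
\begin{equation*}
\psi(\epsilon_n - n) = \psi(\epsilon_n) + \sum_{m=1}^{n}\frac{1}{m - \epsilon_n},
\end{equation*}
so the eigenvalue equation becomes
\begin{equation*}
\psi(\epsilon_n) = C_t - \sum_{m=1}^{n}\frac{1}{m - \epsilon_n}.
\end{equation*}

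Finally, I would argue by contradiction: if $\epsilon_n \not\to 0$, pass to a subsequence $(\epsilon_{n_k})$ with $\epsilon_{n_k} \to \epsilon^{*} \in (0, 1]$. When $\epsilon^{*} \in (0,1)$, $\psi(\epsilon_{n_k}) \to \psi(\epsilon^{*})$ stays bounded while $\sum_{m=1}^{n_k} (m - \epsilon_{n_k})^{-1} \geq H_{n_k} \to \infty$, contradicting the displayed identity. When $\epsilon^{*} = 1$, the single term $(1 - \epsilon_{n_k})^{-1}$ already blows up while $\psi(\epsilon_{n_k}) \to \psi(1) = -\gamma$ stays bounded; again a contradiction. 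Hence $\epsilon_n \to 0$, as claimed.

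The main obstacle is purely bookkeeping: the identity (\ref{eq:Gz}) is already granted, the telescoping for $\psi$ is a routine application of the functional equation, and the remaining estimate rests only on the divergence of the harmonic series. Inserting the Laurent expansion $\psi(\epsilon) = -\epsilon^{-1} - \gamma + O(\epsilon)$ near $\epsilon = 0$ in fact sharpens the conclusion to $\epsilon_n^{-1} = H_n + O(1)$, i.e., the logarithmic rate $\epsilon_n \sim 1/\log n$.
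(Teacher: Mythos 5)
Your proof is correct and follows essentially the same route as the paper: the paper derives the difference identity $G(\lambda)-G(\lambda-1)=-\lambda^{-1}$ (which is exactly the digamma recurrence $\psi(z+1)=\psi(z)+1/z$ transported through $G(\lambda)=\Re\{\psi(i)\}-\psi(-\lambda)$), telescopes it down to the fundamental interval, and invokes divergence of the harmonic series, just as you do. Your formulation in terms of $\epsilon_{n}=n-\lambda_{n}(t)$ and the pole of $\psi$ at $0$ is a clean repackaging of the same argument, and your closing remark on the rate $\epsilon_{n}\sim1/\log n$ matches the paper's observation that the convergence is logarithmic.
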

\begin{proof}
The $\psi$ function has the series expansion \cite{AS92}, 
\begin{equation}
\psi(z)=-\gamma+\sum_{k=0}^{\infty}\left(\frac{1}{k+1}-\frac{1}{k+z}\right),\label{eq:psi-series}
\end{equation}
where $\gamma$ is the Euler constant. Consequently, a computation
shows that 
\begin{eqnarray}
G(\lambda) & = & \sum_{k\in\mathbb{N}_{0}}\frac{1+\lambda k}{\left(k-\lambda\right)\left(1+k^{2}\right)}\nonumber \\
 & = & \frac{1}{2}\left(\psi(i)+\psi(-i)-2\psi(-\lambda)\right)\nonumber \\
 & = & \Re\left(\psi(i)\right)-\psi(-\lambda).\label{eq:psipsip}
\end{eqnarray}
Using this identity and (\ref{eq:psi-series}) we find 
\begin{eqnarray}
G(\lambda)-G(\lambda-1) & = & G(\frac{1}{2})-G(-\frac{1}{2})+\int_{\frac{1}{2}}^{\lambda}\frac{d}{dx}\left(G(x)-G(x-1)\right)dx\nonumber \\
 & = & \sum_{k\in\mathbb{N}_{0}}\frac{1+\frac{1}{2}k}{\left(k-\frac{1}{2}\right)\left(1+k^{2}\right)}-\sum_{k\in\mathbb{N}_{0}}\frac{1-\frac{1}{2}k}{\left(k+\frac{1}{2}\right)\left(1+k^{2}\right)}\nonumber \\
 &  & +\int_{\frac{1}{2}}^{\lambda}\left(\sum_{k\in\mathbb{N}_{0}}\frac{1}{\left(k-x\right)^{2}}-\sum_{k\in\mathbb{N}_{0}}\frac{1}{\left(k+1-x\right)^{2}}\right)dx\nonumber \\
 & = & \sum_{k\in\mathbb{N}_{0}}\frac{4}{4k^{2}-1}+\int_{\frac{1}{2}}^{\lambda}\frac{1}{x^{2}}\, dx\nonumber \\
 & = & -2+\left(2-\lambda^{-1}\right)=-\lambda^{-1}.\label{eq:psidiff}
\end{eqnarray}
\textbf{Caution.} Note that in equating the two sides, it is understood
that the common poles on the LHS have been cancelled, so only one
contribution from a pole remains, from the pole at $\lambda=0$. While
cancellation of poles is $\infty-\infty$, nonetheless, our assertion
is precise because we verify agreement of the residues at the poles
that are subtracted.

With this caution, we note that (\ref{eq:psipsip}) and (\ref{eq:psidiff})
extend to $\mathbb{C}$, i.e., that both equations now will be valid
for all complex values of $\lambda$.

Hence, if $\lambda=n+\varepsilon$ with $n\geq0$ and $0<\varepsilon<1,$
then
\begin{align*}
G(\lambda)=G(n+\varepsilon) & =G(\varepsilon)-\frac{1}{1+\varepsilon}-\frac{1}{2+\varepsilon}-\cdots-\frac{1}{n+\varepsilon}\\
 & <G(\varepsilon)-\frac{1}{2}-\frac{1}{3}-\cdots-\frac{1}{n+1}.
\end{align*}
 Since $\sum_{k=2}^{\infty}1/k$ is divergent the result follows. \end{proof}
\begin{rem}
The proof of Theorem \ref{thm:n-asymptotic} shows, the graph of $G(\lambda),$
$n<\lambda<n+1$, is obtained from the graph of $G(\lambda),$ $n-1<\lambda<n,$
by shifting the point $(\lambda-1,G(\lambda-1))$ to the point $\left(\lambda,G(\lambda-1)-\tfrac{1}{\lambda}\right)$. \end{rem}
\begin{lem}
\label{lem:psireim}The function $\mathbb{C}\ni z\mapsto\psi(z)$
in (\ref{eq:psi-series}) in Theorem \ref{thm:n-asymptotic} is meromorphic
with simple poles at $z\in-\mathbb{N}_{0}$, all residues are $-1$;
and $\psi(\cdot)$ is reflection symmetric, i.e., $\psi(\overline{z})=\overline{\psi(z)}$,
$z\in\mathbb{C}$. 

Moreover, for the two functions $\Re\{\psi\}$ and $\Im\{\psi\}$,
we have the following formulas 
\begin{eqnarray}
\Re\{\psi(x+iy)\} & = & -\gamma+\sum_{k\in\mathbb{N}_{0}}\frac{(x-1)(x+k)+y^{2}}{\left(k+1\right)(\left(x+k\right)^{2}+y^{2})}\label{eq:psire}\\
\Im\{\psi(x+iy)\} & = & \sum_{k\in\mathbb{N}_{0}}\frac{y}{\left(x+k\right)^{2}+y^{2}}\label{eq:psiim}
\end{eqnarray}
for all $x+iy\in\mathbb{C}$; where $\gamma$ is the Euler\textendash{}Mascheroni
constant. (Note the occurrence of the kernels of Poisson and of the
Hilbert transform. Indeed, the formula (\ref{eq:psiim}) for $\Im\{\psi(z)\}$
is a sampling of the Poisson kernel for the upper half plane, sampled
on the $x$-axis with points from $\mathbb{N}_{0}$ as sample points.)

In particular the numbers 
\begin{eqnarray}
\Re\{\psi(i)\} & = & -\gamma+\sum_{k\in\mathbb{N}_{0}}\frac{1-k}{\left(k+1\right)\left(k^{2}+1\right)}\thickapprox0.0946503\label{eq:repsii}\\
\Im\{\psi(i)\} & = & \sum_{k\in\mathbb{N}_{0}}\frac{1}{k^{2}+1}=\frac{1}{2}\left(1+\pi\coth(\pi)\right)=K;\label{eq:impsii}
\end{eqnarray}
see (\ref{eq:eeqn}).\end{lem}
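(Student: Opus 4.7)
The plan is to read everything off the series representation (\ref{eq:psi-series}), together with the partial fraction identity
\[
\frac{1}{k+1}-\frac{1}{k+z}=\frac{z-1}{(k+1)(k+z)},
\]
which is exactly the identity linking (\ref{eq:psi-series}) to (\ref{eq:psipsi}). First I would verify that the series
\[
\psi(z)=-\gamma+\sum_{k=0}^{\infty}\left(\frac{1}{k+1}-\frac{1}{k+z}\right)
\]
defines a meromorphic function on $\mathbb{C}$ with the stated pole structure: on any compact set $K\subset\mathbb{C}\setminus(-\mathbb{N}_{0})$ one has $|1/(k+1)-1/(k+z)|=|z-1|/|(k+1)(k+z)|\le C_{K}/k^{2}$ for $k$ sufficiently large, so the Weierstrass $M$-test gives locally uniform convergence. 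At a point $z=-n$ with $n\in\mathbb{N}_{0}$, only the single term $-1/(k+z)$ with $k=n$ fails to be holomorphic, and it contributes a simple pole with residue $-1$; the remaining terms form a locally uniformly convergent series of holomorphic functions. Reflection symmetry $\psi(\bar z)=\overline{\psi(z)}$ is immediate from the series, since each term has real coefficients and $\overline{1/(k+z)}=1/(k+\bar z)$.

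Next I would derive (\ref{eq:psire}) and (\ref{eq:psiim}) by direct computation: writing $z=x+iy$ with $y\neq0$,
\[
\frac{1}{k+1}-\frac{1}{k+z}=\frac{1}{k+1}-\frac{(k+x)-iy}{(k+x)^{2}+y^{2}},
\]
and combining the two real parts over a common denominator gives the numerator
\[
(k+x)^{2}+y^{2}-(k+1)(k+x)=(x-1)(k+x)+y^{2},
\]
which is exactly the summand in (\ref{eq:psire}); the imaginary part collapses immediately to $y/((k+x)^{2}+y^{2})$, yielding (\ref{eq:psiim}). The two series converge absolutely for $y\neq0$ and for $x+iy\notin -\mathbb{N}_{0}$, by the same $k^{-2}$ comparison as above.

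Finally, specializing to $z=i$ (i.e.\ $x=0$, $y=1$) gives
\[
\Re\{\psi(i)\}=-\gamma+\sum_{k\in\mathbb{N}_{0}}\frac{-k+1}{(k+1)(k^{2}+1)},\qquad
\Im\{\psi(i)\}=\sum_{k\in\mathbb{N}_{0}}\frac{1}{k^{2}+1},
\]
and the closed form $\sum_{k\in\mathbb{N}_{0}}(k^{2}+1)^{-1}=\tfrac{1}{2}(1+\pi\coth\pi)=K$ is precisely (\ref{eq:conK}) from Remark \ref{rem:K} (obtained there from the Fourier series $\sum_{k\geq1}\cos(kx)/(k^{2}+a^{2})$ evaluated at $x=0$, $a=1$). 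This gives (\ref{eq:repsii})--(\ref{eq:impsii}) and identifies $\Im\{\psi(i)\}$ with the constant $K$ appearing in (\ref{eq:eeqn}).

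The only substantive point requiring care is the interchange of differentiation and summation implicit in treating (\ref{eq:psi-series}) as a meromorphic function, but this is standard from the uniform $k^{-2}$ majorization. Everything else is bookkeeping. The one ingredient not internal to this lemma is the closed form for $K$, which is imported from Remark \ref{rem:K}; I do not expect any real obstacle.
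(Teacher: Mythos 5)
Your proposal is correct and follows essentially the same route as the paper: everything is read off the series (\ref{eq:psi-series}), the real and imaginary parts are separated by the same common-denominator computation yielding the numerator $(x-1)(x+k)+y^{2}$, and the constants at $z=i$ are obtained by specialization together with the closed form for $K$ from Remark \ref{rem:K}. Your write-up is in fact somewhat more careful than the paper's (explicit $M$-test majorization and isolation of the singular term for the residue, where the paper simply asserts $\lim_{z\to-n}(z+n)\psi(z)=-1$), and the restriction $y\neq0$ in your derivation is harmless since the identities extend to real $z\notin-\mathbb{N}_{0}$ by the same computation.
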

\begin{proof}
The first assertion in the lemma follows from (\ref{eq:psi-series})
and the following reduction:
\begin{equation}
\psi(z)=-\gamma+(z-1)\sum_{k\in\mathbb{N}_{0}}\frac{1}{\left(k+z\right)\left(k+1\right)}.\label{eq:psired}
\end{equation}
Fix $n\in\mathbb{N}_{0}$; then
\[
\lim_{z\rightarrow-n}(z+n)\psi(z)=-1.
\]

From (\ref{eq:psi-series}), we see that
\begin{eqnarray*}
\psi(x+iy) & = & -\gamma+\sum_{k\in\mathbb{N}_{0}}\left(\frac{1}{k+1}-\frac{1}{\left(x+k\right)+iy}\right)\\
 & = & -\gamma+\sum_{k\in\mathbb{N}_{0}}\left(\frac{1}{k+1}-\frac{x+k}{\left(x+k\right)^{2}+y^{2}}\right)+i\sum_{k\in\mathbb{N}_{0}}\frac{y}{\left(x+k\right)^{2}+y^{2}}\\
 & = & -\gamma+\sum_{k\in\mathbb{N}_{0}}\frac{(x-1)(x+k)+y^{2}}{\left(k+1\right)(\left(x+k\right)^{2}+y^{2})}+i\sum_{k\in\mathbb{N}_{0}}\frac{y}{\left(x+k\right)^{2}+y^{2}}.
\end{eqnarray*}
The desired results follow from this.\end{proof}
\begin{rem}
As a consequence of the Lemma \ref{lem:psireim}, we get the following: 

For every 
\begin{equation}
v:=K\tan(\pi t)\in\mathbb{R},\;(\mbox{see }(\ref{eq:eeqn}))\label{eq:vKtan}
\end{equation}
and every $n\in\mathbb{N}$, let 
\[
\mathbb{D}_{v,n}:=\left\{ z\in\mathbb{C}\::\:\left|z-(n-\frac{1}{2})\right|<r_{n}\right\} 
\]
where $r_{n}\in(\frac{1}{2},1)$, and such that $G(z)-v$ has only
one zero in $\mathbb{D}_{v,n}$. Then, by the argument principle,
\begin{eqnarray}
\frac{1}{2\pi i}\oint_{\partial\mathbb{D}_{v,n}}\frac{zG'(z)}{G(z)-v}dz & = & \sum(\mbox{zeros})-\sum(\mbox{poles})\label{eq:cint}\\
 & = & \lambda_{n}(t)-\left(2n-1\right).\label{eq:cintt}
\end{eqnarray}
See Figure \ref{fig:eigen}. Therefore, 
\begin{equation}
\lambda_{n}(t)=(2n-1)+\frac{1}{2\pi i}\oint_{\partial\mathbb{D}_{v,n}}\frac{zG'(z)}{G(z)-v}dz,\quad n\in\mathbb{N}.\label{eq:inttt}
\end{equation}
\bigskip{}

\end{rem}
The proof of Theorem \ref{thm:Spectrum-L_t} shows that for each $n,$
$t\to\lambda_{n}(t)$ is a continuous function $\lambda_{n}:(-\pi,\pi)\to\mathbb{R},$
when $n\geq1$ and $\lambda_{0}:(-\infty,0)\to\mathbb{R},$ such that
\begin{alignat}{1}
\lambda_{n}(t)\nearrow n & \text{ as }t\nearrow\pi\,\,\,\,\,\,\,\,\,\,\text{for all }n\geq0\nonumber \\
\lambda_{n}(t)\searrow n-1 & \text{ as }t\searrow-\pi\,\,\,\,\,\text{for all }n\geq1,\text{ and}\label{Sec-4-eq:Spectral-Limits}\\
\lambda_{0}(t)\searrow-\infty & \text{ as }t\searrow-\pi.\nonumber 
\end{alignat}
The case $t=\pm\pi$ has spectrum $\mathbb{N}_{0}.$ In particular,
\[
\bigcup_{\zeta\in\mathbb{T}}\mathrm{spectrum}\left(H_{\zeta}\right)=\bigcup_{t\in(-\pi,\pi]}\mathrm{spectrum}\left(L_{t}\right)=\mathbb{R}
\]
and the unions are disjoint. 

In the proof of Theorem \ref{thm:n-asymptotic} we saw that the $\lambda_{0}(t)\searrow-\infty\text{ as }t\searrow-\pi$
is logarithmic. The purpose of the next result is to establish rates
of convergence for the remaining limits in (\ref{Sec-4-eq:Spectral-Limits}). 
\begin{thm}
Let $\lambda_{n}(t),$ $n\in\mathbb{N}_{0},$ be the eigenvalues of
$L_{t}$ enumerated as in Theorem \ref{thm:Spectrum-L_t} and let
$\gamma_{0}=2\gamma+\psi(i)+\psi(-i),$ where $\gamma$ is the Euler
constant and $\psi$ is the digamma function, then $\gamma_{0}\approx-1.34373$
and for $-\pi<t<\pi$ we have 
\begin{align*}
\lambda_{0}(t) & \approx-\frac{\pi-t}{2K}-\frac{\gamma_{0}}{2}\left(\frac{\pi-t}{2K}\right)^{2},\text{ when }t\approx\pi\\
\lambda_{n}(t) & \approx n-\frac{\pi-t}{2K}+\left(-\gamma_{0}+2\sum_{k=1}^{n}\frac{1}{k}\right)\left(\frac{\pi-t}{2K}\right)^{2},\text{ when }t\approx\pi\\
\lambda_{n+1}(t) & \approx n+\frac{\pi+t}{2K}+\left(-\gamma_{0}+2\sum_{k=1}^{n}\frac{1}{k}\right)\left(\frac{\pi+t}{2K}\right)^{2},\text{ when \ensuremath{t\approx-\pi}}
\end{align*}
for all $n\geq1.$ Here $K=\tfrac{1}{2}\left(1+\pi\coth(\pi)\right)\approx2.07667$. \end{thm}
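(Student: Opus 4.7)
The plan is to invert the eigenvalue equation $G(\lambda) = K\tan(t/2)$ from Theorem \ref{thm:Spectrum-L_t} in a neighborhood of each endpoint $t = \pm\pi$, using the closed-form identity $G(\lambda) = \Re\psi(i) - \psi(-\lambda)$ recorded in (\ref{eq:psipsip}). Both sides develop simple poles as $t \to \pm\pi$: the right-hand side from $\tan$, and the left-hand side from $\psi(-\lambda)$ as $-\lambda$ approaches a non-positive integer. Matching residues to leading order produces the linear term in $(\pi \mp t)$, while matching the next-to-leading constants produces the quadratic correction.

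Two Laurent expansions are required. First, iterating $\psi(z+1) = \psi(z) + 1/z$ with $\psi(1) = -\gamma$ yields, near each pole $z = -n$,
\begin{equation*}
\psi(z) = -\frac{1}{z+n} + (H_n - \gamma) + O(z+n),
\end{equation*}
where $H_n := 1 + \tfrac{1}{2} + \cdots + \tfrac{1}{n}$ and $H_0 = 0$. Second, setting $s = \pi - t$ and using $\cot(s/2) = 2/s - s/6 + O(s^3)$, one obtains $K\tan(t/2) = K\cot(s/2) = 1/u + O(u)$ with $u := (\pi-t)/(2K)$. The mirror expansion at $t = -\pi$, with $u' := (\pi+t)/(2K)$, gives $K\tan(t/2) = -1/u' + O(u')$.

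I would then execute each case of the theorem. For $\lambda_0 \to 0^-$ as $t \to \pi^-$, set $\varepsilon := -\lambda_0 > 0$; since $\psi(\varepsilon) = -1/\varepsilon - \gamma + O(\varepsilon)$, the eigenvalue equation becomes
\begin{equation*}
\frac{1}{\varepsilon} = K\tan(t/2) - \bigl(\gamma + \Re\psi(i)\bigr) + O(\varepsilon) = \frac{1}{u} - \frac{\gamma_0}{2} + O(u),
\end{equation*}
using $\gamma_0 = 2\gamma + \psi(i) + \psi(-i) = 2\gamma + 2\Re\psi(i)$; iterative inversion in powers of $u$ then yields the expansion of $\lambda_0 = -\varepsilon$. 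For $\lambda_n \to n^-$ with $n \geq 1$ and $t \to \pi^-$, set $v := n - \lambda_n > 0$; the pole expansion $\psi(-n+v) = -1/v + (H_n - \gamma) + O(v)$ gives $1/v = 1/u + (H_n - \gamma_0/2) + O(u) + O(v)$, and iterative inversion determines the $u^2$-coefficient of $\lambda_n = n - v$. The remaining case $\lambda_{n+1} \to n^+$ as $t \to -\pi^+$ is treated mirror-symmetrically: setting $w := \lambda_{n+1} - n > 0$, one uses the other-side expansion $\psi(-n-w) = 1/w + (H_n - \gamma) + O(w)$ paired with $K\tan(t/2) = -1/u' + O(u')$. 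The main obstacle will be the careful bookkeeping of the iterative inversion at second order: both the $O(u)$ tail of $K\tan(t/2)$ (which contributes at order $u^3$) and the $O(v)$ tail of the $\psi$-expansion (which, after back-substitution $v \sim u$, also contributes at $u^3$) must be tracked so that they do not corrupt the $u^2$-coefficient, and the resulting algebraic expression has to be reconciled with the stated normalization $\gamma_0 = 2\gamma + \psi(i) + \psi(-i)$.
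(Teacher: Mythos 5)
Your plan is sound and is, at bottom, the same local-inversion argument the paper uses, just organized differently: the paper passes to $g=1/G$, which is analytic at each integer $n$ precisely because $G$ has a simple pole there, and Taylor-expands the inverse branch $h_{n}=\left(g\mid_{I_{n}}\right)^{-1}$ at $0$, extracting $g'(n)=-1$ and $g''(n)=-\gamma_{0}+2\sum_{k=1}^{n}\tfrac{1}{k}$ from (\ref{eq:gp})--(\ref{eq:gpp}) by matching pole orders; you instead keep $G(\lambda)=\Re\{\psi(i)\}-\psi(-\lambda)$, read off the Laurent constant $H_{n}-\gamma$ from the recurrence $\psi(z+1)=\psi(z)+1/z$, and do a series reversion of $1/v=1/u+(H_{n}-\gamma_{0}/2)+O(u)+O(v)$. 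Both routes are legitimate; yours avoids the inverse-function derivative formulas, and your remark that the $O(u)$ tail of $K\tan(t/2)$ and the $O(v)$ tail of the $\psi$-expansion each enter only at order $u^{3}$ is exactly the bookkeeping that makes the second-order coefficient trustworthy.

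One point you should be told explicitly, since you flag it as the remaining obstacle: the ``reconciliation with the stated normalization'' will not succeed, because your (correct) computation yields the quadratic coefficient $H_{n}-\gamma_{0}/2=\tfrac{1}{2}\left(-\gamma_{0}+2\sum_{k=1}^{n}\tfrac{1}{k}\right)$ for $n\geq1$, i.e.\ half of what the theorem prints. The paper's own proof computes $h_{n}''(0)=-\gamma_{0}+2\sum_{k=1}^{n}\tfrac{1}{k}$ and correctly inserts the Taylor factor $\tfrac{1}{2}$ in the $n=0$ line (giving $-\gamma_{0}/2$), but drops it when writing the $n\geq1$ lines; indeed, specializing the printed $n\geq1$ formula to $n=0$ would give $-\gamma_{0}$ rather than $-\gamma_{0}/2$, which already exposes the inconsistency. (A second, minor slip in the statement: $\gamma_{0}=2\gamma+2\Re\{\psi(i)\}\approx+1.34373$, not $-1.34373$.) So carry your inversion through as planned and report the coefficient $\tfrac{1}{2}\left(-\gamma_{0}+2\sum_{k=1}^{n}\tfrac{1}{k}\right)$; the discrepancy lies in the printed display, not in your method.
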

\begin{proof}
Recall, see Figure \ref{App-fig:G}, when $-\pi<t<\pi,$ the eigenvalues
of $L_{t}$ are the solutions $-\infty<\lambda_{0}(t)<0,$ and for
$n\geq1,$ $n-1<\lambda_{n}(t)<n$ to 
\begin{equation}
G\left(\lambda_{n}(t)\right)=K\tan(t/2)\label{eq:eigenvalue}
\end{equation}
and 

\begin{eqnarray}
G(\lambda)=\sum_{k=0}^{\infty}\frac{1+\lambda k}{\left(k-\lambda\right)\left(1+k^{2}\right)} & = & \frac{1}{2}\left(\psi(i)+\psi(-i)-2\psi(-\lambda)\right)\nonumber \\
 & = & \Re\{\psi(i)\}-\psi(-\lambda).\label{eq:G-psi}
\end{eqnarray}
Here 
\begin{equation}
\psi(z)=-\gamma+\sum_{k=0}^{\infty}\frac{z-1}{(k+1)(k+z)}\label{eq:tmpsi}
\end{equation}
is the digamma function and $\gamma$ is the Euler constant. 

\begin{figure}
\label{App-fig:G}

\includegraphics[width=4in]{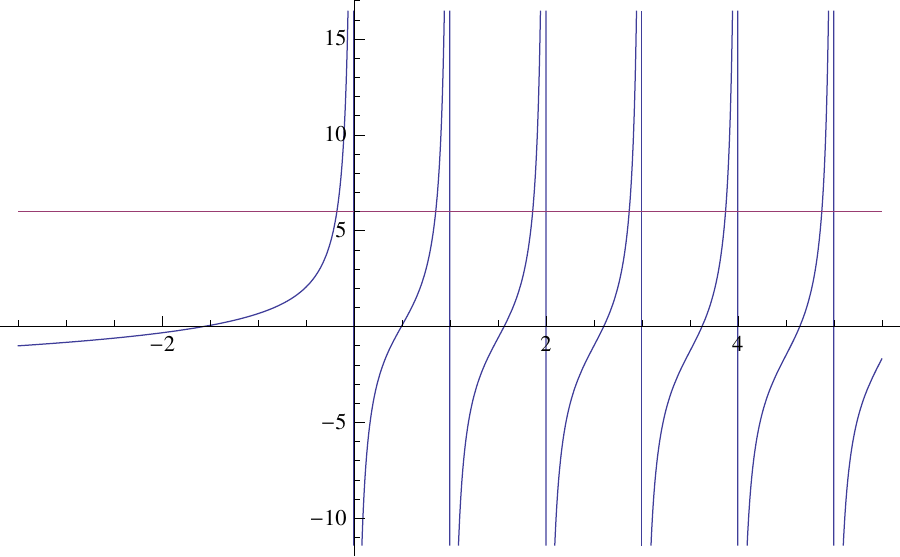}

\caption{$G(\lambda),K\tan(t/2)=6$}
\end{figure}

Since we would like to find the asymptotics of $t\to\lambda_{n}(t)$
at the asymptotes, it is convenient to rewrite (\ref{eq:eigenvalue})
as 
\begin{equation}
g(\lambda)=\cot(t/2)/K,\label{eq:eigenvalue2}
\end{equation}
where $g(\lambda)=1/G(\lambda)$ when $\lambda\notin\mathbb{N}_{0}$
and $g(\lambda)=0$ when $\lambda\in\mathbb{N}_{0}.$ Then $g$ is
analytic in a neighborhood of $n$ for all $n\in\mathbb{N}_{0}.$
Note $\cot(t/2)\to0$ as $t\to\pm\pi.$ Let $I_{n}$ be the open interval
containing $n$ whose endpoints are roots of $G(\lambda)$ and let
\[
h_{n}(x)=\left(g\mid_{I_{n}}\right)^{-1}(x).
\]
For each $n\geq0,$ $h_{n}$ is the inverse of a restriction of $g$
satisfying $h_{n}(0)=n.$ 

By construction of $h_{n}$ we can rewrite (\ref{eq:eigenvalue2})
as 
\[
h_{n}\left(\cot(t/2)/K\right)=\begin{cases}
\lambda_{n+1}(t), & \text{when }0<t<\pi\\
\lambda_{n}(t), & \text{when }-\pi<t<0
\end{cases},
\]
for $n=0,1,2,\ldots$Hence, to investigate the asymptotics of $\lambda_{n}(t)$
as $t\nearrow\pi$ and as $t\searrow-\pi,$ we need to investigate
the asymptotics of $h_{n}\left(\cot(t/2)/K\right)$ as $t\nearrow\pi$
and as $t\searrow-\pi.$ We will do this by writing down a few terms
of the Taylor series at $0$ of $h_{n}$ for each $n.$ 

\begin{figure}
\includegraphics[width=4in]{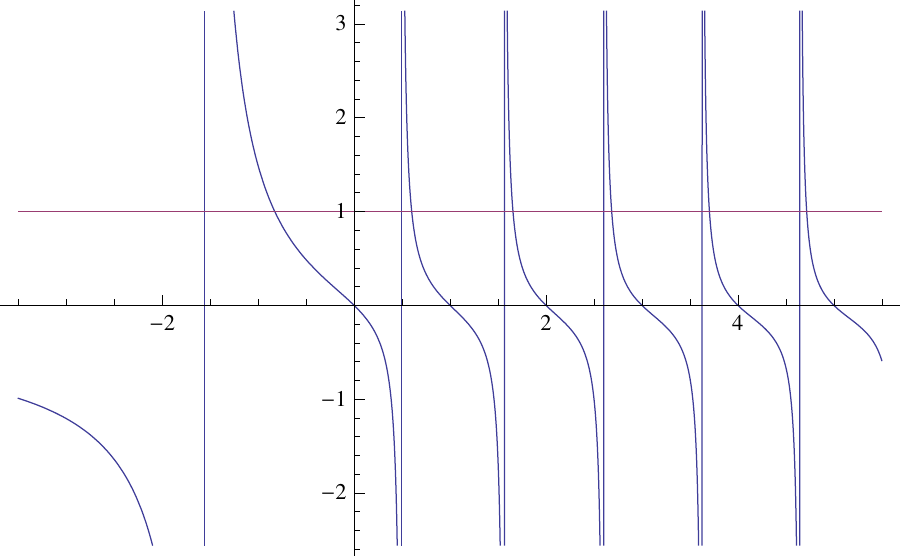}

\caption{$g(\lambda)=1/G(\lambda),\cot(t/2)=K$}
\end{figure}
As noted above $h_{n}(0)=n.$ It is easy to see that 
\begin{align*}
h_{n}'(x) & =\frac{1}{g'\left(h_{n}(x)\right)}\text{ and}\\
h_{n}''(x) & =-\frac{g''\left(h_{n}(x)\right)}{\left(g'\left(h_{n}(x)\right)\right)^{3}}.
\end{align*}
Hence, using that $h_{n}(0)=n,$ we see that to calculate the derivatives
of $h_{n}(x),$ at $x=0,$ it is sufficient to calculate the derivatives
\[
g'(n),g''(n),\ldots.
\]
Using (\ref{eq:G-psi}) and $g(x)=1/G(x),$ we get $g(x)=2/\left(\psi(i)+\psi(-i)-2\psi(-x)\right),$
so
\begin{align}
g'(x) & =\frac{-4\psi'(-x)}{\left(\psi(i)+\psi(-i)-2\psi(-x)\right)^{2}}\nonumber \\
 & =\frac{-\psi'(x)}{\left(\Re\left\{ \psi(i)\right\} -\psi(-x)\right)^{2}},\label{eq:gp}\\
\nonumber 
\end{align}
and
\begin{eqnarray}
g''(x) & = & \frac{4\psi''(-x)\left(\psi(i)+\psi(-i)-2\psi(-x)\right)+16\left(\psi'(-x)\right)^{2}}{\left(\psi(i)+\psi(-i)-2\psi(-x)\right)^{3}}.\nonumber \\
 & = & \frac{\psi''(-x)\left(\Re\{\psi(i)\}-\psi(-x)\right)+2\left(\psi'(-x)\right)^{2}}{\left(\Re\left\{ \psi(i)\right\} -\psi(-x)\right)^{3}}.\label{eq:gpp}
\end{eqnarray}

To calculate $\psi''(n)$ we need some more information of $\psi(x).$
By (\ref{eq:psi-series}) 
\begin{equation}
\psi(-x)=-\gamma+\frac{1}{x}+\sum_{k=1}^{\infty}\left(\frac{1}{k}-\frac{1}{k-x}\right).\label{eq:psi-series-2}
\end{equation}
 Hence
\begin{equation}
\psi'(-x)=\sum_{k=0}^{\infty}\frac{1}{(k-x)^{2}}\text{ and }\psi''(-x)=-2\sum_{k=0}^{\infty}\frac{1}{(k-x)^{3}}.\label{eq:psi-p-pp}
\end{equation}
Plugging (\ref{eq:psi-series-2}) and (\ref{eq:psi-p-pp}) into (\ref{eq:gp})
we see that the numerator and denominator are meromorphic functions
with poles of the same order at $x=n.$ It follows that
\[
g'(n)=-1,\text{ for all }n\geq0.
\]
Similarly, it follows from (\ref{eq:psi-series-2}), (\ref{eq:psi-p-pp}),
and (\ref{eq:gpp}) that 
\begin{align*}
g''(0) & =-2\gamma-\psi(i)-\psi(-i)=-\gamma_{0}\\
g''(n) & =-\gamma_{0}+2\sum_{k=1}^{n}\frac{1}{k},\text{ for all }n\geq1.
\end{align*}
So that $h_{n}'(0)=-1$ for all $n\geq0,$ $h_{0}''(0)=-\gamma_{0},$
and $h''_{n}(0)=-\gamma_{0}+2\sum_{k=1}^{n}\frac{1}{k}.$ Hence, 
\[
\lambda_{0}(t)\approx-\frac{\cot(t/2)}{K}-\frac{\gamma_{0}}{2}\left(\frac{\cot(t/2)}{K}\right)^{2},\text{ when }t\approx\pi
\]
and for all $n\geq1,$
\begin{align*}
\lambda_{n}(t)\approx n-\frac{\cot(t/2)}{K}+\left(-\gamma_{0}+2\sum_{k=1}^{n}\frac{1}{k}\right)\left(\frac{\cot(t/2)}{K}\right)^{2}, & \text{ when \ensuremath{t\approx\pi}}\\
\lambda_{n+1}(t)\approx n-\frac{\cot(t/2)}{K}+\left(-\gamma_{0}+2\sum_{k=1}^{n}\frac{1}{k}\right)\left(\frac{\cot(t/2)}{K}\right)^{2} & \text{ when \ensuremath{t\approx-\pi.}}
\end{align*}
Using 
\begin{align*}
\cot(t/2) & =\frac{\pi-t}{2}+\frac{\left(\pi-t\right)^{3}}{24}+\frac{\left(\pi-t\right)^{7}}{240}+O\left((\pi-t)^{7}\right)\\
 & =-\frac{\pi+t}{2}+\frac{\left(\pi+t\right)^{3}}{24}+\frac{\left(\pi+t\right)^{7}}{240}+O\left((\pi+t)^{7}\right),
\end{align*}
completes the proof.
\end{proof}

\section{\label{sec:qua}Quadratic Forms}

In this section we show that the basic Hermitian operator $L$ from
section \ref{sec:sbdd} has zero as its lower bound, i.e., that no
positive number is a lower bound for $L$.

Let 
\[
Q_{L}(x)=\left\langle x,Lx\right\rangle =\sum_{k\in\mathbb{N}_{0}}k\left|x_{k}\right|^{2},\quad x\in\mathbb{D}_{0}.
\]

\begin{lem}
\label{lem:lbdd}The quadratic form $Q_{L}$ has greatest lower bound
zero.\end{lem}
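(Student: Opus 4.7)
The inequality $Q_{L}(x)\geq 0$ on $\mathbb{D}_{0}$ is immediate from the defining sum $Q_{L}(x)=\sum_{k\in\mathbb{N}_{0}}k|x_{k}|^{2}$, so the greatest lower bound is at least $0$ and the only content of the lemma is the \emph{sharpness}. My plan is therefore to exhibit an explicit sequence $\{x^{(N)}\}\subset\mathbb{D}_{0}$ with $Q_{L}(x^{(N)})/\|x^{(N)}\|^{2}\to 0$. The guiding heuristic is to approximate $e_{0}$ (the eigenvector of $H$ for the bottom of $\mathrm{spec}(H)=\mathbb{N}_{0}$, which is excluded from $\mathbb{D}_{0}$ because $\sum(e_{0})_{k}=1\neq 0$) by restoring the constraint $\sum x_{k}=0$ with a correction on $k\geq 1$ whose $Q_{L}$-cost is as small as possible.

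The correct shape of the correction is dictated by Cauchy--Schwarz: for any $x\in\mathbb{D}_{0}$ supported on $\{0,1,\ldots,N\}$, the identity $x_{0}=-\sum_{k=1}^{N}x_{k}$ combined with
\begin{equation*}
\left|\sum_{k=1}^{N}x_{k}\right|^{2}\leq\left(\sum_{k=1}^{N}k|x_{k}|^{2}\right)\left(\sum_{k=1}^{N}\frac{1}{k}\right)
\end{equation*}
yields $Q_{L}(x)\geq |x_{0}|^{2}/H_{N}$, where $H_{N}:=\sum_{k=1}^{N}1/k$. Equality is attained exactly when $x_{k}\propto 1/k$ for $1\leq k\leq N$. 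Accordingly, I would set
\begin{equation*}
x^{(N)}_{0}:=1,\qquad x^{(N)}_{k}:=-\frac{1}{H_{N}\,k}\ \text{for }1\leq k\leq N,\qquad x^{(N)}_{k}:=0\ \text{for }k>N.
\end{equation*}
The finite support places $x^{(N)}$ in $\mathscr{D}(H)$, and $\sum_{k}x^{(N)}_{k}=1-H_{N}/H_{N}=0$ verifies $x^{(N)}\in\mathbb{D}_{0}=\mathscr{D}(L)$.

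A short calculation then closes the argument:
\begin{equation*}
Q_{L}(x^{(N)})=\sum_{k=1}^{N}k\cdot\frac{1}{H_{N}^{2}\,k^{2}}=\frac{1}{H_{N}},\qquad \|x^{(N)}\|^{2}=1+\frac{1}{H_{N}^{2}}\sum_{k=1}^{N}\frac{1}{k^{2}}\leq 1+\frac{\pi^{2}}{6H_{N}^{2}}.
\end{equation*}
Since $H_{N}\to\infty$, the ratio $Q_{L}(x^{(N)})/\|x^{(N)}\|^{2}\to 0$, forcing the greatest lower bound of $Q_{L}$ on $\mathbb{D}_{0}$ to be exactly $0$.

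The main obstacle is selecting the right \emph{shape} of the test sequence. Naive attempts --- $e_{0}-e_{1}$, uniformly distributing the mass $\sum x_{k}=0$ across $\{1,\ldots,N\}$, or using sign-alternating blocks in a fixed range --- all produce ratios that are bounded below (typically by $1/2$). It is precisely the harmonic weighting $x_{k}\propto 1/k$, which saturates Cauchy--Schwarz, that converts the divergence of $H_{N}$ into the vanishing of $Q_{L}(x^{(N)})/\|x^{(N)}\|^{2}$. Once this is recognized, the rest is a line of arithmetic.
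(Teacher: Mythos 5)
Your proof is correct and uses essentially the same construction as the paper: your test vectors $x^{(N)}$ are exactly the paper's vectors $x^{(n)}$ (with $x_0^{(n)}=-s_n$, $x_j^{(n)}=1/j$ for $1\leq j\leq n$) rescaled by $-1/H_N$, which leaves the Rayleigh quotient unchanged, and both arguments conclude from the divergence of the harmonic sum. Your added Cauchy--Schwarz remark showing the harmonic weights are optimal is a nice touch but not needed for the lemma.
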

\begin{proof}
Clearly, $Q_{L}(x)\geq0$ for all $x\in\mathscr{D}(L)$. What is the
largest constant $c$ such that 
\[
Q_{L}(x)\geq c\left\langle x,x\right\rangle ,\quad x\in\mathscr{D}(L)\:?
\]
Clearly, $c\geq0.$ 

Next we will minimize 
\[
\sum_{k\in\mathbb{N}_{0}}k\left|x_{k}\right|^{2}
\]
subject to the constraints
\begin{equation}
\sum_{k\in\mathbb{N}_{0}}\left|x_{k}\right|^{2}=1\text{ and }\sum_{k\in\mathbb{N}_{0}}x_{k}=0.\label{eq:Constraints}
\end{equation}
Indeed, we prove $\mbox{GLB}(L)=0$. 

To do this, set ${\displaystyle s_{n}=\sum_{i=1}^{n}\frac{1}{i}}$,
$n=1,2,.\dots$, and 
\[
x_{j}^{(n)}=\begin{cases}
-s_{n} & \mbox{if }j=0\\
\frac{1}{j} & \mbox{if }1\leq j\leq n\\
0 & \mbox{if }j>n.
\end{cases}
\]
Note $(x^{(n)})\in\mathbb{D}_{0}=\mathscr{D}(L)$, for all $n$. Then
\begin{alignat*}{1}
\frac{Q_{L}(x^{(n)})}{\left\Vert x^{(n)}\right\Vert _{2}^{2}} & =\frac{\sum_{i=1}^{n}i\left(x_{i}^{(n)}\right)^{2}}{\left(\sum_{i=1}^{n}x_{i}^{(n)}\right)^{2}+\sum_{i=1}^{n}\left(x_{i}^{(n)}\right)^{2}}\\
 & =\frac{\sum_{i=1}^{n}\frac{1}{i}}{\left(\sum_{i=1}^{n}\frac{1}{i}\right)^{2}+\sum_{i=1}^{n}\frac{1}{i^{2}}}\\
 & =\frac{s_{n}}{\left(s_{n}\right)^{2}+\sum_{i=1}^{n}\frac{1}{i^{2}}}\\
 & \sim\frac{1}{s_{n}}\rightarrow0.
\end{alignat*}
Recall that 
\begin{alignat*}{1}
\lim_{n\rightarrow\infty}\sum_{i=1}^{n}\frac{1}{i} & =\lim_{n\rightarrow\infty}s_{n}=\infty\;\mbox{and}\\
\lim_{n\rightarrow\infty}\sum_{i=1}^{n}\frac{1}{i^{2}} & =\frac{\pi^{2}}{6}.
\end{alignat*}

\end{proof}

\subsection{Lower Bounds for Restrictions}

The result from Lemma \ref{lem:lbdd} in the present section, while
dealing with an example, illustrates a more general question: Consider
a selfadjoint operator $H$ in a Hilbert space $\mathscr{H}$ having
its spectrum $spec(H)$ contained in the halfline $[0,\infty)$, and
with $0\in spec(H)$. Then every densely defined Hermitian restriction
$L$ of $H$ will define a quadratic form $Q_{L}$ also having $0$
as a lower bound. But, in general, the greatest lower bound (GLB)
for such a restriction may well be strictly positive. 

The particular restriction $L$ in Lemma \ref{lem:lbdd} does have
$\mbox{GLB}(Q_{L})=0$; and this coincidence of lower bounds will
persist for a general family of cases to be considered in section
\ref{sec:Hardy}. 

Nonetheless, as we show below, there are other related semibounded
selfadjoint operators $H$ with $0$ in the bottom of $spec(H)$,
and having densely defined restrictions $L$ such that $\mbox{GLB}(Q_{L})$
is strictly positive. We now outline such a class of examples. The
deficiency indices will be $(2,2)$.

We begin by specifying the selfadjoint operator $H$ and then identifying
its restriction $L$. We do this by identifying $\mathscr{D}(L)$
as a subspace in $\mathscr{D}(H)$, still with $\mathscr{D}(L)$ dense
in the ambient Hilbert space $\mathscr{H}$. But to analyze the operators
we will have occasion to switch between two different ONBs. To do
this, it will be convenient to realize vectors in $\mathscr{H}$ in
cosine and sine-Fourier bases for $L^{2}(0,\pi)$. In this form, our
operators may be specified as $-(d/dx)^{2}$ with suitable boundary
conditions. The reason for the minus-sign is to make the operators
semibounded.

But establishing the stated bounds is subtle, and inside the arguments,
we will need to alternate between the two Fourier bases in $L^{2}(0,\pi)$.
Indeed, inside the proof we switch between the two ONBs. This allows
us to prove $\mbox{GLB}(Q_{L})=1$, i.e., establishing the best lower
bound for the restriction $L$; strictly larger than the bound for
$H$.

Hence vectors in our Hilbert space $\mathscr{H}$ will have equivalent
presentations both the form of $l^{2}$ (square-summable sequences,
one of each of the two orthogonal bases) and of $L^{2}(0,\pi)$. To
get a cosine-Fourier representation for a function $f\in L^{2}(0,\pi)$,
make an even extension $F_{ev}$ of $f$, i.e., extending $f$ to
$(-\pi,\pi)$, then make a cosine-Fourier series for $F_{ev}$, and
restrict it back to $(0,\pi)$. To get a sine-Fourier series for $f$,
do the same but now using instead an odd extension $F_{odd}$ of $f$
to $(-\pi,\pi)$.

Let $\mathscr{H}^{(s)}:=l^{2}(\mathbb{N}_{0})$, with the standard
basis
\begin{equation}
e_{k}=(0,\ldots,0,\underset{k^{th}}{1},0,\ldots),\quad k\in\mathbb{N}_{0}.\label{eq:stdonb}
\end{equation}
Let $H$ be the selfadjoint operator in $\mathscr{H}^{(s)}$ specified
by 
\begin{eqnarray}
\mathscr{D}^{(s)}(H) & = & \left\{ \alpha=(a_{n})\in l^{2}\:\big|\:\left(n^{2}a_{n}\right)\in l^{2},\; n\in\mathbb{N}_{0}\right\} \label{eq:DomH}\\
\left(H\alpha\right)_{n} & = & n^{2}a_{n},\;\forall\alpha\in\mathscr{D}(H).\label{eq:defH}
\end{eqnarray}
In particular, $He_{0}=0$, and so $\inf\{spec(H)\}=0$. 

Consider the restriction 
\begin{equation}
L\subset H\label{eq:LsubH}
\end{equation}
with domain

\begin{equation}
\mathscr{D}_{1}^{(s)}:=\left\{ \left(a_{n}\right)\in\mathscr{D}(H)\:\Big|\:\sum_{n=0}^{\infty}a_{2n}=\sum_{n=0}^{\infty}a_{2n+1}=0\right\} \label{eq:domLc}
\end{equation}

Let $\mathscr{H}^{(c)}:=L^{2}(0,\pi)$. Using Fourier series, there
is a natural isometric isomorphism 
\begin{equation}
\mathscr{H}^{(s)}\simeq\mathscr{H}^{(c)},\label{eq:Fisoiso}
\end{equation}
corresponding to the two orthogonal bases in $L^{2}(0,\pi)$. Recall
that, for all $f\in L^{2}(0,\pi)$, 
\begin{eqnarray}
f(x) & = & \sum_{n=0}^{\infty}a_{n}\cos(nx)\label{eq:cext}\\
 & = & \sum_{n=1}^{\infty}b_{n}\sin(nx).\label{eq:sext}
\end{eqnarray}
Note that the two right-hand sides in (\ref{eq:cext}) and (\ref{eq:sext})
correspond to even and odd $2\pi$-periodic extensions of $f(x)$. 
\begin{rem}
In (\ref{eq:domLc}), $e_{0}\in\mathscr{D}(H)\backslash\mathscr{D}_{1}^{(s)}$.
To see this, note that while the constant function $f_{0}\equiv1/\sqrt{\pi}$
on $(0,\pi)$ has $e_{0}$ as its cos-representation via (\ref{eq:cext}),
its sin-representation $(b_{n})$ via (\ref{eq:sext}) is as follows:
\[
b_{n}=\begin{cases}
\sqrt{\frac{3}{\pi}}\frac{1}{1+2k} & \mbox{ if }n=1+2k,\:\mbox{odd, and}\\
0 & \mbox{ if }n=2k,\:\mbox{even}.
\end{cases}
\]
\end{rem}
\begin{lem}
\label{lem:Lapbdd}Let $f\in L^{2}(0,\pi)$, and assume that $f'$
and $f''\in L^{2}(0,\pi)$; then the combined boundary conditions
\begin{equation}
f=f'=0\quad\mbox{at the two endpoints }x=0\mbox{ and }x=\pi\label{eq:cbdend}
\end{equation}
take the form 
\begin{equation}
\sum_{n=0}^{\infty}a_{2n}=\sum_{n=0}^{\infty}a_{1+2n}=0\label{eq:cbd}
\end{equation}
using the cos-representation (\ref{eq:cext}), while in the sin-representation
(\ref{eq:sext}) for $f$, the same conditions (\ref{eq:cbdend})
take the equivalent form:
\begin{equation}
\sum_{n=1}^{\infty}n\, b_{2n}=\sum_{n=0}^{\infty}(1+2n)b_{1+2n}=0.\label{eq:cbds}
\end{equation}
\end{lem}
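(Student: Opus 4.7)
The plan is to exploit the fact that each of the two Fourier representations automatically encodes two of the four scalar boundary conditions $f(0)=f(\pi)=f'(0)=f'(\pi)=0$, so that only the remaining two conditions appear as explicit scalar constraints on the coefficients. I would proceed in three steps: a regularity/convergence step, and then the cosine and sine cases separately.

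First, since $f,f',f''\in L^{2}(0,\pi)$, one-dimensional Sobolev embedding $H^{2}(0,\pi)\hookrightarrow C^{1}([0,\pi])$ makes the four scalars $f(0),f(\pi),f'(0),f'(\pi)$ well-defined, so the boundary conditions have classical pointwise meaning. When these conditions actually hold, the even extension $F_{ev}$ appearing in (\ref{eq:cext}) and the odd extension $F_{odd}$ appearing in (\ref{eq:sext}) both extend to elements of $H^{2}$ on the torus $\mathbb{T}=[-\pi,\pi]/{\sim}$, because the matching of $f$ and $f'$ at the two endpoints is then compatible with even, respectively odd, symmetry. Hence $(n^{2}a_{n}),(n^{2}b_{n})\in\ell^{2}$, and a Cauchy--Schwarz pairing with $(1/n^{2})$ gives $(a_{n}),(na_{n}),(b_{n}),(nb_{n})\in\ell^{1}$, providing uniform absolute convergence of each Fourier series for $f$ together with its formal termwise-differentiated series for $f'$.

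In the cosine case, termwise evaluation at the endpoints yields $f(0)=\sum_{n\geq 0}a_{n}$ and $f(\pi)=\sum_{n\geq 0}(-1)^{n}a_{n}$, while $f'(x)=-\sum_{n\geq 1}na_{n}\sin(nx)$ satisfies $f'(0)=f'(\pi)=0$ \emph{automatically}, since $\sin(n\cdot 0)=\sin(n\pi)=0$. Hence the four-component boundary condition collapses to the two scalar equations $\sum_{n}a_{n}=0$ and $\sum_{n}(-1)^{n}a_{n}=0$; adding and subtracting and splitting on the parity of $n$ yields exactly (\ref{eq:cbd}). In the sine case the roles of $f$ and $f'$ are swapped: $\sin(n\cdot 0)=\sin(n\pi)=0$ makes $f(0)=f(\pi)=0$ automatic, while the termwise-differentiated series $f'(x)=\sum_{n\geq 1}nb_{n}\cos(nx)$ gives $f'(0)=\sum_{n\geq 1}nb_{n}$ and $f'(\pi)=\sum_{n\geq 1}(-1)^{n}nb_{n}$. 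Adding and subtracting, then separating even $n=2k$ from odd $n=2k+1$, produces $\sum_{k\geq 1}2k\,b_{2k}=0$ (equivalently $\sum_{n\geq 1}n\,b_{2n}=0$) and $\sum_{k\geq 0}(2k+1)b_{2k+1}=0$, which is precisely (\ref{eq:cbds}). The converse implications in each representation are read off from the same termwise identities in reverse.

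The main, but mild, obstacle is the justification of termwise endpoint evaluation, which I handle through the $H^{2}(\mathbb{T})$ regularity of the appropriate periodic extension, producing coefficient decay sufficient for uniform absolute convergence of both the series for $f$ and the termwise-differentiated series for $f'$. Beyond that step, the derivation reduces to a straightforward parity decomposition of the two scalar equations at $x=0$ and $x=\pi$.
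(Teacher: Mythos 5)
Your argument follows the same route as the paper's proof: substitute the two representations (\ref{eq:cext}) and (\ref{eq:sext}) into the endpoint conditions, observe that each representation automatically encodes two of the four scalar conditions, and split the two surviving equations by parity. The parity bookkeeping is correct, and your convergence justification (Sobolev embedding for the traces, $H^{2}$ regularity of the even/odd periodic extensions under (\ref{eq:cbdend}), hence $(na_{n}),(nb_{n})\in\ell^{1}$ and uniform convergence of the termwise-differentiated series) is more careful than the paper's one-line ``direct substitution'' and makes the implication (\ref{eq:cbdend}) $\Rightarrow$ (\ref{eq:cbd}), (\ref{eq:cbds}) fully rigorous.

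The one genuine gap is your closing sentence, that the converse implications are ``read off from the same termwise identities in reverse.'' The termwise identity $f'(0)=-\sum n a_{n}\sin(n\cdot 0)=0$ is only an identity for the actual boundary value $f'(0)$ once you know the even extension lies in $H^{2}(\mathbb{T})$, and that is equivalent to $f'(0)=f'(\pi)=0$, the very thing the converse must produce; so the argument is circular there. Indeed, for a general $f\in H^{2}(0,\pi)$ the implication (\ref{eq:cbd}) $\Rightarrow$ (\ref{eq:cbdend}) fails: $f(x)=\sin x$ has cosine coefficients $a_{0}=2/\pi$, $a_{2k}=-4/(\pi(4k^{2}-1))$, $a_{2k+1}=0$, so both sums in (\ref{eq:cbd}) vanish, yet $f'(0)=1\neq0$. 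The equivalence is restored exactly when one assumes from the outset that $(n^{2}a_{n})\in\ell^{2}$, i.e.\ that $f$ already lies in the Neumann domain $\mathscr{D}(H)$ (and, on the sine side, that $(n^{2}b_{n})\in\ell^{2}$, without which the sums in (\ref{eq:cbds}) need not even converge). This is how the lemma is actually used in Definition \ref{def:domL} and Lemma \ref{lem:Lapres}, and the paper's own proof is equally silent on the point; but as written your converse step would fail, and the extra hypothesis should be made explicit.
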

\begin{proof}
Direct substitution of (\ref{eq:cext}) into (\ref{eq:cbdend}) yields
\[
\sum_{n\in\mathbb{N}_{0}}a_{n}=\sum_{n\in\mathbb{N}_{0}}(-1)^{n}a_{n}=0
\]
which simplifies to (\ref{eq:cbd}). If instead we substitute (\ref{eq:sext})
into (\ref{eq:cbdend}), we get
\[
\sum_{n\in\mathbb{N}}n\, b_{n}=\sum_{n\in\mathbb{N}}(-1)^{n}n\, b_{n}=0
\]
which simplifies into (\ref{eq:cbds}).\end{proof}
\begin{defn}
\label{def:domL}We set $\mathscr{D}\subset L^{2}(0,\pi)$ to be the
dense subspace given by any one of the three equivalent systems of
conditions (\ref{eq:cbdend}), (\ref{eq:cbd}), and (\ref{eq:cbds}).\end{defn}
\begin{lem}
\label{lem:Lapres}Let $\mathscr{H}=L^{2}(0,\pi)$ be as above, and
let $\mathscr{D}$ be the dense subspace specified in Definition \ref{def:domL}.
We set
\begin{equation}
H=-\left(\frac{d}{dx}\right)^{2}\bigg|_{\left\{ f\:\big|\: f=\sum_{n\in\mathbb{N}_{0}}a_{n}\cos(nx),\:(n^{2}a_{n})\in l^{2}\right\} }\label{eq:lap}
\end{equation}
i.e., restriction. (This is the Neumann-operator.) 

It follows that $H$ is selfadjoint with spectrum $spec(H)=\left\{ n^{2}\:\big|\: n\in\mathbb{N}_{0}\right\} $,
and we set
\begin{equation}
L:=H\big|_{\mathscr{D}}.\label{eq:sublap}
\end{equation}
Then $L$ is a densely defined restriction of $H$ and its deficiency
indices are $(2,2)$. \end{lem}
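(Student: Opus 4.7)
The plan is to verify, in sequence: (i) self-adjointness of the Neumann operator $H$ with the stated spectrum, (ii) density of $\mathscr{D}$ in $L^{2}(0,\pi)$ together with the Hermitian property of $L$, and (iii) that the deficiency indices of $L$ are $(2,2)$.

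For (i), the normalized cosines $\{1/\sqrt{\pi}\}\cup\{\sqrt{2/\pi}\cos(nx)\}_{n\geq 1}$ form an orthonormal basis of $L^{2}(0,\pi)$, and on this basis $H$ is diagonal with eigenvalues $n^{2}$. Passing to cosine-Fourier coefficients gives a unitary equivalence between $H$ and the multiplication operator $(a_{n})\mapsto(n^{2}a_{n})$ on $\ell^{2}(\mathbb{N}_{0})$, which is manifestly self-adjoint with spectrum $\{n^{2}:n\in\mathbb{N}_{0}\}$.

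For (ii), $\mathscr{D}$ is dense because $C_{c}^{\infty}(0,\pi)\subset\mathscr{D}$: every test function supported strictly inside $(0,\pi)$ satisfies the boundary conditions (\ref{eq:cbdend}) trivially and has rapidly decaying cosine-Fourier coefficients, so it lies in $\mathscr{D}(H)$. The Hermitian property follows from two integrations by parts: for $f,g\in\mathscr{D}$,
\[
\langle Lf,g\rangle-\langle f,Lg\rangle=\bigl[f\,\overline{g'}-f'\,\overline{g}\bigr]_{0}^{\pi},
\]
and the boundary bracket vanishes because $f,g$ and $f',g'$ all vanish at $x=0$ and $x=\pi$ by (\ref{eq:cbdend}).

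For (iii), I would first identify $L^{*}$ with the maximal Laplacian $L_{\max}u=-u''$ on $\{u\in L^{2}(0,\pi):u''\in L^{2}(0,\pi)\}$, derivative taken in the distributional sense. The inclusion $L_{\max}\subseteq L^{*}$ is the integration-by-parts computation above, which requires no boundary conditions on $u$ since $f,f'$ vanish at $0,\pi$. For the reverse inclusion, $L$ extends the minimal operator $-d^{2}/dx^{2}$ defined on $C_{c}^{\infty}(0,\pi)$, whose adjoint is well known to coincide with $L_{\max}$; since passing to adjoints reverses inclusions, $L^{*}\subseteq L_{\max}$. Granted this identification, the deficiency spaces
\[
\mathscr{D}_{\pm}=\ker(L^{*}\mp iI)=\{u\in L^{2}(0,\pi):-u''=\pm i\,u\}
\]
are the solution sets of a constant-coefficient linear second-order ODE on the compact interval $[0,\pi]$. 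Every such solution is smooth and therefore lies in $L^{2}(0,\pi)$, and the general solution is spanned by $e^{\pm\alpha x}$ with $\alpha^{2}=\mp i$. Thus $\dim\mathscr{D}_{\pm}=2$ for each sign, and the deficiency indices of $L$ are $(2,2)$.

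The main technical point is the identification of $L^{*}$ with the maximal operator; this is the standard distributional-regularity step, which I would cite from the references (Dunford--Schwartz, Grubb) rather than reprove, since the real content of the lemma is the index count, which is immediate once $L^{*}$ is known.
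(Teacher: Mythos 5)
Your proof is correct, but it follows a genuinely different route from the paper's. The paper disposes of the lemma in essentially two lines by invoking its general restriction machinery (Lemmas \ref{lem:subsp} and \ref{lem:semibdd}): the domain $\mathscr{D}$ is carved out of the Neumann domain $\mathscr{D}(H)$ by the two independent linear conditions (\ref{eq:cbd}), so the associated subspace $\mathfrak{M}$ is two-dimensional and the indices are $(2,2)$ --- no concrete identification of $L^{*}$ is needed. You instead take the classical Sturm--Liouville route: identify $L^{*}$ with the maximal operator (via $L_{\min}\subseteq L$ on one side and Green's identity on the other) and count solutions of $-u''=\pm iu$ on the bounded interval. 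Both arguments are sound. Yours is self-contained, produces the deficiency vectors explicitly ($e^{\pm\alpha x}$ with $\alpha^{2}=\mp i$), and identifies $\overline{L}$ with the closure of the minimal operator; what it does not buy is generality, since it hinges on the adjoint of this real-variable differential operator again being a differential operator --- precisely the feature the paper stresses \emph{fails} in the Hardy-space setting of section \ref{sub:LadH}, which is why the codimension count is the argument that transfers there. Two minor points to tighten: the boundary bracket in your Green's identity should be written with the paper's convention (inner product linear in the second slot), though this does not affect its vanishing; and the step $u,u''\in L^{2}(0,\pi)\Rightarrow u\in H^{2}(0,\pi)$, needed so that $u$ and $u'$ have traces at the endpoints in the inclusion $L_{\max}\subseteq L^{*}$, deserves the one-line remark that $u''\in L^{1}(0,\pi)$ forces $u'$ to be absolutely continuous up to the boundary.
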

\begin{proof}
See Lemma \ref{lem:semibdd} and the discussion above. Note that $\mathscr{D}$
arises from $\mathscr{D}(H)$ by imposition of the two additional
linear conditions (\ref{eq:cbd}). Hence the deficiency indices are
$(2,2)$.

Under (\ref{eq:Fisoiso}), we have the following correspondence:
\begin{eqnarray*}
\mathscr{D}^{(s)} & \longleftrightarrow & \mathscr{D}^{(c)}:=\left\{ f,f''\in\mathscr{H}^{(c)}\right\} ;\\
\mathscr{D}_{1}^{(s)} & \longleftrightarrow & \mathscr{D}_{1}^{(c)}:=\left\{ f\in\mathscr{H}^{(c)}\:\big|\: f(0)=f(\pi)=0\right\} \,\,\,\,\,\,\,\,\,(\mbox{Dirichlet})\\
\mathscr{D}_{2}^{(s)} & \longleftrightarrow & \mathscr{D}_{2}^{(c)}:=\left\{ f\in\mathscr{H}^{(c)}\:\big|\: f'(0)=f'(\pi)=0\right\} \,\,\,\,\,\,(\mbox{Neumann})
\end{eqnarray*}
Moreover, 
\[
(H,\mathscr{D}^{(s)})\longleftrightarrow\left(-\left(d/dx\right)^{2},\mathscr{D}^{(c)}\right)\quad\mbox{selfadjoint}
\]
and for the restriction operator, 
\[
(L,\mathscr{D})\longleftrightarrow(-\left(d/dx\right)^{2},\mathscr{D})
\]
where $\mathscr{D}$ is the dense subspace in Definition \ref{def:domL}. \end{proof}
\begin{lem}
Let $(L,\mathscr{D})$ be the restriction of $H$ to $\mathscr{D}$.
Then 
\begin{equation}
\inf\left\{ \frac{\left\langle \alpha,L\alpha\right\rangle _{2}}{\left\Vert \alpha\right\Vert _{2}^{2}}\:\Big|\:\alpha\in\mathscr{D}\right\} =1.\label{eq:GLBLsubH}
\end{equation}
\end{lem}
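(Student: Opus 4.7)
The plan is to prove the two inequalities $\geq 1$ and $\leq 1$ separately, using the sine-Fourier representation throughout. The sine representation is natural because every $\alpha\in\mathscr{D}$ satisfies Dirichlet conditions $f(0)=f(\pi)=0$, so the odd extension is continuous across the two endpoints of $(-\pi,\pi)$, and integration-by-parts is clean.

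\textbf{Lower bound.} Let $f\in\mathscr{D}$ and write $f(x)=\sum_{n\geq1}b_{n}\sin(nx)$ in the sense of (\ref{eq:sext}). Because $f\in\mathscr{D}\subset\mathscr{D}(H)$ the function is smooth enough that termwise differentiation is justified; combined with the Dirichlet conditions $f(0)=f(\pi)=0$, integration by parts yields
\[
\left\langle \alpha,L\alpha\right\rangle _{2}=\int_{0}^{\pi}|f'(x)|^{2}\,dx.
\]
Parseval then gives $\|f\|_{2}^{2}=\tfrac{\pi}{2}\sum_{n}|b_{n}|^{2}$ and $\|f'\|_{2}^{2}=\tfrac{\pi}{2}\sum_{n}n^{2}|b_{n}|^{2}$, so
\[
\frac{\left\langle \alpha,L\alpha\right\rangle _{2}}{\|\alpha\|_{2}^{2}}=\frac{\sum_{n\geq1}n^{2}|b_{n}|^{2}}{\sum_{n\geq1}|b_{n}|^{2}}\geq1,
\]
with equality only if $b_{n}=0$ for $n\geq2$, i.e.\ $f(x)=b_{1}\sin(x)$. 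But the additional Neumann condition $f'(0)=0$ would then force $b_{1}=0$, so $f\equiv0$. Hence $1$ is a strict lower bound and is not attained.

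\textbf{Upper bound via a minimizing sequence.} To show the infimum is exactly $1$, I will construct $f_{N}\in\mathscr{D}$ with Rayleigh quotient tending to $1$. Use only odd sines (which automatically give $f_{N}(0)=f_{N}(\pi)=0$), so the only remaining constraint from Lemma \ref{lem:Lapbdd} is $\sum_{j\geq0}(2j+1)b_{2j+1}=0$. Set
\[
f_{N}(x):=\sin(x)-\frac{1}{N}\sum_{j=1}^{N}\frac{\sin((2j+1)x)}{2j+1}.
\]
The coefficients are $b_{1}=1$, $b_{2j+1}=-\frac{1}{N(2j+1)}$ for $1\leq j\leq N$, and $b_{n}=0$ otherwise; one checks $1+\sum_{j=1}^{N}(2j+1)\cdot\bigl(-\tfrac{1}{N(2j+1)}\bigr)=1-1=0$, so the boundary conditions (\ref{eq:cbd})/(\ref{eq:cbds}) hold and $f_{N}\in\mathscr{D}$. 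A direct Parseval computation gives
\[
\|f_{N}\|_{2}^{2}=\tfrac{\pi}{2}\Bigl(1+\tfrac{1}{N^{2}}\sum_{j=1}^{N}\tfrac{1}{(2j+1)^{2}}\Bigr),\qquad\langle f_{N},Lf_{N}\rangle_{2}=\tfrac{\pi}{2}\Bigl(1+\tfrac{1}{N^{2}}\sum_{j=1}^{N}1\Bigr)=\tfrac{\pi}{2}\Bigl(1+\tfrac{1}{N}\Bigr),
\]
and the ratio tends to $1$ as $N\to\infty$. Combined with the lower bound, this gives (\ref{eq:GLBLsubH}).

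\textbf{Where the work is.} The lower bound is essentially the classical Dirichlet Poincaré inequality on $(0,\pi)$ and requires no cleverness. The substantive part is the construction of $f_{N}$: one has to simultaneously satisfy the Dirichlet condition at both endpoints (which the sine-only ansatz handles for free), and the Neumann condition at both endpoints (which reduces to the single linear identity $\sum(2j+1)b_{2j+1}=0$ because only odd sines are used). Once these algebraic constraints are arranged, the calculation that the Neumann ``correction'' costs $O(1/N)$ in the numerator and $O(1/N^{2})$ in the denominator is routine. The only subtlety worth flagging is that the minimum is \emph{not} attained; the value $1$ lies below the point spectrum of the selfadjoint extensions of $L$ accessible here, and this is precisely the gap that drives $\mathrm{GLB}(Q_{L})=1>0=\inf\operatorname{spec}(H)$.
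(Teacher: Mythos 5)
Your proof is correct and, for the lower bound, follows the same route as the paper: pass to the sine-Fourier representation, where the Dirichlet endpoint conditions make the quadratic form equal to $\sum_{n\geq1}n^{2}\left|b_{n}\right|^{2}$, which is compared with $\sum_{n\geq1}\left|b_{n}\right|^{2}$ by Parseval (the normalization constants $\pi/2$ cancel in the Rayleigh quotient). The one genuine addition is your explicit minimizing sequence $f_{N}$: the paper's proof only establishes the inequality $\left\langle \alpha,L\alpha\right\rangle \geq\left\Vert \alpha\right\Vert ^{2}$ and then asserts that the result follows, so your verification that $f_{N}\in\mathscr{D}$ (the odd-sine ansatz gives $f_{N}(0)=f_{N}(\pi)=0$ for free, and the coefficients are rigged so that $\sum_{j}(2j+1)b_{2j+1}=0$, i.e.\ condition (\ref{eq:cbds}) holds) together with the computation that the quotient is $\left(1+1/N\right)\big/\left(1+O(1/N^{2})\right)\rightarrow1$ supplies the half of the equality that the paper leaves implicit.
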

\begin{proof}
For all $\alpha=(a_{n})\in\mathscr{D}^{(s)}$, let 
\[
f_{\alpha}(x):=\sum_{n=0}^{\infty}a_{n}\cos(nx)=\sum_{n=1}^{\infty}b_{n}\sin(nx)
\]
where we have switched in $\mathscr{H}^{(c)}=L^{2}(0,\pi)$ from the
cosine basis to sine basis, $\left(a_{n}\right)\mapsto\left(b_{n}\right)$.
Then, for all $\alpha\in\mathscr{D}$, we have: 
\begin{eqnarray}
\left\langle \alpha,L\alpha\right\rangle _{l^{2}} & = & \left\langle f_{\alpha},-\left(d/dx\right)^{2}f_{\alpha}\right\rangle _{\mathscr{H}^{(c)}}\nonumber \\
 & = & \sum_{n=1}^{\infty}n^{2}\left|b_{n}\right|^{2}\geq\sum_{n=1}^{\infty}\left|b_{n}\right|^{2}=\left\Vert \alpha\right\Vert _{l^{2}}^{2}.\label{eq:lbdd}
\end{eqnarray}
The desired result follows.
\end{proof}

\begin{rem}
Note that (\ref{eq:lbdd}) may be restated as a Poincaré inequality
\cite{AB09,YL11} as follows:

Let $f\in L^{2}(0,\pi)$ be such that $f'$ and $f''$ are in $L^{2}$,
and $f=f'=0$ at the endpoints $x=0$, and $x=\pi$; then
\begin{equation}
\int_{0}^{\pi}\left|f'(x)\right|^{2}dx\geq\int_{0}^{\pi}\left|f(x)\right|^{2}dx.\label{eq:pineq}
\end{equation}

\end{rem}

\section{\label{sec:Hardy}The Hardy space $\mathscr{H}_{2}$. }

It is well known that there are two mirror-image versions of the basic
$l^{2}$-Hilbert space $l^{2}(\mathbb{N}_{0})$ of one-sided square-summable
sequences: On one side of the mirror we have plain $l^{2}(\mathbb{N}_{0})$,
the discrete version; and on the other, there is the Hardy space $\mathscr{H}_{2}$
of functions $f(z)$, analytic in the open disk $\mathbb{D}$ in the
complex plane, and represented with coefficients from $l^{2}(\mathbb{N}_{0})$.
(See (\ref{eq:f(z)})-(\ref{eq:H2norm}) below.) By \textquotedblleft{}mirror-image\textquotedblright{}
we are here referring to a familiar unitary equivalence between the
two sides, see \cite{Rud87}. This other viewpoint, involving complex
power series, further makes useful connections to special function
theory. In this connection, the following monographs \cite{AS92,EMOT81}
are especially relevant to our discussion below. 

Introducing the analytic version $\mathscr{H}_{2}$ further allows
us to bring to bear on our problem powerful tools from reproducing
kernel theory, from harmonic analysis and analytic function theory
(see e.g., \cite{Rud87}). The reproducing kernel for $\mathscr{H}_{2}$
is the familiar Szegö kernel. This then further allows us to assign
a geometric meaning to our boundary value problems, formulated initially
in the language of von Neumann deficiency spaces. In the context of
geometric measure theory, the reproducing kernel approach was used
in \cite{DJ11} in a related but different context.

Under the natural isometric isomorphism of $l^{2}$ onto $\mathscr{H}_{2}$,
the domain $\mathscr{D}(H)$ in $l^{2}(\mathbb{N}_{0})$ is mapped
into a subalgebra $\mathscr{A}(\mathscr{H}_{2})$ in $\mathscr{H}_{2}$,
a Banach algebra, consisting of functions on the complex disk having
continuous extensions to the closure of the disk $\overline{\mathbb{D}}$,
and with absolutely convergent power series. 
\begin{lem}
\label{lem:isoH2}Under the isomorphism $l^{2}\simeq\mathscr{H}_{2}$,
the selfadjoint operator $H$ becomes ${\displaystyle z\frac{d}{dz}}$,
and the domain of its restriction $L$ consists of continuous functions
$f$ on $\overline{\mathbb{D}}$, analytic in $\mathbb{D}$, $f\in\mathscr{A}(\mathscr{H}_{2})$,
such that $f(1)=0$.
\end{lem}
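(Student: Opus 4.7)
The plan is to set up the natural unitary $\Phi:\ell^{2}(\mathbb{N}_0)\to\mathscr{H}_2$ given by $\Phi((a_k))(z)=\sum_{k=0}^{\infty}a_k z^k$, which is isometric by definition of the Hardy norm $\|f\|_{\mathscr{H}_2}^2=\sum|a_k|^2$. Under this map the diagonal operator $(a_k)\mapsto(k a_k)$ with domain $\mathscr{D}(H)=\{(a_k)\in\ell^2:(k a_k)\in\ell^2\}$ is transformed to the operator $f\mapsto\sum_{k\geq 0}k a_k z^k$ acting on the corresponding subspace of $\mathscr{H}_2$. I would then observe that for such $(a_k)$ the term-by-term differentiated series $\sum k a_k z^{k-1}$ converges uniformly on compact subsets of $\mathbb{D}$ (its coefficients are in $\ell^2$, hence bounded), so $\Phi((a_k))$ is analytic in $\mathbb{D}$ with $f'(z)=\sum k a_k z^{k-1}$, whence $zf'(z)=\sum k a_k z^k=\Phi(H(a_k))$. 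This identifies $\Phi H\Phi^{-1}=z\tfrac{d}{dz}$ on $\Phi(\mathscr{D}(H))$.

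Next I would handle the restriction $L$, whose domain in $\ell^2$ is $\mathbb{D}_0=\{(a_k)\in\mathscr{D}(H):\sum a_k=0\}$, see (\ref{eq:domL}). The key point is that Lemma \ref{lem:Hdom} already gives $\mathscr{D}(H)\subset\ell^{1}$, so for any $(a_k)\in\mathscr{D}(H)$ the Weierstrass $M$-test applied to $|a_k z^k|\leq|a_k|$ shows that $\sum a_k z^k$ converges absolutely and uniformly on the closed disk $\overline{\mathbb{D}}$. Hence $f=\Phi((a_k))$ has a continuous extension to $\overline{\mathbb{D}}$ with absolutely convergent power series, i.e.\ $f\in\mathscr{A}(\mathscr{H}_2)$, and in particular the boundary value at $z=1$ is
\begin{equation}
f(1)=\sum_{k=0}^{\infty}a_{k},
\end{equation}
the series converging in the ordinary sense. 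Therefore the single linear constraint $\sum a_k=0$ defining $\mathbb{D}_0$ translates exactly into the boundary condition $f(1)=0$.

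Putting the two steps together, $\Phi$ sends $\mathscr{D}(L)=\mathbb{D}_0$ bijectively onto $\{f\in\Phi(\mathscr{D}(H))\subset\mathscr{A}(\mathscr{H}_2):f(1)=0\}$, with $\Phi L\Phi^{-1}$ acting as $z\frac{d}{dz}$, which is the claim. There is no substantive obstacle; the only step that requires a little care is the passage from the $\ell^2$ summability of $(k a_k)$ to the $\ell^1$ summability of $(a_k)$ needed for continuity on $\overline{\mathbb{D}}$ and for the pointwise evaluation $f(1)=\sum a_k$, and that passage is exactly the content of Lemma \ref{lem:Hdom} (Cauchy–Schwarz against $\sum 1/k^2$). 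Once this is invoked, the rest is just reading off what the isomorphism $\Phi$ does to $H$ and to the codimension-one constraint defining $L$.
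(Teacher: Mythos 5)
Your proposal is correct and follows essentially the same route as the paper, which establishes this statement through Lemmas \ref{lem:H2} and \ref{lem:Hdef}: the key ingredient in both is the containment $\mathscr{D}(H)\subset l^{1}(\mathbb{N}_{0})$ from Lemma \ref{lem:Hdom}, giving absolute convergence of the power series on $\overline{\mathbb{D}}$, hence continuity up to the boundary and the identification of the constraint $\sum x_{k}=0$ with $\tilde{f}(1)=0$. Your write-up is somewhat more explicit than the paper's (which disposes of the boundary condition with ``Immediate''), but there is no difference in substance.
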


\subsection{\label{sub:HdomH}Domain  Analysis}

There is a natural isometric isomorphism $l^{2}(\mathbb{N}_{0})\simeq\mathscr{H}_{2}$
where $\mathscr{H}_{2}$ is the Hardy space of all analytic functions
$f$ on $\mathbb{D}:=\{z\in\mathbb{C}\:\big|\:\left|z\right|<1\}$,
with coefficients $(x_{k})\in l^{2}(\mathbb{N}_{0})$, 
\begin{equation}
f(z)=\sum_{k=\mathbb{N}_{0}}x_{k}z^{k},\label{eq:f(z)}
\end{equation}
and where
\begin{equation}
\left\Vert f\right\Vert _{\mathscr{H}_{2}}^{2}=\sup_{r<1}\left\{ \int_{0}^{1}\left|f(r\, e(t))\right|^{2}dt\right\} ;\label{eq:H2norm}
\end{equation}
see \cite{Rud87}. (In (\ref{eq:H2norm}), we used $e(t):=e^{i2\pi t}$,
$t\in\mathbb{R}$.)
\begin{lem}
\label{lem:H2}Under the unitary isomorphism $l^{2}(\mathbb{N}_{0})\simeq\mathscr{H}_{2}:x=(x_{k})\mapsto f(z)=\sum_{k\in\mathbb{N}_{0}}x_{k}z^{k}$,
the selfadjoint operator $H:x\mapsto(k\, x_{k})$ becomes $f\mapsto z\frac{d}{dz}$,
and
\begin{equation}
\mathscr{D}(H)\rightarrow\mathscr{A}(\mathscr{H}_{2})\label{eq:H2}
\end{equation}
where $\mathscr{A}(\mathscr{H}_{2})$ is the Banach algebra of functions
$f\in\mathscr{H}_{2}$ with continuous extension $\tilde{f}$ to $\overline{\mathbb{D}}$. 

The unitary one-parameter group $\{U(t)\}_{t\in\mathbb{R}}$ generated
by $H=z\frac{d}{dz}$ in $\mathscr{H}_{2}$ is 
\begin{equation}
\left(U(t)f\right)(z)=f\left(e(t)z\right),\label{eq:UH2sg}
\end{equation}
for all $f\in\mathscr{H}_{2}$, $t\in\mathbb{R}$, and all $z\in\mathbb{D}$
(= the disk) where $e(t)=e^{i2\pi t}$.\end{lem}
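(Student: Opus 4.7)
The proof decomposes into three essentially independent checks: (i) the operator identification $H \leftrightarrow z\frac{d}{dz}$, (ii) the domain inclusion $\mathscr{D}(H) \hookrightarrow \mathscr{A}(\mathscr{H}_2)$, and (iii) the formula for the one-parameter group. Each step is computational; the only real subtlety is reconciling the $2\pi$ factor in $e(t) = e^{i2\pi t}$.

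For (i), take $f(z) = \sum_{k\in\mathbb{N}_0} x_k z^k$ with $(x_k)\in\ell^2$. Since the power series converges uniformly on compact subsets of $\mathbb{D}$, termwise differentiation is legitimate, giving $f'(z) = \sum_{k\ge 1} k x_k z^{k-1}$ and hence $z f'(z) = \sum_{k\in\mathbb{N}_0} k x_k z^k$. Under the isomorphism, this right-hand side is exactly the image of $Hx = (k x_k)$; moreover $z f'(z)\in\mathscr{H}_2$ precisely when $(k x_k)\in\ell^2$, i.e., when $x\in\mathscr{D}(H)$. For (ii), I invoke Lemma \ref{lem:Hdom}, which already proved $\mathscr{D}(H) \subset \ell^1$. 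For $x \in \ell^1$, the Weierstrass $M$-test with $M_k = |x_k|$ yields absolute and uniform convergence of $\sum x_k z^k$ on $\overline{\mathbb{D}}$, so the corresponding $f \in \mathscr{H}_2$ admits a continuous extension $\tilde f$ to $\overline{\mathbb{D}}$. Hence $f\in\mathscr{A}(\mathscr{H}_2)$ (the image in fact lands in the Wiener subalgebra of the disk algebra, which is a sub-Banach algebra of $A(\overline{\mathbb{D}})$).

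For (iii), define $(U(t)f)(z) := f(e(t) z)$ directly on $\mathscr{H}_2$. Isometry follows from (\ref{eq:H2norm}) by the substitution $s \mapsto s+t$ together with the $1$-periodicity of $s\mapsto e(s)$; the group law $U(s+t) = U(s) U(t)$ is immediate from the multiplicative property $e(s+t)=e(s)e(t)$. Strong continuity reduces, via a density/$\varepsilon$-$3$ argument, to the monomials $z^k$, for which $U(t) z^k = e(tk) z^k \to z^k$ as $t\to 0$. Stone's theorem then produces a unique selfadjoint generator. To identify it as (a constant multiple of) $z\frac{d}{dz}$, compare actions on the orthonormal basis $\{z^k\}_{k\in\mathbb{N}_0}$: one has $U(t) z^k = e^{i2\pi t k} z^k$ and $(z\tfrac{d}{dz}) z^k = k z^k$, which matches $U(t) = e^{i2\pi t H}$ on the dense span of $\{z^k\}$. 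Since that span is a core for $z\tfrac{d}{dz}$, this pins down $U$ on all of $\mathscr{H}_2$.

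The only genuine obstacle is the bookkeeping of the $2\pi$ factor: the paper's convention ``generated by $H$'' here means $U(t) = \exp(i 2\pi t H)$, owing to the choice $e(t) = e^{i2\pi t}$; with any other convention the statement would need a rescaling. Once that is fixed, the entire verification is the spectral theorem applied diagonally on the monomial basis $\{z^k\}_{k\in\mathbb{N}_0}$, on which both $U(t)$ and $H$ act as explicit multiplication operators.
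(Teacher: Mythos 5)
Your proof is correct and, on the one point the paper actually argues (the domain inclusion $\mathscr{D}(H)\subset\mathscr{A}(\mathscr{H}_{2})$), it takes exactly the paper's route: Lemma \ref{lem:Hdom} gives $\mathscr{D}(H)\subset\ell^{1}$, whence absolute and uniform convergence of the power series on $\overline{\mathbb{D}}$ and a continuous boundary extension. The remaining verifications you supply --- termwise differentiation for the identification $H\leftrightarrow z\frac{d}{dz}$, and the diagonal action on the monomial basis (including your correct observation that ``generated by'' here means $U(t)=\exp(i2\pi tH)$ under the convention $e(t)=e^{i2\pi t}$) --- are the routine steps the paper leaves implicit, done in the standard way.
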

\begin{proof}
Since $\mathscr{D}(H)\subset l^{1}(\mathbb{N})$, the power series
${\displaystyle \tilde{f}(z)=\sum_{k\in\mathbb{N}}x_{k}z^{k}}$ is
absolutely convergent for all $z\in\overline{\mathbb{D}}$ when $x\in\mathscr{D}(H)\subset l^{1}(\mathbb{N}_{0})$,
but $f\mapsto\tilde{f}$ does not map onto $\mathscr{A}(\mathscr{H}_{2})$. \end{proof}
\begin{cor}
The operators $\{U(t)\}_{t\in\mathbb{R}}$ in (\ref{eq:UH2sg}) extend
to a contraction semigroup $\{U(t)\,;\, t\in\mathbb{C},\: t=s+i\sigma,\:\sigma>0\}$,
i.e., analytic continuation in $t$ to the upper half-place $\mathbb{C}_{+}$,
and 
\[
\left\Vert U(s+i\sigma)f\right\Vert _{\mathscr{H}_{2}}\leq\left\Vert f\right\Vert _{\mathscr{H}_{2}},\quad f\in\mathscr{H}_{2},
\]
holds for all $s+i\sigma\in\mathbb{C}_{+}$.\end{cor}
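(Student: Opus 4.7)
The plan is to exploit the diagonalization of the original selfadjoint generator $H = z\frac{d}{dz}$ provided by the power-series identification $\mathscr{H}_{2}\simeq l^{2}(\mathbb{N}_{0})$. For real $t$, formula (\ref{eq:UH2sg}) in $l^{2}$ coordinates reads $(x_{k})\mapsto(e(t)^{k}x_{k})$, since each monomial $z^{k}$ is an eigenvector for $H$ with eigenvalue $k\ge 0$. The nonnegativity of $\mathrm{spec}(H)=\mathbb{N}_{0}$ is the structural fact that will drive the extension to $\mathbb{C}_{+}$.

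First I would define, for $t=s+i\sigma$ with $\sigma>0$, the operator $U(t)$ on $\mathscr{H}_{2}$ by the same recipe as in (\ref{eq:UH2sg}):
\[
(U(t)f)(z) := f(e(t)z),\qquad f\in\mathscr{H}_{2},\; z\in\mathbb{D}.
\]
Since $|e(t)|=|e^{i2\pi(s+i\sigma)}|=e^{-2\pi\sigma}<1$, we have $e(t)z\in\mathbb{D}$ for every $z\in\overline{\mathbb{D}}$, so $U(t)f$ is in fact analytic on a neighborhood of $\overline{\mathbb{D}}$. Written in the power-series model $f(z)=\sum_{k\in\mathbb{N}_{0}}x_{k}z^{k}$, this becomes
\[
(U(t)f)(z)=\sum_{k\in\mathbb{N}_{0}}e^{-2\pi k\sigma}\,e^{i2\pi ks}\,x_{k}\,z^{k},
\]
so $U(t)$ acts in $l^{2}$ as the bounded multiplication operator $(x_{k})\mapsto\bigl(e^{-2\pi k\sigma}e^{i2\pi ks}x_{k}\bigr)$.

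The contraction estimate is then immediate from Parseval in $\mathscr{H}_{2}\cong l^{2}$:
\[
\|U(s+i\sigma)f\|_{\mathscr{H}_{2}}^{2}=\sum_{k\in\mathbb{N}_{0}}e^{-4\pi k\sigma}|x_{k}|^{2}\le\sum_{k\in\mathbb{N}_{0}}|x_{k}|^{2}=\|f\|_{\mathscr{H}_{2}}^{2},
\]
where the inequality uses $e^{-4\pi k\sigma}\le 1$ for $k\in\mathbb{N}_{0}$, $\sigma>0$. The semigroup law $U(t_{1})U(t_{2})=U(t_{1}+t_{2})$ for $t_{1},t_{2}\in\overline{\mathbb{C}_{+}}$ follows from $e(t_{1})e(t_{2})=e(t_{1}+t_{2})$, and consistency with the unitary group on $\mathbb{R}$ is built into the definition.

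It remains to check joint analyticity of $t\mapsto U(t)f$ from $\mathbb{C}_{+}$ into $\mathscr{H}_{2}$. For each $k$ the coordinate function $t\mapsto e^{i2\pi kt}x_{k}$ is entire; on any compact subset $K\subset\mathbb{C}_{+}$ we have $\sup_{t\in K}|e^{i2\pi kt}|^{2}=e^{-4\pi k\,\sigma_{K}}$ with $\sigma_{K}:=\inf_{t\in K}\Im(t)>0$, so the partial sums $\sum_{k\le N}e^{i2\pi kt}x_{k}z^{k}$ converge in $\mathscr{H}_{2}$-norm uniformly on $K$, yielding analyticity of the limit by Weierstrass. The only point where mild care is required is this passage from the pointwise power-series formula to genuine $\mathscr{H}_{2}$-valued analyticity, but the uniform geometric decay $e^{-4\pi k\sigma_{K}}$ makes it routine; no spectral theorem is strictly needed, although one may alternatively read the whole statement off as $U(t)=e^{i2\pi tH}$ with $H\ge 0$.
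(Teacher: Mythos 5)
Your argument is correct and is essentially the paper's own proof (which is stated in one line as "substitution of $e(s+i\sigma)z$ into (\ref{eq:f(z)}) and (\ref{eq:UH2sg})"), only with the details of the Parseval computation, the semigroup law, and the $\mathscr{H}_{2}$-valued analyticity written out explicitly. The key observation in both is the same: $\left|e(s+i\sigma)\right|=e^{-2\pi\sigma}<1$ together with the nonnegativity of the exponents $k\in\mathbb{N}_{0}$ forces $\left|e(t)^{k}\right|\leq1$, hence the contraction bound.
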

\begin{proof}
Follows from Lemma \ref{lem:isoH2}, and a substitution of $e(s+i\sigma)z$
into eq. (\ref{eq:f(z)}) and (\ref{eq:UH2sg}).\end{proof}
\begin{lem}
\label{lem:Hdef}Under the isomorphism ${\displaystyle x\rightarrow f(z)=\sum_{k\in\mathbb{N}_{0}}x_{k}z^{k}}$,
the defect vector $y=\left(\frac{1}{1+k}\right)_{k\in\mathbb{N}_{0}}$
is mapped into
\begin{equation}
y(z)=-\frac{1}{z}\log\left(1-z\right),\; z\in\mathbb{D}\backslash\{0\}.\label{eq:defect}
\end{equation}
Under the isomorphism ${\displaystyle x\rightarrow f_{x}(z)=\sum_{k\in\mathbb{N}_{0}}x_{k}z^{k}}$,
the domain $\mathscr{D}(L)$ of the restriction $L$ is 
\begin{equation}
\left\{ f\in\mathscr{H}_{2}\:\big|\:\left\langle y,\left(I+z\frac{d}{dz}\right)f\right\rangle _{\mathscr{H}_{2}}=0\right\} =\left\{ \tilde{f}\in\mathscr{D}\widetilde{(H)}\:\big|\:\tilde{f}(1)=0\right\} .\label{eq:DH}
\end{equation}
 \end{lem}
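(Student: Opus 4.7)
The plan is to handle the two assertions of the lemma separately, both via direct computation on power-series coefficients combined with the isometric isomorphism $\ell^{2}(\mathbb{N}_{0})\simeq\mathscr{H}_{2}$. The first assertion is an elementary Taylor-series identification; the second reduces, via Parseval, to an identity between the $\ell^{2}$-pairing $\langle y,(I+z\tfrac{d}{dz})f\rangle$ and the boundary value $\tilde{f}(1)$, which then matches the defining condition $\sum_{k}x_{k}=0$ of $\mathscr{D}(L)$ from (\ref{eq:domL}).

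For the defect function, I would invoke the standard Taylor series $-\log(1-z)=\sum_{k\geq1}z^{k}/k$ on $\mathbb{D}$; dividing by $z$ and reindexing gives $-z^{-1}\log(1-z)=\sum_{k\geq0}z^{k}/(1+k)$, which is precisely the image of $y_{k}=1/(1+k)$ under the Hardy-space isomorphism. Observe that $y\in\ell^{2}$ since $\sum 1/(1+k)^{2}=\pi^{2}/6<\infty$, so $y\in\mathscr{H}_{2}$ and the formula is meaningful.

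For the characterization of $\mathscr{D}(L)$, the key step is a coefficient computation: if $f(z)=\sum_{k}x_{k}z^{k}$ with $x\in\mathscr{D}(H)$, then
\[
\bigl(I+z\tfrac{d}{dz}\bigr)f=\sum_{k\in\mathbb{N}_{0}}(1+k)\,x_{k}\,z^{k},
\]
whose coefficient sequence lies in $\ell^{2}$ (both $(x_{k})$ and $(kx_{k})$ do, by definition of $\mathscr{D}(H)$). By Parseval,
\[
\Bigl\langle y,\,\bigl(I+z\tfrac{d}{dz}\bigr)f\Bigr\rangle_{\mathscr{H}_{2}}=\sum_{k\in\mathbb{N}_{0}}\overline{\tfrac{1}{1+k}}\cdot(1+k)\,x_{k}=\sum_{k\in\mathbb{N}_{0}}x_{k},
\]
an absolutely convergent series by Lemma \ref{lem:Hdom}. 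The same lemma shows that $f$ extends continuously to $\overline{\mathbb{D}}$ with $\tilde{f}(1)=\sum_{k}x_{k}$. Combining these two equalities yields $\bigl\langle y,(I+z\tfrac{d}{dz})f\bigr\rangle_{\mathscr{H}_{2}}=\tilde{f}(1)$, and the claimed equality of sets then follows from (\ref{eq:domL}).

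The only step that requires any care is verifying that both entries of the inner product lie in $\mathscr{H}_{2}$ and that interchanging summation with the inner product is legal; both points are immediate from $y\in\ell^{2}$ and $((1+k)x_{k})\in\ell^{2}$. No genuine obstacle arises. The real content of the lemma is that the specific defect vector $y=(1/(1+k))_{k}$ is tailored precisely so that pairing it with $(I+z\tfrac{d}{dz})$ realizes the linear functional \emph{evaluation at $z=1$}, thereby translating the abstract algebraic constraint $\sum x_{k}=0$ into the geometric boundary condition $\tilde{f}(1)=0$.
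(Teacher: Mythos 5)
Your proof is correct and is exactly the computation the paper has in mind: the published proof is literally the single word ``Immediate,'' and your Taylor-series identification of $-z^{-1}\log(1-z)$ together with the Parseval evaluation $\langle y,(I+z\tfrac{d}{dz})f\rangle=\sum_k x_k=\tilde f(1)$ (absolutely convergent by Lemma \ref{lem:Hdom}) supplies precisely the details being suppressed. No gaps.
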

\begin{proof}
Immediate.\end{proof}
\begin{thm}
\label{rem:Lerch}Let $\Phi$ be the Lerch\textquoteright{}s transcendent
\cite{EMOT81,AS92}, 
\begin{equation}
\Phi(z,s,v)=\sum_{k\in\mathbb{N}_{0}}\frac{z^{k}}{\left(k+v\right)^{s}}.\label{eq:lerch}
\end{equation}
(Recall that $\Phi$ converges absolutely for all $v\neq0,-1,-2,\ldots$,
with either $z\in\mathbb{D}$, or $z\in\partial\mathbb{D}=\mathbb{T}$
and $\Re(s)>1$. )

Under the isomorphism $l^{2}(\mathbb{N}_{0})\simeq\mathscr{H}_{2}$,
the defect vectors ${\displaystyle \left(\frac{1}{k\mp i}\right)_{k\in\mathbb{N}_{0}}}$
in (\ref{eq:defv}) are mapped to
\begin{equation}
\tilde{x}_{\pm}(z)=\Phi(z,1,\mp i);\label{eq:deflerch}
\end{equation}
and eq. (\ref{eq:defect}) can be written as
\begin{equation}
y(z)=\Phi(z,1,1).\label{eq:deflerch2}
\end{equation}

Moreover, the eigenvectors $\left(\frac{\lambda_{n}(t)}{k-\lambda_{n}(t)}\right)_{k\in\mathbb{N}_{0}}$in
(\ref{eq:ev}) are mapped to
\[
\tilde{y}_{n,t}(z)=\lambda_{n}(t)\,\Phi(z,1,-\lambda_{n}(t)).
\]
Hence, for all $t\in\mathbb{R}$, the set $\{\tilde{y}_{n,t}\}_{n\in\mathbb{N}_{0}}$
forms an orthogonal basis for $\mathscr{H}_{2}$.\end{thm}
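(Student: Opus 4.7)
The plan is to verify each of the four assertions by direct computation, exploiting the isomorphism $\ell^2(\mathbb{N}_0)\simeq\mathscr{H}_2$ given by $x=(x_k)\mapsto f_x(z)=\sum_{k\in\mathbb{N}_0}x_k z^k$, and then to transfer the orthogonal basis property from $\ell^2$ to $\mathscr{H}_2$ via this unitary identification. The first three equalities are essentially term-by-term pattern-matching with the definition of Lerch's transcendent in (\ref{eq:lerch}); the final basis assertion is an appeal to the preceding Corollary on eigenvectors of $L_t$.

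First, I would check convergence. For $v\notin -\mathbb{N}_0$, $\Phi(z,1,v)=\sum_{k\geq 0}z^k/(k+v)$ converges absolutely for $z\in\mathbb{D}$, since $|z^k/(k+v)|\leq |z|^k/(k-|\Re v|)$ for large $k$. In particular all power series appearing below represent genuine elements of $\mathscr{H}_2$. Then (\ref{eq:deflerch}) is immediate: with $x_\pm=(1/(k\mp i))_{k\in\mathbb{N}_0}$, the image under the isomorphism is
\[
\tilde{x}_\pm(z)=\sum_{k\in\mathbb{N}_0}\frac{z^k}{k\mp i}=\sum_{k\in\mathbb{N}_0}\frac{z^k}{k+(\mp i)}=\Phi(z,1,\mp i).
\]
For (\ref{eq:deflerch2}), the vector $y=(1/(k+1))_{k}$ maps to $\sum_k z^k/(k+1)=\Phi(z,1,1)$, which in closed form equals $-z^{-1}\log(1-z)$ via the standard Taylor series of $\log$; this reconciles with (\ref{eq:defect}) from Lemma \ref{lem:Hdef}. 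The eigenvector computation is analogous: $y_n(t)=(\lambda_n(t)/(k-\lambda_n(t)))_k$ (from (\ref{eq:ev})) maps to
\[
\tilde{y}_{n,t}(z)=\lambda_n(t)\sum_{k\in\mathbb{N}_0}\frac{z^k}{k-\lambda_n(t)}=\lambda_n(t)\,\Phi(z,1,-\lambda_n(t)),
\]
where Theorem \ref{thm:Spectrum-L_t} guarantees $-\lambda_n(t)\notin -\mathbb{N}_0$ (since no $\lambda_n(t)$ is a nonnegative integer when $\zeta\neq -1$), so the Lerch series is well defined.

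The only remaining point is the orthogonal basis claim. By the Corollary following (\ref{eq:ev}), the family $\{y_n(t)\}_{n\in\mathbb{N}_0}$ is an orthogonal basis of $\ell^2(\mathbb{N}_0)$ whenever $\zeta=e(t)\neq -1$. Since $\ell^2(\mathbb{N}_0)\to\mathscr{H}_2$ is a unitary isomorphism, the images $\{\tilde{y}_{n,t}\}_{n\in\mathbb{N}_0}$ form an orthogonal basis for $\mathscr{H}_2$, yielding in particular the identity
\[
\bigl\langle\tilde{y}_{n,t},\tilde{y}_{m,t}\bigr\rangle_{\mathscr{H}_2}=\lambda_n(t)\lambda_m(t)\sum_{k\in\mathbb{N}_0}\frac{1}{(k-\lambda_n(t))(k-\lambda_m(t))}=0,\quad n\neq m.
\]

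The \emph{hardest} point to be careful about is not computational but interpretive: the stated \textquotedblleft for all $t\in\mathbb{R}$\textquotedblright{} must exclude (or suitably reinterpret) the degenerate case $\zeta=e(t)=-1$, where $L_t=H$ has spectrum $\mathbb{N}_0$ and the formula $\lambda_n(t)/(k-\lambda_n(t))$ is singular; in that case the eigenbasis is simply the monomial basis $\{z^n\}_{n\in\mathbb{N}_0}$ of $\mathscr{H}_2$, which can be recovered as a limiting case via a renormalization. Aside from this remark, the proof reduces entirely to the substitution and transfer steps above.
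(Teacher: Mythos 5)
Your proposal is correct and follows essentially the same route as the paper, whose proof is simply a citation of Lemma \ref{lem:isoH2} and Theorems \ref{thm:eigEqn} and \ref{thm:Spectrum-L_t}; you have merely written out the term-by-term substitutions and the unitary transfer of orthogonality that the paper leaves implicit. Your closing caveat about the degenerate case $\zeta=e(t)=-1$ is a legitimate point that the paper's blanket ``for all $t\in\mathbb{R}$'' glosses over.
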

\begin{proof}
Follows from Lemma \ref{lem:isoH2}, Theorems \ref{thm:eigEqn} and
\ref{thm:Spectrum-L_t}.
\end{proof}

\subsection{The Szegö Kernel }

In understanding the defect spaces $\mathscr{D}_{\pm}$, and the partial
isometries between them, we will be making use of the Szegö kernel.
It is the reproducing kernel for the Hardy space $\mathscr{H}_{2}$,
i.e., a function $K$ on $\mathbb{D}\times\mathbb{D}$ such $K(\cdot,z)\in\mathscr{H}_{2}$,
$z\in\mathbb{D}$; and 
\begin{equation}
f(z)=\left\langle K(\cdot,z),f\right\rangle _{\mathscr{H}_{2}},\quad f\in\mathscr{H}_{2}.\label{eq:SZK}
\end{equation}
(Note our inner product is linear in the second variable.)

The following formula is known
\begin{equation}
K(w,z)=\frac{1}{1-\overline{z}w}.\label{eq:ZKK}
\end{equation}
Our main concern is properties of functions $f\in\mathscr{H}_{2}$
on the boundary $\partial\mathbb{D}$.
\begin{lem}
\label{lem:Hardy}The following offers a correspondence between the
two representations $l^{2}(\mathbb{N}_{0})$ and $\mathscr{H}_{2}$,
see (\ref{eq:f(z)})-(\ref{eq:H2norm}). Consider the two operators
$L$ and $H$ from sections \ref{sec:sbdd}-\ref{sec:spH}. In the
$\mathscr{H}_{2}$ model, we have $H=z\frac{d}{dz}$ , $L=H\big|_{\mathscr{D}(L)}$,
and 
\[
\mathscr{D}(L)=\{f\in\mathscr{H}_{2}\:\big|\:\tilde{f}(1)=0\}
\]
where $\tilde{f}$ is the boundary function (see \cite[ch11]{Rud87}),
i.e., for $e(x)\in\mathbb{T}=\partial\mathbb{D}$, a.a. $x\in\mathbb{R}/\mathbb{Z}\simeq[0,1)$,
setting $e(x):=e^{i2\pi x}$, Fatou's theorem states existence a.e.
of the function $\tilde{f}$ as follows:
\begin{equation}
\tilde{f}(x)=\lim_{z\rightarrow e(x)}f(z).\quad(\mbox{non-tangentially})\label{eq:bdf}
\end{equation}
We make the identification $\tilde{f}(e(x))\simeq\tilde{f}(x)$ with
$\tilde{f}$ $\mathbb{Z}$-periodic. Under the correspondence: $f\longleftrightarrow\tilde{f}$,
we have
\begin{equation}
f(z)=\int_{0}^{1}\frac{\tilde{f}(x)}{1-\overline{e(x)}z}dx\label{eq:fext}
\end{equation}
where the RHS in (\ref{eq:fext}) is a Szegö -kernel integral. Moreover,
\begin{equation}
z\frac{d}{dz}f\longleftrightarrow\frac{1}{2\pi i}\frac{d}{dx}\tilde{f}.\label{eq:ddz}
\end{equation}

As a result, for the vectors in the two defect-spaces $\mathscr{D}_{\pm}$
in Lemma \ref{lem:vN def-space}, we have
\begin{eqnarray*}
f_{+}(z) & = & \int_{0}^{1}\frac{e^{-2\pi x}}{1-\overline{e(x)}z}dx\\
 & = & \frac{1-e^{-2\pi}}{2\pi i}\sum_{n=0}^{\infty}\frac{z^{n}}{n-i},\;\mbox{ and}\\
f_{-}(z) & = & \int_{0}^{1}\frac{e^{2\pi x}}{1-\overline{e(x)}z}dx\\
 & = & -\frac{e^{2\pi}-1}{2\pi i}\sum_{n=0}^{\infty}\frac{z^{n}}{n+i}.
\end{eqnarray*}
\end{lem}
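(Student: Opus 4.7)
The plan is to leverage the unitary isomorphism $l^2(\mathbb{N}_0) \simeq \mathscr{H}_2$ given by $(x_k) \mapsto f(z) = \sum_{k\in\mathbb{N}_0} x_k z^k$ already recorded in Lemma \ref{lem:H2}, and then translate each statement about $(x_k)$ into a corresponding assertion about $f$ or its boundary function $\tilde f$. First I would verify $H \longleftrightarrow z\tfrac{d}{dz}$ by term-wise differentiation of the power series, which is immediate on the dense domain $\mathscr{D}(H) \subset \ell^1(\mathbb{N}_0)$ established in Lemma \ref{lem:Hdom}. The identification $\mathscr{D}(L) = \{f \in \mathscr{H}_2 : \tilde f(1) = 0\}$ then follows from the boundary condition $\sum_k x_k = 0$ in (\ref{eq:domL}): since $\mathscr{D}(H) \subset \ell^1$, the power series $\sum x_k z^k$ extends continuously to $\overline{\mathbb{D}}$ (Abel), and the value at $z=1$ equals $\sum x_k$.

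Second, I would invoke Fatou's theorem (see \cite{Rud87}) to produce the non-tangential boundary limit $\tilde f(x)$ in (\ref{eq:bdf}), and identify $\tilde f$ with the $\mathbb{Z}$-periodic boundary function whose Fourier series has coefficients $x_n$ for $n \geq 0$ and $0$ for $n < 0$ (the defining property of functions in $\mathscr{H}_2$). To obtain the Szeg\H o reproducing formula (\ref{eq:fext}), I would expand the kernel as the geometric series
\begin{equation*}
\frac{1}{1-\overline{e(x)}z} \;=\; \sum_{n \in \mathbb{N}_0} z^n\, e^{-i2\pi n x}, \qquad z \in \mathbb{D},
\end{equation*}
which converges uniformly in $x$ for each fixed $z \in \mathbb{D}$, and integrate term by term. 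The integral $\int_0^1 \tilde f(x)\, e^{-i 2\pi n x}\, dx$ is precisely the $n$th Fourier coefficient $x_n$ (for $n \geq 0$), so the right-hand side of (\ref{eq:fext}) reproduces $\sum_n x_n z^n = f(z)$. The derivative correspondence (\ref{eq:ddz}) is then verified coefficient-wise: from $f(z) = \sum x_n z^n$ we have $z f'(z) = \sum n x_n z^n$, whose boundary Fourier series is $\sum n x_n e^{i2\pi n x} = \frac{1}{2\pi i}\,\tfrac{d}{dx}\sum x_n e^{i2\pi n x} = \frac{1}{2\pi i}\,\tilde f'(x)$.

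Third, for the explicit formulas for the defect vectors $f_\pm$, I would compute the Fourier coefficients of the boundary functions $e^{\mp 2\pi x}$ and substitute back via (\ref{eq:fext}). A direct evaluation gives
\begin{equation*}
\int_0^1 e^{\mp 2\pi x}\, e^{-i 2\pi n x}\, dx \;=\; \frac{1 - e^{\mp 2\pi}}{2\pi(\pm 1 + i n)},
\end{equation*}
(using $e^{-i 2\pi n} = 1$), and the algebraic identities $\frac{1}{1 + in} = \frac{-i}{n-i}$ and $\frac{1}{1 - in} = \frac{i}{n+i}$ convert these into the series $\frac{1-e^{-2\pi}}{2\pi i}\sum \frac{z^n}{n-i}$ and $-\frac{e^{2\pi}-1}{2\pi i}\sum \frac{z^n}{n+i}$ in the statement. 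These are precisely the $\mathscr{H}_2$-images of the defect vectors $x_\pm = (1/(k\mp i))_{k \geq 0}$ from (\ref{eq:defv}), confirming membership in $\mathscr{D}_\pm$.

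The main obstacle I anticipate is not algebraic but analytic: justifying the interchange of summation and integration in the Szeg\H o reproducing formula for all $f \in \mathscr{H}_2$ (not just those with absolutely convergent boundary series), and ensuring that the boundary differentiation in (\ref{eq:ddz}) makes sense as an identity between the boundary function of $z f'(z)$ and the distributional derivative of $\tilde f$. Both points are standard in the theory of the Hardy space and the Cauchy/Szeg\H o projection (see \cite{Rud87}), and can be handled either by working first on a dense set such as $\mathscr{A}(\mathscr{H}_2)$ and passing to limits in $\mathscr{H}_2$-norm, or by interpreting the formulas in the sense of non-tangential limits combined with $L^2$-Fourier series.
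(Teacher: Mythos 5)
Your proposal is correct and follows essentially the same route as the paper's own proof: expand the Szeg\H{o} kernel as a geometric series, integrate term by term to identify the coefficients of $f$ with the Fourier coefficients of $\tilde f$, and conclude via Parseval. You in fact go a bit further than the printed proof by explicitly computing $\int_0^1 e^{\mp 2\pi x}e^{-i2\pi nx}\,dx$ to verify the stated formulas for $f_\pm$ and by checking (\ref{eq:ddz}) coefficient-wise, both of which are carried out correctly.
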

\begin{proof}
The key ingredient in the proof is the representation of $f(z)\in\mathscr{H}_{2}$
as kernel integrals arising from the corresponding boundary versions
$\tilde{f}(x)\simeq\tilde{f}(e(x))$ where $e(x)=e^{i2\pi x}$, and
$\tilde{f}$ is as in (\ref{eq:bdf}). The kernel integral in (\ref{eq:fext})
is a reproducing property for the Szegö-kernel, see \cite{Rud87}.

To show that the transform $f(z)\longleftrightarrow\tilde{f}(x)$
(Fatou a.e. extension to $\partial\mathbb{D}$) is a norm-preserving
isomorphism of $\mathscr{H}_{2}$ onto a closed subspace in $L^{2}(\mbox{of a periodic interval})$,
we check that
\begin{equation}
f(z)=\sum_{n=0}^{\infty}c_{n}z^{n}\label{eq:series}
\end{equation}
where the coefficients $(c_{n})$ in (\ref{eq:se}) are also the Fourier-coefficients
of the function $\tilde{f}$ in (\ref{eq:bdf}). But for $z\in\mathbb{D}$,
i.e., $\left|z\right|<1$, we may expand the RHS in (\ref{eq:fext})
as follows
\[
f(z)=\sum_{n=0}^{\infty}z^{n}\int_{0}^{1}e(-nx)\tilde{f}(x)dx.
\]
But 
\begin{equation}
c_{n}=\int_{0}^{1}\overline{e(nx)}\tilde{f}(x)dx\label{eq:cn}
\end{equation}
are the $\tilde{f}$-Fourier coefficients over the period interval
$[0,1)$. The result follows from this as follows
\begin{eqnarray*}
\left\Vert f\right\Vert _{\mathscr{H}_{2}}^{2} & = & \sum_{n=0}^{\infty}\left|c_{n}\right|^{2}\,\,\,\,\,\,\,\,\,\,\,\,\,\,\,\,\,\,\,(\mbox{by Lemma \ref{lem:H2}})\\
 & = & \int_{0}^{1}\left|\tilde{f}(x)\right|^{2}dx\,\,\,\,\,\,\,(\mbox{by (}\ref{eq:cn}\mbox{) and Parseval applied to }\tilde{f})\\
 & = & \left\Vert \tilde{f}\right\Vert _{L^{2}(0,1)}^{2}.
\end{eqnarray*}

\end{proof}
Let ${\displaystyle H=z\frac{d}{dz}}$ be the selfadjoint operator
in Lemma \ref{lem:isoH2}.
\begin{cor}
Under the boundary correspondence (\ref{eq:bdf})-(\ref{eq:fext})
in Lemma \ref{lem:Hardy}, if $f\in\mathscr{D}(H)\subset\mathscr{H}_{2}$;
then the boundary function $\tilde{f}$ is a well-defined and continuous
function on $\mathbb{T}\simeq[0,1)\simeq\mathbb{R}/\mathbb{Z}$. If
further $f\in\mathscr{D}(H^{2})$, then $\tilde{f}$\textup{ ($=$
the boundary function)} is Lipschitz, i.e., 
\begin{equation}
\left|\tilde{f}(x)-\tilde{f}(y)\right|\leq\mbox{Const}\cdot\left|x-y\right|.\label{eq:Lip}
\end{equation}
\end{cor}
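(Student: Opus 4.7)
The plan is to translate the domain conditions $f\in\mathscr{D}(H)$ and $f\in\mathscr{D}(H^{2})$ into absolute summability of the Taylor/Fourier coefficients of $f$, and then read off continuity and Lipschitz regularity of $\tilde{f}$ on $\mathbb{T}$ from uniform convergence of the corresponding Fourier series. Throughout I write $f(z)=\sum_{k\in\mathbb{N}_{0}}a_{k}z^{k}$ with $(a_{k})\in\ell^{2}$, so that under Fatou's theorem (\ref{eq:bdf}) the boundary function has Fourier series $\tilde{f}(x)\sim\sum a_{k}e^{i2\pi kx}$.

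For the first assertion, $f\in\mathscr{D}(H)$ is equivalent to $(ka_{k})\in\ell^{2}$. Lemma \ref{lem:Hdom} then gives $(a_{k})\in\ell^{1}$, so the Fourier series $\sum a_{k}e^{i2\pi kx}$ converges absolutely and uniformly on $\mathbb{T}$; its sum is a continuous $\mathbb{Z}$-periodic function, which must coincide with the almost-everywhere defined Fatou boundary function $\tilde{f}$. Hence $\tilde{f}$ has a (unique) continuous representative on $\mathbb{T}$.

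For the Lipschitz estimate, the key step is to upgrade this to absolute summability of $(ka_{k})$. If $f\in\mathscr{D}(H^{2})$, then $(k^{2}a_{k})\in\ell^{2}$, and a one-line Cauchy--Schwarz gives
\begin{equation*}
\sum_{k\geq 1}k\,|a_{k}|\;=\;\sum_{k\geq 1}\tfrac{1}{k}\cdot k^{2}|a_{k}|\;\leq\;\Bigl(\sum_{k\geq 1}k^{-2}\Bigr)^{1/2}\Bigl(\sum_{k\geq 1}k^{4}|a_{k}|^{2}\Bigr)^{1/2}\;<\;\infty .
\end{equation*}
Thus $(ka_{k})\in\ell^{1}$, and the termwise-differentiated series $2\pi i\sum_{k\geq 1}ka_{k}e^{i2\pi kx}$ converges absolutely and uniformly on $\mathbb{T}$. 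By Lemma \ref{lem:H2} this termwise derivative is precisely the Fourier series of the boundary function of $\frac{1}{2\pi i}H f=z\frac{d}{dz}f/(2\pi i)$ (cf.\ (\ref{eq:ddz})), so $\tilde{f}$ is differentiable with $\|\tilde{f}'\|_{\infty}\leq 2\pi\sum_{k\geq 1}k|a_{k}|$. The bound (\ref{eq:Lip}) then follows from the mean value theorem with
$\mathrm{Const}=2\pi\sum_{k\geq 1}k|a_{k}|$.

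I do not expect a real obstacle: the only thing one has to be slightly careful about is that termwise differentiation of the Fourier series represents the boundary values of $Hf$ (not just a formal derivative), but this is exactly the content of (\ref{eq:ddz}) in Lemma \ref{lem:Hardy} combined with the uniform convergence coming from $(ka_{k})\in\ell^{1}$. Everything else is Cauchy--Schwarz plus the standard fact that an absolutely convergent Fourier series represents a continuous function.
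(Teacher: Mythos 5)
Your proof is correct and follows essentially the same route as the paper: both arguments reduce the claim to $(a_k)\in\ell^1$ (for continuity) and $\sum_k k|a_k|<\infty$ (for Lipschitz) via the identical Cauchy--Schwarz splitting $k=k^{-1}\cdot k^{2}$ against $(k^{2}a_k)\in\ell^{2}$. The only cosmetic difference is that the paper bounds $|\tilde f(x)-\tilde f(y)|$ directly term by term using $|e(kx)-e(ky)|\le 2\pi k|x-y|$, whereas you pass through uniform convergence of the differentiated series and the mean value theorem; both yield the same constant $2\pi^{2}/\sqrt{6}\cdot\Vert H^{2}f\Vert_{\mathscr{H}_{2}}$ up to normalization.
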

\begin{proof}
By Lemma \ref{lem:Hardy} and Conclusions \ref{conclusion}, the operator
\begin{equation}
\mathscr{H}_{2}\ni f\mapsto\tilde{f}\in\mathscr{L}_{+}\subset L^{2}(\mathbb{T})\label{eq:T}
\end{equation}
is an isometric isomorphism of $\mathscr{H}_{2}$ onto $\mathscr{L}_{+}$.
Moreover, see the proof of Lemma \ref{lem:Hardy}, if $f\in\mathscr{D}(H)$,
then
\begin{equation}
\tilde{f}(e(x))=\sum_{n\in\mathbb{N}_{0}}c_{n}e(nx),\quad x\in\mathbb{R}/\mathbb{Z}\label{eq:se}
\end{equation}
with $(c_{n})\in l^{2}$ and $(nc_{n})\in l^{2}$, see eq. (\ref{eq:series})-(\ref{eq:cn}).
Hence $(c_{n})$ in (\ref{eq:se}) is in $l^{1}$, by Cauchy-Schwarz,
see Lemma \ref{lem:Hdom}. Therefore, by domination, the boundary
function $\tilde{f}$ in (\ref{eq:se}) is uniformly continuous on
$\mathbb{T}$.

Assume now that $f\in\mathscr{D}(H^{2})$; then $(n^{2}c_{n})\in l^{2}$,
and therefore:
\begin{eqnarray*}
\left|\tilde{f}(x)-\tilde{f}(y)\right| & \leq & \left|x-y\right|2\pi\sum_{n=1}^{\infty}\left|n\, c_{n}\right|\\
 & \leq & \left|x-y\right|2\pi\left(\frac{\pi^{2}}{6}\right)^{1/2}\left(\sum_{n=1}^{\infty}n^{4}\left|c_{n}\right|^{2}\right)^{1/2}\\
 & \leq & \left|x-y\right|\frac{2\pi^{2}}{\sqrt{6}}\left\Vert H^{2}f\right\Vert _{\mathscr{H}_{2}},
\end{eqnarray*}
which is the desired Lipschitz estimate (\ref{eq:Lip}).\end{proof}
\begin{cor}
Consider the selfadjoint operator $H=z\,\frac{d}{dz}$ in $\mathscr{H}_{2}$
(as in Lemma \ref{lem:H2}). For $f\in\mathscr{D}(H^{2})$, consider
the boundary function $\tilde{f}$ as in Lemma \ref{lem:Hardy}, and
set
\begin{equation}
\mathscr{D}_{\{\pm1\}}:=\left\{ f\in\mathscr{D}(H^{2})\:\big|\:\tilde{f}(1)=\tilde{f}(-1)=0\right\} \label{eq:D11}
\end{equation}
and set
\begin{equation}
L_{2}:=H^{2}\bigg|_{\mathscr{D}_{\{\pm1\}}},\label{eq:L2subH2}
\end{equation}
then $L_{2}$ is a densely defined restriction of $H^{2}$, and
\begin{equation}
\mbox{GLB}(Q_{L_{2}})=1.\label{eq:glbL2}
\end{equation}
\end{cor}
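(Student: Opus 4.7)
The plan is to reduce the corollary, via the power-series isomorphism $\mathscr{H}_{2}\simeq l^{2}(\mathbb{N}_{0})$ of Lemma \ref{lem:H2}, to the lower-bound computation already carried out in Section \ref{sec:qua}. Writing $f(z)=\sum_{n\in\mathbb{N}_{0}}a_{n}z^{n}$, the first step is to convert the two boundary conditions into conditions on the coefficients $(a_n)$. For $f\in\mathscr{D}(H^{2})$ the preceding Lipschitz corollary (see (\ref{eq:Lip})) gives in particular $(n a_{n})\in l^{2}$ and hence $(a_n)\in l^{1}$ by the argument of Lemma \ref{lem:Hdom}, so the boundary values can be evaluated as absolutely convergent pointwise sums
\[
\tilde f(1)=\sum_{n\in\mathbb{N}_{0}}a_{n},\qquad \tilde f(-1)=\sum_{n\in\mathbb{N}_{0}}(-1)^{n}a_{n}.
\]
Taking half-sums and half-differences, the system $\tilde f(1)=\tilde f(-1)=0$ is equivalent to
\[
\sum_{n\in\mathbb{N}_{0}}a_{2n}=0\quad\text{and}\quad\sum_{n\in\mathbb{N}_{0}}a_{2n+1}=0,
\]
which is precisely the defining system (\ref{eq:domLc}) for the domain $\mathscr{D}_{1}^{(s)}$ of Section \ref{sec:qua}.

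Next I would identify the operators. Under the isomorphism, $(z\tfrac{d}{dz})^{2}=H^{2}$ on $\mathscr{H}_{2}$ corresponds to the multiplication operator $(a_{n})\mapsto(n^{2}a_{n})$ on $l^{2}$, which is the operator $H$ of (\ref{eq:defH}) (the Section \ref{sec:qua} labeling where our $H^2$ is called $H$). Hence $(L_{2},\mathscr{D}_{\{\pm1\}})$ is unitarily equivalent to the restriction $(L,\mathscr{D}_{1}^{(s)})$ studied in Section \ref{sec:qua}. Density of $\mathscr{D}_{\{\pm1\}}$ in $\mathscr{H}_{2}$ then follows from density of $\mathscr{D}_{1}^{(s)}$ in $l^{2}$; this in turn is either a direct codimension-$2$ analogue of the explicit approximation argument in Lemma \ref{lem:densedom}, or a special case of Lemma \ref{lem:semibdd} applied with a two-dimensional closed subspace $\mathfrak{M}\subset l^{2}$ whose two spanning vectors encode the two linear constraints and meet $\mathscr{D}(H)$ trivially.

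For the lower bound GLB$(Q_{L_{2}})=1$, I would invoke the bound (\ref{eq:GLBLsubH}) already proved in Section \ref{sec:qua}. Under the unitary identification above,
\[
Q_{L_{2}}(f)=\langle f,H^{2}f\rangle_{\mathscr{H}_{2}}=\sum_{n\in\mathbb{N}_{0}}n^{2}|a_{n}|^{2},
\]
and the inequality $\sum n^{2}|a_{n}|^{2}\geq\sum|a_{n}|^{2}$ on $\mathscr{D}_{1}^{(s)}$ comes from the cosine$\to$sine switch in $L^{2}(0,\pi)$: the two conditions $\sum a_{2n}=\sum a_{2n+1}=0$ translate, via Lemma \ref{lem:Lapbdd}, into $f_{\alpha}(0)=f_{\alpha}(\pi)=0$, so $f_{\alpha}$ admits a sine expansion $f_{\alpha}=\sum_{n\geq1}b_{n}\sin(nx)$ with frequencies $n\geq1$ only, giving $\sum n^{2}b_{n}^{2}\geq\sum b_{n}^{2}$. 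Sharpness of the constant $1$ follows by choosing a sequence of test vectors converging to the bottom Dirichlet eigenfunction $f(x)=\sin x$ for $-(d/dx)^{2}$ on $(0,\pi)$, whose eigenvalue is exactly $1$, and pulling this sequence back to $\mathscr{H}_{2}$.

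The main obstacle — indeed the only step beyond bookkeeping — is the very first one: verifying that $\tilde f(\pm 1)$ may be evaluated pointwise as $\sum a_{n}$ and $\sum(-1)^{n}a_{n}$ for every $f\in\mathscr{D}(H^{2})$. Without absolute summability of $(a_{n})$, the equivalence of the geometric conditions $\tilde f(\pm1)=0$ with the linear conditions defining $\mathscr{D}_{1}^{(s)}$ would be delicate; this is exactly where the Lipschitz bound on the boundary function, established in the Corollary preceding the statement, enters in an essential way.
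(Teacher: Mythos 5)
Your proposal is correct and follows essentially the same route as the paper: translate the two boundary conditions $\tilde f(1)=\tilde f(-1)=0$ into the coefficient conditions $\sum_{n}a_{2n}=\sum_{n}a_{2n+1}=0$, identify $(L_{2},\mathscr{D}_{\{\pm1\}})$ with the restriction studied in Lemmas \ref{lem:Lapbdd} and \ref{lem:Lapres}, and invoke the bound (\ref{eq:GLBLsubH}) obtained there by the cosine-to-sine switch. The only difference is that you make explicit the justification (absolute summability of $(a_{n})$ for $f\in\mathscr{D}(H^{2})$) for evaluating $\tilde f(\pm1)$ as pointwise sums, a step the paper's ``an inspection shows'' leaves implicit.
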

\begin{proof}
An inspection shows that this example is unitarily equivalent to the
one considered in Lemmas \ref{lem:Lapbdd} and \ref{lem:Lapres}.
Indeed, if $f\in\mathscr{D}(H^{2})$ has the representation $f(z)=\sum_{n\in\mathbb{N}_{0}}a_{n}z^{n}$,
then the two conditions in (\ref{eq:D11}) translate into
\[
\sum_{n\in\mathbb{N}_{0}}a_{n}=\sum_{n\in\mathbb{N}_{0}}(-1)^{n}a_{n}=0;
\]
or equivalently, 
\[
\sum_{n\in\mathbb{N}_{0}}a_{2n}=\sum_{n\in\mathbb{N}_{0}}a_{1+2n}=0;
\]
compare with (\ref{eq:cbd}). Hence the result follows from Lemma
\ref{lem:Lapres}.\end{proof}
\begin{rem}
In using the unit-interval $I=[0,1]$ as range of the independent
variable $x$ in the representation $\mathscr{H}_{2}\ni f(z)\longleftrightarrow\tilde{f}(x)$
via (\ref{eq:bdf}) we use the parameterization
\begin{equation}
I\ni x\mapsto e(x)=e^{i2\pi x}\in\mathbb{T}=\partial\mathbb{D}.\label{eq:e}
\end{equation}
To justify the correspondence $\tilde{f}(x)\longleftrightarrow\tilde{f}(e(x))$
for a function $\tilde{f}$ on $\mathbb{R}$, as in $\tilde{f}=e^{-2\pi x}$,
or $e^{2\pi x}$, it is understood that we use $1$-periodic versions
of these functions, see Figure \ref{fig:exp1} and \ref{fig:exp2}
below

\begin{figure}[H]
\includegraphics[scale=0.8]{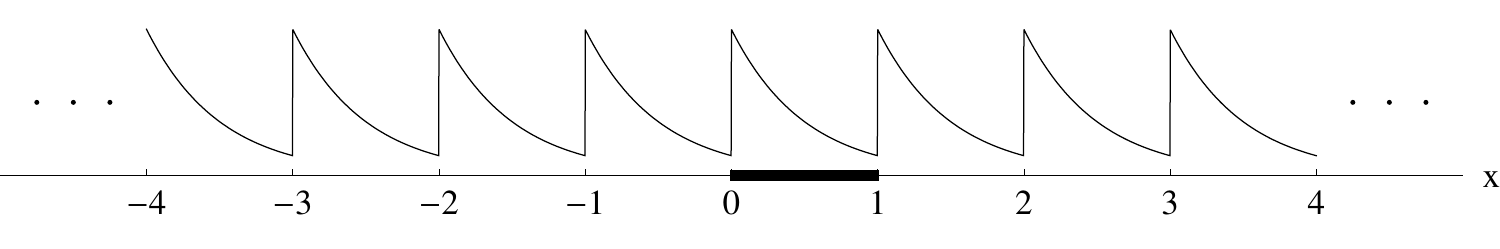}

\caption{\label{fig:exp1}Periodic version of $e^{-2\pi x}$}
\end{figure}

\begin{figure}[H]
\includegraphics[scale=0.8]{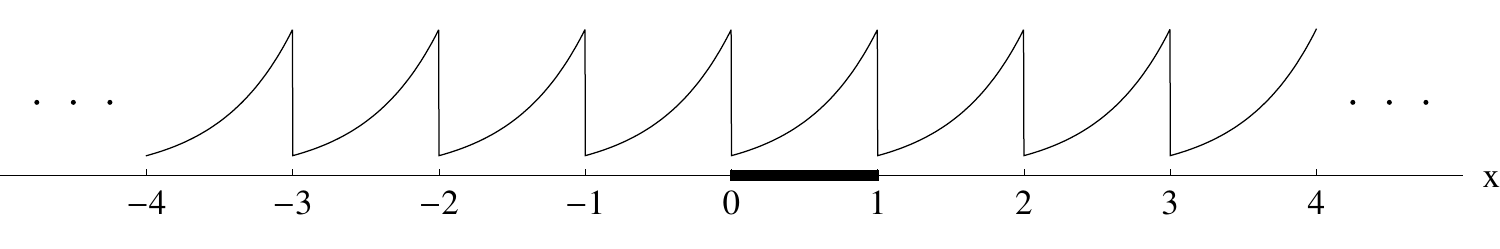}

\caption{\label{fig:exp2}Periodic version of $e^{2\pi x}$}
\end{figure}

\uline{WARNING}\textbf{ }The role of the periodization (illustrated
in Figures \ref{fig:exp1}-\ref{fig:exp2}, and in (\ref{eq:e}))
is important. Indeed, if the period-interval used in Figs 1-2 is changed
from $[0,1)$ into $[-\frac{1}{2},\frac{1}{2})$, we get the following
related function $\tilde{y}_{2}(x)=e^{-2\pi\left|x\right|}$; see
also Figure \ref{fig:exp3} below:

\begin{figure}[H]
\includegraphics[scale=0.8]{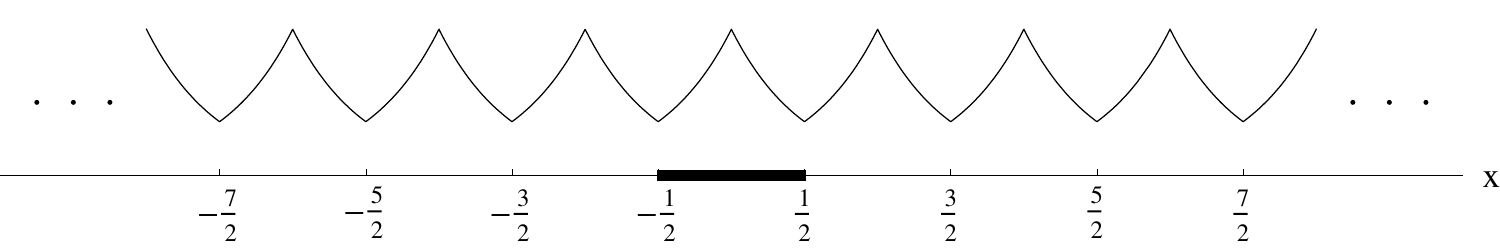}

\caption{\label{fig:exp3}Periodic version of $e^{-2\pi\left|x\right|}$, periodic-interval
$[-\frac{1}{2},\frac{1}{2})$}
\end{figure}

and
\begin{eqnarray}
y_{2}(z) & = & \int_{-\frac{1}{2}}^{\frac{1}{2}}\frac{e^{-2\pi\left|x\right|}}{1-\overline{e(x)}z}dx\nonumber \\
 & = & \frac{1}{\left(2\pi\right)^{2}}\sum_{n=0}^{\infty}\frac{z^{n}}{1+n^{2}};\label{eq:y2}
\end{eqnarray}
and we recovers the $l^{2}(\mathbb{N}_{0})$-sequence ${\displaystyle (y_{2})_{n}:=\frac{1}{1+n^{2}}}$
used in sections 3-4 above.
\end{rem}

\begin{rem}
In the isometric realization from Lemma \ref{lem:Hardy} of the Hardy
space $\mathscr{H}_{2}$ as a closed subspace inside $L^{2}(I)$ we
are selecting a specific a period interval $I$. Avoiding a choice,
an alternative to $L^{2}(I)$ is the Hilbert space $L^{2}(\mathbb{R}/\mathbb{Z})$
where the quotient group $\mathbb{R}/\mathbb{Z}$ is given its invariant
quotient measure on $\mathbb{R}/\mathbb{Z}$. With the identification
of $\mathbb{R}/\mathbb{Z}$ with $\mathbb{T}=\partial\mathbb{D}$,
this is Haar measure on the one-torus $\mathbb{T}$. Of these three
equivalent versions of Hilbert space, perhaps $L^{2}(\mathbb{R}/\mathbb{Z})$
is more natural as it doesn\textquoteright{}t presuppose a choice
of period interval.

The closed subspace $\mathscr{L}_{+}$ in $L^{2}(\mathbb{R}/\mathbb{Z})$
corresponding to $\mathscr{H}_{2}$ under (\ref{eq:bdf}) in Lemma
\ref{lem:Hardy} is of course the subspace of $L^{2}$ functions that
have their Fourier coefficients vanish on the negative part of $\mathbb{Z}$.
This subspace $\mathscr{L}_{+}$ in $L^{2}(\mathbb{R}/\mathbb{Z})$
is invariant under periodic translation $(U(t)f)(x)=f(x+t)$. Indeed
this unitary one-parameter group $U(t)$, acting in $\mathscr{L}_{+}$,
has as its infinitesimal generator our standard selfadjoint operator
$H$ from Lemmas \ref{lem:Hdom} and \ref{lem:Hardy}. Note that the
spectrum of $H$ is $\mathbb{N}_{0}$ when $H$ is realized as a selfadjoint
operator in $\mathscr{L}_{+}$; not in $L^{2}(\mathbb{R}/\mathbb{Z})$.
One can adapt this approach in order to get realizations of the other
selfadjoint extensions of $L$, and their associated unitary one-parameter
groups.
\end{rem}

\begin{rem}
\label{conclusion}\textbf{Conclusions} (Toeplitz operators). In our
study of selfadjoint extensions, we are making use of three (unitarily
equivalent) realizations of Toeplitz operators; taking here \textquotedblleft{}Toeplitz
operator\textquotedblright{} to mean \textquotedblleft{}matrix corner\textquotedblright{}
of an operator in an ambient $L^{2}$ space, i.e., restriction of
an operator $T$ in ambient $L^{2}$, followed by the projection $P$
onto the subspace; in short, $PTP$.

But it is helpful to realize these \textquotedblleft{}matrix corners\textquotedblright{}
in any one of three equivalent ways; hence three equivalent ways of
realizing operators $T$ in the ambient Hilbert space and $PTP$ in
its closed subspace:
\begin{enumerate}
\item realize the subspace as $\mathscr{L}_{+}$ inside $L^{2}(\mathbb{R}/\mathbb{Z})$,
where $\mathscr{L}_{+}$ is the subspace of $L^{2}$-functions with
vanishing negative Fourier coefficients; 
\item or we may take the subspace to be the Hardy space $\mathscr{H}_{2}$
inside $L^{2}(\mathbb{T})$. Or equivalently, 
\item we can work with the subspace of one-sided $l^{2}$ sequences inside
two-sided; so the subspace $l^{2}(\mathbb{N}_{0})$ inside $l^{2}(\mathbb{Z})$.
\end{enumerate}

So there are these three different but unitarily equivalent formulations;
each one brings to light useful properties of the operators under
consideration.

One detail which makes the analysis more difficult here as compared
with the more classical case of $L^{2}$ is the study of unitary one-parameter
groups: for example, periodic translation, $U(t):f\mapsto f(x+t)$,
leaves invariant the subspace; but the related quasi-periodic translation
does not.

\end{rem}

\begin{table}
\begin{tabular}{|>{\centering}m{0.15\textwidth}|>{\centering}m{0.2\textwidth}|>{\centering}m{0.15\textwidth}|>{\centering}m{0.15\textwidth}|>{\centering}m{0.2\textwidth}|}
\hline 
ambient Hilbert space & closed subspace & $\mathscr{D}(H)$ & $\mathscr{D}(L)$ & $\mathscr{D}(L^{*})$\tabularnewline
\hline 
case 1

\medskip{}

$l^{2}(\mathbb{Z})$ & $l^{2}(\mathbb{N}_{0})$ & $x\in l^{2}(\mathbb{N}_{0})$ s.t. $(k\, x_{k})\in l^{2}$ & \medskip{}
$x\in\mathscr{D}(H)$ s.t. $\sum_{k}x_{k}=0$\medskip{}
 & $\mathscr{D}(H)+\mathbb{C}y_{3}$\tabularnewline
\hline 
case 2

\medskip{}
 $L^{2}(\mathbb{T})$ & $\mathscr{H}_{2}(\mathbb{D})$

(Hardy space) & \medskip{}
$f\in\mathscr{H}_{2}$\\
s.t.\\
 $z\frac{d}{dz}f\in\mathscr{H}_{2}$\medskip{}
 & $f\in\mathscr{D}(H)$ s.t.\\
$\tilde{f}(1)=0$ & $\mathscr{D}(H)+\mathbb{C}\tilde{y}_{3}$\tabularnewline
\hline 
\medskip{}
case 3

\medskip{}
 $L^{2}(\mathbb{R}/\mathbb{Z})$\\
$\simeq$\\
$L^{2}(I)$\medskip{}
$I=[0,1)$ & $\mathscr{L}_{+}=\{f\in L^{2}\:\big|\:\hat{f}(n)=0,\: n\in\mathbb{Z}_{-}\}$ & $f\in\mathscr{L}_{+}$\\
s.t. $\frac{d}{dx}f\in L^{2}$ & $f\in\mathscr{D}(H)$ s.t. $f(0)=f(1)=0$  & \medskip{}
$\mathscr{D}(H)+\mathbb{C}P_{\mathscr{L}_{+}}\psi$,

\bigskip{}
$\psi(x)=$\\
$\begin{cases}
1 & 0\leq x<\frac{1}{2}\\
-1 & \frac{1}{2}\leq x<1
\end{cases}$\medskip{}
\tabularnewline
\hline 
\multicolumn{1}{>{\centering}m{0.15\textwidth}}{} & \multicolumn{1}{>{\centering}m{0.2\textwidth}}{} & \multicolumn{1}{>{\centering}m{0.15\textwidth}}{} & \multicolumn{1}{>{\centering}m{0.15\textwidth}}{} & \multicolumn{1}{>{\centering}m{0.2\textwidth}}{}\tabularnewline
\end{tabular}

\caption{\label{tab:H}Three models of Toeplitz operator analysis.}
\end{table}

\begin{rem}
In our model, we have $L\subset H\subset L^{*}$ (see (\ref{eq:LHL})).
As a result of (\ref{eq:Y2}), (\ref{eq:Y3}) and Remark \ref{rem:H},
we see that 
\begin{equation}
\dim\mathscr{D}(H)/\mathscr{D}(L)=\dim\mathscr{D}(L^{*})/\mathscr{D}(H)=1\label{eq:codim}
\end{equation}
(the two quotients are both one-dimensional); hence 
\begin{equation}
\mathscr{D}(L^{*})=\mathscr{D}(H)+\mathbb{C}P_{\mathscr{L}_{+}}\psi.\label{eq:de}
\end{equation}

The choice of the generating vector $\psi$ is not unique. For example,
let $T$ be the triangular wave, i.e.,
\begin{equation}
T(x)=\frac{1}{2}-\left|x\right|,\quad x\in[-\frac{1}{2},\frac{1}{2}];\label{eq:trif}
\end{equation}
which has Fourier series 
\begin{equation}
T(x)=\frac{1}{4}+\frac{2}{\pi^{2}}\sum_{n=1}^{\infty}\frac{1}{(2n-1)^{2}}\cos(2\pi(2n-1)x),\quad x\in[-\frac{1}{2},\frac{1}{2}].\label{eq:tri}
\end{equation}
Differentiating (\ref{eq:trif}), we get the Haar wavelet $\psi$
as in Table \ref{tab:H}; and where 
\begin{equation}
\psi(x)=-\frac{4}{\pi}\sum_{n=1}^{\infty}\frac{1}{2n-1}\sin(2\pi(2n-1)x),\quad x\in(-\frac{1}{2},\frac{1}{2}).\label{eq:haar}
\end{equation}
It follows that 
\begin{equation}
\left(P_{\mathscr{L}_{+}}\psi\right)(x)=\frac{2i}{\pi}\sum_{n=1}^{\infty}\frac{1}{2n-1}\, e_{2n-1}(x).
\end{equation}
Note the Fourier coefficients satisfy 
\[
\left(\left(P_{\mathscr{L}_{+}}\psi\right)^{\wedge}(n)\right)\in l^{2}(\mathbb{N}_{0})
\]
but
\[
\left(n\left(P_{\mathscr{L}_{+}}\psi\right)^{\wedge}(n)\right)\notin l^{2}(\mathbb{N}_{0}).
\]

\end{rem}

\subsection{$\mathscr{D}_{\pm}(L_{F})$ as RKHSs}

For a fixed closed subset $F$ of $\partial\mathbb{D}$ of zero angular
measure, we identified in Corollary \ref{cor:F} an associated Hermitian
operator $L_{F}$ with dense domain in the Hardy space $\mathscr{H}_{2}$,
see (\ref{eq:Df}) and (\ref{eq:LF}). Below we compute the corresponding
pair of deficiency subspaces (see Lemma \ref{lem:vNext}). While they
are closed subspaces in $\mathscr{H}_{2}$, it turns out that they
can be computed with reference to only the given closed set $F$.
To this end we show in Theorem \ref{thm:defLF} that each deficiency
subspace is a reproducing kernel Hilbert space (RKHS) with a positive
definite kernel function on $F\times F$. The kernel is not the Szegö
kernel, but rather the Hurwitz zeta function computed on differences
of points in $F$ (angular variables), see (\ref{eq:KernelH}) below.
From this we show that the partial isometries between the two deficiency
spaces \textquotedbl{}is\textquotedbl{} a compact group $G(F)$, a
Lie group if $F$ is finite. With this, we prove in Corollary \ref{cor:Liesp}
a formula for the spectrum of each of the selfadjoint extensions of
$L_{F}$. 
\begin{cor}
\label{cor:finite}Let $z_{1},z_{2},\ldots,z_{n}$ be a finite set
of distinct points $\in\mathbb{C}$ s.t. $|z_{i}|=1$, $1\leq i\leq n$;
and set
\begin{equation}
\mathscr{D}_{\{z_{i}\}}=\{f\in\mathscr{D}\widetilde{(H)}\:\big|\:\tilde{f}(z_{i})=0,\,1\leq i\leq n\};\label{eq:Dz}
\end{equation}
then
\begin{equation}
L_{n}:=z\frac{d}{dz}\big|_{\mathscr{D}_{\{z_{i}\}}}\label{eq:Ln}
\end{equation}
is Hermitian with dense domain in $\mathscr{H}_{2}$, and with deficiency
indices $(n,n)$. \end{cor}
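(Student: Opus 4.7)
The plan is to verify four properties of $L_n$ in sequence: it is Hermitian, closed, densely defined in $\mathscr{H}_2$, and satisfies $\dim\mathscr{D}(H)/\mathscr{D}(L_n)=n$. From von Neumann's classification (Lemma \ref{lem:vN def-space}), the last identity then forces the deficiency indices to equal $(n,n)$, because $H$ is a selfadjoint extension of $L_n$, and every selfadjoint extension arises by adjoining to $\mathscr{D}(L_n)$ a subspace of the form $\{\xi_+ + U\xi_+ : \xi_+\in\mathscr{D}_+\}$ of dimension $\dim\mathscr{D}_+$.

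First, $L_n\subset H$ with $H$ selfadjoint (Lemma \ref{lem:H2}) gives $L_n\subset L_n^*$, so $L_n$ is Hermitian. For closedness, Lemma \ref{lem:Hdom} together with the Cauchy--Schwarz argument there shows the embedding $\mathscr{D}(H)\hookrightarrow \ell^1(\mathbb{N}_0)$ is continuous in the graph norm, so each point-evaluation $\delta_{z_j}: f\mapsto \tilde f(z_j)=\sum_k x_k z_j^k$ is continuous on $\mathscr{D}(H)$ in that norm, and $\mathscr{D}_{\{z_i\}}=\bigcap_j \ker \delta_{z_j}$ is graph-closed. The codimension count is immediate: the Lagrange polynomials
\[
p_j(z)=\prod_{i\neq j}\frac{z-z_i}{z_j-z_i},\qquad j=1,\ldots,n,
\]
lie in $\mathscr{D}(H)$ (finite linear combinations of monomials $z^k$, each an eigenvector of $H$) and satisfy $p_j(z_i)=\delta_{ij}$. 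Hence $f\mapsto (\tilde f(z_1),\ldots,\tilde f(z_n))$ is surjective from $\mathscr{D}(H)$ onto $\mathbb{C}^n$ with kernel $\mathscr{D}(L_n)$, giving codimension exactly $n$.

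For density in $\mathscr{H}_2$, I would mimic Lemma \ref{lem:densedom}: start from $\ell_{\mathrm{fin}}$ (dense in $\mathscr{H}_2$), and given $x\in \ell_{\mathrm{fin}}$ supported in $[0,N]$ with $\tilde f_x(z_j)=a_j$, add a correction $y^{(m)}$ supported on $n$ disjoint blocks of $m$ consecutive high indices, one block per constraint, shifted far past $N$. The cancellation condition $\delta_{z_j}(x+y^{(m)})=0$ for $j=1,\ldots,n$ becomes a linear system governed by a nonsingular Vandermonde-type matrix in the distinct $z_j$'s, so the block amplitudes are uniquely determined and of bounded magnitude in $|a_j|$; placing each block at index $\gtrsim mN$ forces $\|y^{(m)}\|_2=O(m^{-1/2})\to 0$, so $x+y^{(m)}\in\mathscr{D}_{\{z_i\}}$ approximates $x$. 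Combining all four steps concludes the proof. The main obstacle is the density argument: one must choose the block placement and solve the Vandermonde system while keeping the $\ell^2$-norm of the correction small, the analogue here of the $|A|^2/m$ bound exhibited in the proof of Lemma \ref{lem:densedom}.
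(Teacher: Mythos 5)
Your architecture is genuinely different from the paper's: the paper disposes of this corollary in one line by invoking Lemma \ref{lem:semibdd} (equivalently Lemma \ref{lem:subsp}), taking $\mathfrak{M}$ to be the $n$-dimensional span of the vectors $\bigl(\overline{z_j}^{\,k}/(k-\xi)\bigr)_{k\in\mathbb{N}_0}$ (cf.\ Corollary \ref{cor:Fdef}), whence density of $\mathscr{D}(L_n)=R(\xi)\left(\mathfrak{M}^{\perp}\right)$, closedness, and the index count all come for free from the general correspondence. Your direct verification is a legitimate alternative, and most of it is sound: $L_n\subset H$ gives symmetry; the graph-norm continuity of $\delta_{z_j}$ via the $\ell^1$-embedding of Lemma \ref{lem:Hdom} gives closedness; the Lagrange polynomials give $\dim\mathscr{D}(H)/\mathscr{D}(L_n)=n$ exactly; and the passage from that codimension to indices $(n,n)$ via the von Neumann decomposition is correct.

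The genuine gap is in the density step, which you correctly single out as the main obstacle but whose construction, as written, fails. If the correction in block $B_j=[M_j,M_j+m)$ has \emph{constant} amplitude $b_j$, then its contribution to the $i$-th constraint is $b_j\sum_{k\in B_j}z_i^{k}=b_j\,z_i^{M_j}\left(z_i^{m}-1\right)/\left(z_i-1\right)$ for $z_i\neq1$, which is $O(1)$ in $m$ and can even vanish identically (take $z_i=-1$ and $m$ even: the system is then unsolvable). Even when the system is solvable, bounded block sums force amplitudes $b_j$ of order $1$, and then $\|y^{(m)}\|_2^2\asymp m\sum_j|b_j|^2\to\infty$ rather than $O(1/m)$; the favorable $|A|^2/m$ scaling of Lemma \ref{lem:densedom} occurs precisely because there $z_1=1$ and the block sum grows like $m$. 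The missing idea is to \emph{modulate} block $j$ by $\overline{z_j}^{\,k}$, i.e.\ take $y^{(m)}_k=c_j\overline{z_j}^{\,k}/m$ for $k\in B_j$: then the diagonal entries of the resulting $n\times n$ system equal $c_j$ while the off-diagonal entries are $O(1/m)$ by the geometric-sum bound $2/|1-z_i\overline{z_j}|$, so the matrix is invertible for large $m$ with uniformly bounded inverse, the $c_j$ stay bounded, and $\|y^{(m)}\|_2^2=O(1/m)\to0$. With that repair (or by simply quoting Lemma \ref{lem:subsp} for density, as the paper does), your proof closes.
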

\begin{proof}
This is an application of Lemma \ref{lem:semibdd}.\end{proof}
\begin{cor}
\label{cor:F}Let $F\subset\mathbb{T}=\{z\in\mathbb{C}\:\big|\:|z|=1\}=\partial\mathbb{D}$
be a closed subset of zero Haar measure, e.g., some fixed Cantor subset
of $\mathbb{T}$; and set
\begin{equation}
\mathscr{D}_{F}:=\{f\in\mathscr{D}_{\mathscr{H}_{2}}(z\frac{d}{dz})\:\big|\: f=0\:\mbox{ on }F\},\label{eq:Df}
\end{equation}
then 
\begin{equation}
L_{F}:=z\frac{d}{dz}\big|_{\mathscr{D}(F)}\label{eq:LF}
\end{equation}
is Hermitian with dense domain in the Hardy space $\mathscr{H}_{2}$,
and with deficiency indices $(\infty,\infty)$.\end{cor}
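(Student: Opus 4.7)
The plan is to parallel Corollary~\ref{cor:finite}, lifting the argument from finitely many points on $\mathbb{T}$ to an arbitrary infinite closed measure-zero set $F$ (the Cantor example in the statement signals that $F$ is taken to be infinite). Three tasks remain: density of $\mathscr{D}_F$ in $\mathscr{H}_2$, closedness and Hermiticity of $L_F$, and the computation of the deficiency indices.

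For density, the assumption that $F$ has zero angular measure is essential. I would invoke the classical Fatou--Rudin existence theorem: for any closed $F \subset \mathbb{T}$ of Lebesgue measure zero, there is a sequence $\{\phi_n\}$ in the disk algebra (smooth enough to lie inside $\mathscr{D}(H)$) with $\widetilde{\phi_n}|_F \equiv 0$, $\|\phi_n\|_\infty \leq 1$, and $\widetilde{\phi_n} \to 1$ a.e.\ on $\mathbb{T}\setminus F$. Given $g \in \mathscr{H}_2$ and $\varepsilon>0$, first approximate $g$ in $\mathscr{H}_2$-norm by a polynomial $p$, which lies a priori in $\mathscr{D}(H)$. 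Then $\phi_n p \in \mathscr{D}(H)$ (since $\mathscr{D}(H)$ is identified with the Banach algebra $\mathscr{A}(\mathscr{H}_2)$ under Lemma~\ref{lem:H2}), and its boundary function $\widetilde{\phi_n p} = \widetilde{\phi}_n\,\widetilde{p}$ vanishes on $F$; hence $\phi_n p \in \mathscr{D}(L_F)$. Dominated convergence on $\mathbb{T}$, with majorant $|\widetilde{p}| \in C(\mathbb{T})$, gives $\phi_n p \to p$ in $L^2(\mathbb{T})$, hence in $\mathscr{H}_2$ by the isometric identification in Lemma~\ref{lem:Hardy}.

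For closedness and Hermiticity, each point-evaluation functional $f \mapsto \widetilde{f}(\zeta)$ is continuous on $\mathscr{D}(H)$ in the graph norm, because by Lemma~\ref{lem:Hdom} we have $\mathscr{D}(H) \subset l^1(\mathbb{N}_0)$, so the power series of $f$ converges absolutely and uniformly on $\overline{\mathbb{D}}$. Consequently $\mathscr{D}(L_F) = \bigcap_{\zeta \in F}\ker\bigl(f \mapsto \widetilde{f}(\zeta)\bigr)$ is graph-norm closed in $\mathscr{D}(H)$, making $L_F$ a closed operator; Hermiticity follows from the inclusions $L_F \subseteq H \subseteq L_F^*$.

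For the deficiency indices, I choose a sequence $\zeta_1,\zeta_2,\ldots$ of distinct points in $F$. For each $n$, Corollary~\ref{cor:finite} produces a restriction $L_n$ corresponding to $\{\zeta_1,\ldots,\zeta_n\}$ with deficiency indices $(n,n)$. Since $\mathscr{D}(L_F) \subseteq \mathscr{D}(L_n)$, we have $L_F \subseteq L_n$, hence $L_n^* \subseteq L_F^*$, and every $\pm i$-eigenvector of $L_n^*$ is also a $\pm i$-eigenvector of $L_F^*$. Thus $\dim \mathscr{D}_\pm(L_F) \geq \dim \mathscr{D}_\pm(L_n) = n$ for every $n$, forcing $(\infty,\infty)$. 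The main obstacle lies in the density step: whereas the finite case is handled by elementary polynomial factors $(z-\zeta_1)\cdots(z-\zeta_n)$, the infinite case requires the Fatou--Rudin construction of outer functions vanishing on $F$, and the $F$-and-$M$-Riesz theorem confirms that this would fail without the measure-zero hypothesis, so the assumption is genuinely needed.
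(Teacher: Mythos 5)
Your overall architecture is a legitimate alternative to the paper's: the paper disposes of density, closedness and Hermiticity in one stroke by invoking the abstract restriction correspondence of Lemma \ref{lem:semibdd} (reducing everything to the single verification that the closed span $\mathfrak{M}$ of the boundary-evaluation vectors meets $\mathscr{D}(H)$ only in $0$), whereas you argue each piece directly. Your closedness argument via graph-norm continuity of the point evaluations (Lemma \ref{lem:Hdom}) is sound, and your computation of the deficiency indices by monotonicity --- $L_F\subseteq L_n$ forces $\mathscr{D}_\pm(L_n)\subseteq\mathscr{D}_\pm(L_F)$, so the indices dominate $(n,n)$ for every $n$ --- is clean and arguably more transparent than anything in the paper's sketch.

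The genuine gap is in the density step. The classical Fatou construction produces functions $\phi_n$ in the disk algebra $A(\mathbb{D})$ vanishing exactly on $F$; it does \emph{not} produce functions in $\mathscr{D}(H)$, which is the strictly smaller class of $f$ with $z\frac{d}{dz}f\in\mathscr{H}_2$. The paper is explicit on this point: Lemma \ref{lem:H2} states that the map $\mathscr{D}(H)\rightarrow\mathscr{A}(\mathscr{H}_2)$ is \emph{not} onto. Your parenthetical ``smooth enough to lie inside $\mathscr{D}(H)$'' is therefore exactly the assertion that needs proof, and it is the only place where the measure-zero hypothesis does real analytic work in your argument. A typical Fatou function $\phi=e^{-h}$, with $h$ the Herglotz integral of an integrable $u$ blowing up on $F$, has $\phi'=-h'e^{-h}$, and there is no reason for this to lie in $\mathscr{H}_2$ without further control on $u$. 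To repair the step you would need the stronger fact that every closed measure-zero subset of $\mathbb{T}$ is the zero set of a function in $A^\infty(\overline{\mathbb{D}})$ (Taylor--Williams), or some other quantitative refinement of Fatou --- a nontrivial citation that should be made explicit --- or else you should fall back on the paper's route, where density comes for free from Lemma \ref{lem:subsp} once $\mathfrak{M}\cap\mathscr{D}(H)=0$ is checked. (A minor related slip: $\mathscr{D}(H)$ is not ``identified with'' the Banach algebra $\mathscr{A}(\mathscr{H}_2)$; it only embeds into it. Your use of the product rule to get $\phi_n p\in\mathscr{D}(H)$ is fine once $\phi_n\in\mathscr{D}(H)\cap H^\infty$ is actually secured.)
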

\begin{proof}
Follows again from the general result in Lemma \ref{lem:semibdd}.
Since $F$ has Haar measure $0$, one verifies that the closed subspace
$\mathfrak{M}$ in $\mathscr{H}_{2}$ defined for $\mathscr{D}_{F}$
in (\ref{eq:Df}) (via Lemma \ref{lem:semibdd}) is closed in $\mathscr{H}_{2}$
and satisfies 
\begin{equation}
\mathfrak{M}\cap\mathscr{D}(z\frac{d}{dz})=0.\label{eq:int-1}
\end{equation}
\end{proof}
\begin{rem}
Let $\mathscr{A}(\mathscr{H}_{2})$ be the Banach algebra introduced
in Lemma \ref{lem:H2}, and let $F\subset\mathbb{T}$ be a closed
subset specified as in Corollary \ref{cor:F}. Let $L_{F}$ be the
Hermitian restriction operator in (\ref{eq:LF}), and $L_{F}^{*}$
its adjoint operator. Then both domains $\mathscr{D}(L_{F})$ and
$\mathscr{D}(L_{F}^{*})$ are invariant under pointwise multiplication
by functions $a$ from $\mathscr{A}(\mathscr{H}_{2})$, i.e., 
\begin{equation}
\left(af\right)(z)=a(z)f(z),\quad a\in\mathscr{A}(\mathscr{H}_{2}),\: f\in\mathscr{H}_{2},\:\mbox{and }z\in\mathbb{D}.\label{eq:mop}
\end{equation}
Hence this action of $\mathscr{A}(\mathscr{H}_{2})$ passes to the
quotient
\begin{equation}
\mathscr{D}(L_{F}^{*})/\mathscr{D}(L_{F})\simeq\mathscr{D}_{+}(L_{F})+\mathscr{D}_{-}(L_{F}).\label{eq:quotient}
\end{equation}
\end{rem}
\begin{cor}
\label{cor:Fdef}Let $F\subset\mathbb{T}$ be a closed subset of zero
measure, see Corollary \ref{cor:F} and (\ref{eq:Df}). Let $x_{\pm}\in l^{2}(\mathbb{N}_{0})$
be the sequences from (\ref{eq:defv}), i.e., $x_{\pm}(k)=(k\mp i)^{-1}$,
$k\in\mathbb{N}_{0}$. Then the two deficiency subspaces in $\mathscr{H}_{2}$
derived from (\ref{eq:LF})
\begin{equation}
\mathscr{D}_{\pm}(L_{F})=\{f_{\pm}\in\mathscr{D}(L_{F}^{*})\:\big|\: L_{F}^{*}f_{\pm}=\pm if_{\pm}\}\label{eq:defLF}
\end{equation}
are as follows: For $\theta\in F$, set 
\begin{equation}
f_{\pm}^{(\theta)}(z):=\sum_{n=0}^{\infty}\frac{e(-n\,\theta)}{n\mp i}z^{n};\label{eq:defLF-1}
\end{equation}
i.e., the expansion coefficients for $f_{\pm}^{(\theta)}$ are 
\begin{equation}
\widehat{f_{\pm}^{(\theta)}}(n)=\overline{e(n\,\theta)}\, x_{\pm}(n),\quad n\in\mathbb{N}_{0}.\label{eq:LFcoeff}
\end{equation}
Then $\mathscr{D}_{\pm}(L_{F})$ is the closed span in $\mathscr{H}_{2}$
of the functions in (\ref{eq:defLF-1}), as $\theta$ ranges over
$F$. \end{cor}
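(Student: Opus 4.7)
The plan is to verify the two inclusions $\supseteq$ and $\subseteq$ separately. The forward inclusion is a direct eigenvector check exploiting the vanishing condition that defines $\mathscr{D}(L_F)$; the reverse inclusion reduces to the observation that the pairing $\langle g, f_\pm^{(\theta)}\rangle$ coincides with the boundary value at $e(\theta)$ of $(H\pm iI)^{-1}g$, which lets one convert orthogonality to $\{f_\pm^{(\theta)}\}$ into a statement about membership in $\mathscr{D}(L_F)$.

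For $\supseteq$, I would check directly that $L_F^* f_\pm^{(\theta)} = \pm i\, f_\pm^{(\theta)}$ for every $\theta$ with $e(\theta)\in F$. Let $\psi = \sum_n a_n z^n \in \mathscr{D}(L_F)$. Since the coefficients of $f_+^{(\theta)}$ are $e(-n\theta)/(n-i)$, a short calculation based on the algebraic identity $n/(n-i) - i/(n-i) = 1$ gives
\[
\langle H\psi, f_+^{(\theta)}\rangle - i\,\langle \psi, f_+^{(\theta)}\rangle \;=\; \sum_n \overline{a_n}\,e(-n\theta) \;=\; \overline{\tilde\psi(e(\theta))},
\]
and the right-hand side vanishes because $\psi \in \mathscr{D}(L_F)$ forces $\tilde\psi|_F = 0$. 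This shows $f_+^{(\theta)} \in \mathscr{D}_+(L_F)$; the $-i$ case is identical. Conceptually, the rotations $(U_\theta f)(z):= f(e(-\theta)z)$ are unitary on $\mathscr{H}_2$ and commute with $H$, intertwining the single-point restriction $L$ of \S\ref{sec:sbdd} with $L_{\{e(\theta)\}}$; in these terms $f_\pm^{(\theta)} = U_\theta x_\pm$, and $f_\pm^{(\theta)} \in \mathscr{D}_\pm(L_F)$ because $L_F \subset L_{\{e(\theta)\}}$ implies $L_{\{e(\theta)\}}^{*} \subset L_F^*$.

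For $\subseteq$, since the inner product is linear in the second slot, the adjoint identity $\langle L_F\psi, g\rangle = \langle \psi, L_F^* g\rangle$ rewrites the eigenvalue equation $L_F^* g = ig$ as
\[
\mathscr{D}_+(L_F) \;=\; \bigl((H + iI)\,\mathscr{D}(L_F)\bigr)^{\perp}\subset \mathscr{H}_2.
\]
Now suppose $g \in \mathscr{D}_+(L_F)$ satisfies $\langle g, f_+^{(\theta)}\rangle = 0$ for every $\theta$ with $e(\theta)\in F$; I claim $g = 0$. Set $\psi := (H+iI)^{-1}g$; by Lemma \ref{lem:H2}, $\psi$ lies in $\mathscr{D}(H) \subset \mathscr{A}(\mathscr{H}_2)$, so its boundary extension $\tilde\psi$ is continuous on $\overline{\mathbb{D}}$. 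Writing $g = \sum_n g_n z^n$ and taking conjugates termwise yields the key boundary-value identity
\[
\langle g, f_+^{(\theta)}\rangle \;=\; \sum_n \frac{\overline{g_n}\,e(-n\theta)}{n - i} \;=\; \overline{\tilde\psi(e(\theta))},
\]
so the orthogonality hypothesis is exactly $\tilde\psi|_F = 0$, i.e., $\psi \in \mathscr{D}(L_F)$. But then $g = (H+iI)\psi \in (H+iI)\mathscr{D}(L_F)$, and the perpendicularity characterization forces $\langle g, g\rangle = 0$, giving $g = 0$.

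The argument for $\mathscr{D}_-(L_F)$ is obtained by swapping $(H+iI)$ with $(H-iI)$ throughout and $f_+^{(\theta)}$ with $f_-^{(\theta)}$; the corresponding identity $\langle g, f_-^{(\theta)}\rangle = \overline{\widetilde{(H-iI)^{-1}g}\,(e(\theta))}$ has the same form. The main obstacle is spotting the pairing identity $\langle g, f_\pm^{(\theta)}\rangle = \overline{\widetilde{(H\pm iI)^{-1}g}\,(e(\theta))}$: once this boundary-value interpretation is in hand, the rest is a short circuit, because $g = (H\pm iI)\psi$ for the very $\psi$ that orthogonality places in $\mathscr{D}(L_F)$. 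The hypothesis that $F$ has zero angular measure enters only tacitly through Corollary \ref{cor:F}, to guarantee that $\mathscr{D}(L_F)$ is dense in $\mathscr{H}_2$, so that $L_F$ and its adjoint are genuinely defined.
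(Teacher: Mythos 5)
Your proof is correct and rests on the same key identity as the paper's: the computation $\left\langle (H+iI)\varphi,f_{+}^{(\theta)}\right\rangle =\overline{\tilde{\varphi}(e(\theta))}$ for $\varphi\in\mathscr{D}(H)$ is the paper's eq.\ (\ref{eq:LFi}) up to complex conjugation, and it gives the inclusion $f_{\pm}^{(\theta)}\in\mathscr{D}_{\pm}(L_{F})$ exactly as in the paper. The one place you go beyond the printed argument is the reverse inclusion: the paper stops after the forward inclusion and asserts that the equality ``follows,'' whereas you supply the missing density step explicitly, writing $\mathscr{D}_{+}(L_{F})=\left((H+iI)\mathscr{D}(L_{F})\right)^{\perp}$ and showing via $\psi=(H+iI)^{-1}g$ that any $g\in\mathscr{D}_{+}(L_{F})$ orthogonal to every $f_{+}^{(\theta)}$ must vanish --- which is the intended (but unwritten) completion of the paper's proof.
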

\begin{proof}
We will do the detailed steps for any $\{f_{+}^{(\theta)}(\cdot)\:\big|\:\theta\in F\}\subset\mathscr{D}_{+}(L_{F})$,
as the other case for $\mathscr{D}_{-}(L_{F})$ is the same argument,
\emph{mutatis mutandis}.

Note that functions $\varphi\in\mathscr{D}(L_{F})$ are given by the
condition
\begin{equation}
\varphi(\theta)\simeq\varphi(e(\theta))=0,\quad\forall\theta\in F\label{eq:varphi}
\end{equation}
where we use the usual identification $\theta\longleftrightarrow e(\theta)=e^{i2\pi\theta}$
with points in a period interval $\simeq$ points in $\mathbb{T}$.
For $\varphi\in\mathscr{D}(L_{F})$, set $x_{n}=\hat{\varphi}(n)$,
i.e., ${\displaystyle \varphi(z)=\sum_{n\in\mathbb{N}_{0}}x_{n}z^{n}}$.
Then for $\theta\in F$, we have; $(n\, x_{n})\in l^{2}(\mathbb{N}_{0})$,
and:
\begin{alignat}{1}
0=\varphi(\theta) & =\sum_{n=0}^{\infty}x_{n}e(n\,\theta)\nonumber \\
 & =\left\langle \frac{\overline{e(n\,\theta)}}{n-i},(n+i)x_{n}\right\rangle _{l^{2}(\mathbb{N}_{0})}\nonumber \\
 & =\left\langle f_{+}^{(\theta)},(L_{F}+iI)\varphi\right\rangle _{\mathscr{H}_{2}}\label{eq:LFi}
\end{alignat}
where, in the last step, we used the isomorphism $l^{2}(\mathbb{N}_{0})\simeq\mathscr{H}_{2}$
of Lemma \ref{lem:H2}. The respective subscripts $\left\langle \cdot,\cdot\right\rangle _{l^{2}(\mathbb{N}_{0})}$
and $\left\langle \cdot,\cdot\right\rangle _{\mathscr{H}_{2}}$ indicates
the reference Hilbert space used. 

Since (\ref{eq:LFi}) holds for all $\theta\in F$, it follows that
each $f_{+}^{(\theta)}\in\mathscr{D}(L_{F})$, so the asserted ``$=$''
in the Corollary follows.
\end{proof}

\begin{rem}
If $\theta\mbox{ and }\rho\in I$ ($=$ the period interval), then
the $\mathscr{H}_{2}$-inner product of the two functions $f_{+}^{(\theta)}$
and $f_{-}^{(\theta)}$ is as follows:
\begin{equation}
\left\langle f_{+}^{(\theta)},f_{+}^{(\rho)}\right\rangle _{\mathscr{H}_{2}}=\sum_{n=0}^{\infty}\frac{e(n(\theta-\rho))}{1+n^{2}}=Z(\theta-\rho,1,2)\label{eq:dInner}
\end{equation}
where $Z$ is the Hurwitz-zeta function.

\end{rem}
\begin{thm}
\label{thm:defLF}Let the closed subset $F\subset\mathbb{T}$ be as
in Corollary \ref{cor:Fdef} and let $L_{F}$ be the corresponding
Hermitian unbounded and densely defined operator in the Hardy space
$\mathscr{H}_{2}$. Then each of the two defect spaces $\mathscr{D}_{\pm}(L_{F})$
in (\ref{eq:defLF}) is a reproducing kernel Hilbert space (RKHS)
with RK equal to the Hurwitz zeta function (\ref{eq:dInner}). \end{thm}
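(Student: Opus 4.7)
\noindent\textbf{Proof plan (Theorem \ref{thm:defLF}).}
My plan is to combine Corollary~\ref{cor:Fdef}, which presents $\mathscr{D}_{\pm}(L_{F})$ as the closed $\mathscr{H}_{2}$-span of an explicit family $\{f_{\pm}^{(\theta)}\}_{\theta\in F}$, with the Moore--Aronszajn construction of a RKHS from a positive definite kernel. I treat $\mathscr{D}_{+}(L_{F})$; the minus-case follows identically \emph{mutatis mutandis}, since formula (\ref{eq:dInner}) produces the same kernel for both signs.

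Set $K(\theta,\rho):=Z(\theta-\rho,1,2)=\sum_{n=0}^{\infty}\frac{e(n(\theta-\rho))}{1+n^{2}}$ on $F\times F$. By (\ref{eq:dInner}), $K(\theta,\rho)=\langle f_{+}^{(\theta)},f_{+}^{(\rho)}\rangle_{\mathscr{H}_{2}}$, so $K$ is positive semidefinite for free: for any finite $\{\theta_{j}\}\subset F$ and $\{c_{j}\}\subset\mathbb{C}$,
\[
\sum_{j,k}\overline{c_{j}}c_{k}\,K(\theta_{j},\theta_{k})=\Bigl\|\sum_{j}c_{j}f_{+}^{(\theta_{j})}\Bigr\|_{\mathscr{H}_{2}}^{2}\ge 0.
\]
Next I would introduce the \emph{boundary sampling} map $W\colon\mathscr{D}_{+}(L_{F})\to\mathbb{C}^{F}$ defined by $(W\psi)(\theta):=\langle f_{+}^{(\theta)},\psi\rangle_{\mathscr{H}_{2}}$. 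Two observations are immediate: $W(f_{+}^{(\rho)})(\theta)=K(\theta,\rho)$, so every kernel section $K(\cdot,\rho)$ lies in the range of $W$; and $W$ is injective, because Corollary~\ref{cor:Fdef} asserts that $\{f_{+}^{(\theta)}\}_{\theta\in F}$ is total in $\mathscr{D}_{+}(L_{F})$, so $\ker W=\mathscr{D}_{+}(L_{F})\cap\mathscr{D}_{+}(L_{F})^{\perp}=\{0\}$.

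Equip $\mathscr{H}(K):=W(\mathscr{D}_{+}(L_{F}))$ with the inner product pulled back through $W^{-1}$. By injectivity this is well defined, and by construction $W$ becomes unitary onto $\mathscr{H}(K)$, which inherits completeness from $\mathscr{D}_{+}(L_{F})$ and is therefore a bona fide Hilbert space of functions on $F$. The reproducing property then drops out:
\[
\bigl\langle K(\cdot,\rho),W\psi\bigr\rangle_{\mathscr{H}(K)}=\bigl\langle Wf_{+}^{(\rho)},W\psi\bigr\rangle_{\mathscr{H}(K)}=\bigl\langle f_{+}^{(\rho)},\psi\bigr\rangle_{\mathscr{H}_{2}}=(W\psi)(\rho),
\]
and point evaluation at $\theta\in F$ is continuous in $\mathscr{H}(K)$ by Cauchy--Schwarz, $|(W\psi)(\theta)|\le\|f_{+}^{(\theta)}\|_{\mathscr{H}_{2}}\|\psi\|_{\mathscr{H}_{2}}=K(\theta,\theta)^{1/2}\|W\psi\|_{\mathscr{H}(K)}$.

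The step I expect to demand the most care---more bookkeeping than genuine obstacle---is the case $|F|=\infty$: one must verify that the Hilbert space $\mathscr{H}(K)$ assembled as $W(\mathscr{D}_{+}(L_{F}))$ coincides, as a space of functions on $F$ with inner product, with the abstract RKHS produced by Moore--Aronszajn from $K$. This reduces to checking that the linear extension of $K(\cdot,\theta)\mapsto f_{+}^{(\theta)}$ is a well-defined isometry between the corresponding pre-Hilbert spans of kernel sections and defect vectors---which is, yet again, the Gramian identity (\ref{eq:dInner})---after which uniqueness of the RKHS with kernel $K$ closes the argument.
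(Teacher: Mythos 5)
Your proposal is correct and rests on exactly the same two ingredients as the paper's proof, namely the totality of the family $\{f_{+}^{(\theta)}\}_{\theta\in F}$ from Corollary~\ref{cor:Fdef} and the Gramian identity (\ref{eq:dInner}) identifying $\langle f_{+}^{(\theta)},f_{+}^{(\rho)}\rangle_{\mathscr{H}_{2}}$ with the Hurwitz kernel. The only cosmetic difference is direction: the paper builds the synthesis map $T\varphi=\sum_{\alpha}\varphi(\alpha)f_{+}^{(\alpha)}$ from finitely supported functions on $F$ and extends it by completion to an isometry onto $\mathscr{D}_{+}(L_{F})$, whereas you build the adjoint sampling map $W$ going the other way; these are the two standard presentations of the same Moore--Aronszajn identification.
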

\begin{proof}
For the theory of RKHS, see for example \cite{Nel57,Alp92,ABK02}.
In summary, given a set $F$ and a positive definite kernel $\{K(\alpha,\beta)\}_{(\alpha,\beta)\in F\times F}$
then the RKHS, $\mathscr{H}(K)$ is the completion of finitely supported
functions $\varphi$ on $F$, i.e., 
\begin{equation}
\varphi:F\rightarrow\mathbb{C}\label{eq:ff}
\end{equation}
in the pre-Hilbert inner product:
\begin{equation}
\left\langle \varphi,\psi\right\rangle _{\mathscr{H}(K)}:=\sum_{\alpha}\sum_{\beta}\,\overline{\varphi(\alpha)}\psi(\beta)K(\alpha,\beta).\label{eq:KH}
\end{equation}
The positive definite property in (\ref{eq:KH}) is the assertion
that
\begin{equation}
\left\langle \varphi,\varphi\right\rangle _{\mathscr{H}(K)}=\underset{F\times F}{\sum\sum}\,\,\overline{\varphi(\alpha)}\varphi(\beta)K(\alpha,\beta)\geq0\label{eq:pdf}
\end{equation}
for all finitely supported functions $\varphi$, see (\ref{eq:ff}).

To establish the theorem, take 
\begin{equation}
K(\alpha,\beta)=\sum_{n=0}^{\infty}\frac{e(n(\alpha-\beta))}{1+n^{2}}=Z(\alpha-\beta,1,2)\label{eq:KernelH}
\end{equation}
where the expression in (\ref{eq:KernelH}) is the Hurwitz zeta function. 

We will denote the Hurwitz zeta-function simply 
\begin{equation}
Z(x):=\sum_{n=0}^{\infty}\frac{e(nx)}{1+n^{2}}.\label{eq:HZf}
\end{equation}

Continue the proof now for $\mathscr{D}_{+}(L_{F})$ (the other case
is by the same argument), recall from Corollary \ref{cor:Fdef} that
if $\varphi$ is a finitely supported function on $F$ (see (\ref{eq:ff})
and (\ref{eq:defLF-1})) then
\begin{equation}
\left(T\varphi\right)(z)=\sum_{\alpha\in F}\varphi(\alpha)f_{+}^{(\alpha)}(z).\label{eq:Tvarphi}
\end{equation}
By (\ref{eq:Tvarphi}) and (\ref{eq:KH}), we conclude that
\[
\left\Vert T\varphi\right\Vert _{\mathscr{H}_{2}}^{2}=\left\Vert \varphi\right\Vert _{\mathscr{H}(K_{Hurwitz})}^{2}.
\]
But this means that $T$ in (\ref{eq:Tvarphi}) extends by closure
and completion to become an isometric isomorphism of the RKHS $\mathscr{H}(K)$
onto $\mathscr{D}_{+}(L_{F})\subset\mathscr{H}_{2}$.\end{proof}
\begin{lem}
\label{lem:real}Let $Z$ be as in (\ref{eq:dInner}), and write 
\begin{eqnarray}
Z(x) & = & \sum_{n=0}^{\infty}\frac{e(nx)}{1+n^{2}}\nonumber \\
 & = & \sum_{n=0}^{\infty}\frac{\cos(2\pi nx)}{1+n^{2}}+i\sum_{n=1}^{\infty}\frac{\sin(2\pi nx)}{1+n^{2}}\nonumber \\
 & = & f(x)+ig(x)\label{eq:HL}
\end{eqnarray}
where $f:=\Re(Z)$, and $g:=\Im(Z)$. Then 
\begin{equation}
f(x)=\frac{\pi}{2}\sum_{n\in\mathbb{Z}}e^{-2\pi\left|x+n\right|}+\frac{1}{2}.\label{eq:re0}
\end{equation}
Moreover,
\begin{equation}
\left(f\big|_{[0,1]}\right)(x)=\frac{\pi}{2}\frac{\cosh(2\pi(x-\frac{1}{2}))}{\sinh(\pi)}+\frac{1}{2}.\label{eq:re}
\end{equation}
Therefore, $f$ is the $1$-periodic extension of the RHS in (\ref{eq:re}).\end{lem}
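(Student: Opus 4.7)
The strategy is a classical Poisson-summation argument applied to the even function $h(x)=e^{-2\pi|x|}$. I would first compute its Fourier transform. Since $h$ is even,
$$\hat h(\xi)=2\int_0^\infty e^{-2\pi x}\cos(2\pi\xi x)\,dx,$$
and evaluating this elementary integral (integration by parts twice, or a standard Laplace-transform identity) gives $\hat h(\xi)=\dfrac{1}{\pi(1+\xi^2)}$.

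Since $h$ decays exponentially and $\hat h$ is summable on $\mathbb{Z}$, Poisson summation applies and yields
$$\sum_{n\in\mathbb{Z}}e^{-2\pi|x+n|}=\sum_{n\in\mathbb{Z}}\hat h(n)\,e^{2\pi i n x}=\frac{1}{\pi}+\frac{2}{\pi}\sum_{n=1}^{\infty}\frac{\cos(2\pi n x)}{1+n^2},$$
where I combined the $\pm n$ terms using evenness of $\hat h$. Comparing this with the definition $f(x)=1+\sum_{n\ge 1}\frac{\cos(2\pi n x)}{1+n^2}$ coming from (\ref{eq:HL}), and solving for $f(x)$, produces formula (\ref{eq:re0}).

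For the closed form (\ref{eq:re}), I would evaluate the periodic sum $\sum_{n\in\mathbb{Z}}e^{-2\pi|x+n|}$ directly on the fundamental domain $[0,1]$. Splitting into the ranges $n\ge 0$ (where $|x+n|=x+n$) and $n\le -1$ (where $|x+n|=-x-n$) produces two geometric series with common ratio $e^{-2\pi}$, summing to
$$\frac{e^{-2\pi x}+e^{2\pi(x-1)}}{1-e^{-2\pi}}.$$
Multiplying numerator and denominator by $e^{\pi}$ rewrites this as $\cosh(2\pi(x-\tfrac12))/\sinh(\pi)$, and substituting into (\ref{eq:re0}) gives (\ref{eq:re}) immediately; $1$-periodicity of the LHS of (\ref{eq:re0}) then justifies the last sentence of the lemma.

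No serious obstacle is anticipated. The only care needed is to verify the hypotheses of Poisson summation (both $h$ and $\hat h$ are in $L^1$ with enough decay for the pointwise identity to hold, for every $x$) and to keep the Fourier normalization consistent with $e(x)=e^{i2\pi x}$ used throughout the paper; beyond that, the computation is bookkeeping.
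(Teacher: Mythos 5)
Your proposal is correct and follows essentially the same route as the paper: both apply Poisson summation to $\psi(x)=e^{-2\pi|x|}$ with $\hat\psi(\lambda)=\frac{1}{\pi}\frac{1}{1+\lambda^{2}}$ to obtain (\ref{eq:re0}), and both evaluate the periodization on $[0,1]$ as a pair of geometric series to get the $\cosh/\sinh$ closed form (\ref{eq:re}). The only cosmetic difference is that the paper also invokes the cosine-series identity recorded in Remark \ref{rem:K} as an independent shortcut to (\ref{eq:re}), but the substance of the argument is identical.
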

\begin{proof}
From Remark \ref{rem:K}, we see that
\begin{equation}
f(x)=\sum_{n=0}^{\infty}\frac{\cos(2\pi nx)}{1+n^{2}}=\frac{\pi}{2}\frac{\cosh(2\pi(x-\frac{1}{2}))}{\sinh(\pi)}+\frac{1}{2},\quad x\in[0,1].\label{eq:srep}
\end{equation}

It is well-known that for causal sequences in $l^{2}(\mathbb{N}_{0})\simeq\mathscr{H}_{2}$,
the real and imaginary parts of the corresponding Fourier transform
are related via the Hilbert transform. Thus, we have
\begin{equation}
g(\theta)=\mbox{p.v.}\int_{0}^{1}f(t)\cot\left(\pi(\theta-t)\right)dt
\end{equation}
where $\cot(\pi(\theta-x))$ is the Hilbert-kernel. See Figure \ref{fig:L}
below.

\begin{figure}[H]
\includegraphics{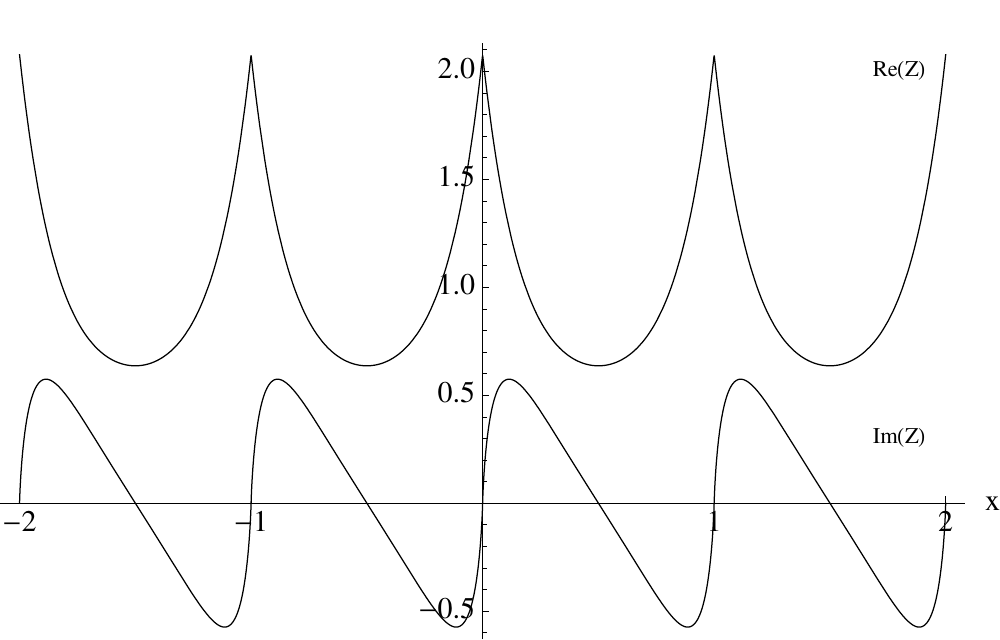}

\caption{\label{fig:L}The real and imaginary parts of the Hurwitz zeta-function
$Z(x)$. Note that $Z(x)$ is real-valued at $\mathbb{Z}/2$. }
\end{figure}

Let $\psi(x):=e^{-2\pi\left|x\right|}$, so that
\[
\hat{\psi}(\lambda)=\int_{-\infty}^{\infty}\psi(x)e^{-i2\pi\lambda x}dx=\frac{1}{\pi}\frac{1}{1+\lambda^{2}}.
\]
For any function $f$ on $\mathbb{R}$, define
\[
\left(\mbox{PER}f\right)(x):=\sum_{n\in\mathbb{Z}}f(x+n).
\]
It follows that (see \cite{BJ02})
\begin{eqnarray}
\left(\mbox{PER}\psi\right)(x) & = & \sum_{n\in\mathbb{Z}}\hat{\psi}(n)e(nx)\nonumber \\
 & = & \frac{1}{\pi}\sum_{n\in\mathbb{Z}}\frac{e(nx)}{1+n^{2}}=\frac{1}{\pi}+\frac{2}{\pi}\sum_{n=1}^{\infty}\frac{\cos(2\pi nx)}{1+n^{2}}.\label{eq:PERpsi}
\end{eqnarray}
Eq. (\ref{eq:re0}) follows from this. 

For $x\in[0,1]$, we also have
\begin{eqnarray}
\left(\mbox{PER}\psi\right)(x) & = & \sum_{n\in\mathbb{Z}}e^{-2\pi\left|x+n\right|}\nonumber \\
 & = & e^{-2\pi x}\sum_{n=0}^{\infty}e^{-2\pi n}+e^{2\pi x}\sum_{n=1}^{\infty}e^{-2\pi n}\nonumber \\
 & = & \frac{\cosh(2\pi(x-\frac{1}{2}))}{\sinh(\pi)}.\label{eq:temp}
\end{eqnarray}
Combine (\ref{eq:re0}) and (\ref{eq:temp}), we get the desired result
in (\ref{eq:re}). \end{proof}
\begin{lem}
The following Fourier integral identities hold: 
\begin{eqnarray}
\int_{0}^{\infty}\frac{\cos(2\pi\lambda x)}{1+\lambda^{2}} & d\lambda= & \pi e^{-2\pi\left|x\right|}\label{eq:rezetaa}\\
\int_{0}^{\infty}\frac{\sin(2\pi\lambda)}{1+\lambda^{2}}d\lambda & = & \frac{1}{2}\left(e^{-2\pi x}li\left(e^{2\pi x}\right)-e^{2\pi x}li\left(e^{-2\pi x}\right)\right)\label{eq:imzetaa}
\end{eqnarray}
where $li(x)$ is the logarithmic integral 
\begin{equation}
li(x)=\int_{0}^{x}\frac{dt}{\ln t},\quad x>0;\label{eq:logint}
\end{equation}
and for $0<x<1$, the RHS in (\ref{eq:logint}) denotes Cauchy principal
value. \end{lem}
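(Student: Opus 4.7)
The plan is to prove the two identities by classical complex-variable methods, treating (\ref{eq:rezetaa}) as a consequence of the Fourier inversion already carried out in the proof of Lemma \ref{lem:real}, and (\ref{eq:imzetaa}) via contour deformation that produces an exponential integral.

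For (\ref{eq:rezetaa}), I would recall the computation from the proof of Lemma \ref{lem:real} that the Fourier transform of $\psi(x) = e^{-2\pi|x|}$ equals $\hat\psi(\lambda) = \frac{1}{\pi(1+\lambda^2)}$. Fourier inversion then gives
\begin{equation*}
e^{-2\pi|x|} = \int_{-\infty}^{\infty} \frac{e^{i2\pi\lambda x}}{\pi(1+\lambda^{2})}\,d\lambda,
\end{equation*}
and since the integrand is an even function of $\lambda$ (only the cosine part survives), we recover (\ref{eq:rezetaa}) directly. An equivalent route is to evaluate $\int_{-\infty}^{\infty}\frac{e^{i2\pi\lambda x}}{1+\lambda^{2}}d\lambda$ by closing the contour in the upper (resp. lower) half-plane when $x>0$ (resp. $x<0$), picking up the residue at $\lambda = i$ (resp. $-i$).

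For (\ref{eq:imzetaa}) (where the intended integrand is $\sin(2\pi\lambda x)/(1+\lambda^{2})$), I would apply the partial-fraction decomposition $\frac{1}{1+\lambda^{2}} = \frac{1}{2i}\bigl(\frac{1}{\lambda-i}-\frac{1}{\lambda+i}\bigr)$ to the auxiliary integral $J(x) := \int_{0}^{\infty}\frac{e^{i2\pi\lambda x}}{1+\lambda^{2}}d\lambda$, whose imaginary part is the desired integral. In each piece, the substitution $u = 2\pi x(\lambda \mp i)$ converts it into a contour integral of $e^{iu}/u$ along a horizontal ray based at $\mp 2\pi i x$. Deforming each such ray back to the positive real axis (with a small semicircular detour around the pole at $u=0$) identifies the result with a value of the exponential integral $\mathrm{Ei}$. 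The substitution prefactors $e^{\mp 2\pi x}$ combine to give
\begin{equation*}
\Im J(x) \;=\; \tfrac{1}{2}\bigl(e^{-2\pi x}\,\mathrm{Ei}(2\pi x) - e^{2\pi x}\,\mathrm{Ei}(-2\pi x)\bigr),
\end{equation*}
and the change of variable $u=\ln t$ in (\ref{eq:logint}) gives $\mathrm{Ei}(y) = li(e^{y})$ (principal value for $0<e^{y}<1$), converting the expression to the right-hand side of (\ref{eq:imzetaa}).

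The main obstacle will be the bookkeeping in the contour deformation: the integrand $e^{iu}/u$ has a pole at the origin, and one has to verify that the real part of the contribution picked up by the semicircular detour is precisely $\tfrac{\pi}{2}e^{-2\pi|x|}$, consistent with (\ref{eq:rezetaa}), so that taking imaginary parts of $J(x)$ gives the clean formula above with no leftover real-valued terms. Once the principal-value convention for $li$ on $(0,1)$ is respected (as stated in the last clause of the lemma), all manipulations are absolutely convergent and the sign conventions match those in (\ref{eq:logint}).
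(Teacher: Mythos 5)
Your route is genuinely different from the paper's: the paper offers no argument for this lemma beyond the remark that (\ref{eq:rezetaa}) ``can be verified directly'' and a citation to \cite[page 67]{Boc59} for (\ref{eq:imzetaa}), whereas you supply an actual derivation. Your treatment of (\ref{eq:imzetaa}) --- partial fractions, the substitution $u=2\pi x(\lambda\mp i)$, deformation of the resulting horizontal rays onto the positive real axis with a principal-value detour at $u=0$, and the identification $\mathrm{Ei}(y)=li(e^{y})$ via $u=\ln t$ --- is the standard and correct way to arrive at the combination $\frac{1}{2}\left(e^{-2\pi x}li\left(e^{2\pi x}\right)-e^{2\pi x}li\left(e^{-2\pi x}\right)\right)$, and it makes the lemma self-contained where the paper merely points to a reference. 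You also correctly read the integrand of (\ref{eq:imzetaa}) as $\sin(2\pi\lambda x)/(1+\lambda^{2})$.

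There is, however, one concrete inconsistency you should resolve, concerning the constant in (\ref{eq:rezetaa}). Fourier inversion of $\hat{\psi}(\lambda)=\frac{1}{\pi(1+\lambda^{2})}$ gives $\int_{-\infty}^{\infty}\frac{\cos(2\pi\lambda x)}{1+\lambda^{2}}\,d\lambda=\pi e^{-2\pi\left|x\right|}$; since the integrand is even in $\lambda$, the integral over $[0,\infty)$ as written in (\ref{eq:rezetaa}) is \emph{half} of this, namely $\frac{\pi}{2}e^{-2\pi\left|x\right|}$, not $\pi e^{-2\pi\left|x\right|}$. Your first paragraph asserts that evenness ``recovers (\ref{eq:rezetaa}) directly,'' while your final paragraph states that the real part of $J(x)=\int_{0}^{\infty}\frac{e^{i2\pi\lambda x}}{1+\lambda^{2}}d\lambda$ --- which is exactly the left-hand side of (\ref{eq:rezetaa}) --- equals $\frac{\pi}{2}e^{-2\pi\left|x\right|}$. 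These two claims contradict each other, and the second is the correct value. The discrepancy reflects a typo in the statement of the lemma itself (either the integral should run over all of $\mathbb{R}$, or the right-hand side should be $\frac{\pi}{2}e^{-2\pi\left|x\right|}$, consistent with how the full-line transform is used in (\ref{eq:PERpsi})); your write-up should flag this explicitly rather than silently asserting both constants.
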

\begin{proof}
Eq. (\ref{eq:rezetaa}) can be verified directly. For (\ref{eq:imzetaa}),
see \cite[page 67]{Boc59}.\end{proof}
\begin{cor}
Let $Z$ be the Hurwitz zeta-function in (\ref{eq:dInner}), and let
$g=\Im(Z)$. Set 
\begin{equation}
\varphi(x):=\frac{1}{2}\left(e^{-2\pi x}li\left(e^{2\pi x}\right)-e^{2\pi x}li\left(e^{-2\pi x}\right)\right);\label{eq:imzetaaa}
\end{equation}
then 
\begin{equation}
g(x)=\left(\mbox{\emph{PER }\ensuremath{\varphi}}\right)(x)=\sum_{n\in\mathbb{N}}\varphi(x+n).\label{eq:perimzeta}
\end{equation}
\end{cor}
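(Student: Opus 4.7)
The plan is to derive the Fourier transform of $\varphi$ from the integral identity (\ref{eq:imzetaa}) and then invoke Poisson summation. This parallels Lemma \ref{lem:real}, where the real part $f=\Re(Z)$ was obtained by applying Poisson summation to $\psi(x)=e^{-2\pi|x|}$; here the odd counterpart $\varphi$ will play the role of $\psi$.

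First I would read (\ref{eq:imzetaa}) in its intended form (the sine-transform evaluated at a variable $x$, i.e.\ with $\sin(2\pi\lambda x)$ in the integrand, as is forced by the $x$-dependence on the right-hand side, matching (\ref{eq:imzetaaa})), giving
\[
\varphi(x)=\int_{0}^{\infty}\frac{\sin(2\pi\lambda x)}{1+\lambda^{2}}\,d\lambda,\qquad x\in\mathbb{R}.
\]
Writing $\sin(2\pi\lambda x)=(e(\lambda x)-e(-\lambda x))/(2i)$, splitting at $\lambda=0$, and substituting $\lambda\mapsto-\lambda$ in the $e(-\lambda x)$ piece extends this to a full Fourier inversion integral on $\mathbb{R}$, and identifies
\[
\widehat{\varphi}(\lambda)=\frac{-i\,\mathrm{sgn}(\lambda)}{2(1+\lambda^{2})},\qquad\lambda\in\mathbb{R}.
\]

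Next I would apply Poisson summation: formally,
\[
\sum_{n\in\mathbb{Z}}\varphi(x+n)=\sum_{k\in\mathbb{Z}}\widehat{\varphi}(k)\,e(kx)=\sum_{k=1}^{\infty}\frac{-i}{2(1+k^{2})}\bigl(e(kx)-e(-kx)\bigr)=\sum_{k=1}^{\infty}\frac{\sin(2\pi kx)}{1+k^{2}}=g(x),
\]
using $\widehat{\varphi}(0)=0$ and pairing $\pm k$. This is precisely the asserted formula $g=\mathrm{PER}\,\varphi$ (with the summation interpreted symmetrically over $\mathbb{Z}$, matching the definition of $\mathrm{PER}$ used in the proof of Lemma \ref{lem:real}).

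The main obstacle is convergence: using $li(y)\sim y/\ln y$ as $y\to 0^{+}$ and as $y\to+\infty$ one checks $\varphi(x)\sim1/(2\pi x)$ as $|x|\to\infty$, so $\varphi\notin L^{1}(\mathbb{R})$ and the classical Poisson summation formula does not apply at face value. I would handle this by symmetric truncation, exploiting that $\varphi$ is odd:
\[
\sum_{n=-N}^{N}\varphi(x+n)=\varphi(x)+\sum_{n=1}^{N}\bigl[\varphi(x+n)+\varphi(x-n)\bigr],
\]
and the leading $1/(2\pi n)$ contributions of $\varphi(x+n)$ and $\varphi(x-n)$ cancel, leaving an absolutely summable $O(1/n^{2})$ remainder; hence the principal-value sum converges uniformly on compact subsets of $\mathbb{R}\setminus\mathbb{Z}$. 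Alternatively, regularize by $\varphi_{\varepsilon}(x):=\varphi(x)e^{-\varepsilon|x|}\in L^{1}\cap L^{2}$, apply the classical Poisson formula to $\varphi_{\varepsilon}$, and pass to the limit $\varepsilon\downarrow 0$ using the uniform bound $|\widehat{\varphi_{\varepsilon}}(k)|\le 1/(2(1+k^{2}))$ on the Fourier side and Abel summation on the spatial side.
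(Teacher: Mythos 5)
Your proof is correct and follows the route the paper intends: the preceding lemma's Fourier integral identity exhibits $\varphi$ as the (odd, principal-value) inverse Fourier transform of $-i\,\mathrm{sgn}(\lambda)/(2(1+\lambda^{2}))$, and Poisson summation with symmetric truncation --- exactly the $\lim_{N\to\infty}\sum_{n=-N}^{N}$ that the paper's own proof and Figure \ref{fig:perzetai} display --- reproduces $g(x)=\sum_{k\geq1}\sin(2\pi kx)/(1+k^{2})$. The paper delegates the argument to the citation \cite{BJ02}, so your handling of the non-$L^{1}$ decay $\varphi(x)\sim1/(2\pi x)$ via the odd pairing $\varphi(x+n)+\varphi(x-n)=O(n^{-2})$ simply supplies detail the paper omits, and your symmetric reading of the sum (and of the misprinted integrand in (\ref{eq:imzetaa})) is the correct one.
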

\begin{proof}
See \cite{BJ02}. Figure \ref{fig:perzetai} below illustrates the
approximation 
\[
\lim_{N\rightarrow\infty}\sum_{n=-N}^{N}\varphi(x+n)=g(x)\left(=\Im Z\right).
\]

\end{proof}
\begin{figure}
\begin{tabular}{c}
\includegraphics[scale=0.9]{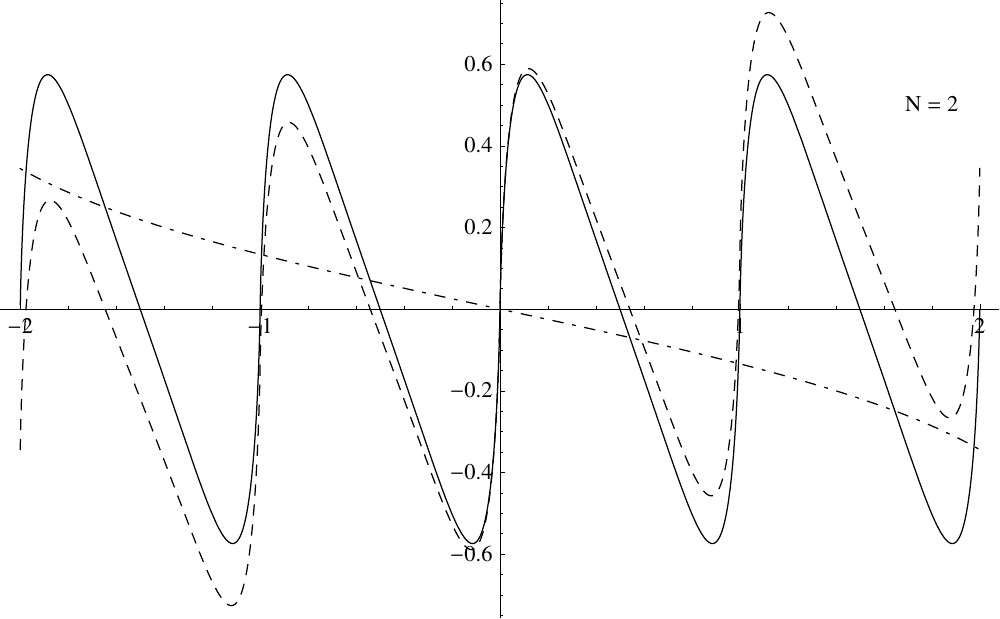}\tabularnewline
\includegraphics[scale=0.9]{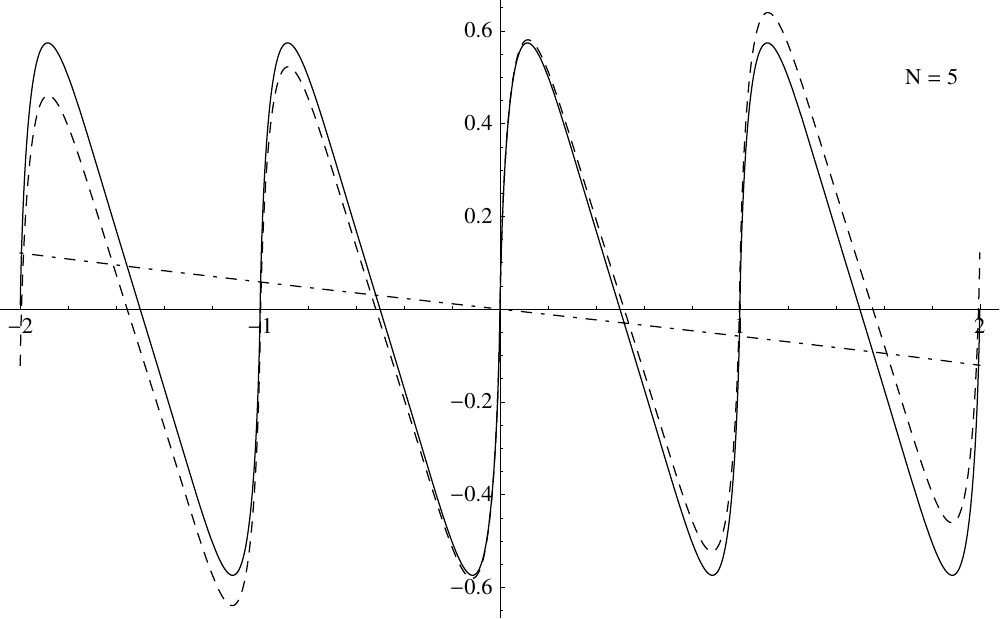}\tabularnewline
\includegraphics[scale=0.9]{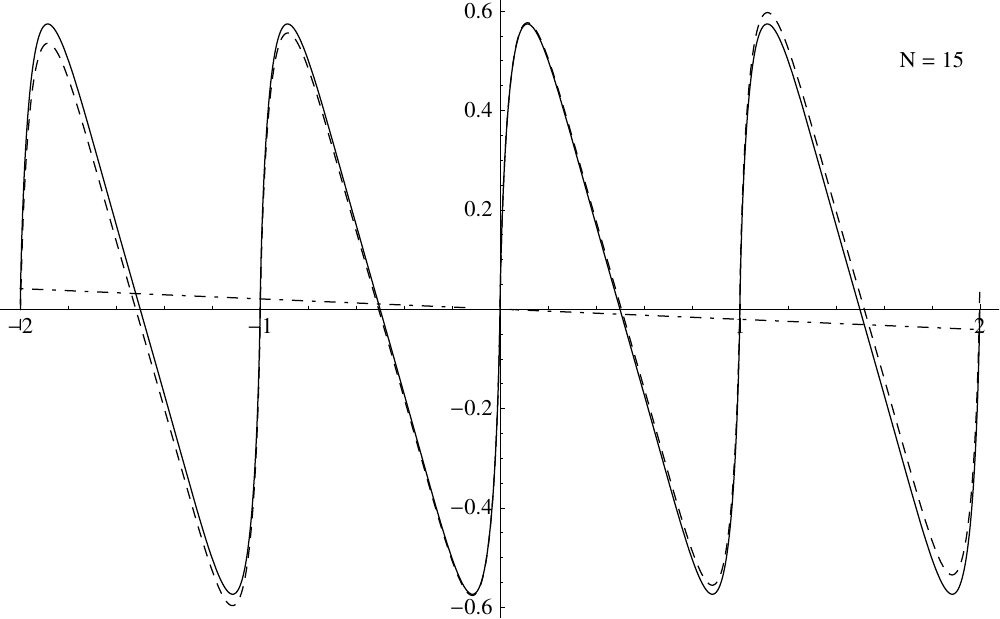}\tabularnewline
\tabularnewline
\end{tabular}

\caption{\label{fig:perzetai}$g(x)=\left(\mbox{PER}\varphi\right)(x)$. The
dashed line on the diagonal denotes the approximation error.}
\end{figure}

\begin{lem}
Hurwitz zeta-function is positive definite on $\mathbb{R}$, i.e.,
if $\varphi:\mathbb{R}\rightarrow\mathbb{C}$ is any finitely supported
function on $\mathbb{R}$, then
\begin{equation}
\sum_{x}\sum_{y}\overline{\varphi(x)}\varphi(y)Z(x-y)\geq0.\label{eq:Zpdf}
\end{equation}
\end{lem}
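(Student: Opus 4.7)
The plan is to exploit the Fourier-series structure of $Z$ directly. Writing the kernel as
\[
Z(x-y)=\sum_{n=0}^{\infty}\frac{e(nx)\,\overline{e(ny)}}{1+n^{2}},
\]
and noting that, since $\varphi$ is finitely supported and $|e(nx)|=1$, the iterated series
\[
\sum_{x,y}\sum_{n\ge 0}\frac{\overline{\varphi(x)}\,\varphi(y)\,e(nx)\,\overline{e(ny)}}{1+n^{2}}
\]
is absolutely summable, I may freely swap summation orders.

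Next I would introduce the finite partial trigonometric sums
\[
a_{n}:=\sum_{x}\varphi(x)\,\overline{e(nx)}, \qquad n\in\mathbb{N}_{0},
\]
so that $\sum_{x}\overline{\varphi(x)}\,e(nx)=\overline{a_{n}}$. Substituting and separating the $x$- and $y$-sums, the double sum in \eqref{eq:Zpdf} becomes
\[
\sum_{n=0}^{\infty}\frac{1}{1+n^{2}}\,\overline{a_{n}}\,a_{n}=\sum_{n=0}^{\infty}\frac{|a_{n}|^{2}}{1+n^{2}}\ge 0,
\]
which is the stated positive-definiteness. This is the entire proof; there is no genuine obstacle, only the bookkeeping of the swap of summations, which is immediate from absolute convergence.

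As a conceptual remark, I would note that this lemma is already implicit in Theorem \ref{thm:defLF}: given any finitely supported $\varphi$ on $\mathbb{R}$, one may assume its support $F_{0}$ is a finite, hence closed, subset of a period interval identified with a finite subset of $\mathbb{T}$ of measure zero; then the kernel $Z(\alpha-\beta)$ on $F_{0}\times F_{0}$ is precisely the reproducing kernel $K(\alpha,\beta)$ in \eqref{eq:KernelH} of the RKHS $\mathscr{H}(K)\simeq\mathscr{D}_{+}(L_{F_{0}})$, and is therefore positive definite by the general theory of reproducing kernels. The direct Fourier computation above makes the non-negative spectral weights $(1+n^{2})^{-1}$ visible and exhibits the kernel as an average of the rank-one positive-definite kernels $(x,y)\mapsto e(n(x-y))$, in line with Bochner's theorem for positive-definite functions on $\mathbb{R}$.
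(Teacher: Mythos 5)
Your proof is correct and is essentially the paper's own argument: the paper likewise expands $Z(x-y)$ via its Fourier series, exchanges the (absolutely convergent) sums, and reduces the quadratic form to $\sum_{n\ge 0}\bigl|\sum_{x}\varphi(x)e(nx)\bigr|^{2}/(1+n^{2})\ge 0$. Your added justification of the interchange and the closing remark tying the lemma to Theorem \ref{thm:defLF} are consistent with, but not different in substance from, what the paper does.
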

\begin{proof}
Computation of the double-sum in (\ref{eq:Zpdf}) yields
\begin{equation}
\sum_{n=0}^{\infty}\frac{1}{1+n^{2}}\left|\sum_{x}\varphi(x)\, e(nx)\right|^{2}\geq0.\label{eq:HZpos}
\end{equation}

\end{proof}
The next results yield a representation of all the partial isometries
between the two defect spaces determined by some chosen and fixed
finite subset $F$ of $\mathbb{T}$, as in Corollary \ref{cor:finite}.
But, by von Neumann\textquoteright{}s classification (Lemma \ref{lem:vN def-space}),
this will then also be a representation of all the selfadjoint extensions
of the basic Hermitian operator $L_{F}$ in (\ref{eq:Ln}) determined
by the set $F$. If the cardinality of $F$ is $m$, then the operator
$L_{F}$ has deficiency indices $(m,m)$, and the partial isometries
map between $m$-dimensional deficiency-spaces.
\begin{cor}
Let $Z$ be the Hurwitz zeta function from (\ref{eq:HL}), and let
$F\subset\mathbb{T}$ be a finite subset. Let
\begin{equation}
L_{F}:=H_{\{f\in\mathscr{H}_{2}\:\big|\: z\frac{d}{dz}f\in\mathscr{H}_{2},\, f=0\mbox{ on }F\}}.\label{eq:defLFF}
\end{equation}
Then the partial isometries $U_{F}$ between the two deficiency spaces
$\mathscr{D}_{\pm}(L_{F})$ from the von Neumann decomposition (\ref{eq:vN2})
in Lemma \ref{lem:vN def-space} are in bijective correspondence with
$\#F\times\#F$ complex matrices $\left(M_{\alpha,\beta}\right)_{(\alpha,\beta)\in F\times F}$
satisfying
\begin{equation}
\underset{(\gamma,\xi)\in F\times F}{\sum\sum}\overline{M}_{\gamma,\alpha}Z(\gamma-\xi)M_{\xi,\beta}=Z(\alpha-\beta)\label{eq:defmat}
\end{equation}
for all $(\alpha,\beta)\in F\times F$. \end{cor}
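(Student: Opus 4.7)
The plan is to use the RKHS structure established in Theorem \ref{thm:defLF} together with the explicit generating systems $\{f_\pm^{(\alpha)}\}_{\alpha \in F}$ from Corollary \ref{cor:Fdef} to reduce the isometry condition on $U_F$ to a matrix identity indexed by $F \times F$.

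First I would verify that, since $F$ is finite with $m := \#F$, each collection $\{f_\pm^{(\alpha)}\}_{\alpha \in F}$ is actually a basis of the corresponding defect space $\mathscr{D}_\pm(L_F)$. Equivalently, the Gram matrix $G := [Z(\alpha - \beta)]_{\alpha,\beta \in F}$ is strictly positive definite. Using the identity (\ref{eq:HZpos}), vanishing of $\langle \varphi, \varphi \rangle_{\mathscr{H}(K)}$ for a function $\varphi$ supported on $F$ forces $\sum_{\alpha \in F} \varphi(\alpha) e(n\alpha) = 0$ for every $n \in \mathbb{N}_0$; restricting to $n = 0, 1, \ldots, m-1$ gives a Vandermonde system in the distinct scalars $\{e(\alpha)\}_{\alpha \in F}$, forcing $\varphi \equiv 0$. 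This confirms $\dim \mathscr{D}_\pm(L_F) = m$ in agreement with Corollary \ref{cor:finite}, and that the spanning families are bases.

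Next I would parameterize an arbitrary partial isometry $U_F \in \mathscr{C}(L_F)$ whose initial space is all of $\mathscr{D}_+(L_F)$ (automatically a unitary onto $\mathscr{D}_-(L_F)$ since the dimensions agree) by
\[
U_F f_+^{(\alpha)} \;=\; \sum_{\gamma \in F} M_{\gamma,\alpha}\, f_-^{(\gamma)}, \qquad \alpha \in F,
\]
where $M = (M_{\gamma,\alpha})$ is uniquely determined by the basis property of step one. A direct computation using the inner-product formula $\langle f_-^{(\gamma)}, f_-^{(\xi)} \rangle_{\mathscr{H}_2} = Z(\gamma - \xi)$ (verified as in (\ref{eq:dInner}) directly from the coefficient formula (\ref{eq:defLF-1})) then yields
\[
\langle U_F f_+^{(\alpha)}, U_F f_+^{(\beta)} \rangle \;=\; \sum_{\gamma,\xi \in F} \overline{M_{\gamma,\alpha}}\, Z(\gamma - \xi)\, M_{\xi,\beta}.
\]
The isometry requirement $\langle U_F f_+^{(\alpha)}, U_F f_+^{(\beta)} \rangle = \langle f_+^{(\alpha)}, f_+^{(\beta)} \rangle = Z(\alpha - \beta)$ for every $\alpha,\beta \in F$ is then exactly (\ref{eq:defmat}). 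Because $\{f_+^{(\alpha)}\}$ is a basis, checking this identity on basis pairs is equivalent to $U_F^* U_F = I$ on $\mathscr{D}_+(L_F)$, and the dimensional match forces $U_F U_F^* = I$ on $\mathscr{D}_-(L_F)$. Conversely any $M$ satisfying (\ref{eq:defmat}) defines via the displayed formula a unique linear map $U_F$ which, by the same computation, is an isometry.

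The main obstacle I anticipate is purely bookkeeping: tracking the sesquilinearity convention (the paper's inner product is linear in the second slot) and certifying that the positivity of $G$ coming from (\ref{eq:HZpos}) is \emph{strict}, which is exactly what makes $U_F \longleftrightarrow M$ bijective. In compact form, (\ref{eq:defmat}) reads $M^* G M = G$ with $G$ the positive definite Hurwitz--Gram matrix; via a Cholesky factorization $G = L^* L$, the admissible matrices $M$ are conjugate through $L$ to the standard unitary group $U(m)$, which identifies the compact Lie group $G(F)$ promised in the introduction and completes the asserted bijection.
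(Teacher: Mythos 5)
Your proposal is correct and follows essentially the same route as the paper: parameterize $U_{F}$ on the generating system $\{f_{+}^{(\alpha)}\}_{\alpha\in F}$ by a matrix $M$, compute inner products via the Hurwitz--zeta Gram identity (\ref{eq:norm}), and read off (\ref{eq:defmat}) from the isometry requirement. The only addition is that you front-load the strict positive definiteness of the Gram matrix via the Vandermonde argument (needed to make the correspondence $U_{F}\leftrightarrow M$ genuinely bijective), a step the paper supplies only in the proof of Corollary \ref{cor:lie}; this is a legitimate and arguably cleaner ordering, not a different method.
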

\begin{proof}
In Theorem \ref{thm:defLF}, we showed that each of the two deficiency
spaces $\mathscr{D}_{\pm}(L_{F})$ is an isomorphic image of the same
RKHS, the one from the kernel
\begin{equation}
K_{Z}(\alpha,\beta)=Z(\alpha-\beta)\label{eq:KLL}
\end{equation}
where $Z=Z_{Hurwitz}$ is the Hurwitz zeta-function. Hence a partial
isometry $U_{F}:\mathscr{D}_{+}(L_{F})\rightarrow\mathscr{D}_{-}(L_{F})$,
onto, will be acting on functions $\varphi$ on $F$ via the representation
(\ref{eq:Tvarphi})
\begin{equation}
U_{F}(\sum_{\alpha\in F}\varphi(\alpha)f_{+}^{(\alpha)})=\sum_{\alpha\in F}\left(M\varphi\right)(\alpha)f_{-}^{(\alpha)}\label{eq:isoF}
\end{equation}
where $M\varphi$ on the RHS in (\ref{eq:isoF}) has the following
matrix-representation:
\begin{equation}
\left(M\varphi\right)(\alpha)=\sum_{\beta\in F}M_{\alpha,\beta}\varphi(\beta).\label{eq:MF}
\end{equation}
But we are also viewing $M$ as an operator in $l^{2}(F)$ which is
finite-dimensional since $F$ is assumed finite. 

Substituting (\ref{eq:MF}) into (\ref{eq:isoF}), and unravelling
the isometric property of $U_{F}$, the desired conclusion (\ref{eq:defmat})
follows. To see this, notice (from Theorem \ref{thm:defLF}) that
\begin{equation}
\left\Vert \sum_{\alpha\in F}\varphi(\alpha)f_{+}^{(\alpha)}\right\Vert _{\mathscr{H}_{2}}^{2}=\underset{(\alpha,\beta)\in F\times F}{\sum\sum}\overline{\varphi(\alpha)}\varphi(\beta)Z(\alpha-\beta).\label{eq:norm}
\end{equation}

\end{proof}

\begin{cor}
\label{cor:lie}Let $F\subset\mathbb{T}$ be a finite subset (of distinct
points), $\#F=m$, and let 
\begin{equation}
K_{F}(\alpha,\beta):=Z(\alpha-\beta)\quad,(\alpha,\beta)\in F\times F\label{eq:KFab}
\end{equation}
be the corresponding kernel defined from restricting the Hurwitz zeta-funciton
$Z$. 
\begin{enumerate}
\item Then $K_{F}(\cdot,\cdot)$ is (strictly) positive definite on the
vector space $V_{F}=\mathbb{C}^{F}=$ all complex-valued functions
on $F$, i.e., $K_{F}(\cdot,\cdot)$ has rank $\#F$. 
\item The $(\#F)\times(\#F)$ complex matrices $M$ satisfying (\ref{eq:defmat})
form a compact Lie group $G(F)$ of transformations in $V_{F}=\mathbb{C}^{F}$. 
\end{enumerate}
\end{cor}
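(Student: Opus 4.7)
The plan is to derive both assertions from the Fourier identity (\ref{eq:HZpos}), combined with a Vandermonde argument for part (1) and a Cholesky-type factorization plus conjugation for part (2).

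For part (1), I would start from
\[
\sum_{\alpha,\beta \in F} \overline{\varphi(\alpha)}\,\varphi(\beta)\, Z(\alpha-\beta) \;=\; \sum_{n=0}^{\infty} \frac{1}{1+n^{2}} \bigl|\sum_{\alpha \in F} \varphi(\alpha)\, e(n\alpha)\bigr|^{2}
\]
and observe that if the left side vanishes for some $\varphi \in V_F$, then each term on the right must vanish, giving $\sum_{\alpha \in F} \varphi(\alpha)\, e(n\alpha) = 0$ for every $n \in \mathbb{N}_0$. Restricting to the indices $n = 0, 1, \ldots, m-1$, where $m = \#F$, produces a homogeneous linear system in the unknowns $\{\varphi(\alpha)\}_{\alpha\in F}$ whose coefficient matrix is Vandermonde in the $m$ nodes $\{e(\alpha)\}_{\alpha \in F} \subset \mathbb{T}$. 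Because the points of $F$ are distinct, so are their exponentials, and the Vandermonde determinant $\prod_{\alpha \neq \beta}(e(\beta) - e(\alpha))$ is non-zero, forcing $\varphi \equiv 0$. This establishes strict positive definiteness and rank $m$ for $K_F$.

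For part (2), I would rewrite (\ref{eq:defmat}) in matrix form as $M^{*} K_F M = K_F$, where $K_F$ now denotes the $m \times m$ Gram matrix with entries $Z(\alpha-\beta)$. Using part (1), I would factor $K_F = A^{*} A$ with $A \in GL(m,\mathbb{C})$ (for instance via the positive square root $A = K_F^{1/2}$); the defining relation then reads $(AMA^{-1})^{*}(AMA^{-1}) = I$. Thus conjugation by $A$ carries $G(F)$ bijectively onto the unitary group $U(m) \subset GL(m,\mathbb{C})$, and is a smooth group isomorphism. Pulling back the compact Lie group structure of $U(m)$ exhibits $G(F)$ as a compact Lie subgroup of $GL(m,\mathbb{C})$ acting on $V_F$; compactness follows automatically since $G(F)$ is the image of a compact set under a homeomorphism of $GL(m,\mathbb{C})$.

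The hard part will be part (1); once strict positive definiteness is in place, part (2) is standard finite-dimensional linear algebra. The key ingredient for part (1) is the separation of the $m$ points of $F$ by the characters $e(n\cdot)$, $n = 0, 1, \ldots, m-1$, which is exactly the Vandermonde non-degeneracy. A coordinate-free alternative would invoke Theorem \ref{thm:defLF} directly: the RKHS $\mathscr{H}(K_F)$ is isometric to $\mathscr{D}_{+}(L_F) \subset \mathscr{H}_2$, so strict positivity of the kernel is equivalent to linear independence of the defect functions $\{f_+^{(\alpha)}\}_{\alpha \in F}$, which can also be read off from their explicit power-series expansions (\ref{eq:defLF-1}).
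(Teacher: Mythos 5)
Your proposal is correct and follows essentially the same route as the paper: strict positive definiteness of $K_F$ is obtained by combining the Fourier-expansion identity (\ref{eq:HZpos}) with the non-vanishing of the Vandermonde determinant in the distinct nodes $e(\alpha)$, $\alpha\in F$. The only difference is in part (2), where the paper simply cites general Lie theory for the compact-group conclusion, while you make it explicit by factoring $K_F=A^{*}A$ and conjugating $G(F)$ onto $U(m)$ --- a harmless and indeed slightly more informative elaboration of the same fact.
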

\begin{proof}
Set $m:=\#F$. The key step in the proof is the assertion that the
sesquilinear form $K_{F}(\cdot,\cdot)$ in (\ref{eq:KFab}) has full
rank, i.e., that its eigenvalues are all strictly positive. 

Then it follows from Lie theory (see e.g., \cite{Hel08}) that
\begin{equation}
G(F)=\{M\:\big|\: m\times m\mbox{ complex matrix s.t. }(\ref{eq:defmat})\mbox{ holds}\}\label{eq:GF}
\end{equation}
is a compact Lie group as stated.

It follows from (\ref{eq:Zpdf}) and (\ref{eq:norm}) that $K_{F}(\cdot,\cdot)$
is positive semi-definite. To show that it has full rank $=m$, we
must check that if $\varphi\in V_{F}=\mathbb{C}^{F}$ satisfying 
\begin{equation}
\sum_{\beta\in F}K_{F}(\alpha,\beta)\varphi(\beta)=0,\quad\forall\alpha\in F\label{eq:cpdf}
\end{equation}
then $\varphi=0$. 

Let $\varphi\in V_{F}$ satisfying (\ref{eq:cpdf}). Using (\ref{eq:norm}),
note that (\ref{eq:cpdf}) implies
\begin{equation}
\sum_{\beta\in F}Z(\alpha-\beta)\varphi(\beta)=0,\quad\forall\alpha\in F,\label{eq:cpdf1}
\end{equation}
and therefore, by (\ref{eq:HZpos})
\begin{equation}
\sum_{\beta\in F}\varphi(\beta)e(n\beta)=0,\quad\forall n\in\mathbb{N}_{0}.\label{eq:cpdf2}
\end{equation}
Now index the points $\{\beta\}$ in $F$ as follows $\beta_{1},\ldots,\beta_{m}$,
with corresponding $\zeta_{j}:=e(\beta_{j})=e^{i2\pi\beta_{j}},$$1\leq j\leq m$;
and set $\mathbb{N}_{m}:=\{0,1,2,\ldots,m-1\}$; then the matrix $\left(e(n\beta_{j})\right){}_{1\leq j\leq m,\: n\in\mathbb{N}_{m}}$
is a Vandermonde matrix
\begin{equation}
\left[\begin{array}{ccccc}
1 & 1 & \cdots & \cdots & 1\\
\zeta_{1} & \zeta_{2} & \cdots & \cdots & \zeta_{m}\\
\zeta_{1}^{2} & \zeta_{2}^{2} & \cdots & \cdots & \zeta_{m}^{2}\\
\vdots & \vdots & \vdots & \vdots & \vdots\\
\zeta_{1}^{m-1} & \zeta_{2}^{m-1} & \cdots & \cdots & \zeta_{m}^{m-1}
\end{array}\right]\label{eq:vmatrix}
\end{equation}
with determinant
\[
\prod_{1\leq j<k\leq m}\left(\zeta_{k}-\zeta_{j}\right)\neq0.
\]

Hence, translating back to the sesquilinear form $K_{F}$, we conclude
that $K_{F}$ is strictly positive definite, and that, therefore $G(F)$
is a compact Lie group of $m\times m$ complex matrices.
\end{proof}
We proved that whenever a finite subset $F\subset\mathbb{T}$ is chosen
as above, and if $M$ is an element in the corresponding Lie group
$G(F)$, then there is a unique selfadjoint extension $H_{M}$ corresponding
to the partial isometry induced by $M$, acting between the two deficiency
spaces for $L_{F}$. In the next result we compute the spectrum of
$H_{M}$. Each $H_{M}$ has pure point spectrum as $L_{F}$ has finite
deficiency indices $(m,m)$ where $m=(\#F)$. Since for finite index
all selfadjoint extensions have the same essential spectrum \cite{AG93};
and as a result we have pure point-spectrum.
\begin{example}
Let the closed subset $F$ of $\partial\mathbb{D}$ consist of the
two points $z_{\pm}=\pm1$. Then the compact group $G(F)$ from Corollary
\ref{cor:lie} is (up to conjugacy) the group of $2\times2$ complex
matrices preserving the quadratic form
\begin{equation}
\mathbb{C}^{2}\ni(z_{1},z_{2})\mapsto K_{ev}\left|z_{1}\right|^{2}+K_{odd}\left|z_{2}\right|^{2}\label{eq:GFF}
\end{equation}
where
\begin{equation}
\begin{cases}
K_{ev} & :={\displaystyle \sum_{n=0}^{\infty}\frac{1}{1+(2n)^{2}}},\quad\mbox{and}\\
\\
K_{odd} & :={\displaystyle \sum_{n=0}^{\infty}\frac{1}{1+(1+2n)^{2}}};
\end{cases}\label{eq:Ksplit}
\end{equation}
i.e., the splitting of the summation ${\displaystyle \sum_{k\in\mathbb{N}_{0}}=\frac{1}{2}\left(1+\pi\coth(\pi)\right)}$
into even and odd parts.\end{example}
\begin{proof}
Computation of the Hurwitz zeta-function at the two points $F=\{\pm1\}$
yields the two numbers $K_{ev}$ and $K_{odd}$ in (\ref{eq:Ksplit}).

Note $0<K_{odd}<K_{ev}$. Hence when the matrix $K_{Z_{F}}$ in (\ref{eq:KernelH})
is computed for $F=\{\pm1\}$, we get for eigenvalues the two numbers
in (\ref{eq:Ksplit}). The corresponding system of normalized eigenvectors
in $\mathbb{C}^{2}$ is 
\[
\left\{ \frac{1}{\sqrt{2}}\left(\begin{array}{c}
1\\
1
\end{array}\right),\frac{1}{\sqrt{2}}\left(\begin{array}{c}
1\\
-1
\end{array}\right)\right\} .
\]
The assertion in (\ref{eq:Ksplit}) follows from this.\end{proof}
\begin{cor}
\label{cor:Liesp}Let $F\subset\mathbb{T}$ be finite, and let $M\in G(F)$
where $G(F)$ is the Lie group from Corollary \ref{cor:lie}. Then
$\lambda\in\mathbb{R}$ is in the spectrum of the selfadjoint extension
$H_{M}$ if and only if there is some $\psi\in\mathbb{C}^{F}$(a complex
valued function on $F$) such that $\lambda$ is a root in the following
function
\begin{equation}
F_{M}(\lambda):=\sum_{k\in\mathbb{N}_{0}}\sum_{\alpha\in F}e(k\alpha)\frac{\psi(\alpha)(\lambda-i)(k+i)+\left(M\psi\right)(\alpha)(\lambda+i)(k-i)}{\left(k-\lambda\right)\left(k^{2}+1\right)}.\label{eq:liesp}
\end{equation}
Moreover, we have
\begin{equation}
\frac{dF_{M}}{d\lambda}(\lambda)=\sum_{k\in\mathbb{N}_{0}}\sum_{\alpha\in F}e(k\alpha)\frac{\psi(\alpha)+\left(M\psi\right)(\alpha)}{\left(k-\lambda\right)^{2}}.\label{eq:dFM}
\end{equation}

It follows that the selfadjoint extension $H_{M}$ has the same qualitative
spectral configuration as we described in our results from section
\ref{sec:spH}, which deal only with the special case of deficiency
indices $(1,1)$. From our spectral generating function $F_{M}$ and
its derivative, given above, it follows that the spectral picture
in the $(m,m)$ case is qualitatively the same, now for $m>1$, as
we found in section \ref{sec:spH} in the special case of $m=1:$
Only point-spectrum\emph{:} and when one of the selfadjoint extensions
$H_{M}$ is fixed, we get eigenvalues distributed in each of the intervals
$(-\infty,0)$, and $[n,n+1)$ for $n\in\mathbb{N}_{0}$. But excluding
$(-\infty,0)$ for the case of the Friedrichs extension.\end{cor}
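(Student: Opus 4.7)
The plan is to mirror the proof of Theorem~\ref{thm:eigEqn}, generalizing from the $(1,1)$ case to the $(m,m)$ case where $m=\#F$. The overall strategy is: (i) use the von Neumann parameterization of Lemma~\ref{lem:vN def-space} to describe an arbitrary element of $\mathscr{D}(H_M)$; (ii) use the RKHS identification of $\mathscr{D}_\pm(L_F)$ from Theorem~\ref{thm:defLF} together with Corollary~\ref{cor:Fdef} to coordinatize defect vectors by $\psi\in\mathbb{C}^F$; (iii) solve the eigenvalue equation $H_Mx=\lambda x$ coordinate-by-coordinate in the $l^2(\mathbb{N}_0)\simeq\mathscr{H}_2$ model; and (iv) impose the boundary conditions defining $\mathscr{D}(L_F)$ to obtain the characterizing equation.

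Concretely, I would write $x=\varphi_0+f_++U_Mf_+$ with $\varphi_0\in\mathscr{D}(L_F)$ and $f_+=\sum_{\alpha\in F}\psi(\alpha)f_+^{(\alpha)}\in\mathscr{D}_+(L_F)$. Because $K_F$ has full rank (Corollary~\ref{cor:lie}), this parameterization is a bijection between $\mathbb{C}^F$ and $\mathscr{D}_+(L_F)$, and by~(\ref{eq:isoF}) we have $U_Mf_+=\sum_{\alpha\in F}(M\psi)(\alpha)f_-^{(\alpha)}$. Using $L_F^*f_\pm^{(\alpha)}=\pm if_\pm^{(\alpha)}$, the eigenvalue equation becomes
\[
L_F\varphi_0+if_+-iU_Mf_+=\lambda(\varphi_0+f_++U_Mf_+).
\]
Equating $k$-th Fourier coefficients, with $\widehat{f_\pm^{(\alpha)}}(k)=e(-k\alpha)/(k\mp i)$ from Corollary~\ref{cor:Fdef}, I solve for $(\varphi_0)_k$ as the rational expression
\[
(\varphi_0)_k=\sum_{\alpha\in F}e(-k\alpha)\,\frac{\psi(\alpha)(\lambda-i)(k+i)+(M\psi)(\alpha)(\lambda+i)(k-i)}{(k-\lambda)(k^2+1)},
\]
exactly as in the coordinate-wise derivation leading to~(\ref{eq:F-1}).

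Next I impose the constraint $\varphi_0\in\mathscr{D}(L_F)$, which by~(\ref{eq:Df}) and the boundary-value correspondence of Lemma~\ref{lem:Hardy} reads $\tilde\varphi_0(\beta)=\sum_k(\varphi_0)_ke(k\beta)=0$ for every $\beta\in F$. After substitution this becomes a system of $\#F$ linear equations in the unknowns $\psi\in\mathbb{C}^F$, and the scalar function $F_M(\lambda)$ displayed in the corollary is (up to the Fourier-exponent sign convention inherited from Corollary~\ref{cor:Fdef}) the generating function of this system; a nontrivial $\psi$ exists precisely when $\lambda$ is an eigenvalue of $H_M$. For the derivative formula~(\ref{eq:dFM}), the key algebraic identity is
\[
(\lambda-i)(k+i)\psi(\alpha)+(\lambda+i)(k-i)(M\psi)(\alpha)=(\lambda k+1)(\psi+M\psi)(\alpha)+i(\lambda-k)(\psi-M\psi)(\alpha),
\]
after which the second summand, divided by $(k-\lambda)(k^2+1)$, becomes $\lambda$-independent and drops out under $d/d\lambda$; the first summand uses $\tfrac{d}{d\lambda}\tfrac{\lambda k+1}{k-\lambda}=\tfrac{k^2+1}{(k-\lambda)^2}$, so the $(k^2+1)$ factors cancel and leave the stated expression.

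Finally, the qualitative spectral picture follows as in Theorem~\ref{thm:Spectrum-L_t}: the derivative formula~(\ref{eq:dFM}) is positive between the simple poles of $F_M$ at $\lambda=k\in\mathbb{N}_0$, so $F_M$ is monotone on each such interval, giving precisely one eigenvalue in each of the intervals $(-\infty,0)$ and $(n,n+1)$ for $n\in\mathbb{N}_0$, with $(-\infty,0)$ excluded exactly when $M$ corresponds to the Friedrichs extension. The main obstacle I anticipate is the careful accounting of the $\#F$ boundary conditions: in the $(1,1)$ case this collapses to a single scalar equation, but for $m>1$ one must simultaneously track $m$ linear equations in $\psi$, argue that the resulting determinantal eigenvalue condition is equivalent to the single scalar statement displayed in the corollary after a suitable normalization of $\psi$ within the $m$-dimensional kernel, and rule out the trivial solution $\psi=0$ (which would make $\varphi_0\equiv0$ and so fail to produce a genuine eigenvector of $H_M$).
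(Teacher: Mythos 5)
Your proposal follows essentially the same route as the paper: von Neumann's parameterization $f=g+f_{+}(\psi)+f_{-}(M\psi)$, coordinate-wise solution of $H_{M}f=\lambda f$ for the $\mathscr{D}(L_{F})$-component, and the same cancellation of the factor $(k^{2}+1)$ in the derivative computation (your identity $(\lambda-i)(k+i)=\lambda k+1+i(\lambda-k)$ is just a reorganization of the paper's $A,B$ substitution). Your closing caveat about having to track all $\#F$ boundary conditions $\tilde{g}(\beta)=0$, $\beta\in F$, rather than the single scalar equation displayed, is well taken --- the paper's own proof glosses over exactly this point --- but it does not change the method.
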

\begin{proof}
Now the partial isometries $U{}_{M}:\mathscr{D}_{+}(L_{F})\rightarrow\mathscr{D}_{-}(L_{F})$
from Lemma \ref{lem:vN def-space} are given by $\mathbb{C}^{F}\ni\varphi\mapsto M\varphi\in\mathbb{C}^{F}$
via the formula (\ref{eq:defmat}) from Corollary \ref{cor:lie},
where $M\in G(F)$. For $\psi\in\mathbb{C}^{F}$, set
\begin{equation}
f_{+}(\psi)=\sum_{\alpha\in F}\psi(\alpha)f_{+}^{(\alpha)}=\sum_{\alpha\in F}\left(\sum_{k\in\mathbb{N}_{0}}\frac{\psi(\alpha)e(k\alpha)}{k-i}z^{k}\right)\in\mathscr{D}_{+}(L_{F});\label{eq:fp}
\end{equation}
and 
\begin{equation}
f_{-}(\psi)=\sum_{\alpha\in F}\psi(\alpha)f_{-}^{(\alpha)}=\sum_{\alpha\in F}\left(\sum_{k\in\mathbb{N}_{0}}\frac{\psi(\alpha)e(k\alpha)}{k+i}z^{k}\right)\in\mathscr{D}_{-}(L_{F}).\label{eq:fm}
\end{equation}
Using now the characterization of the selfadjoint extensions $H_{M}$
($M\in G(F)$) of the initial operator $L_{F}$, we get:
\begin{equation}
H_{M}(g+f_{+}(\psi)+f_{-}(M\psi))=Lg+i\left(f_{+}(\psi)-f_{-}(M\psi)\right)\label{eq:ffext}
\end{equation}
valid for all $g\in\mathscr{D}(L_{F})$, and all $\psi\in\mathbb{C}^{F}$.
Indeed by Lemma \ref{lem:vNext} the vectors $f$ in $\mathscr{D}(H_{M})$
must have the form 
\begin{equation}
f=g+f_{+}(\psi)+f_{-}(M\psi)\label{eq:extdomain}
\end{equation}
where $g\in\mathscr{D}(L_{F})$, i.e., $g(\alpha)=0$, $\forall\alpha\in F$,
and where $\psi\in\mathbb{C}^{F}$. But (\ref{eq:extdomain}) has
an $l^{2}(\mathbb{N}_{0})$-representation as follows ($k\in\mathbb{N}_{0}$):
\begin{equation}
f_{k}=g_{k}+\sum_{\alpha\in F}\psi(\alpha)\frac{e(k\alpha)}{k-i}+\sum_{\alpha\in F}\left(M\psi\right)(\alpha)\frac{e(k\alpha)}{k+i}.\label{eq:extfk}
\end{equation}
For details on $(M\psi)(\alpha)$, see (\ref{eq:isoF}). 

Hence, the eigenvalue problem (for $\lambda\in\mathbb{R}$)
\begin{equation}
H_{M}f=\lambda f,\quad f\in\mathscr{D}(H_{M})\label{eq:Feig}
\end{equation}
takes the following form:
\begin{eqnarray}
 &  & kg_{k}+i\sum_{\alpha\in F}e(k\alpha)\left(\frac{\psi(\alpha)}{k-i}-\frac{\left(M\psi\right)(\alpha)}{k+i}\right)\nonumber \\
 & = & \lambda g_{k}+\lambda\sum_{\alpha\in F}e(k\alpha)\left(\frac{\psi(\alpha)}{k-i}+\frac{\left(M\psi\right)(\alpha)}{k+i}\right);\label{eq:Feigen2}
\end{eqnarray}
which in turn simplifies as follows: The function
\begin{eqnarray*}
F_{M}(\lambda) & = & \sum_{k\in\mathbb{N}_{0}}\sum_{\alpha\in F}\frac{e(k\alpha)}{\left(k-\lambda\right)}\left[\lambda\left(\frac{\psi(\alpha)}{k-i}+\frac{\left(M\psi\right)(\alpha)}{k+i}\right)-i\left(\frac{\psi(\alpha)}{k-i}-\frac{\left(M\psi\right)(\alpha)}{k+i}\right)\right]\\
 & = & \sum_{k\in\mathbb{N}_{0}}\sum_{\alpha\in F}e(k\alpha)\frac{\psi(\alpha)(\lambda-i)(k+i)+\left(M\psi\right)(\alpha)(\lambda+i)(k-i)}{\left(k-\lambda\right)\left(k^{2}+1\right)}.
\end{eqnarray*}
So $\psi\in\mathbb{C}^{F}$ must be such that the function 
\begin{equation}
F_{M}^{(\psi)}(\lambda)=\sum_{k\in\mathbb{N}_{0}}\sum_{\alpha\in F}e(k\alpha)\frac{\psi(\alpha)(\lambda-i)(k+i)+\left(M\psi\right)(\alpha)(\lambda+i)(k-i)}{\left(k-\lambda\right)\left(k^{2}+1\right)}\label{eq:FFla}
\end{equation}
has $\lambda$ as a root, i.e., $F_{M}(\lambda)=0$ must hold for
points $\lambda\in spect(H_{M})$; and conversely if $F_{M}(\lambda)=0$,
then the vector $f$ in (\ref{eq:extfk}) will be an eigenvector,
note
\[
\sum_{k\in\mathbb{N}_{0}}g_{k}\, e(k\alpha)=0,\quad\forall\alpha\in F.
\]

We proceed to verify (\ref{eq:dFM}). Setting
\begin{eqnarray*}
A & := & \psi(\alpha)\left(k+i\right)\\
B & := & \left(M\psi\right)(\alpha)\left(k-i\right)
\end{eqnarray*}
and 
\[
g_{k,\alpha}(\lambda):=\frac{A(\lambda-i)+B(\lambda+i)}{\left(k-\lambda\right)\left(k^{2}+1\right)};
\]
then from (\ref{eq:liesp}), we have 
\[
F_{M}(\lambda)=\sum_{k\in\mathbb{N}_{0}}\sum_{\alpha\in F}e(k\alpha)g_{k,\alpha}(\lambda).
\]
Note that 
\begin{equation}
g_{k,\alpha}'(\lambda)=\frac{\left(A+B\right)k-i(A-B)}{\left(k-\lambda\right)^{2}\left(k^{2}+1\right)},\label{eq:tderiv}
\end{equation}
and the numerator in (\ref{eq:tderiv}) is given by 
\begin{eqnarray*}
 &  & \left(A+B\right)k-i(A-B)\\
 & = & k\left(\psi(\alpha)\left(k+i\right)+\left(M\psi\right)(\alpha)\left(k-i\right)\right)-i\left(\psi(\alpha)\left(k+i\right)-\left(M\psi\right)(\alpha)\left(k-i\right)\right)\\
 & = & \psi(\alpha)\left(k(k+i)-i(k+i)\right)+\left(M\psi\right)(\alpha)\left(k(k-i)+i(k-i)\right)\\
 & = & \left(\psi(\alpha)+\left(M\psi\right)(\alpha)\right)\left(k^{2}+1\right).
\end{eqnarray*}
Substitute the above equation into (\ref{eq:tderiv}), we get
\begin{eqnarray*}
g_{k,\alpha}'(\lambda) & = & \frac{\left(\psi(\alpha)+\left(M\psi\right)(\alpha)\right)\left(k^{2}+1\right)}{\left(k-\lambda\right)^{2}\left(k^{2}+1\right)}\\
 & = & \frac{\psi(\alpha)+\left(M\psi\right)(\alpha)}{\left(k-\lambda\right)^{2}}.
\end{eqnarray*}
It follows that
\begin{eqnarray*}
\frac{dF_{M}}{d\lambda} & = & \sum_{k\in\mathbb{N}_{0}}\sum_{\alpha\in F}g_{k,\alpha}'(\lambda)\\
 & = & \sum_{k\in\mathbb{N}_{0}}\sum_{\alpha\in F}e(k\alpha)\frac{\psi(\alpha)+\left(M\psi\right)(\alpha)}{\left(k-\lambda\right)^{2}}
\end{eqnarray*}
which is eq. (\ref{eq:dFM}).

Note in the computation of the derivative we get cancellation of the
factor $(k^{2}+1)$ in numerator and denominator.
\end{proof}

\subsection{A Comparison}

Below we offer a comparison of the extension theory in the subspace
$\mathscr{L}_{+}$ and in the ambient Hilbert space $L^{2}(\mathbb{R}/\mathbb{Z})\simeq L^{2}(I)$,
where we are using the usual identification between the quotient $\mathbb{R}/\mathbb{Z}$
and a choice of a period interval $I$. There is a slight notational
ambiguity, as $L$ may be understood as refer to a Hermitian operator
with dense domain, referring each of the two Hilbert spaces $\mathscr{L}_{+}$
and $L^{2}(\mathbb{R}/\mathbb{Z})\simeq L^{2}(I)$, where $I=[0,1)$;
see Table \ref{tab:H}. But the boundary conditions $f(0)=f(1)=0$
make sense in both cases; and in both cases, it is understood that
$f$ and $\frac{d}{dx}f$ are in $L^{2}$.
\begin{lem}
Let $L$ be the above mentioned Hermitian operator with dense domain
$\mathscr{D}(L)$ in $L^{2}(\mathbb{R}/\mathbb{Z})$. For $\zeta=e(\theta)\in\mathbb{T}$,
let $H_{\theta}(=H_{\zeta})$ be the corresponding selfadjoint extension;
see the von Neumann classification, Lemma \ref{lem:vN def-space},
and let $\mathscr{H}(\theta)$ be the Hilbert space:
\begin{equation}
f:\mathbb{R}\rightarrow\mathbb{C}\quad\mbox{measurable, and in }L_{loc}^{2};\label{eq:fR}
\end{equation}
\begin{equation}
f(x+n)=e(n\theta)f(x),\quad\forall x\in\mathbb{R},\forall n\in\mathbb{Z};\mbox{ and}\label{eq:fn}
\end{equation}
\begin{equation}
\left\Vert f\right\Vert _{\mathscr{H}(\theta)}^{2}=\int_{\mathbb{R}/\mathbb{Z}}\left|f(x)\right|^{2}dx<\infty\label{eq:fnorm}
\end{equation}
(Note that the integral in (\ref{eq:fnorm}) makes sense on account
of (\ref{eq:fn}), i.e., $\left|f\right|^{2}$ is a $1$-periodic
function on $\mathbb{R}$. )
\begin{enumerate}
\item Then for every $\theta$, the restriction mapping $\mathscr{H}(\theta)\overset{T_{\theta}}{\rightarrow}L^{2}(I)$
is a unitary isometric isomorphism, and
\begin{equation}
U_{\theta}(t)=T_{\theta}U(t)T_{\theta}^{*},\quad t\in\mathbb{R},\;\mbox{ on }L^{2}(I)\label{eq:U}
\end{equation}
yields all the unitary one-parameter groups $\{U_{\theta}(t)\}$ corresponding
to the selfadjoint extensions of $L$. In (\ref{eq:fn}) RHS, $\{U(t)\}_{t\in\mathbb{R}}$
is the periodic translation $f\mapsto f(x+t)$ acting in the Hilbert
space $\mathscr{H}(\theta)$. 
\item If $\theta\in[0,1)$ is the parameter of the von Neumann classification,
then $\{U_{\theta}(t)\}_{t\in\mathbb{R}}$ in (\ref{eq:fn}) leaves
invariant the subspace $\mathscr{L}_{+}$ if and only if $\theta=0$.
\end{enumerate}
\end{lem}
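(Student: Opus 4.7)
The plan for part (1) is to construct $T_\theta$ as restriction to $I$, verify it is unitary, and then identify the transported translation group with the von Neumann extension $H_\theta$ via its Stone generator. Concretely, any $f\in\mathscr{H}(\theta)$ is determined by $f|_I$ since the quasi-periodicity (\ref{eq:fn}) extends values from $I$ to all of $\mathbb{R}$; conversely any $g\in L^2(I)$ has a unique extension $f(x+n):=e(n\theta)g(x)$, $x\in I$, $n\in\mathbb{Z}$, and (\ref{eq:fnorm}) shows that $T_\theta$ is an isometric bijection. The shift $(U(t)f)(x)=f(x+t)$ preserves $\mathscr{H}(\theta)$, since both sides of (\ref{eq:fn}) pick up the same factor $e(n\theta)$ under translation, and preserves norms because $|f|^2$ is $1$-periodic on $\mathbb{R}$; hence $\{U(t)\}$ is a strongly continuous unitary one-parameter group and so is $U_\theta(t):=T_\theta U(t)T_\theta^*$ on $L^2(I)$. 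The Stone generator of $U(t)$ is $\frac{1}{2\pi i}\frac{d}{dx}$ on $\{f\in\mathscr{H}(\theta):f'\in\mathscr{H}(\theta)\}$; after conjugation by $T_\theta$ and unwinding the quasi-periodicity, this becomes $\frac{1}{2\pi i}\frac{d}{dx}$ on $L^2(I)$ with the boundary condition $g(1^-)=e(\theta)g(0^+)$, which is exactly the selfadjoint extension $H_\theta=H_\zeta$ of the von Neumann family in Lemma \ref{lem:vN def-space}. Since $\theta$ ranges over a full period, formula (\ref{eq:U}) produces every selfadjoint extension of $L$.

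For part (2), the easy direction is $\theta=0$: here $\mathscr{H}(0)=L^2(\mathbb{R}/\mathbb{Z})$ and $U_0(t)$ is ordinary periodic translation, which acts diagonally on Fourier coefficients as $\widehat{U_0(t)f}(n)=e(nt)\widehat{f}(n)$. Since $\mathscr{L}_+$ is defined by the vanishing of negative Fourier coefficients, it is invariant.

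For the converse, assume $\theta\neq 0$, so $e(\theta)\neq 1$. Using $f(y)=e(\lfloor y\rfloor\theta)g(\{y\})$ and restricting the shifted function, one finds, for $0<t<1$,
\[
(U_\theta(t)g)(x)=\begin{cases} g(x+t), & 0\le x<1-t,\\ e(\theta)\,g(x+t-1), & 1-t\le x<1.\end{cases}
\]
Applied to $g\equiv 1\in\mathscr{L}_+$, the image is the step function with values $1$ on $[0,1-t)$ and $e(\theta)$ on $[1-t,1)$. A direct integration shows that its Fourier coefficient at $k=-1$ equals
\[
\frac{(1-e(\theta))\bigl(e^{-2\pi i t}-1\bigr)}{2\pi i},
\]
which is nonzero for every $t\in(0,1)$, since $\theta\notin\mathbb{Z}$. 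Hence $U_\theta(t)\cdot 1\notin\mathscr{L}_+$ and $\mathscr{L}_+$ is not invariant. The main obstacle in the whole argument is correctly tracking the phase jump introduced by the quasi-periodic extension across the wrap-around $x+t\ge 1$; this phase jump is precisely the mechanism that distinguishes $\theta\neq 0$ from $\theta=0$ and forces the subspace $\mathscr{L}_+$ to be broken by the flow.
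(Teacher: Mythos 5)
Your proof is correct, and for part (2) it takes a genuinely different route from the paper. For part (1) you and the paper do essentially the same thing: the paper simply invokes the theory of induced representations (induction from $\mathbb{Z}$ to $\mathbb{R}$) and leaves the details implicit, while you write out the isometry $T_\theta$, the invariance of $\mathscr{H}(\theta)$ under translation, and the identification of the Stone generator with the quasi-periodic boundary condition $g(1^-)=e(\theta)g(0^+)$; the only soft spot is the sentence asserting that this boundary condition "is exactly" the von Neumann extension $H_\zeta$ with $\zeta=e(\theta)$ — pinning down that the labeling agrees with the Cayley-transform parameterization of Lemma \ref{lem:vN def-space} (rather than differing by a M\"obius reparametrization of $\mathbb{T}$) would take one more line, but since the claim is only that (\ref{eq:U}) yields \emph{all} the extensions as $\theta$ runs over a period, surjectivity is what matters and your argument delivers it. For part (2) the paper argues spectrally: $U_\theta(t)$ has the orthonormal eigenbasis $\{e_{\theta+n}\}_{n\in\mathbb{Z}}$, so invariant subspaces are spans of eigenvectors, and the identity $\left\Vert P_{\mathscr{L}_+}e_\varphi\right\Vert^2=1-\frac{\sin^2(\pi\varphi)}{\pi^2}\zeta_1(\varphi)$ (Lemma \ref{lem:Hzeta}) shows each $e_{\theta+n}$ is neither in $\mathscr{L}_+$ nor orthogonal to it unless $\theta\in\mathbb{Z}$. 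You instead exhibit a single witness: the constant function $1\in\mathscr{L}_+$, whose image under $U_\theta(t)$ is a step function with a phase jump $e(\theta)$ at the wrap-around, and whose Fourier coefficient at $k=-1$, namely $\frac{(1-e(\theta))(e^{-2\pi i t}-1)}{2\pi i}$, you compute correctly to be nonzero. This is more elementary and self-contained; what the paper's route buys is the quantitative formula involving the Hurwitz zeta function (and the connection to the Shannon-type kernel in Figure \ref{fig:sh}), which is of independent interest beyond the yes/no invariance question.
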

\begin{proof}
See the discussion above. The construction in (\ref{eq:fn}) is an
example of an induced representation; induction from $\mathbb{Z}$
up to $\mathbb{R}$; see \cite{Mac88}. 

It follows from (\ref{eq:U}) that for fixed $\theta\in[0,1)$ the
spectrum of the unitary one-parameter group $\{U_{\theta}(t)\}_{t\in\mathbb{R}}$
and its selfadjoint generator $H_{\theta}$ in $L^{2}(I)\simeq L^{2}(\mathbb{R}/\mathbb{Z})$
is $\{e_{\theta+n}\:\big|\: n\in\mathbb{Z}\}$, where $e_{\varphi}(x)=e(\varphi x)=e^{i2\pi\varphi x}$.
Now let $P_{\mathscr{L}_{+}}$ be the projection of $L^{2}(I)$ onto
$\mathscr{L}_{+}$, then a computation yields
\[
\left\Vert P_{\mathscr{L}_{+}}e_{\varphi}\right\Vert ^{2}=1-\frac{\sin^{2}(\pi\varphi)}{\pi^{2}}\zeta_{1}(\varphi).
\]
see Lemma \ref{lem:Hzeta} and Figure \ref{fig:sh} below.  Apply
this to $\varphi=\theta+n$, and the last conclusion in the lemma
follows from this. 
\end{proof}
\begin{figure}
\includegraphics[scale=0.9]{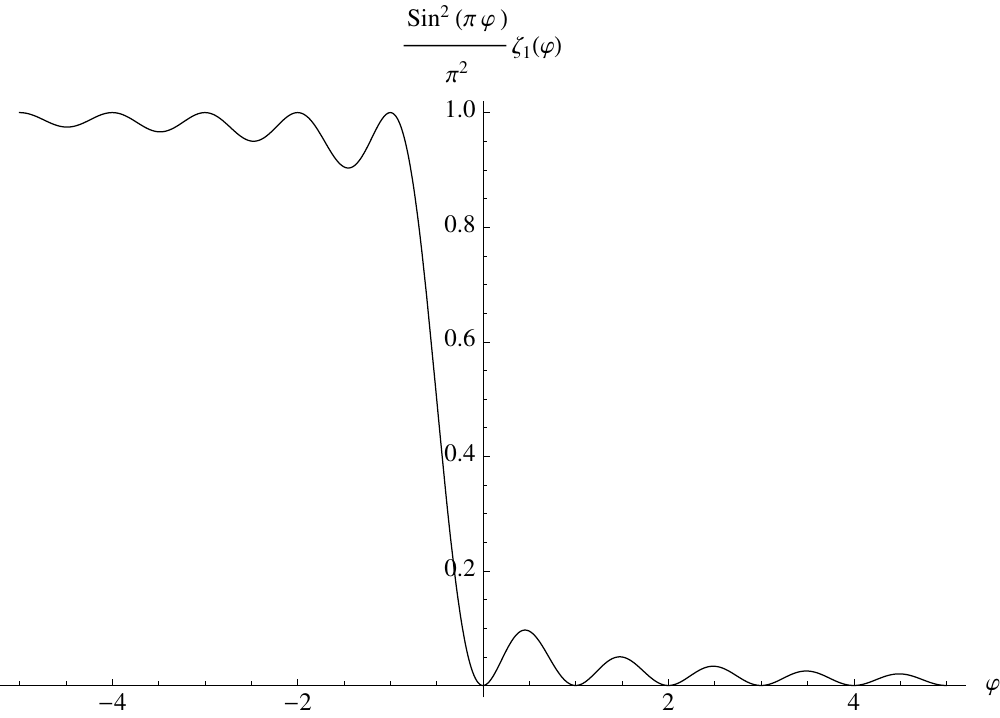}

\caption{\label{fig:sh}The function $\frac{\sin^{2}(\pi\varphi)}{\pi^{2}}\,\zeta_{1}(\varphi)$ }

\end{figure}

\begin{lem}
\label{lem:Hzeta}Let 
\begin{equation}
\zeta_{1}(\varphi)=\sum_{n=1}^{\infty}\frac{1}{(\varphi+n)^{2}}\label{eq:Hz}
\end{equation}
be the Hurwitz zeta function. (Note, the summation in (\ref{eq:Hz})
begins at $n=1$.) Then
\begin{equation}
\left\Vert P_{\mathscr{L}_{+}}e_{\varphi}\right\Vert ^{2}=1-\frac{\sin^{2}(\pi\varphi)}{\pi^{2}}\zeta_{1}(\varphi).\label{eq:PL}
\end{equation}
\end{lem}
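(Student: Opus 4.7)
The plan is to expand $e_{\varphi}(x)=e^{i2\pi\varphi x}$ in the orthonormal Fourier basis $\{e_{n}\}_{n\in\mathbb{Z}}$ for $L^{2}(\mathbb{R}/\mathbb{Z})\simeq L^{2}(I)$ and use the fact that $\mathscr{L}_{+}$ is the closed span of $\{e_{n}\}_{n\in\mathbb{N}_{0}}$, so that $P_{\mathscr{L}_{+}}e_{\varphi}=\sum_{n\geq 0}\langle e_{n},e_{\varphi}\rangle\, e_{n}$ and hence
\[
\left\Vert P_{\mathscr{L}_{+}}e_{\varphi}\right\Vert^{2}=\sum_{n=0}^{\infty}\left|\langle e_{n},e_{\varphi}\rangle\right|^{2}.
\]
Assume first $\varphi\notin\mathbb{Z}$ (the integer case will follow by continuity; see below). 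A direct computation gives
\[
\langle e_{n},e_{\varphi}\rangle=\int_{0}^{1}e^{-i2\pi nx}e^{i2\pi\varphi x}\, dx=\frac{e^{i2\pi\varphi}-1}{i2\pi(\varphi-n)},
\]
so that, using $|e^{i2\pi\varphi}-1|^{2}=4\sin^{2}(\pi\varphi)$,
\[
\left|\langle e_{n},e_{\varphi}\rangle\right|^{2}=\frac{\sin^{2}(\pi\varphi)}{\pi^{2}(\varphi-n)^{2}}.
\]

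The second step, and the crux of the argument, is to evaluate the partial sum $\sum_{n\geq 0}(\varphi-n)^{-2}$ in terms of $\zeta_{1}(\varphi)$. For this I invoke the classical Mittag-Leffler/cotangent expansion
\[
\sum_{n\in\mathbb{Z}}\frac{1}{(\varphi-n)^{2}}=\frac{\pi^{2}}{\sin^{2}(\pi\varphi)},
\]
and split the two-sided sum at $n=0$. Reindexing the tail $n\leq -1$ via $m=-n\geq 1$ gives
\[
\sum_{n=-\infty}^{-1}\frac{1}{(\varphi-n)^{2}}=\sum_{m=1}^{\infty}\frac{1}{(\varphi+m)^{2}}=\zeta_{1}(\varphi),
\]
so that
\[
\sum_{n=0}^{\infty}\frac{1}{(\varphi-n)^{2}}=\frac{\pi^{2}}{\sin^{2}(\pi\varphi)}-\zeta_{1}(\varphi).
\]
Multiplying through by $\sin^{2}(\pi\varphi)/\pi^{2}$ yields the identity (\ref{eq:PL}).

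It remains to address $\varphi\in\mathbb{Z}$, which is essentially free: for $\varphi=k\in\mathbb{N}_{0}$ we have $e_{\varphi}=e_{k}\in\mathscr{L}_{+}$ so the LHS is $1$, while on the RHS $\sin^{2}(\pi k)=0$ and $\zeta_{1}(k)$ is finite, matching $1$. For $\varphi=-k$ with $k\geq 1$, $e_{\varphi}\perp\mathscr{L}_{+}$, so the LHS is $0$; on the RHS the pole of $\zeta_{1}$ at $-k$ is exactly cancelled by the double zero of $\sin^{2}(\pi\varphi)$, the limit being $1$, again matching. The main (very mild) obstacle is simply citing the Mittag-Leffler expansion of $\csc^{2}$; everything else is a direct Fourier computation and bookkeeping of the two tails of the sum.
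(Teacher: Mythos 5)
Your proof is correct and follows essentially the same route as the paper: expand $e_{\varphi}$ in the one-sided Fourier basis, apply Parseval to get $\sum_{n\geq 0}\frac{\sin^{2}(\pi\varphi)}{\pi^{2}(\varphi-n)^{2}}$, and subtract from the full two-sided sum, whose value $\pi^{2}/\sin^{2}(\pi\varphi)$ you cite as the Mittag-Leffler expansion while the paper obtains the identical identity from Parseval applied to $\left\Vert e_{\varphi}\right\Vert^{2}=1$. Your explicit treatment of the integer values of $\varphi$ is a small addition the paper omits, but the argument is the same.
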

\begin{proof}
In the verification of (\ref{eq:PL}), it is convenient to choose
$I=[-\frac{1}{2},\frac{1}{2})$ as period interval in the duality
of Lemma \ref{lem:H2}. For $P_{\mathscr{L}_{+}}e_{\varphi}$ we get
\begin{eqnarray*}
P_{\mathscr{L}_{+}}e_{\varphi} & = & \sum_{n=0}^{\infty}\left\langle e_{n},e_{\varphi}\right\rangle _{L^{2}(I)}e_{n}\\
 & = & \sum_{n=0}^{\infty}\frac{\sin\pi(\varphi-n)}{\pi(\varphi-n)}e_{n};\;\mbox{ and}
\end{eqnarray*}
therefore
\begin{eqnarray*}
\left\Vert P_{\mathscr{L}_{+}}e_{\varphi}\right\Vert ^{2} & = & \sum_{n=0}^{\infty}\frac{\sin^{2}(\pi(\varphi-n))}{\left(\pi(\varphi-n)\right)^{2}}\,\,\,\,\,\,\,\,\,\,\,\,\,\,\,(\mbox{by Parseval})\\
 & = & \frac{\sin^{2}(\pi\varphi)}{\pi^{2}}\sum_{n=0}^{\infty}\frac{1}{\left(\varphi-n\right)^{2}}.
\end{eqnarray*}
Now compare this to
\[
1=\left\Vert e_{\varphi}\right\Vert _{L^{2}(I)}^{2}=\frac{\sin^{2}(\pi\varphi)}{\pi^{2}}\sum_{n\in\mathbb{Z}}\frac{1}{\left(\varphi-n\right)^{2}};
\]
also by Parseval. Subtraction yields
\begin{equation}
1-\left\Vert P_{\mathscr{L}_{+}}e_{\varphi}\right\Vert ^{2}=\frac{\sin^{2}(\pi\varphi)}{\pi^{2}}\zeta_{1}(\varphi).\label{eq:LZ}
\end{equation}
To get this, change variable in the summation range $-\infty<n\leq-1$.
The desired conclusion (\ref{eq:PL}) is now immediate from (\ref{eq:LZ}).
\end{proof}

\subsection{\label{sub:LadH}The Operator $L^{*}$}

We now turn to von Neumann\textquoteright{}s boundary theory for the
Hermitian operators $L_{F}$ in the complex case (see Corollary \ref{cor:F}),
so the case when the Hilbert space is $\mathscr{H}_{2}$ (= the Hardy
space). We already proved that the deficiency subspaces (in the sense
of von Neumann), are different from their analogues for boundary value
problems in the real case, see e.g., Theorem \ref{thm:defLF}. The
two deficiency spaces are RKHSs, and they depend on conditions assigned
on the prescribed closed subset $F\subset\partial\mathbb{D}$, of
(angular) measure $0$.

When such a subset $F$ is fixed, let $L_{F}$ be the corresponding
Hermitian operator. But now the adjoint operators $L_{F}^{*}$, defined
relative to the inner product in $\mathscr{H}_{2}$, turns out no
longer to be differential operators. The nature of these operators
$L_{F}^{*}$ depends on use of analytic function theory in an essential
way, and it is studied below. By analogy to the real case, one might
guess that $L_{F}^{*}$ is again a differential operator acting on
a suitable domain in $\mathscr{H}_{2}$, but it is not; -- not even
in the simplest case when the set $F$ is a singleton. 
\begin{lem}
\begin{flushleft}
\label{lem:Lad}Let $L$ be the Hermitian operator as before, i.e.,
$L$ is the restriction of ${\displaystyle H=z\frac{d}{dz}}$ on the
dense domain $\mathscr{D}(L)$ in $\mathscr{H}_{2}$, consisting of
functions $f\in\mathscr{D}(H)$ s.t. $\tilde{f}(1)=0$. On meromorphic
functions $f$, set 
\begin{eqnarray}
\left(CP_{1}f\right)(z) & := & \left(\lim_{w\rightarrow1}(w-1)f(w)\right)\frac{1}{z-1}\label{eq:cp1}\\
 & = & \frac{1}{z-1}\frac{1}{2\pi i}\oint_{\gamma\: at\: z=1}f(w)dw\label{eq:cp1i}
\end{eqnarray}
where the contour is chosen as a circle centered at $z=1$. Then on
its domain, as an operator, $L^{*}$ acts as follows: 
\begin{equation}
L^{*}=\left(1-CP_{1}\right)z\frac{d}{dz}.\label{eq:LadjH}
\end{equation}

\par\end{flushleft}\end{lem}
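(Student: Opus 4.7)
The plan is to combine the von Neumann decomposition of $\mathscr{D}(L^*)$ from Lemma~\ref{lem:vNext} with a direct partial-fraction computation of $z x_\pm'(z)$ for the two defect vectors, then verify the claimed formula on each summand. By Lemma~\ref{lem:vNext}, every $g \in \mathscr{D}(L^*)$ has a unique representation $g = g_0 + c_+ x_+ + c_- x_-$ with $g_0 \in \mathscr{D}(L)$ and $c_\pm \in \mathbb{C}$; since $L \subset H \subset L^*$ and $L^* x_\pm = \pm i x_\pm$ by definition of the defect spaces, one reads off
\[
L^* g \;=\; z g_0'(z) + i c_+ x_+(z) - i c_- x_-(z).
\]
The task is then to evaluate $(1 - CP_1)(z g'(z))$ term-by-term and match it with this expression.

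First I would treat the defect vectors directly. Using the identity $\frac{k}{k \mp i} = 1 \pm \frac{\pm i}{k \mp i}$ inside the series $(x_\pm)_k = (k \mp i)^{-1}$, a short computation yields the meromorphic identity
\[
z x_\pm'(z) \;=\; \frac{1}{1-z} \,\pm\, i\, x_\pm(z),
\]
so $z x_\pm'(z)$ is meromorphic in a neighbourhood of $z=1$ with a simple pole of residue $-1$; the Hardy-space part $\pm i x_\pm$ contributes nothing to the residue because the Szeg\H{o} estimate $|f(w)| \leq \|f\|_{\mathscr{H}_2}(1-|w|)^{-1/2}$ forces $(w-1) x_\pm(w) \to 0$ as $w \to 1$ non-tangentially. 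Consequently $CP_1(z x_\pm'(z)) = -1/(z-1) = 1/(1-z)$, and $(1 - CP_1) z x_\pm'(z) = \pm i x_\pm(z)$, as required. For the remaining summand $g_0 \in \mathscr{D}(L) \subset \mathscr{D}(H)$, one has $z g_0'(z) = H g_0 \in \mathscr{H}_2$, and the same Szeg\H{o} estimate forces $CP_1(z g_0'(z)) = 0$, so $(1 - CP_1) z g_0'(z) = z g_0'(z) = L g_0$. Summing the three contributions, the singular term $\tfrac{c_+ + c_-}{1-z}$ is exactly what $CP_1$ extracts, and one recovers $z g_0'(z) + i c_+ x_+ - i c_- x_-$, matching $L^* g$.

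The main obstacle is not the algebra but the analytic justification that the formal expression $z g'(z)$, for a general $g \in \mathscr{D}(L^*)$, genuinely extends to a function meromorphic across $z = 1$ with at most a simple pole, so that $CP_1$ is well-defined on it. This rests on two ingredients: the von Neumann decomposition isolates all possible singular contributions inside the explicit defect-vector coefficients $c_\pm$, while the Hardy-space growth bound ensures that the residual $\mathscr{H}_2$-component generates no pole at $z=1$. Once this is established, the equivalence of the two expressions (\ref{eq:cp1}) and (\ref{eq:cp1i}) for $CP_1$ is immediate from Cauchy's residue theorem applied to a small contour $\gamma$ around $z=1$ enclosing the simple pole, completing the identification $L^* = (1 - CP_1) z \tfrac{d}{dz}$ on $\mathscr{D}(L^*)$.
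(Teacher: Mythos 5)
Your argument is correct and follows essentially the same route as the paper's proof: the von Neumann decomposition of $\mathscr{D}(L^{*})$ combined with an explicit computation showing that $z\frac{d}{dz}$ applied to the deficiency vectors produces exactly one Szeg\H{o}-kernel pole $\frac{1}{1-z}$ at $z=1$, which $CP_{1}$ extracts; the paper merely works in the basis $\rho_{2},\rho_{3}$ (with $z\frac{d}{dz}\rho_{2}=\rho_{3}$ and $z\frac{d}{dz}\rho_{3}=-\rho_{2}+\frac{1}{1-z}$) rather than your $x_{\pm}=\rho_{3}\pm i\rho_{2}$. One minor imprecision: $z\,x_{\pm}'(z)$ is not genuinely meromorphic across $z=1$ (the term $\pm i\,x_{\pm}$ has a logarithmic singularity there), but your own growth-estimate argument shows that $\lim_{w\rightarrow1}(w-1)\,w\,x_{\pm}'(w)=-1$ nonetheless, which is all that the definition of $CP_{1}$ requires.
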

\begin{proof}
Recall that in our model in the Hardy space, we have $L\subset H\subset L^{*}$
(see (\ref{eq:LHL})), and 
\[
\dim\mathscr{D}(H)/\mathscr{D}(L)=\dim\mathscr{D}(L^{*})/\mathscr{D}(H)=1.
\]
In particular, the two defect vectors (\ref{eq:Y2}) and (\ref{eq:Y3})
have the following representation:
\begin{eqnarray}
y_{2}=\left(\frac{1}{1+n^{2}}\right)_{n\in\mathbb{N}_{0}} & \mapsto & \rho_{2}(z):=\sum_{n=0}^{\infty}\frac{z^{n}}{1+n^{2}}\label{eq:rhoy2}\\
y_{3}=\left(\frac{n}{1+n^{2}}\right)_{n\in\mathbb{N}_{0}} & \mapsto & \rho_{3}(z):=\sum_{n=0}^{\infty}\frac{n\, z^{n}}{1+n^{2}}.\label{eq:rhoy3}
\end{eqnarray}
By von Neumann's theory, the domain of $L^{*}$ is characterized by
\begin{equation}
\mathscr{D}(L^{*})=\left\{ \varphi(z)+a\rho_{2}(z)+b\rho_{3}(z)\;\big|\;\varphi\in\mathscr{D}(L),\;\mbox{and }a,b\in\mathbb{C}\right\} ;\label{eq:domLadH}
\end{equation}
and 
\begin{eqnarray}
L^{*}\left(\varphi(z)+a\rho_{2}(z)+b\rho_{3}(z)\right) & = & L\varphi(z)+a\rho_{2}(z)+b\rho_{3}(z)\nonumber \\
 & = & z\frac{d}{dz}\varphi(z)+a\rho_{3}(z)-b\rho_{2}(z).\label{eq:LadH}
\end{eqnarray}

Note for all $z\in\mathbb{D}$, we have 
\[
z\frac{d}{dz}\rho_{2}(z)=\sum_{n=0}^{\infty}\frac{n\, z^{n}}{1+n^{2}}=\rho_{3}(z);
\]
and
\begin{eqnarray*}
z\frac{d}{dz}\rho_{3}(z) & = & \sum_{n=0}^{\infty}\frac{n^{2}\, z^{n}}{1+n^{2}}\\
 & = & -\sum_{n=0}^{\infty}\frac{z^{n}}{1+n^{2}}+\sum_{n=0}^{\infty}z^{n}\\
 & = & -\rho_{2}(z)+\frac{1}{1-z}.
\end{eqnarray*}
As a result, for all $f(z)=\varphi(z)+a\rho_{2}(z)+b\rho_{3}(z)\in\mathscr{D}(L^{*})$,
we see that
\begin{eqnarray}
\left(z\frac{d}{dz}f\right)(z) & = & z\frac{d}{dz}\varphi(z)+a\rho_{3}(z)-b\rho_{2}(z)+\frac{b}{1-z}\nonumber \\
 & = & \left(L^{*}f\right)(z)+\frac{b}{1-z}.\label{eq:Ladtemp}
\end{eqnarray}

Now, the last term in the above equation has a simple pole at $z=1$,
and it is extracted by the map $CP_{1}$ in (\ref{eq:cp1i}) as follows:
\begin{equation}
\frac{b}{1-z}=CP_{1}z\frac{d}{dz}f.\label{eq:Ladcp}
\end{equation}
Hence (\ref{eq:Ladtemp}) and (\ref{eq:Ladcp}) together yield 
\begin{eqnarray*}
\left(L^{*}f\right)(z) & = & \left(z\frac{d}{dz}f\right)(z)-CP_{1}z\frac{d}{dz}f\\
 & = & \left(1-CP_{1}\right)z\frac{d}{dz}f(z)
\end{eqnarray*}
which is (\ref{eq:LadjH}).\end{proof}
\begin{cor}
If $g\in\mathscr{D}(L^{*})$ is such that ${\displaystyle z\frac{d}{dz}g\in\mathscr{H}_{2}}$,
then 
\begin{equation}
L^{*}g=z\frac{d}{dz}g.\label{eq:LadHa}
\end{equation}

\end{cor}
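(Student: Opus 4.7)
The plan is to deduce this corollary directly from the proof of Lemma \ref{lem:Lad}, and in particular from the intermediate identity (\ref{eq:Ladtemp}) displayed there. Since $g\in\mathscr{D}(L^{*})$, the von Neumann decomposition (\ref{eq:domLadH}) gives a unique representation
\[
g(z)=\varphi(z)+a\,\rho_{2}(z)+b\,\rho_{3}(z),\qquad \varphi\in\mathscr{D}(L),\ a,b\in\mathbb{C},
\]
with $\rho_{2},\rho_{3}$ as in (\ref{eq:rhoy2})--(\ref{eq:rhoy3}). Applying the identity established inside the proof of Lemma \ref{lem:Lad}, namely
\[
\left(z\tfrac{d}{dz}g\right)(z)=(L^{*}g)(z)+\frac{b}{1-z},
\]
reduces the problem to showing that the hypothesis $z\tfrac{d}{dz}g\in\mathscr{H}_{2}$ forces $b=0$.

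To do this, I would rearrange the identity as
\[
\frac{b}{1-z}=\left(z\tfrac{d}{dz}g\right)(z)-(L^{*}g)(z).
\]
By assumption the first term on the right is in $\mathscr{H}_{2}$, and the second term is in $\mathscr{H}_{2}$ simply because $L^{*}$ is an operator into $\mathscr{H}_{2}$. Hence $\frac{b}{1-z}\in\mathscr{H}_{2}$. But the geometric series expansion $\frac{1}{1-z}=\sum_{n\in\mathbb{N}_{0}}z^{n}$ exhibits coefficients $(1,1,1,\dots)\notin l^{2}(\mathbb{N}_{0})$, so under the isomorphism $\mathscr{H}_{2}\simeq l^{2}(\mathbb{N}_{0})$ of Lemma \ref{lem:H2} the function $\frac{1}{1-z}$ does not belong to $\mathscr{H}_{2}$. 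Consequently $b=0$, and substituting back yields $L^{*}g=z\tfrac{d}{dz}g$, which is (\ref{eq:LadHa}).

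There is no real obstacle here; the only subtle point is the clean separation between the analytic part (which lies in $\mathscr{H}_{2}$) and the ``boundary singularity'' $\frac{b}{1-z}$ at $\zeta=1\in\partial\mathbb{D}$, which is precisely what $CP_{1}$ in (\ref{eq:cp1})--(\ref{eq:cp1i}) is designed to extract. Equivalently, one could argue directly from (\ref{eq:LadjH}): the hypothesis that $z\tfrac{d}{dz}g\in\mathscr{H}_{2}$ implies that $z\tfrac{d}{dz}g$ has no simple pole at $z=1$ (its residue there must vanish, as the pole part $\frac{b}{1-z}$ would otherwise obstruct membership in $\mathscr{H}_{2}$), hence $CP_{1}\bigl(z\tfrac{d}{dz}g\bigr)=0$ and $L^{*}g=(1-CP_{1})z\tfrac{d}{dz}g=z\tfrac{d}{dz}g$.
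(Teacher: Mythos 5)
Your proposal is correct and is essentially the argument the paper intends: the corollary is stated without an explicit proof as an immediate consequence of Lemma \ref{lem:Lad}, and your reduction via (\ref{eq:Ladtemp}) to showing $b=0$ (because $\frac{1}{1-z}=\sum_{n}z^{n}$ has coefficients $(1,1,\dots)\notin l^{2}$, so $\frac{b}{1-z}\in\mathscr{H}_{2}$ forces $b=0$), equivalently $CP_{1}\bigl(z\frac{d}{dz}g\bigr)=0$ in (\ref{eq:LadjH}), is exactly the intended route.
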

Lemma \ref{lem:Lad} generalizes naturally to the operator $L_{F}$,
where the boundary conditions are specified by a finite subset $F\subset\mathbb{T}$;
see Corollaries \ref{cor:finite} and \ref{cor:F}.
\begin{thm}
\label{thm:LFad}Let $F=\{\zeta_{j}\}_{j=1}^{m}$ be a finite subset
of $\mathbb{T}=\partial\mathbb{D}$. Let $L_{F}$ with domain $\mathscr{D}(L_{F})$
be as Corollary \ref{cor:F}. Specifically, 
\begin{equation}
\mathscr{D}(L_{F})=\left\{ f\in\mathscr{H}_{2}\:\big|\: z\frac{d}{dz}f\in\mathscr{H}_{2},\:\tilde{f}=0\mbox{ on }F\right\} .\label{eq:DLFH}
\end{equation}
Recall $L_{F}$ has deficiency-indices $(m,m)$. Set 
\begin{equation}
\left(CP_{F}(f)\right)(z):=\sum_{j=1}^{m}\left(\lim_{w\rightarrow\zeta_{j}}\left(w-\zeta_{j}\right)f(w)\right)\frac{1}{z-\zeta_{j}}.\label{eq:CPF}
\end{equation}
Then $L^{*}$ acts on $\mathscr{D}(L^{*})$ as follows: 
\begin{equation}
L^{*}=\left(1-CP_{F}\right)z\frac{d}{dz}.\label{eq:LFadCPF}
\end{equation}

\end{thm}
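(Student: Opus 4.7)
The plan is to generalize the proof of Lemma \ref{lem:Lad} from the singleton $F = \{1\}$ to the finite set $F = \{\zeta_j\}_{j=1}^{m}$, by indexing the polar part extraction over the $m$ boundary points and exploiting the explicit defect-vector formulas of Corollary \ref{cor:Fdef}. First, invoke the von Neumann decomposition (Lemmas \ref{lem:vN def-space}, \ref{lem:vNext}) together with Corollary \ref{cor:Fdef}: every $f \in \mathscr{D}(L_F^*)$ has a unique representation
\[
f = \varphi + \sum_{j=1}^{m} a_j f_+^{(\theta_j)} + \sum_{j=1}^{m} b_j f_-^{(\theta_j)},
\]
where $\zeta_j = e(\theta_j)$, $\varphi \in \mathscr{D}(L_F)$, $a_j, b_j \in \mathbb{C}$, and $L_F^* \varphi = L_F \varphi = z\tfrac{d}{dz}\varphi$, while $L_F^* f_\pm^{(\theta_j)} = \pm i\, f_\pm^{(\theta_j)}$ by the very definition of the defect spaces.

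Second, carry out the local computation at each $\zeta_j$. Differentiating the series (\ref{eq:defLF-1}) term-by-term and using the identity ${n}/{(n \mp i)} = 1 \pm {i}/{(n \mp i)}$ yields, for $|z| < 1$,
\[
z\frac{d}{dz} f_\pm^{(\theta_j)}(z) = \sum_{n=0}^{\infty} \overline{\zeta_j}^{\,n} z^n \;\pm\; i f_\pm^{(\theta_j)}(z) = \frac{1}{1 - \overline{\zeta_j}\, z} \pm i f_\pm^{(\theta_j)}(z).
\]
Since $|\zeta_j| = 1$, we have $1 - \overline{\zeta_j}\, z = -\overline{\zeta_j}(z - \zeta_j)$, so the meromorphic continuation of $z\tfrac{d}{dz} f_\pm^{(\theta_j)}$ has a unique simple pole within $F$, located at $\zeta_j$ with residue $-\zeta_j$. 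By the very definition (\ref{eq:CPF}), $CP_F$ extracts exactly this polar part:
\[
CP_F\Bigl(z\tfrac{d}{dz} f_\pm^{(\theta_j)}\Bigr)(z) = -\frac{\zeta_j}{z-\zeta_j} = \frac{1}{1 - \overline{\zeta_j}\, z},
\]
hence $(1 - CP_F)\, z\tfrac{d}{dz} f_\pm^{(\theta_j)} = \pm i f_\pm^{(\theta_j)} = L_F^* f_\pm^{(\theta_j)}$, establishing the identity on each defect vector.

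Third, for $\varphi \in \mathscr{D}(L_F)$, (\ref{eq:DLFH}) forces $z\tfrac{d}{dz}\varphi \in \mathscr{H}_2$, i.e. this function is represented by a convergent power series in $\mathbb{D}$ with no polar singularities at the boundary points of $F$, so that $\lim_{w \to \zeta_j}(w - \zeta_j)\, z\tfrac{d}{dz}\varphi(w) = 0$ for every $j$; thus $CP_F(z\tfrac{d}{dz}\varphi) = 0$ and $(1 - CP_F)\, z\tfrac{d}{dz}\varphi = z\tfrac{d}{dz}\varphi = L_F^* \varphi$. Combining this with the defect-vector computation by linearity gives the asserted identity (\ref{eq:LFadCPF}) on all of $\mathscr{D}(L_F^*)$.

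The main obstacle is interpretational rather than computational: for general $f \in \mathscr{D}(L_F^*)$ the formal expression $z\tfrac{d}{dz}f$ need not lie in $\mathscr{H}_2$ (its Taylor coefficients are only bounded, not square-summable, as is already visible on $f_\pm^{(\theta_j)}$), so the formula (\ref{eq:LFadCPF}) must be read as an equality of meromorphic functions in a neighborhood of $\overline{\mathbb{D}}$, with $(1 - CP_F)$ subtracting precisely the polar contributions at $F$ so that the result lands back in $\mathscr{H}_2$. Justifying that $CP_F$ acts consistently on the regular component $\varphi$ requires one to argue that $z\tfrac{d}{dz}\varphi$, though only a priori analytic in $\mathbb{D}$, admits a well-defined vanishing residue at each $\zeta_j \in F$; this follows because the boundary condition $\tilde\varphi(\zeta_j) = 0$ from (\ref{eq:Df}) propagates to the absence of a polar obstruction, while the Hardy class controls growth at the boundary. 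All polar obstructions therefore come from the defect-space summands, where they are explicit, which is exactly what makes the decomposition $L_F^* = (1 - CP_F)\, z\tfrac{d}{dz}$ rigorous.
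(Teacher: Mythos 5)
Your proposal is correct and follows essentially the same route as the paper's proof: decompose $\mathscr{D}(L_F^*)$ via von Neumann, apply $z\frac{d}{dz}$ to spanning vectors of the deficiency spaces, identify the extra term as the Szeg\H{o} kernel $\left(1-\overline{\zeta}_j z\right)^{-1}=-\zeta_j/(z-\zeta_j)$, and observe that $CP_F$ reproduces exactly that polar part while annihilating the regular part coming from $\mathscr{D}(L_F)$. The only cosmetic difference is that you work directly with $f_\pm^{(\theta_j)}$ where the paper uses the equivalent spanning system $R_2(\zeta_j,\cdot),R_3(\zeta_j,\cdot)$; these are linear combinations of one another, so the computations are the same.
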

In justifying formula (\ref{eq:LFadCPF}) we need the following
\begin{lem}
Let $F=\{\zeta_{j}\}_{j=1}^{m}\subset\partial\mathbb{D}$ be as above,
and let $\mathscr{D}(L_{F}^{*})$ be the domain of the adjoint operator
$\left(L_{F}\right)^{*}$ acting in $\mathscr{H}_{2}$. 

Then the functions in the subspace
\begin{equation}
\mathscr{G}_{F}:=\left\{ z\frac{d}{dz}f\:\big|\: f\in\mathscr{D}(L_{F}^{*})\right\} \label{eq:GFH}
\end{equation}
have the following properties:
\begin{enumerate}
\item Every $g\in\mathscr{G}_{F}$ is analytic in $\mathbb{D}$;
\item But not every $g\in\mathscr{G}_{F}$ is in $\mathscr{H}_{2}$;
\item The functions $g\in\mathscr{G}_{F}$ are meromorphic with possible
poles of order at most one, and the poles are contained in $F$.
\end{enumerate}
\end{lem}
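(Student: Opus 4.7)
The plan is to combine the von Neumann decomposition from Lemma~\ref{lem:vNext} with the explicit formulas for the defect vectors given in Corollary~\ref{cor:Fdef}. Writing a general $f \in \mathscr{D}(L_F^*)$ as $f = \varphi + \sum_{j=1}^{m}\bigl(a_j f_+^{(\theta_j)} + b_j f_-^{(\theta_j)}\bigr)$ with $\varphi \in \mathscr{D}(L_F)$ and $\zeta_j = e(\theta_j) \in F$, I would reduce all three assertions to a single explicit computation of $z\tfrac{d}{dz}$ applied to the defect vectors $f_\pm^{(\theta)}$. Note that $z\tfrac{d}{dz}\varphi \in \mathscr{H}_2$ since $\mathscr{D}(L_F) \subset \mathscr{D}(H)$, so that summand is harmless.

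The key step is term-by-term differentiation inside the disk. Using $\tfrac{n}{n \mp i} = 1 \pm \tfrac{i}{n \mp i}$ and recognizing the geometric series, one obtains
\begin{equation*}
z\frac{d}{dz}f_\pm^{(\theta)}(z) \;=\; \sum_{n=0}^{\infty} \bigl(\overline{\zeta}\, z\bigr)^{n} \;\pm\; i\, f_\pm^{(\theta)}(z) \;=\; \frac{1}{1 - \overline{\zeta}\, z} \;\pm\; i\, f_\pm^{(\theta)}(z), \qquad \zeta = e(\theta).
\end{equation*}
Since $|\overline{\zeta}\,z| = |z| < 1$ on $\mathbb{D}$, the rational piece $\tfrac{1}{1-\overline{\zeta}z}$ is analytic in $\mathbb{D}$, has a single simple pole at the boundary point $z = \zeta$ (with residue $-\zeta$, since $\overline{\zeta}\zeta = 1$), and is \emph{not} in $\mathscr{H}_2$, because its Taylor coefficients $\overline{\zeta}^{\,n}$ have $|\overline{\zeta}^{\,n}| = 1$ and are therefore not square-summable. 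The remaining term $\pm i f_\pm^{(\theta)}$ is in $\mathscr{H}_2$ by definition of the defect vectors.

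Assembling these, for general $f \in \mathscr{D}(L_F^*)$,
\begin{equation*}
z\frac{d}{dz}f \;=\; z\frac{d}{dz}\varphi \;+\; \sum_{j=1}^{m} (a_j + b_j)\,\frac{1}{1 - \overline{\zeta_j}\, z} \;+\; i \sum_{j=1}^{m} \bigl(a_j f_+^{(\theta_j)} - b_j f_-^{(\theta_j)}\bigr).
\end{equation*}
The first and third summands lie in $\mathscr{H}_2$, hence are analytic in $\mathbb{D}$; the middle finite sum is a rational function, analytic on $\overline{\mathbb{D}} \setminus F$ with at most simple poles at the prescribed points of $F$. This simultaneously establishes (1) analyticity on $\mathbb{D}$, and (3) the meromorphic extension with at most simple poles confined to $F$. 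Claim (2) then follows by choosing any $f$ for which some coefficient sum $a_j + b_j$ is nonzero: the residue of $z\tfrac{d}{dz}f$ at $\zeta_j$ equals $-\zeta_j(a_j + b_j) \neq 0$, so $z\tfrac{d}{dz}f$ has a genuine simple pole on $\partial\mathbb{D}$ and cannot belong to $\mathscr{H}_2$.

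There is no serious obstacle. The only points requiring care are the legitimacy of term-by-term differentiation (immediate from uniform convergence of the power series on compact subsets of $\mathbb{D}$) and the observation that the word ``possible'' in (3) is necessary: the residue at $\zeta_j$ may vanish for the special selfadjoint-extension parameter choices in which $a_j + b_j = 0$, but is otherwise nonzero, and in particular this explains why $z\tfrac{d}{dz}$ maps $\mathscr{D}(L_F^*)$ \emph{into}, but strictly beyond, $\mathscr{H}_2$.
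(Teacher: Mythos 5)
Your proposal is correct and follows essentially the same route as the paper: decompose $\mathscr{D}(L_F^*)=\mathscr{D}(L_F)+\mathscr{D}_++\mathscr{D}_-$ and observe that $z\frac{d}{dz}$ applied to each defect generator produces an $\mathscr{H}_2$ remainder plus the Szeg\H{o}-kernel term $\frac{1}{1-\overline{\zeta}_j z}=-\frac{\zeta_j}{z-\zeta_j}$, which carries the simple boundary pole and has non-square-summable coefficients. The only cosmetic difference is that you work in the $f_\pm^{(\theta)}$ basis (getting $z\frac{d}{dz}f_\pm^{(\theta)}=\frac{1}{1-\overline{\zeta}z}\pm i f_\pm^{(\theta)}$) while the paper uses the equivalent $R_2,R_3$ combinations via the identity $z\frac{d}{dz}R_3(\zeta_j,z)=-R_2(\zeta_j,z)+\frac{1}{1-\overline{\zeta}_jz}$.
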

\begin{proof}
The justification of conclusions (1)-(3) in the lemma will follow
from the computations below, but we already saw that 
\[
g(z):=z\frac{d}{dz}\rho_{3}(z)=-\rho_{2}(z)+\frac{1}{1-z}
\]
holds, where $\rho_{2}$ and $\rho_{3}$ are the functions in (\ref{eq:rhoy2})-(\ref{eq:rhoy3}).
Note that ${\displaystyle g=z\frac{d}{dz}\rho_{3}\in\mathscr{G}_{F}}$
where $F=\{1\}$ is the singleton.
\end{proof}

\begin{proof}[Proof of Theorem \ref{thm:LFad}]
See the proof of Lemma \ref{lem:Lad}. In detail:

The system of vectors 
\begin{equation}
f_{\pm}^{(j)}(z)=\sum_{n=0}^{\infty}\frac{\overline{\zeta}_{j}^{n}}{n\mp i}z^{n}=\sum_{n=0}^{\infty}\frac{1}{n\mp i}\left(\overline{\zeta}_{j}z\right)^{n}\label{eq:sdef}
\end{equation}
is a generating system of vectors for the two deficiency spaces $\mathscr{D}_{\pm}^{(F)}$
for $L_{F}$. 

Introducing, $R_{2}(w,z)=\rho_{2}(\overline{w}z)$, and $R_{3}(w,z)=\rho_{3}(\overline{w}z)$
for $z\in\mathbb{D}$, and $w\in\partial\mathbb{D}$, we conclude
that $\mathscr{D}_{+}+\mathscr{D}_{-}$ is spanned by the $2m$ functions
\begin{equation}
z\mapsto R_{2}(\zeta_{j},z)\quad\mbox{and}\quad z\mapsto R_{3}(\zeta_{j},z).\label{eq:gdef}
\end{equation}
Moreover, 
\begin{equation}
\begin{cases}
z\frac{d}{dz}R_{3}(\zeta_{j},z)=-R_{2}(\zeta_{j},z)+\frac{1}{1-\overline{\zeta}_{j}z}\\
z\frac{d}{dz}R_{2}(\zeta_{j},z)=R_{3}(\zeta_{j},z), & z\in\mathbb{D},\: i\leq j\leq m,
\end{cases}\label{eq:sdefH}
\end{equation}
where we recall that the second term of the RHS in (\ref{eq:sdefH})
is the \emph{Szegö-kernel},
\begin{equation}
K_{S_{z}}(\zeta_{j},z)=\left(1-\overline{\zeta}_{j}z\right)^{-1}=-\frac{\zeta_{j}}{z-\zeta_{j}},\quad z\in\mathbb{D},\:1\leq j\le m.\label{eq:Ksz}
\end{equation}
Hence, application of $CP_{F}$ (in (\ref{eq:CPF})) to $K_{S_{z}}(\zeta_{j},\cdot)$
yields
\begin{equation}
CP_{F}K_{S_{z}}(\zeta_{j},z)=K_{S_{z}}(\zeta_{j},z),\quad z\in\mathbb{D}.\label{eq:CPFKS}
\end{equation}

Combining this with the arguments from the proof of Lemma \ref{lem:Lad},
we now conclude that the desired formula (\ref{eq:LFadCPF}) holds
on $\mathscr{D}(L_{F}^{*})=\mathscr{D}(L_{F})+\mathscr{D}_{+}+\mathscr{D}_{-}$,
where we used (\ref{eq:sdef}) and Corollary \ref{cor:Fdef} in the
last step.\end{proof}
\begin{cor}
\label{cor:BadHF}Let $F=\{\zeta_{j}\}_{j=1}^{m}\subset\mathbb{T}$
and $L_{F}$ be as in Theorem \ref{thm:LFad}. Then the boundary form
in (\ref{eq:BoundaryForm}) is given by
\begin{equation}
\boldsymbol{B}(f,f)=\Im\left\{ \sum_{j=1}^{m}C_{j}(f)\overline{\tilde{f}(\zeta_{j})}\right\} \label{eq:BLFH}
\end{equation}
for all $f\in\mathscr{D}(L^{*})$, where 
\begin{equation}
C_{j}(f):=\lim_{w\rightarrow\zeta_{j}}(w-\zeta_{j})\left(w\frac{d}{dw}f(w)\right),\quad j=1,2,\ldots,m;\label{eq:CLFH}
\end{equation}
and $\tilde{f}$ is the continuous extension of $f$ onto the boundary
$\partial\mathbb{D}$. Note that $C_{j}(f)$ is the residue of ${\displaystyle z\frac{d}{dz}f}$
at the simple pole $z=\zeta_{j}$, $j=1,2,\ldots,m$.\end{cor}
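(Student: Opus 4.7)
The plan is to reduce (\ref{eq:BLFH}) to an elementary computation that combines Lemma \ref{lem:vN def-space} with the explicit formula for $L_F^*$ established in Theorem \ref{thm:LFad}, followed by a radial approximation inside $\mathbb{D}$. Specifically, Lemma \ref{lem:vN def-space} gives
\[
\boldsymbol{B}(f,f) \;=\; \tfrac{1}{2i}\bigl(\langle L_F^* f, f\rangle - \langle f, L_F^* f\rangle\bigr) \;=\; \Im\langle L_F^* f, f\rangle_{\mathscr{H}_2},
\]
and Theorem \ref{thm:LFad} identifies $L_F^*f(z) = zf'(z) - \sum_{j=1}^{m} C_j(f)\,(z-\zeta_j)^{-1}$ as analytic functions on $\mathbb{D}$, with $C_j(f)$ the residue specified in (\ref{eq:CLFH}).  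Although $L_F^* f \in \mathscr{H}_2$, the two summands on the right are \emph{not} in $\mathscr{H}_2$ separately -- each has a simple boundary pole at $\zeta_j$ -- so we evaluate $\langle L_F^* f, f\rangle$ as the limit of the integrals on $|z|=r$ for $r \nearrow 1$.

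For each $r < 1$ the inner product splits as
\[
\int_0^1 (L_F^* f)(re(\theta))\,\overline{f(re(\theta))}\,d\theta \;=\; \int_0^1 re(\theta) f'(re(\theta))\,\overline{f(re(\theta))}\,d\theta - \sum_{j=1}^{m} C_j(f)\,J_j(r),
\]
where $J_j(r) := \int_0^1 \overline{f(re(\theta))}\,(re(\theta)-\zeta_j)^{-1}\,d\theta$.  Expanding $f(z) = \sum_n a_n z^n$ in Fourier series and using orthogonality, the first integral equals $\sum_n n|a_n|^2 r^{2n}$, which is real and therefore contributes nothing to the imaginary part.  The key step is the evaluation of $J_j(r)$.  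Since $|re(\theta)| = r < 1 = |\zeta_j|$, expand the Cauchy kernel geometrically,
\[
\frac{1}{re(\theta)-\zeta_j} \;=\; -\sum_{k\geq 0} r^k\,\overline{\zeta_j}^{\,k+1}\,e(k\theta),
\]
and integrate termwise against $\overline{f(re(\theta))} = \sum_n \overline{a_n}\,r^n\,e(-n\theta)$.  The integrals $\int_0^1 e((k-n)\theta)\,d\theta$ force $k=n$, leaving
\[
J_j(r) \;=\; -\overline{\zeta_j}\sum_{n\geq 0}\overline{a_n}\,(r^2\overline{\zeta_j})^{n} \;=\; -\overline{\zeta_j}\,\overline{f(r^2 \zeta_j)},
\]
a formula one can also verify by a direct residue calculation on $|w|=r$.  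Passing to $r \to 1$ and invoking Fatou's theorem (Lemma \ref{lem:Hardy}) at the point $\zeta_j$ -- which is what allows $\tilde{f}(\zeta_j)$ to appear -- yields $J_j(r) \to -\overline{\zeta_j \tilde{f}(\zeta_j)}$.  Collecting the pieces gives $\langle L_F^* f, f\rangle = (\text{real}) + \sum_j C_j(f)\,\overline{\zeta_j\tilde{f}(\zeta_j)}$, and taking imaginary parts produces the asserted boundary-form identity (\ref{eq:BLFH}).

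The main obstacle is precisely the need to keep the two pieces of the decomposition together throughout the calculation.  In the generic situation $zf'$ has logarithmic-type boundary behaviour (inherited from the defect vectors $f_\pm^{(\zeta_j)}$ of Corollary \ref{cor:Fdef}, whose coefficients decay only like $1/n$), so each of $\int zf'\,\overline{f}\,d\theta$ and $\sum_j C_j(f) J_j(r)$ may fail to remain bounded as $r \to 1$; only the difference $\langle L_F^* f, f\rangle$ converges, and only the explicit cancellation of the $\zeta_j$-singular parts built into Theorem \ref{thm:LFad} makes the limit harmless.  In the final formula the real divergent contributions from the two pieces cancel inside the imaginary part, and what survives is exactly the sum of the boundary residues paired with the conjugated boundary values, one term for each $\zeta_j \in F$, which is the content of (\ref{eq:BLFH}).
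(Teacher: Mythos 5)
Your route is in essence the paper's: both start from $2i\boldsymbol{B}(f,f)=\langle L_{F}^{*}f,f\rangle-\langle f,L_{F}^{*}f\rangle$ (Lemma \ref{lem:vN def-space}), substitute the formula $L_{F}^{*}=(1-CP_{F})z\frac{d}{dz}$ of Theorem \ref{thm:LFad}, discard the $z\frac{d}{dz}$ contribution as real, and recognize the pairing of $f$ against the Cauchy/Szeg\H{o} kernel at $\zeta_{j}$ as producing $\overline{\tilde{f}(\zeta_{j})}$. What you add is the truncation to circles $|z|=r$, which replaces the paper's formal use of the reproducing property at a boundary point by an explicit Fourier computation; this is the same method executed with more care, not a different one.

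Two points need attention, though. First, your own computation gives $J_{j}(r)=-\overline{\zeta_{j}}\,\overline{f(r^{2}\zeta_{j})}$, so collecting the pieces yields $\Im\{\sum_{j}C_{j}(f)\,\overline{\zeta_{j}}\,\overline{\tilde{f}(\zeta_{j})}\}$ (up to an overall sign fixed by the inner-product convention), which is not literally (\ref{eq:BLFH}): the unimodular factor $\overline{\zeta_{j}}$ cannot be absorbed into an imaginary part. The paper's proof drops the same factor when it identifies $CP_{F}z\frac{d}{dz}f$ with $\sum_{j}C_{j}(f)K_{S_{z}}(\zeta_{j},\cdot)$, even though $(z-\zeta_{j})^{-1}=-\overline{\zeta_{j}}K_{S_{z}}(\zeta_{j},z)$ by (\ref{eq:Ksz}); so you have reproduced, rather than repaired, that slip, and your final sentence ``taking imaginary parts produces the asserted identity'' does not follow from what precedes it. Second, the appeal to Fatou ``at the point $\zeta_{j}$'' is not available: a generic $f\in\mathscr{D}(L_{F}^{*})$ with $C_{j}(f)\neq0$ has a nonzero $\zeta_{j}$-defect component (Corollary \ref{cor:Fdef}), hence behaves like $-\beta\log(1-\overline{\zeta_{j}}z)$ near $\zeta_{j}$ and admits no finite boundary value there, so $J_{j}(r)$ itself diverges. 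Your closing paragraph correctly senses this, but it contradicts the displayed limit $J_{j}(r)\to-\overline{\zeta_{j}\tilde{f}(\zeta_{j})}$, and the fact that actually needs proof --- that the divergent part of $C_{j}(f)J_{j}(r)$ is purely real because $C_{j}(f)$ and the coefficient of the logarithm are determined by the same defect amplitude $\beta$, making that part behave like $|\beta|^{2}\log\frac{1}{1-r^{2}}$ --- is asserted rather than checked. Splitting $f$ near $\zeta_{j}$ into its $\zeta_{j}$-defect part plus a remainder continuous at $\zeta_{j}$, and applying your $J_{j}(r)$ formula to each piece, would close this gap.
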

\begin{proof}
By eq. (\ref{eq:LFadCPF}), we have 
\begin{eqnarray*}
\left\langle L^{*}f,f\right\rangle  & = & \left\langle z\frac{d}{dz}f,f\right\rangle -\left\langle CP_{F}z\frac{d}{dz}f,f\right\rangle \\
\left\langle f,L^{*}f\right\rangle  & = & \left\langle f,z\frac{d}{dz}f\right\rangle -\left\langle f,CP_{F}z\frac{d}{dz}f\right\rangle .
\end{eqnarray*}
Note by (\ref{eq:CPF}) and (\ref{eq:CLFH}), we see that
\[
CP_{F}z\frac{d}{dz}f(z)=\sum_{j=1}^{m}C_{j}(f)K_{S_{z}}(\zeta_{j},z)
\]
where $K_{S_{z}}(\zeta_{j},z)$ is the \emph{Szegö-kernel} (\ref{eq:Ksz}). 

Hence 
\begin{eqnarray*}
2i\boldsymbol{B}(f,f) & = & \left\langle L^{*}f,f\right\rangle -\left\langle f,L^{*}f\right\rangle \\
 & = & 2i\,\Im\left\{ \left\langle f,CP_{F}z\frac{d}{dz}f\right\rangle \right\} \\
 & = & 2i\,\Im\left\{ \sum_{j=1}^{m}\left\langle f,C_{j}(f)K_{S_{z}}(\zeta_{j},z)\right\rangle \right\} \\
 & = & 2i\,\Im\left\{ \sum_{j=1}^{m}C_{j}(f)\overline{\tilde{f}(\zeta_{j})}\right\} 
\end{eqnarray*}
and (\ref{eq:BLFH}) follows from this.
\end{proof}

\section{\label{sec:Fried}The Friedrichs Extension}

In sections \ref{sec:sbdd} and \ref{sec:spH} we introduced a particular
semibounded operator $L$, and we proved that its deficiency indices
are $(1,1)$. In section \ref{sec:Hardy} we explored its relevance
for the study of an harmonic analysis in the Hardy space $\mathscr{H}_{2}$
of the complex disk $\mathbb{D}$. We further proved that $0$ is
an effective lower bound for $L$ (Lemma \ref{lem:lbdd}.) We further
proved that every selfadjoint extension of $L$ has pure point-spectrum,
uniform multiplicity one. Moreover (Corollary \ref{cor:bottom}),
for every $b<0$, we showed that there is a unique selfadjoint extension
$H_{b}$ of $L$ such that $b$ is the smallest eigenvalue of $H_{b}$.

Now, in general, for a semibounded Hermitian operator $L$, there
are two distinguished selfadjoint extensions with the same lower bound,
the Friedrichs extension, and the Krein extension; and in general
they are quite different. For example, in boundary-value problems,
these two selfadjoint extensions correspond to Dirichlet vs Neumann
boundary conditions, respectively.

But for our particular operator $L$ from sections \ref{sec:sbdd}
and \ref{sec:spH}, we show that the two the Friedrichs extension,
and the Krein extension, must coincide. While this may be obtained
from abstract arguments, nonetheless, it is of interest to compute
explicitly this unique extension. Indeed, the abstract characterizations
in the literature (\cite{DS88b,Kr55,AG93,Gru09}) of the two, the
Friedrichs extension and the Krein extension, are given only in very
abstract terms.

Moreover, in general, it is not true that when a semibounded selfadjoint
operator $H$ is restricted, that its Friedrichs extension will coincide
with $H$. But it is true for our particular model operator $H$.
We now turn to the details of the study of the selfadjoint extensions
of $L$.

Let $\mathscr{H}=l^{2}(\mathbb{N}_{0})$, and define the selfadjoint
operator $H$ as in section \ref{sec:sbdd}, 
\begin{equation}
\left(Hx\right)_{k}=k\, x_{k},\: k\in\mathbb{N}_{0}\label{eq:H}
\end{equation}
and
\begin{equation}
\mathscr{D}(H)=\{x\in l^{2}\:\big|\: k\, x_{k}\in l^{2}\}.\label{eq:domH}
\end{equation}
On the dense domain
\begin{equation}
\mathbb{D}_{0}:=\mathscr{D}(L)=\{x\in\mathscr{D}(H)\:\big|\:\sum_{k\in\mathbb{N}_{0}}x_{k}=0\},\label{eq:D0}
\end{equation}
set
\begin{equation}
L:=H\big|_{\mathbb{D}_{0}},\label{eq:L}
\end{equation}
i.e., $L$ is the restriction of $H$ to the dense domain $\mathbb{D}_{0}$
specified in (\ref{eq:D0}). 
\begin{lem}
\label{lem:DLadjoint}For the domain of $L^{*}$ (the adjoint operator),
we have
\begin{equation}
\mathscr{D}(L^{*})=\mathbb{D}_{0}+\mathbb{C}y_{2}+\mathbb{C}y_{3}\label{eq:DLad}
\end{equation}
as a direct sum, where 
\begin{equation}
{\displaystyle \left(y_{2}\right)_{k}=\frac{1}{1+k^{2}}},\quad k\in\mathbb{N}_{0}\label{eq:YY2}
\end{equation}
and 
\begin{equation}
{\displaystyle \left(y_{3}\right)_{k}=\frac{k}{1+k^{2}}},\quad k\in\mathbb{N}_{0},\label{eq:YY3}
\end{equation}
see sect. \ref{sec:sbdd}. \end{lem}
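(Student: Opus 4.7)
The plan is to treat Lemma \ref{lem:DLadjoint} as a direct assembly of two results already in hand: the von Neumann decomposition from Lemma \ref{lem:vN def-space}, and the change-of-basis formulas of Lemma \ref{lem:Y23}. Concretely, recall from Lemma \ref{lem:L*-eigenvalues} (and the discussion of defect vectors immediately preceding eq. (\ref{eq:vNext})) that $L$ has deficiency indices $(1,1)$ with one-dimensional defect spaces
$$\mathscr{D}_{\pm} = \mathbb{C} x_{\pm}, \qquad (x_{\pm})_k = \frac{1}{k \mp i}, \quad k \in \mathbb{N}_0.$$
Applying the general von Neumann formula (\ref{eq:DLs}) to the present operator $L$ therefore gives
$$\mathscr{D}(L^{*}) = \mathbb{D}_0 \oplus \mathbb{C} x_+ \oplus \mathbb{C} x_-$$
as a direct sum of linear subspaces in $l^2(\mathbb{N}_0)$.

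Next, I would pass from the basis $\{x_+, x_-\}$ of the combined defect space to $\{y_2, y_3\}$. The norms $\|x_+\|^2 = \|x_-\|^2 = \sum_{k \ge 0}(1+k^2)^{-1} = K$ are equal, so hypothesis (\ref{eq:defnorm}) of Lemma \ref{lem:Y23} is satisfied and the invertible linear map
$$y_2 = \frac{x_+ - x_-}{2i}, \qquad y_3 = \frac{x_+ + x_-}{2}, \qquad x_{\pm} = y_3 \pm i y_2$$
identifies $\mathbb{C} x_+ + \mathbb{C} x_- = \mathbb{C} y_2 + \mathbb{C} y_3$. A one-line coordinate computation (using $\tfrac{1}{2i}(\tfrac{1}{k-i} - \tfrac{1}{k+i}) = \tfrac{1}{1+k^2}$ and $\tfrac{1}{2}(\tfrac{1}{k-i} + \tfrac{1}{k+i}) = \tfrac{k}{1+k^2}$) recovers the explicit formulas (\ref{eq:YY2}) and (\ref{eq:YY3}). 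Substituting this identity into the von Neumann decomposition immediately yields (\ref{eq:DLad}).

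For directness of the sum on the right-hand side of (\ref{eq:DLad}), the directness of $\mathbb{D}_0 \oplus \mathscr{D}_+ \oplus \mathscr{D}_-$ transfers through the linear isomorphism above; it can also be checked by inspection, since $y_2, y_3$ are linearly independent, $\sum_k (y_2)_k = K \neq 0$ shows $y_2 \notin \mathbb{D}_0$, and $(k(y_3)_k) = (k^2/(1+k^2)) \notin l^2$ shows $y_3 \notin \mathscr{D}(H) \supset \mathbb{D}_0$. No serious obstacle arises: the whole proof is a bookkeeping exercise that repackages Section \ref{sec:sbdd}, and the earlier identities $L^{*} y_2 = y_3$, $L^{*} y_3 = -y_2$ from (\ref{eq:YY23}) confirm that both $y_2$ and $y_3$ genuinely lie in $\mathscr{D}(L^{*})$.
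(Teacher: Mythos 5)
Your proposal is correct and follows essentially the same route as the paper: the paper's proof is just a pointer back to Section \ref{sec:sbdd}, where precisely your argument appears — the von Neumann decomposition $\mathscr{D}(L^{*})=\mathscr{D}(L)\oplus\mathbb{C}x_{+}\oplus\mathbb{C}x_{-}$ from (\ref{eq:vNext}) together with the change of basis to $y_{2},y_{3}$ via Lemma \ref{lem:Y23}. Your added checks of directness and of $y_{2}\notin\mathbb{D}_{0}$, $y_{3}\notin\mathscr{D}(H)$ are correct and only make explicit what the paper leaves implicit.
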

\begin{proof}
The details for this formula (\ref{eq:DLad}) are contained in section
\ref{sec:sbdd}. \end{proof}
\begin{thm}
\label{thm:Friedrichs}The operator $H$ from (\ref{eq:H})-(\ref{eq:domH})
is the Friedrichs extension of $L$. \end{thm}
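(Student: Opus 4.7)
The plan is to identify the Friedrichs extension $L_F$ of $L$ via its classical characterization (see \cite{Kat95,DS88b}) as the unique selfadjoint extension whose domain lies in the form closure
\[
\mathscr{Q}(L) := \text{closure of } \mathbb{D}_0 \text{ in the norm } \|x\|_*^2 = \|x\|^2 + \langle x,Lx\rangle = \sum_{k\in\mathbb{N}_{0}}(1+k)\,|x_k|^2,
\]
combined with the abstract identity $\mathscr{D}(L_F) = \mathscr{D}(L^*)\cap \mathscr{Q}(L)$. Using Lemma \ref{lem:DLadjoint} and the observation that $y_2\in\mathscr{D}(H)$ (since $Hy_2=y_3\in\ell^{2}$) while $y_2\notin\mathbb{D}_0$, we have $\mathscr{D}(H)=\mathbb{D}_0+\mathbb{C}y_2$ and $\mathscr{D}(L^*)=\mathscr{D}(H)+\mathbb{C}y_3$. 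Hence the result reduces to proving two assertions: (a) $e_0\in\mathscr{Q}(L)$, and (b) $y_3\notin\mathscr{Q}(L)$.

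Assertion (b) is elementary: the form norm $\|y_3\|_*^2=\sum_{k}(1+k)\,k^{2}/(1+k^{2})^{2}$ diverges like the harmonic series $\sum 1/k$, so $y_3$ is not even a member of the natural weighted $\ell^{2}$ space into which $\mathscr{Q}(L)$ embeds, and in particular not in the completion $\mathscr{Q}(L)$. The heart of the argument is (a), and the plan is to recycle the minimizing sequence that drove the proof of Lemma \ref{lem:lbdd}. With $s_n=\sum_{i=1}^{n}1/i$, define
\[
u^{(n)}_j = \begin{cases} -1 & \text{if } j=0,\\ 1/(j\,s_n) & \text{if } 1\le j\le n,\\ 0 & \text{if } j>n,\end{cases}
\]
which lies in $\mathbb{D}_0$. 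A direct computation gives
\[
\|u^{(n)}+e_0\|_*^{2}=\frac{1}{s_n^{2}}\sum_{j=1}^{n}\frac{1+j}{j^{2}}=\frac{1}{s_n^{2}}\Bigl(s_n+\sum_{j=1}^{n}\frac{1}{j^{2}}\Bigr)\sim\frac{1}{s_n}\longrightarrow 0
\]
as $n\to\infty$. Thus $-u^{(n)}\to e_0$ in the form norm, so $e_0\in\mathscr{Q}(L)$; combined with $\mathbb{D}_0\subset\mathscr{Q}(L)$ this yields $\mathscr{D}(H)=\mathbb{D}_0+\mathbb{C}e_0\subset\mathscr{Q}(L)$, and in particular $y_2\in\mathscr{Q}(L)$.

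Putting (a) and (b) together, $\mathscr{D}(L^*)\cap\mathscr{Q}(L)=\mathbb{D}_0+\mathbb{C}y_2=\mathscr{D}(H)$, so $L_F=L^*|_{\mathscr{D}(H)}=H$, as claimed. The conceptual obstacle is that $e_0\notin\mathbb{D}_0$ under the graph norm of $L$ (the two form a direct sum in $\mathscr{D}(H)$), so the inclusion $e_0\in\mathscr{Q}(L)$ must be proven at the level of the weaker form norm; it is precisely the logarithmic divergence of $s_n=\sum_{i\le n}1/i$ that makes the approximation $-u^{(n)}\to e_0$ succeed, while the analogous attempt for $y_3$ fails because $\sum k/(1+k^{2})$ diverges too slowly to be absorbed by any normalization. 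This sharp dichotomy between $y_2$ and $y_3$ is what isolates the single extension $H$ as Friedrichs, and it also explains why in this model the Friedrichs and Krein extensions necessarily coincide.
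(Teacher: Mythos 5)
Your proof is correct and follows essentially the same route as the paper: both rest on the characterization $\mathscr{D}(H_{Friedrichs})=\mathscr{D}(L^{*})\cap\mathscr{Q}(L)$, the decomposition $\mathscr{D}(L^{*})=\mathbb{D}_{0}+\mathbb{C}y_{2}+\mathbb{C}y_{3}$, and the divergence of $\sum_{k}(1+k)\left|(y_{3})_{k}\right|^{2}$ to exclude the $y_{3}$-component. The one genuine addition is your explicit minimizing sequence $u^{(n)}$ proving $e_{0}\in\mathscr{Q}(L)$, i.e., the inclusion $\mathscr{D}(H)\subseteq\mathscr{Q}(L)$, which the paper leaves implicit (it follows there from maximality of selfadjoint restrictions of $L^{*}$); this is a worthwhile detail to have on record.
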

\begin{proof}
We denote the Friedrichs extension by $H_{Friedrichs}$. On $\mathbb{D}_{0}$
from (\ref{eq:D0}), we define the quadratic form
\begin{equation}
Q(x):=Q_{L}(x)=\left\langle x,Lx\right\rangle _{l^{2}}+\left\Vert x\right\Vert _{2}^{2}=\sum_{k\in\mathbb{N}_{0}}k\left|x_{k}\right|^{2}+\left\Vert x\right\Vert _{2}^{2},\; x\in\mathbb{D}_{0}.\label{eq:Q}
\end{equation}
Hence 
\begin{equation}
Q(x)\geq\left\Vert x\right\Vert _{2}^{2},\:\mbox{for all }x\in\mathbb{D}_{0}.\label{eq:Q-1}
\end{equation}
Let $\mathscr{H}_{Q}$ be the Hilbert completion of the pre-Hilbert
space $(\mathbb{D}_{0},Q)$. Then from (\ref{eq:Q-1}), we see that
$\mathscr{H}_{Q}$ is naturally contained in $l^{2}$, i.e., containment
with a contractive embedding mapping $\mathscr{H}_{Q}\hookrightarrow l^{2}$,
and bounded by $1$; and moreover that $(\sqrt{k}\, x_{k})\in l^{2}$
holds for $x\in\mathscr{H}_{Q}$. From \cite[p. 1240]{DS88a}, we
infer that
\begin{equation}
\mathscr{D}(H_{Friedrichs})=\mathscr{D}(L^{*})\cap\mathscr{H}_{Q}.\label{eq:DFriedrichs}
\end{equation}

In addition to (\ref{eq:DFriedrichs}), we shall also need the following
lemma. 
\begin{lem}
\label{lem:H}The selfadjoint operator $H$ in (\ref{eq:H}) has as
domain
\begin{equation}
\mathscr{D}(H)=\mathbb{D}_{0}+\mathbb{C}y_{2}.\label{eq:domH-1}
\end{equation}
\end{lem}
\begin{proof}
( $\subseteq$ ) Let $x\in l^{2}$ satisfy (\ref{eq:domH}), and set
\begin{equation}
t:=\sum_{k\in\mathbb{N}_{0}}\frac{1}{1+k^{2}}=\frac{1}{2}\left(1+\pi\coth(\pi)\right),\;\mbox{and }\label{t}
\end{equation}
\begin{equation}
s:=\sum_{k\in\mathbb{N}_{0}}x_{k};\;\mbox{then}\label{eq:s}
\end{equation}
\begin{equation}
z:=x-\left(\frac{s}{t}\right)y_{2}\label{eq:z}
\end{equation}
satisfies ${\displaystyle \sum_{k\in\mathbb{N}_{0}}z_{k}=0}$. 

Consequently, $z\in\mathbb{D}_{0}$, and therefore $x\in\mbox{RHS(\ref{eq:domH-1})}$.
Since $y_{2}\in\mathscr{D}(H)$, the other inclusion ``$\supseteq$''
in (\ref{eq:domH-1}) is clear.
\end{proof}

We now continue with the proof of Theorem \ref{thm:Friedrichs}. We
claim that the intersection in (\ref{eq:DFriedrichs}) coincides with
(\ref{eq:domH-1}). Indeed, by (\ref{eq:DLad}), every $x\in\mathscr{D}(L^{*})$
has the form
\begin{equation}
x=z+a\, y_{2}+b\, y_{3}\label{eq:x}
\end{equation}
where $z\in\mathbb{D}_{0}$, and $a,b\in\mathbb{C}$. This decomposition
is unique. If $x$ is also in $\mathscr{H}_{Q}$, it follows from
(\ref{eq:Q})-(\ref{eq:Q-1}) that $x\in l^{2}$, and $\left(\sqrt{k}\, x_{k}\right)\in l^{2}$.
Since both terms $z$ and $a\, y_{2}$ satisfy the last condition,
we conclude that the last term $b\, y_{3}$ in (\ref{eq:x}) will
as well. But since ${\displaystyle \sqrt{k}\left(y_{3}\right)_{k}=\frac{k^{3/2}}{1+k^{2}}\notin l^{2}}$,
we conclude that $b=0$. Hence (\ref{eq:x}) reduces to $x=z+a\, y_{2}$;
and this is in $\mathscr{D}(H)$ by Lemma \ref{lem:Hdom}. We proved
that 
\[
\mathscr{D}(H)=\mathscr{D}(H_{Friedrichs})
\]
and therefore ${\displaystyle H=H_{Friedrichs}}$. 

\end{proof}
\begin{cor}
\label{cor:FKext}Let the Hermitian operator $L$ and its domain $\mathscr{D}(L)=\mathbb{D}_{0}$
be as in Theorem \ref{thm:Friedrichs}, see also eq (\ref{eq:domL});
then the Friedrichs and the Krein extensions of $L$ coincide as selfadjoint
operators in $l^{2}(\mathbb{N}_{0})$.\end{cor}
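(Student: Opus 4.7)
The plan is to exploit the fact that the Friedrichs and Krein extensions of a semibounded Hermitian operator are, respectively, the largest and the smallest selfadjoint extension with the same lower bound as $L$, and to show that in our setting there is only one such extension, namely $H$ itself. The result will then be immediate.

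First, recall from Lemma \ref{lem:lbdd} that $L \geq 0$, and from Theorem \ref{thm:Friedrichs} that the Friedrichs extension $L_{Friedrichs}$ of $L$ equals the selfadjoint operator $H$ of (\ref{eq:H})--(\ref{eq:domH}), whose spectrum is $\mathbb{N}_{0}\subset[0,\infty)$. Next, I would invoke the general fact (see e.g.\ \cite{AG93,Kr55,DS88b}) that the Krein--von Neumann extension $L_{Krein}$ is the \emph{smallest} non-negative selfadjoint extension of $L$; in particular $L_{Krein}\geq 0$, so that $\mathrm{spec}(L_{Krein})\subset[0,\infty)$.

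Second, I would classify the non-negative selfadjoint extensions of $L$ using the $(1,1)$-parameterization already developed in section \ref{sec:spH}. By Theorem \ref{thm:Spectrum-L_t}, every selfadjoint extension $L_{t}=H_{e(t)}$ with $\cos(t)\neq -1$ has a strictly negative eigenvalue $\lambda_{0}(t)<0$, so $\mathrm{spec}(L_{t})\not\subset[0,\infty)$. The unique remaining extension corresponds to the boundary value $\zeta=-1$, and Remark \ref{rem:H} identifies this extension precisely as $H$. Consequently, $H$ is the \emph{only} selfadjoint extension of $L$ whose spectrum lies in $[0,\infty)$.

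Combining the two steps: since both $L_{Friedrichs}$ and $L_{Krein}$ are non-negative selfadjoint extensions of $L$, and $H$ is the unique such extension, we conclude
\[
L_{Friedrichs}=H=L_{Krein}.
\]
The main subtle point is to make sure that the standard definition of the Krein extension we invoke (as the minimal non-negative selfadjoint extension, cf.\ \cite{Kr55,AG93}) applies in our setting; this is immediate from Lemma \ref{lem:lbdd} together with the fact that $L$ has dense domain and equal deficiency indices $(1,1)$. Everything else is a bookkeeping exercise using the explicit spectral description of the one-parameter family $\{L_{t}\}$ already established.
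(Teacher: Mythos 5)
Your proof is correct, but it takes a genuinely different route from the paper's. The paper proves the corollary by \emph{constructing} the Krein extension explicitly: it observes that $e_{0}\in\mathscr{D}(L^{*})$ with $L^{*}e_{0}=0$, defines $K$ on $\mathscr{D}(K)=\mathscr{D}(L)+\mathbb{C}\,e_{0}$ by $K(\varphi+c\,e_{0})=L\varphi$, checks that $K$ is selfadjoint and non-negative, and then computes $Ke_{n}=n\,e_{n}$ for all $n\in\mathbb{N}_{0}$ (using $e_{n}-e_{0}\in\mathscr{D}(L)$), which identifies $K$ with $H=H_{Friedrichs}$ outright. You instead invoke the abstract characterization of the Krein extension as the \emph{minimal} non-negative selfadjoint extension, and combine it with the spectral dichotomy of Theorem \ref{thm:Spectrum-L_t} (every $L_{t}$ with $\cos(t)\neq-1$ has $\lambda_{0}(t)<0$, and the remaining case $\zeta=-1$ is $H$ by Remark \ref{rem:H}) to conclude that $H$ is the unique non-negative selfadjoint extension, hence equal to both distinguished extensions. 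Your route is shorter and yields, as a by-product, exactly the statement the paper records in the \emph{next} corollary (uniqueness of the extension with spectrum in $[0,\infty)$); its cost is the reliance on the external minimality theorem of Krein. The paper's route is longer but self-contained at this point and has independent value: it exhibits the Krein extension's domain concretely as $\mathscr{D}(L)+\ker(L^{*})$ and verifies the coincidence by direct computation on the basis $\{e_{n}\}$, without needing the spectral analysis of section \ref{sec:spH}. Both arguments are sound.
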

\begin{proof}
For the definition of the Krein extension, see e.g., \cite[sect. 107, p 367-8]{AG93}.
To understand it, it is useful to introduce the following inner product
on $\mathscr{D}(L^{*})$ in the general case of the von Neumann decomposition
(\ref{eq:vNdep}). For $f,g\in\mathscr{D}(L^{*})$ set
\begin{equation}
\left\langle f,g\right\rangle _{*}=\left\langle f,g\right\rangle +\left\langle L^{*}f,L^{*}g\right\rangle .\label{eq:Ginner}
\end{equation}
In this inner product $\left\langle \cdot,\cdot\right\rangle _{*}$,
the three subspaces $\mathscr{D}(L)$, and $\mathscr{D}_{\pm}$ in
(\ref{eq:vNdep}) are mutually orthogonal, and, for $f_{\pm}\in\mathscr{D}_{\pm}$,
we have
\begin{equation}
\left\Vert f_{\pm}\right\Vert _{*}^{2}=\left\langle f_{\pm},f_{\pm}\right\rangle _{*}=2\left\Vert f_{\pm}\right\Vert ^{2}.\label{eq:ndefv}
\end{equation}

Return now to the particular example with $L$ in $l^{2}(\mathbb{N}_{0})$
as described in the corollary, and in Theorem \ref{thm:Friedrichs},
we introduce the standard ONB in $l^{2}(\mathbb{N}_{0})$, $\{e_{n}\:\big|\: n\in\mathbb{N}_{0}\}$
where $e_{n}(k)=\delta_{n,k}$ for all $n,k\in\mathbb{N}_{0}$, and
$\delta_{n,k}$ denoting the Kronecker-delta.

A computation shows that $e_{0}\in\mathscr{D}(L^{*})$, and $L^{*}e_{0}=0$. 

Clearly, $e_{0}\notin\mathscr{D}(L)$. Moreover,
\begin{equation}
\left\langle e_{0},x_{\pm}\right\rangle _{*}=\pm i\label{eq:idefv}
\end{equation}
where ${\displaystyle (x_{\pm})_{n}=\frac{1}{n\mp i}}$ are the defect-vectors
from (\ref{eq:defv}). 

We now define a selfadjoint extension $K$ of $L$ as follows
\begin{equation}
\mathscr{D}(K)=\mathscr{D}(L)+\mathbb{C}\, e_{0}\label{eq:domK}
\end{equation}
and
\begin{equation}
K(\varphi+c\, e_{0})=L\varphi\label{eq:defK}
\end{equation}
for all $\varphi\in\mathscr{D}(L)$ and all $c\in\mathbb{C}$. 

We see from (\ref{eq:gInner}), (\ref{eq:ndefv}), and (\ref{eq:idefv})
that the co-dimension of $\mathscr{D}(L)$ in $\mathscr{D}(K)$ is
one. From (\ref{eq:defK}), we get
\begin{eqnarray*}
\left\langle \varphi+c\, e_{0},K(\varphi+c\, e_{0})\right\rangle  & = & \left\langle \varphi+c\, e_{0},L\varphi\right\rangle \\
 & = & \left\langle \varphi,L\varphi\right\rangle +\overline{c}\left\langle e_{0},L\varphi\right\rangle \\
 & = & \left\langle \varphi,L\varphi\right\rangle +\overline{c}\left\langle L^{*}e_{0},\varphi\right\rangle \\
 & = & \left\langle \varphi,L\varphi\right\rangle \geq0.
\end{eqnarray*}
Since $L$ has indices $(1,1)$, we conclude that the operator $K$defined
in (\ref{eq:domK})-(\ref{eq:defK}) is selfadjoint and semibounded,
$K\geq0$; moreover $Ke_{0}=0$, from (\ref{eq:defK}). 

We claim that
\begin{equation}
Ke_{n}=n\, e_{n},\quad\forall n\in\mathbb{N}_{0}.\label{eq:eigK}
\end{equation}
This holds for $n=0$. If $n>0$, then $e_{n}-e_{0}\in\mathscr{D}(L)$,
and 
\begin{equation}
L(e_{n}-e_{0})=n\, e_{n}.\label{eq:Len}
\end{equation}
Hence 
\begin{eqnarray*}
Ke_{n} & = & K(e_{n}-e_{0}+e_{0})\\
 & = & L(e_{n}-e_{0})\,\,\,\,\,\,\,\,\,\,\,\,\,\,\,\,\,\,\mbox{by }(\ref{eq:defK})\\
 & = & n\, e_{n}\,\,\,\,\,\,\,\,\,\,\,\,\,\,\,\,\,\,\,\,\,\,\,\,\,\,\,\,\,\,\,\,\,\,\,\mbox{by }(\ref{eq:Len})
\end{eqnarray*}
which is the desired conclusion (\ref{eq:eigK}).\end{proof}
\begin{cor}
Among all the selfadjoint extensions of $L$ in (\ref{eq:D0}) and
(\ref{eq:L}), the Friedrichs extension is the only one having its
spectrum contained in $[0,\infty)$. \end{cor}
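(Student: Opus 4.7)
The plan is to combine the von Neumann classification of selfadjoint extensions of $L$ (deficiency indices $(1,1)$) with the explicit spectral description already obtained in Section \ref{sec:spH}, plus the identification from Theorem \ref{thm:Friedrichs}.

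First I would recall that by Lemma \ref{lem:vN def-space}, the family of selfadjoint extensions of $L$ is in bijection with the unitary group of $\mathscr{D}_{+}$, which here is a copy of the circle $\mathbb{T}$; we have parameterized these extensions as $H_{\zeta}$, $\zeta=e(t)\in\mathbb{T}$, using (\ref{eq:ext})--(\ref{eq:ext-1}). Next, by Remark \ref{rem:H} together with Theorem \ref{thm:Friedrichs}, the selfadjoint operator $H$ in (\ref{eq:H})--(\ref{eq:domH}) equals both $H_{-1}$ (the von Neumann extension corresponding to $\zeta=-1$, i.e., $\cos t=-1$) and $H_{\mathrm{Friedrichs}}$. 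Since $\mathrm{spectrum}(H)=\mathbb{N}_{0}\subset[0,\infty)$, the Friedrichs extension certainly has its spectrum in $[0,\infty)$.

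The main step is then to verify that every \emph{other} selfadjoint extension has at least one strictly negative eigenvalue, so that $H_{\mathrm{Friedrichs}}$ is the unique one of nonnegative type. For this I would invoke directly Theorem \ref{thm:Spectrum-L_t}: for every $\zeta=e(t)\neq-1$, the spectrum of $H_{\zeta}=L_{t}$ is the pure-point set $\{\lambda_{n}(t)\}_{n\in\mathbb{N}_{0}}$ described by (\ref{eq:spHt})--(\ref{eq:spHt-1}), and the bottom eigenvalue $\lambda_{0}(t)$ lies in $(-\infty,0)$. Equivalently, one may reread the proof: the equation $G(\lambda)=K\tan(\pi t)$ in (\ref{eq:eeqn}) has, for every $t\in(-\pi,\pi)$, exactly one solution in the interval $(-\infty,0)$, as established from the sign analysis of $G$ and $G''$ and the intermediate value argument carried out in Section \ref{sec:spH}. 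Hence $\mathrm{spectrum}(H_{\zeta})\cap(-\infty,0)\neq\emptyset$ whenever $\zeta\neq-1$, which rules out every extension except $H_{\mathrm{Friedrichs}}=H_{-1}$.

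I don't foresee a genuine obstacle here, as all nontrivial content has been established earlier; the only point that requires a line of care is that Corollary \ref{cor:bottom} tells us there is \emph{some} extension realizing each negative number as the bottom of the spectrum, but we need the converse direction, namely that every $\zeta\neq-1$ forces $\lambda_{0}(t)<0$. This is exactly what the monotonicity/asymptotic picture for $G$ in the proof of Theorem \ref{thm:Spectrum-L_t} (and Figure \ref{fig:eigen}) delivers. The conclusion of the corollary then follows immediately.
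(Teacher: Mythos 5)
Your proposal is correct and follows essentially the same route as the paper: identify the Friedrichs extension with $H=H_{-1}$ (spectrum $\mathbb{N}_{0}$) via Theorem \ref{thm:Friedrichs}, and then invoke Theorem \ref{thm:Spectrum-L_t} to conclude that every other extension $H_{\zeta}$, $\zeta\neq-1$, has a strictly negative bottom eigenvalue $\lambda_{0}(t)<0$. The extra care you take in checking that the sign analysis of $G$ genuinely forces $\lambda_{0}(t)<0$ for all $\zeta\neq-1$ is exactly the content the paper relies on implicitly.
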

\begin{proof}
First note from Theorem \ref{thm:Friedrichs} that $\mbox{spectrum}(H_{Fridrichs})=\mathbb{N}_{0}$.
But if $H_{\theta}$ is one of the other (different) selfadjoint extension
of $L$ in the von Neumann classification, we found in Theorem \ref{thm:Spectrum-L_t}
that the spectrum has the form
\begin{equation}
\lambda_{0}<0,\quad n-1<\lambda_{n}<n,\quad n=1,2,\ldots\label{eq:lambda}
\end{equation}
with $\lambda_{n}=\lambda_{n}(\theta)$ depending on $\theta$ in
the von Neumann classification. The conclusion follows.\end{proof}
\begin{cor}
Let $L$ be the semibounded operator in (\ref{eq:D0}) and (\ref{eq:L})
with deficiency indices $(1,1)$. Then for every selfadjoint extension
$H_{\theta}$ in the von Neumann classification we have resolvent
operator
\begin{equation}
R(\xi,H_{\theta})=\left(\xi I-H_{\theta}\right)^{-1},\quad\Im\xi\neq0\label{eq:resH}
\end{equation}
in the Hilbert-Schmidt class.\end{cor}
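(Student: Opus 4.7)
The plan is to reduce the claim to a convergence estimate for the eigenvalue sequence and then invoke the classical characterization of Hilbert--Schmidt operators in terms of their singular values. Recall from Theorem \ref{thm:Spectrum-L_t-discreet} that each selfadjoint extension $H_\theta$ has purely discrete spectrum of uniform multiplicity one, so there is an orthonormal basis $\{\varphi_n^{(\theta)}\}_{n\in\mathbb{N}_0}$ of $\ell^2(\mathbb{N}_0)$ consisting of eigenvectors, $H_\theta\varphi_n^{(\theta)}=\lambda_n(\theta)\varphi_n^{(\theta)}$. Consequently, for $\Im\xi\neq0$, the resolvent
\[
R(\xi,H_\theta)=(\xi I-H_\theta)^{-1}
\]
is diagonal in this basis with eigenvalues $\bigl(\xi-\lambda_n(\theta)\bigr)^{-1}$.

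The next step is to express the Hilbert--Schmidt norm explicitly. By the standard criterion, $R(\xi,H_\theta)$ is in the Hilbert--Schmidt class if and only if
\[
\|R(\xi,H_\theta)\|_{HS}^{2}=\sum_{n\in\mathbb{N}_0}\frac{1}{|\xi-\lambda_n(\theta)|^{2}}<\infty.
\]
When $H_\theta=H_{Friedrichs}$ (Theorem \ref{thm:Friedrichs}) the spectrum is precisely $\mathbb{N}_0$, and the sum is $\sum_{n\ge0}|\xi-n|^{-2}$, which converges since $|\xi-n|\sim n$ as $n\to\infty$ and $\sum n^{-2}<\infty$.

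For the remaining selfadjoint extensions, I would invoke Theorem \ref{thm:n-asymptotic}, which establishes that $n-\lambda_n(\theta)\to0$ as $n\to\infty$. Thus, for $n$ sufficiently large,
\[
|\xi-\lambda_n(\theta)|\ge |\xi-n|-|n-\lambda_n(\theta)|\ge \tfrac{1}{2}n,
\]
so the tail of $\sum_n|\xi-\lambda_n(\theta)|^{-2}$ is dominated by a constant multiple of $\sum_n n^{-2}$. Since only $\lambda_0(\theta)<0$ is separated from this asymptotic pattern, it contributes just one finite term (bounded by $|\Im\xi|^{-2}$ via (\ref{eq:res})), and the remaining finitely many $\lambda_n(\theta)$ with $n-1<\lambda_n(\theta)<n$ contribute bounded terms as well. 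This establishes the summability and hence the Hilbert--Schmidt property.

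The main (and only mild) obstacle is verifying that the asymptotics in Theorem \ref{thm:n-asymptotic} are uniform enough in the relevant sense, i.e.\ that the bound $|n-\lambda_n(\theta)|\le \tfrac{1}{2}|\xi-n|$ holds eventually for the given fixed $\xi$ with $\Im\xi\neq0$. This is immediate since $|\xi-n|\to\infty$ as $n\to\infty$ while $|n-\lambda_n(\theta)|\to0$. An alternative, perhaps cleaner, route is to fix one specific extension $H_{\theta_0}$ (for instance $H_{Friedrichs}$), observe that $R(\xi,H_{\theta_0})$ is Hilbert--Schmidt by the Friedrichs case above, and then use the Krein--type resolvent identity
\[
R(\xi,H_\theta)-R(\xi,H_{\theta_0})=\text{rank-one operator},
\]
valid because the two selfadjoint extensions differ by a one-dimensional perturbation (indices $(1,1)$). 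Since the Hilbert--Schmidt class is an ideal closed under finite-rank additions, the Hilbert--Schmidt property transfers from $H_{\theta_0}$ to every other $H_\theta$, completing the proof.
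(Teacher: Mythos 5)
Your main argument is exactly the paper's proof: the resolvent is diagonal in the eigenbasis, and the interlacing $\lambda_0(\theta)<0$, $n-1<\lambda_n(\theta)<n$ from Theorem \ref{thm:Spectrum-L_t} already forces $\sum_n|\xi-\lambda_n(\theta)|^{-2}<\infty$ (you do not even need the sharper asymptotic $n-\lambda_n(\theta)\to0$ from Theorem \ref{thm:n-asymptotic}, though invoking it does no harm). The alternative route via the Krein resolvent identity and the rank-one difference of resolvents is also valid and arguably cleaner, but the core argument matches the paper.
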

\begin{proof}
From the eigenvalue list in (\ref{eq:lambda}) we note that
\[
\sum_{n}\left|\left(\xi-\lambda_{n}\right)^{-1}\right|^{2}<\infty.
\]

\end{proof}

\subsection{The Domain of the Adjoint Operator}

In Lemma \ref{lem:DLadjoint} and Theorem \ref{thm:Friedrichs}, we
considered $\mathscr{H}=l^{2}(\mathbb{N}_{0})$, and a Hermitian symmetric
operator $L$ with dense domain
\begin{equation}
\mathscr{D}(L)=\left\{ x=\left(x_{k}\right)_{k\in\mathbb{N}_{0}}\in l^{2}\:\Big|\:\left(kx_{k}\right)\in l^{2},\mbox{ and }\sum_{k\in\mathbb{N}_{0}}x_{k}=0\right\} .\label{eq:domLL}
\end{equation}
In consideration of the selfadjoint extensions of $L$, we used the
domain $\mathscr{D}(L^{*})$ and the two vectors $x_{\pm}$ (see (\ref{eq:defv})),
and the pair $y_{2},y_{3}$ (see (\ref{eq:YY2}) \& (\ref{eq:YY3})),
and 
\begin{equation}
z=\left(z_{k}\right)_{k\in\mathbb{N}_{0}},\quad z_{k}=\frac{1}{1+k}.\label{eq:defz}
\end{equation}
These vectors all lie in $\mathscr{D}(L^{*})\backslash\mathscr{D}(L)$;
i.e., they are in the domain of the larger of the two operators, $L\subset L^{*}$.
Recall 
\begin{equation}
\mathscr{D}(L^{*})/\mathscr{D}(L)=2.\label{eq:Dadjointmod}
\end{equation}

The fact that special choices are needed results from the following:
\begin{prop}
Let $n\in\mathbb{N}_{0}$, then the basis vectors $e_{n}$ ($e_{n}=\left(\delta_{n,k}\right)$,
$k\in\mathbb{N}_{0}$) are in $\mathscr{D}(L^{*})$; and
\[
L^{*}e_{n}=n\, e_{n}.
\]
Moreover, the Friedrichs extension $H$ of $L$ is the unique selfadjoint
extension s.t. 
\begin{equation}
\left\{ e_{n}\:\big|\: n\in\mathbb{N}_{0}\right\} \subseteq\mathscr{D}(H).\label{eq:enD}
\end{equation}
\end{prop}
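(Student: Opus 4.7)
The plan is to split the proposition into its two assertions. For the first, observe that since $L\subset H$ and $H$ is selfadjoint, taking adjoints gives $H = H^{*}\subset L^{*}$; so it suffices to show $e_{n}\in\mathscr{D}(H)$ with $He_{n}=n\,e_{n}$. But this is immediate from the definition (\ref{eq:H})–(\ref{eq:domH}): the sequence $(k\,\delta_{n,k})_{k\in\mathbb{N}_{0}}$ is supported at the single index $k=n$, hence lies in $l^{2}$, and its value there is $n\cdot 1 = n$. Therefore $e_{n}\in\mathscr{D}(L^{*})$ and $L^{*}e_{n}=He_{n}=n\,e_{n}$.

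For the second assertion, since $H=H_{\mathrm{Friedrichs}}$ has $\mathscr{D}(H)=\{x\in l^{2}\mid(kx_{k})\in l^{2}\}$, the inclusion (\ref{eq:enD}) is clear. For uniqueness, let $K$ be any selfadjoint extension of $L$ with $\{e_{n}\}_{n\in\mathbb{N}_{0}}\subseteq\mathscr{D}(K)$. By von Neumann's classification (Lemma~\ref{lem:vN def-space}), $K\subseteq L^{*}$, and combined with the first assertion this gives
\[
Ke_{n}=L^{*}e_{n}=n\,e_{n},\qquad n\in\mathbb{N}_{0};
\]
in particular, $\mathbb{N}_{0}\subseteq\mathrm{spec}(K)$.

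Now invoke the spectral classification of Theorem~\ref{thm:Spectrum-L_t}: since $L$ has deficiency indices $(1,1)$, its selfadjoint extensions form the one-parameter family $\{H_{\zeta}\}_{\zeta\in\mathbb{T}}$, and for every $\zeta=e(t)\neq -1$ the spectrum consists of exactly one point $\lambda_{0}(t)\in(-\infty,0)$ together with exactly one point $\lambda_{n}(t)\in(n-1,n)$ for each $n\in\mathbb{N}$. In particular, \emph{no} integer lies in $\mathrm{spec}(H_{\zeta})$ when $\zeta\neq -1$, so the constraint $\mathbb{N}_{0}\subseteq\mathrm{spec}(K)$ forces $K=H_{-1}$. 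By Remark~\ref{rem:H}, $H_{-1}=H$, giving $K=H$ as required.

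The argument is essentially a packaging of earlier results, and there is no serious obstacle; the only care needed is the appeal to the strict inclusions $n-1<\lambda_{n}(t)<n$ in Theorem~\ref{thm:Spectrum-L_t}, which is what rules out every non-Friedrichs extension once a single integer eigenvalue (any $n\in\mathbb{N}_{0}$) is present.
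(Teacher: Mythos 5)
Your proof is correct, but it takes a different route from the paper's on the first assertion and supplies an argument the paper's own proof omits on the second. For $e_{n}\in\mathscr{D}(L^{*})$ and $L^{*}e_{n}=n\,e_{n}$, the paper does not go through $H$ at all: it writes the explicit decomposition $e_{n}=\varphi+\tfrac{1}{t}y_{2}$ with $t=\sum_{k}(1+k^{2})^{-1}$, checks $\sum_{k}\varphi_{k}=0$ so that $\varphi\in\mathscr{D}(L)$, and then computes $L^{*}e_{n}=L\varphi+\tfrac{1}{t}L^{*}y_{2}=L\varphi+\tfrac{1}{t}y_{3}$ coordinate-wise. Your observation that $e_{n}\in\mathscr{D}(H)$ trivially and that $H=H^{*}\subseteq L^{*}$ (which is just (\ref{eq:LHL})) is shorter and equally valid; what the paper's computation buys is an explicit exhibition of how $e_{n}$ sits inside $\mathscr{D}(L)+\mathbb{C}y_{2}=\mathscr{D}(H)$, which feeds into the surrounding discussion of the graph-norm decomposition of $\mathscr{D}(L^{*})$. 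For the uniqueness of $H$ among selfadjoint extensions containing all $e_{n}$ in their domain, the paper's proof of this proposition is in fact silent; your argument — $K\subseteq L^{*}$ forces $Ke_{n}=n\,e_{n}$, hence $\mathbb{N}_{0}\subseteq\mathrm{spec}(K)$, which by the strict inequalities $\lambda_{0}(t)<0$ and $n-1<\lambda_{n}(t)<n$ in Theorem~\ref{thm:Spectrum-L_t} rules out every $H_{\zeta}$ with $\zeta\neq-1$, leaving $K=H_{-1}=H$ by Remark~\ref{rem:H} — is sound and is essentially the same spectral-exclusion argument the paper uses for the nearby corollary that the Friedrichs extension is the only one with spectrum in $[0,\infty)$. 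Indeed a single integer eigenvalue already suffices to force $\zeta=-1$, as you note.
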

\begin{proof}
Let $y_{2},y_{3}$, and $t:=\sum_{k\in\mathbb{N}_{0}}\frac{1}{1+k^{2}}$
be as in (\ref{eq:YY2}), (\ref{eq:YY3}), and (\ref{t}). Then set
\begin{equation}
e_{n}=\underset{\in\mathscr{D}(L)}{\underbrace{\left(e_{n}-\frac{1}{t}y_{2}\right)}}+\underset{\in\mathscr{D}(L^{*})}{\underbrace{\frac{1}{t}y_{2}}}\label{eq:endecomp}
\end{equation}
Since $\mathscr{D}(L)\subset\mathscr{D}(L^{*})$, it follows that
$e_{n}\in\mathscr{D}(L^{*})$.

Fix $n$. Since ${\displaystyle \varphi=e_{n}-\frac{1}{t}y_{2}}$
satisfies
\begin{equation}
\varphi_{k}=\begin{cases}
{\displaystyle -\frac{1}{t\left(1+k^{2}\right)}} & \mbox{if }k\neq n\\
\\
{\displaystyle 1-\frac{1}{t\left(1+n^{2}\right)}} & \mbox{if }k=n,
\end{cases}\label{eq:varphik}
\end{equation}
it follows that ${\displaystyle \sum_{k\in\mathbb{N}_{0}}\varphi_{k}=0}$,
and so $\varphi\in\mathscr{D}(L)$. As a result, using $L\subset L^{*}$,
we obtain the following:
\begin{equation}
L^{*}e_{n}=L\varphi+\frac{1}{t}L^{*}y_{2}.\label{eq:Ladj}
\end{equation}
Now use $L^{*}y_{2}=y_{3}$ (see section \ref{sec:sbdd}) in eq. (\ref{eq:Ladj})
to compute $L^{*}e_{n}$ as follows:

If $n,k\in\mathbb{N}_{0}$, from (\ref{eq:varphik}) we obtain:
\[
\left(L^{*}e_{n}\right)(k)=\begin{cases}
{\displaystyle -\frac{k}{t(1+k^{2})}+\frac{k}{t(1+k^{2})}}=0 & \quad\mbox{if }k\neq n\\
\\
{\displaystyle n\left(1-\frac{1}{t(1+n^{2})}\right)+\frac{n}{t(1+n^{2})}=n} & \quad\mbox{if }k=n.
\end{cases}
\]
Hence, $\left(L^{*}e_{n}\right)(k)=n\delta_{n,k}=ne_{n}(k)$; or $L^{*}e_{n}=n\, e_{n}$
as asserted.
\end{proof}
In Lemma \ref{lem:vNext} and (\ref{eq:Ginner}), we introduced the
graph-norm on $\mathscr{D}(L^{*})$, i.e., 
\begin{equation}
\left\Vert f\right\Vert _{*}^{2}=\left\Vert L^{*}f\right\Vert ^{2}+\left\Vert f\right\Vert ^{2},\quad f\in\mathscr{D}(L^{*}).\label{eq:GLnorm}
\end{equation}
The selfadjoint extensions $H$ of $L$ satisfy
\begin{equation}
L\subseteq H\subseteq L^{*}.\label{eq:LHLad}
\end{equation}
Set 
\begin{equation}
\left\Vert f\right\Vert _{H}^{2}:=\left\Vert Hf\right\Vert ^{2}+\left\Vert f\right\Vert ^{2},\quad f\in\mathscr{D}(H).\label{eq:GHnorm}
\end{equation}

We now have the following result:
\begin{prop}
Let $L$ and $H$ be as in (\ref{eq:H}) and (\ref{eq:L}). Then $span\{e_{n}\}_{n\in\mathbb{N}}$
is dense in $\mathscr{D}(H)$ w.r.t. the graph-norm $\left\Vert \cdot\right\Vert _{H}$,
but not dense in $\mathscr{D}(L^{*})$ w.r.t. the other graph-norm
$\left\Vert \cdot\right\Vert _{*}$.\end{prop}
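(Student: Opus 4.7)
The plan is to exploit the spectral diagonalization of the Friedrichs extension $H$ together with the strict inclusion $\mathscr{D}(H)\subsetneq\mathscr{D}(L^{*})$ recorded in (\ref{eq:Dadjointmod}) and (\ref{eq:DLad}). Both assertions then reduce to short identifications of closed subspaces in the respective graph-norm Hilbert spaces, after noting that $\{e_n\}_{n\in\mathbb{N}_0}$ is an ONB consisting of eigenvectors for both $H$ (with $He_n=n\,e_n$) and for $L^{*}$ (with $L^{*}e_n=n\,e_n$, per the preceding proposition).

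First I would prove density in $\mathscr{D}(H)$ with respect to $\|\cdot\|_H$. Given $f=(f_k)\in\mathscr{D}(H)$, the characterization (\ref{eq:domH}) says $\sum_n(1+n^{2})|f_n|^{2}<\infty$, and $f=\sum_{n}f_n e_n$ unconditionally in $l^{2}$. The partial sums $S_Nf:=\sum_{n=0}^{N}f_n e_n$ lie in $\mathrm{span}\{e_n\}$, and both
\[
\|f-S_Nf\|_{2}^{2}=\sum_{n>N}|f_n|^{2},\qquad \|Hf-HS_Nf\|_{2}^{2}=\sum_{n>N}n^{2}|f_n|^{2}
\]
tend to zero as tails of convergent series, so $S_Nf\to f$ in $\|\cdot\|_H$.

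Next I would show $\mathrm{span}\{e_n\}$ is \emph{not} $\|\cdot\|_{*}$-dense in $\mathscr{D}(L^{*})$ by showing its $\|\cdot\|_{*}$-closure stays inside the proper subspace $\mathscr{D}(H)\subsetneq\mathscr{D}(L^{*})$. Each $e_n$ lies in $\mathscr{D}(H)$ (it is in $l^{2}$ with $(k\,e_n(k))\in l^{2}$), so the span does too. The key step is that $\mathscr{D}(H)$ is closed in $(\mathscr{D}(L^{*}),\|\cdot\|_{*})$: if $g_k\in\mathscr{D}(H)$ and $g_k\to g$ in $\|\cdot\|_{*}$, then $g_k\to g$ in $l^{2}$ and $Hg_k=L^{*}g_k\to L^{*}g$ in $l^{2}$, so by closedness of $H$ one has $g\in\mathscr{D}(H)$ and $Hg=L^{*}g$. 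Since $\dim(\mathscr{D}(L^{*})/\mathscr{D}(H))=1$ by (\ref{eq:codim}) and (\ref{eq:DLad}), any representative of the nontrivial quotient class, e.g. $y_{3}\in\mathscr{D}(L^{*})\setminus\mathscr{D}(H)$, lies outside the $\|\cdot\|_{*}$-closure of $\mathrm{span}\{e_n\}$.

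There is no serious obstacle here; the only point worth flagging is the conceptual contrast driving the dichotomy. The eigenvectors $\{e_n\}$ of $L^{*}$ attached to the real eigenvalues $\mathbb{N}_{0}$ completely generate $\mathscr{D}(H)$ in the $H$-graph topology, but they are blind to the imaginary-eigenvalue defect directions $\mathscr{D}_{\pm}$ which enlarge $\mathscr{D}(L^{*})$ beyond $\mathscr{D}(H)$ by exactly one complex dimension (one half of the two-dimensional von Neumann boundary (\ref{eq:DLad}), the other half having been absorbed when passing from $L$ to its Friedrichs extension $H$, by Theorem \ref{thm:Friedrichs}).
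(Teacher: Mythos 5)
Your proof is correct, and its second half takes a genuinely different route from the paper's. For the non-density assertion the paper produces an explicit witness orthogonal to the closure: it computes $\left\langle \rho_{2},\rho_{3}\right\rangle _{*}=\left\langle \rho_{2},\rho_{3}\right\rangle +\left\langle L^{*}\rho_{2},L^{*}\rho_{3}\right\rangle =\left\langle \rho_{2},\rho_{3}\right\rangle +\left\langle \rho_{3},-\rho_{2}\right\rangle =0$ and combines this with the mutual $\left\langle \cdot,\cdot\right\rangle _{*}$-orthogonality of $\mathscr{D}(L)$, $\mathscr{D}_{+}$, $\mathscr{D}_{-}$ from Lemma \ref{lem:vNext} and the identification $\mathscr{D}(H)=\mathscr{D}(L)+\mathbb{C}y_{2}$ to conclude $y_{3}\in\mathscr{D}(L^{*})\ominus\mathscr{D}(H)$, so that $y_{3}$ cannot lie in the $\left\Vert \cdot\right\Vert _{*}$-closure of anything inside $\mathscr{D}(H)$. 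You instead prove that $\mathscr{D}(H)$ is itself $\left\Vert \cdot\right\Vert _{*}$-closed, using only the closedness of the selfadjoint operator $H$ and the fact that $L^{*}$ restricted to $\mathscr{D}(H)$ equals $H$; this is a softer, computation-free argument, and together with your first half it actually identifies the $\left\Vert \cdot\right\Vert _{*}$-closure of the span as exactly $\mathscr{D}(H)$ (the two graph norms agree there), while the paper's calculation buys a concrete vector orthogonal to that closure. Your first half is the standard partial-sum core argument that the paper simply declares ``clear.'' One small caveat: the statement writes $\mathrm{span}\{e_{n}\}_{n\in\mathbb{N}}$, but your argument (and the paper's intent) requires the index set $\mathbb{N}_{0}$; if $e_{0}$ were genuinely omitted, density in $\mathscr{D}(H)$ would fail, since $e_{0}\in\mathscr{D}(H)$ is orthogonal to $\mathrm{span}\{e_{n}\}_{n\geq1}$ already in $l^{2}$.
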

\begin{proof}
The first assertion in the proposition is clear (we say that $span\{e_{n}\}$
is a \textbf{\emph{core}} for $H$.).

We now show that $\rho_{3}\sim y_{3}$ and $\rho_{2}\sim y_{2}$ are
mutually orthogonal in $\mathscr{D}(L^{*})$ relative to the inner
product $\left\langle \cdot,\cdot\right\rangle _{*}$ from (\ref{eq:Ginner})
and (\ref{eq:GLnorm}).

Indeed, $\rho_{3}\in\mathscr{D}(L^{*})$, and $L^{*}\rho_{3}=-\rho_{2}$
(see (\ref{eq:rhoy2}) \& (\ref{eq:rhoy3})). Hence 
\begin{eqnarray*}
\left\langle \rho_{2},\rho_{3}\right\rangle _{*} & = & \left\langle \rho_{2},\rho_{3}\right\rangle +\left\langle L^{*}\rho_{2},L^{*}\rho_{3}\right\rangle \\
 & = & \left\langle \rho_{2},\rho_{3}\right\rangle +\left\langle \rho_{3},-\rho_{2}\right\rangle =0.
\end{eqnarray*}

Using the details from the proof of Corollary \ref{cor:FKext}, we
conclude that 
\begin{equation}
\rho_{3}\in\mathscr{D}(L^{*})\ominus\mathscr{D}(H)\label{eq:rho3do}
\end{equation}
where $\ominus$ in (\ref{eq:rho3do}) refers to $\left\langle \cdot,\cdot\right\rangle _{*}$.\end{proof}
\begin{cor}
In the computation of $L_{F}^{*}$ in sect. \ref{sec:Hardy}, we must
have a term not like ${\displaystyle z\frac{d}{dz}}$; see Theorem
\ref{thm:LFad}.
\end{cor}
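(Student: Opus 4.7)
The plan is to read the corollary as asserting that the operator $L_F^*$ cannot be presented simply as $z\frac{d}{dz}$ restricted to $\mathscr{D}(L_F^*)$, and to confirm this by exhibiting one explicit vector in $\mathscr{D}(L^*)$ whose image under $z\frac{d}{dz}$ falls outside $\mathscr{H}_2$. The preceding proposition has already identified the witness: $\rho_3 \in \mathscr{D}(L^*)\ominus \mathscr{D}(H)$ (orthogonality in the graph inner product $\langle\cdot,\cdot\rangle_*$), so in particular $\rho_3 \in \mathscr{D}(L^*)$ but $\rho_3 \notin \mathscr{D}(H)$. The obstruction I will exploit is that, for such a vector, applying $z\frac{d}{dz}$ naively produces a meromorphic function with a boundary pole.

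First I would record the direct computation
\[
z\frac{d}{dz}\rho_3(z) \;=\; \sum_{n=0}^{\infty}\frac{n^2}{1+n^2}z^n \;=\; -\rho_2(z) + \frac{1}{1-z},
\]
which appears in the proof of Lemma \ref{lem:Lad}. The term $(1-z)^{-1}$ has a simple pole at $z=1\in \partial\mathbb{D}$, hence is not a member of $\mathscr{H}_2$. By definition $L^*$ maps $\mathscr{D}(L^*)$ into $\mathscr{H}_2$, so $L^*\rho_3$ cannot equal $z\frac{d}{dz}\rho_3$; the polar singularity must be removed. The mechanism that removes it is precisely the operator $CP_1$ in Lemma \ref{lem:Lad}, and more generally $CP_F$ in Theorem \ref{thm:LFad}, yielding
\[
L_F^* \;=\; \bigl(1 - CP_F\bigr)\,z\frac{d}{dz}.
\]
Since $CP_F$ is not a differential operator but a residue-plus-Szegő-kernel extraction (cf.\ (\ref{eq:CPF}) and (\ref{eq:Ksz})), the formula for $L_F^*$ necessarily contains a non-differential summand.

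To complete the argument in the general $F$-case I would use the generating vectors $f_{\pm}^{(\zeta_j)}$ of $\mathscr{D}_{\pm}(L_F)$ from (\ref{eq:sdef}) together with identity (\ref{eq:sdefH}): applying $z\frac{d}{dz}$ to $R_3(\zeta_j,z)$ produces a Szegő-kernel contribution $K_{S_z}(\zeta_j,z) = (1-\bar{\zeta}_j z)^{-1}$ with a simple pole at each $\zeta_j \in F$. These boundary poles lie outside $\mathscr{H}_2$, so on every $\rho$-type vector the correction $CP_F$ is actively non-zero. Consequently the formula for $L_F^*$ cannot reduce to $z\frac{d}{dz}$ alone on any domain that contains even one of the vectors $R_3(\zeta_j,\cdot)$, proving the corollary.

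The main obstacle is conceptual rather than computational: one must recognize that the corrective term $CP_F$ is \emph{non-local}, being built out of residues and Szegő kernels, and therefore that no rearrangement of $(1-CP_F)z\frac{d}{dz}$ can be re-expressed as differentiation followed by another differential operator. Once this point is clear, the explicit witness $\rho_3$ (or, for arbitrary finite $F$, any $R_3(\zeta_j,\cdot)$) establishes the claim immediately; the interest of the statement lies in the contrast with the real-variable setting, where $L^*$ would remain a differential operator on its maximal domain.
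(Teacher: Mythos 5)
Your proposal is correct and follows essentially the route the paper intends: the paper states this corollary without a separate proof precisely because its justification is already contained in Lemma \ref{lem:Lad} and Theorem \ref{thm:LFad} (the computation $z\frac{d}{dz}\rho_{3}=-\rho_{2}+\frac{1}{1-z}$, with $\frac{1}{1-z}=\sum_{n\geq0}z^{n}\notin\mathscr{H}_{2}$ since its coefficient sequence is not square-summable) together with the preceding proposition exhibiting $\rho_{3}\in\mathscr{D}(L^{*})\ominus\mathscr{D}(H)$ as the witness. Your only cosmetic imprecision is inferring $\frac{1}{1-z}\notin\mathscr{H}_{2}$ from the boundary pole alone (boundary singularities per se do not exclude membership in $\mathscr{H}_{2}$; the decisive fact is the non-$\ell^{2}$ Taylor coefficients), but for a simple pole the conclusion is correct and the argument stands.
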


\section{\label{sec:hd}Higher Dimensions}

Now there is a higher dimensional version of our analysis in section
\ref{sec:Hardy} above (for the Hardy-Hilbert space $\mathscr{H}_{2}$).
This is the Arveson-Drury space $\mathscr{H}_{d}^{(AD)}$, $d>1$.
While the case $d>1$ does have a number of striking parallels with
$d=1$ from section \ref{sec:Hardy}, there are some key differences
as well.

The reason for the parallels is that the reproducing kernel, the Szegö
kernel (\ref{eq:ZKK}), extends from one complex dimension to $d>1$
almost verbatim. This is a key point of the Arveson-Drury analysis
\cite{Arv98,Dru78}.

In section \ref{sec:Hardy}, for $d=1$, we showed that the study
of von Neumann boundary theory for Hermitian operators translates
into a geometric analysis on the boundary of the disk $\mathbb{D}$
in one complex dimension, so on the circle $\partial\mathbb{D}$. 

It is the purpose of this section to show that multivariable operator
theory is more subtle. A main reason for this is a negative result
by Arveson \cite[Coroll 2]{Arv98} stating that the Hilbert norm in
$\mathscr{H}_{d}^{(AD)}$, $d>1$, cannot be represented by a Borel
measure on $\mathbb{C}^{d}$. So, in higher dimension, the question
of \textquotedblleft{}geometric boundary\textquotedblright{} is much
more subtle. Contrast this with eq (\ref{eq:H2norm}) and Lemma \ref{lem:H2}
above. 

The Szegö kernel $K_{w}(z)=\left(1-\overline{w}z\right)^{-1}$ (see
(\ref{eq:ZKK})) in higher dimensions, i.e., $\mathbb{C}^{d}$ is
called the Arveson-Drury kernel, see \cite{Arv00,Arv98,Dru78}. 

Let $z=\left(z_{1},.\ldots,z_{d}\right)\in\mathbb{C}^{d}$, $\alpha=\left(\alpha_{1},\ldots,\alpha_{d}\right)\in\mathbb{N}_{0}^{d}$,
i.e., $\alpha_{i}\in\mathbb{N}_{0}$; and set
\begin{equation}
\begin{cases}
\left\langle w,z\right\rangle =\sum_{j=1}^{d}\overline{w}_{j}z_{j},\\
\\
\left\Vert z\right\Vert ^{2}=\sum_{j=1}^{d}\left|z_{j}\right|^{2},
\end{cases}\label{eq:dd}
\end{equation}
and 
\begin{equation}
K_{w}^{(AD)}(z):=\frac{1}{1-\left\langle w,z\right\rangle }.\label{eq:dker}
\end{equation}
Then the corresponding reproducing kernel Hilbert space (RKHS) is
called the Arveson-Drury Hilbert space. It is a Hilbert space of analytic
functions in 
\begin{equation}
z=(z_{1},.\ldots,z_{d})\in B_{d}=\left\{ z\in\mathbb{C}^{d}\:\big|\:\left\Vert z\right\Vert <1\right\} ,
\end{equation}
(see (\ref{eq:dd}).)

Since, for $d=1$, 
\begin{eqnarray*}
z\frac{d}{dz}K_{w}(z) & = & \frac{\overline{w}z}{\left(1-\overline{w}z\right)^{2}}=\overline{w}zK_{w}(z)^{2},\\
z_{j}\frac{\partial}{\partial z_{j}}K_{w}(z) & = & \overline{w_{j}}z_{j}K_{w}(z)^{2};\quad\mbox{and}
\end{eqnarray*}
in higher dimensions, 
\begin{eqnarray}
\sum_{j=1}^{d}z_{j}\frac{\partial}{\partial z_{j}}K_{w}(z) & = & \left\langle w,z\right\rangle K_{w}(z)^{2}\nonumber \\
 & = & =-K_{w}(z)+K_{w}(z)^{2};
\end{eqnarray}
it is natural to view $\mathscr{H}_{d}^{(AD)}$ as a direct extension
of the Hardy space $\mathscr{H}_{2}$ from section \ref{sec:Hardy}
above.

But $\mathscr{H}_{d}^{(AD)}$ is also a symmetric Fock space over
the Hilbert space $\mathbb{C}^{d}$, i.e., 
\[
\mathscr{H}_{d}^{(AD)}=\mbox{Fock}_{symm}(\mathbb{C}^{d}),\quad\mbox{see }(\ref{eq:dd})\:\mbox{and }[\mbox{Arv98}].
\]

Set ${\displaystyle H_{j}:=z_{j}\frac{\partial}{\partial z_{j}}}$,
and ${\displaystyle H:=\sum_{j=1}^{d}H_{j}=\sum_{j=1}^{d}z_{j}\frac{\partial}{\partial z_{j}}}$
= the Arveson-Dirac operator, and 
\begin{equation}
\mathscr{D}(H)=\left\{ f\in\mathscr{H}_{d}^{(AD)}\:\big|\: Hf\in\mathscr{H}_{d}^{(AD)}\right\} .\label{eq:domHad}
\end{equation}
We know \cite{Arv00} that $\left\{ H_{j}\right\} $ is a commuting
family of selfadjoint operators in $\mathscr{H}_{d}^{(AD)}$.

For $\alpha=\left(\alpha_{1},\ldots,\alpha_{d}\right)\in\mathbb{N}_{0}^{d}$,
set ${\displaystyle n=\left|\alpha\right|=\sum_{j=1}^{d}\alpha_{j}}$;
then ${\displaystyle \binom{n}{\alpha}=\frac{n!}{\alpha_{1}!\alpha_{2}!\cdots\alpha_{d}!}}$
are the multinomial coefficients. It follows that 
\begin{equation}
\left\Vert z_{1}^{\alpha_{1}}\cdots z_{d}^{\alpha_{d}}\right\Vert _{\mathscr{H}_{d}^{(AD)}}^{2}=\binom{n}{\alpha}^{-1}.\label{eq:mul1}
\end{equation}
Hence, if 
\begin{equation}
f(z)=\sum_{\alpha\in\mathbb{N}_{0}^{d}}c(\alpha)z^{\alpha},\label{eq:mul2}
\end{equation}
$z^{\alpha}:=z_{1}^{\alpha_{1}}z_{2}^{\alpha_{2}}\cdots z_{d}^{\alpha_{d}}$,
is in $\mathscr{H}_{d}^{(AD)}$, then 
\begin{equation}
\left\Vert f\right\Vert _{\mathscr{H}_{d}^{(AD)}}^{2}=\sum_{n=0}^{\infty}\sum_{\left|\alpha\right|=n}\binom{n}{\alpha}^{-1}\left|c(\alpha)\right|^{2}.\label{eq:mul3}
\end{equation}

In $\mathscr{H}_{d}^{(AD)}$, $d>1$, the analogue of $\mathscr{D}(L)=\mathbb{D}_{0}$,
in Lemma \ref{lem:Hardy}, and Lemma \ref{lem:densedom}, is 
\begin{equation}
\mathbb{D}_{0}=\left\{ f=f_{c}\in\mathscr{D}(H)\:\big|\:\sum_{\alpha\in\mathbb{N}_{0}^{d}}c(\alpha)=0\right\} .\label{eq:ADdom}
\end{equation}
It is a dense linear subspace in $\mathscr{H}_{d}^{(AD)}$. 
\begin{thm}
\label{thm:zdzmd}The operator family $\left\{ H_{j}\:\big|\:1\leq j\leq d\right\} $
is a commuting family and each $H_{j}$ is essentially selfadjoint
on $\mathbb{D}_{0}$. \end{thm}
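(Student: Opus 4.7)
My plan is to prove both assertions together by exploiting the observation that each $H_j$ is a diagonal multiplication operator on the orthogonal monomial basis. From $H_j z^\alpha = \alpha_j z^\alpha$ and the norm formula (\ref{eq:mul1}), the family $\{H_j\}_{j=1}^d$ has a common invariant dense set of polynomials on which each $H_j$ is symmetric with pure real spectrum $\mathbb{N}_0$, and each $H_j$ possesses an obvious selfadjoint realization $\tilde H_j$ on the maximal diagonal domain $\{\sum c(\alpha) z^\alpha : \sum \alpha_j^2 |c(\alpha)|^2 \|z^\alpha\|^2 < \infty\}$. Commutativity of the family then drops out for free: the associated unitary one-parameter groups $U_j(t) : f(z_1,\ldots,z_d) \mapsto f(z_1,\ldots,e^{it}z_j,\ldots,z_d)$ act on monomials by $U_j(t) z^\alpha = e^{it\alpha_j} z^\alpha$ and thus mutually commute on the dense polynomial subspace, hence everywhere on $\mathscr{H}_d^{(AD)}$ by continuity; by Stone's theorem this is the strong commutativity of the $\tilde H_j$'s.

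The substantive task is the essential selfadjointness of $L_j := H_j|_{\mathbb{D}_0}$. Following the framework of Lemma \ref{lem:vN def-space}, I would argue by showing that the deficiency spaces of $L_j$ at $\xi = \pm i$ are trivial. Suppose $\psi = \sum_\alpha b(\alpha) z^\alpha \in \mathscr{D}(L_j^*)$ satisfies $L_j^* \psi = \xi \psi$ for such a $\xi$. The zero-sum polynomials $\varphi = z^\alpha - z^\beta$ lie in $\mathbb{D}_0$, and testing the adjoint identity $\langle L_j \varphi,\psi\rangle = \xi \langle \varphi,\psi\rangle$ against them, together with $\langle z^\gamma,\psi\rangle = b(\gamma)\|z^\gamma\|^2$, collapses to
\[
(\alpha_j - \xi)\, b(\alpha)\, \|z^\alpha\|^2 \;=\; (\beta_j - \xi)\, b(\beta)\, \|z^\beta\|^2, \qquad \forall\, \alpha, \beta \in \mathbb{N}_0^d.
\]
Since $\xi \notin \mathbb{R}$, no denominator vanishes, and this forces $b(\alpha) = C/\bigl((\alpha_j - \xi)\|z^\alpha\|^2\bigr)$ for a single constant $C \in \mathbb{C}$.

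The heart of the matter --- and the step I expect to be the main obstacle --- is then a combinatorial divergence check using (\ref{eq:mul1}):
\[
\|\psi\|_{\mathscr{H}_d^{(AD)}}^2 \;=\; \sum_\alpha |b(\alpha)|^2 \|z^\alpha\|^2 \;=\; |C|^2 \sum_{\alpha \in \mathbb{N}_0^d} \frac{\binom{|\alpha|}{\alpha}}{\alpha_j^2 + 1}.
\]
When $d = 1$ the right-hand side reduces to $|C|^2 \sum_n (n^2+1)^{-1} < \infty$, which is exactly the nontrivial $(1,1)$ deficiency recovered in sections \ref{sec:sbdd}--\ref{sec:spH}. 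For $d > 1$, however, I would exploit the existence of at least one index $i \neq j$: restricting the sum to the sub-lattice $\alpha = n\,\hat{e}_i$ (with $n \in \mathbb{N}_0$) yields $\alpha_j = 0$ and $\binom{|\alpha|}{\alpha} = 1$, so already the sub-sum $\sum_{n=0}^\infty 1$ diverges. Consequently $C = 0$, hence $\psi = 0$, so $L_j$ has deficiency indices $(0,0)$ and is essentially selfadjoint on $\mathbb{D}_0$. The structural reason the theorem needs $d > 1$ is exactly this: the multinomial factor $\binom{|\alpha|}{\alpha}$, summed over the free indices orthogonal to the $j$-th coordinate, inflates the $d = 1$ convergent series into a divergent one.
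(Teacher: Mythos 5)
Your proposal is correct, but it takes a genuinely different route from the paper. The paper proves essential selfadjointness of all the $H_{j}$ at once by invoking Nelson's theorem \cite{Nel59}: it shows that the single operator $\sum_{j=1}^{d}H_{j}^{2}$ is essentially selfadjoint on $\mathbb{D}_{0}$ by checking that any $g=\sum_{\alpha}b(\alpha)z^{\alpha}$ orthogonal to $\bigl(I+\sum_{j}H_{j}^{2}\bigr)\mathbb{D}_{0}$ must satisfy $b(\alpha)=b(0)\binom{|\alpha|}{\alpha}\bigl(1+\sum_{j}\alpha_{j}^{2}\bigr)^{-1}$, whence $\left\Vert g\right\Vert ^{2}=|b(0)|^{2}\sum_{\alpha}\binom{|\alpha|}{\alpha}\bigl(1+\sum_{j}\alpha_{j}^{2}\bigr)^{-2}$ diverges for $d>1$ because $\sum_{|\alpha|=n}\binom{n}{\alpha}=d^{n}$; Nelson's theorem then delivers both the essential selfadjointness of each $H_{j}$ and the strong commutativity of their closures in one stroke. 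You instead compute the von Neumann deficiency spaces of each restriction $H_{j}\big|_{\mathbb{D}_{0}}$ directly, and you dispose of commutativity separately via the explicit torus action $U_{j}(t)$ and Stone's theorem. Both divergence checks exploit the same structural fact --- the multinomial weights of (\ref{eq:mul1}) overwhelm a polynomially bounded denominator once $d>1$ --- though your restriction to multi-indices supported on a single coordinate $i\neq j$ (where every term of the sub-sum equals $1$) is an arguably cleaner lower bound than the paper's shell count $d^{n}$. What your route buys is elementariness and a sharper picture: it isolates exactly which coordinate directions kill the would-be deficiency vector and makes transparent why $d=1$ reproduces the $(1,1)$ indices of sections \ref{sec:sbdd}--\ref{sec:spH}; what the paper's route buys is that one appeal to Nelson certifies selfadjointness and joint commutativity simultaneously. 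Two small points to make explicit in a final write-up: testing only against the differences $z^{\alpha}-z^{\beta}$ is legitimate because it yields a necessary condition already forcing $\psi=0$; and the density of $\mathbb{D}_{0}$, which the deficiency-index framework presupposes, should be cited (the paper asserts it just before the theorem) or proved by the same divergence trick applied to $\xi=0$.
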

\begin{rem}
Comparing the theorem with Lemma \ref{lem:Hardy}, and Corollary \ref{cor:finite},
we note that the unitary one-parameter groups acting on $\mathscr{H}_{d}^{(AD)}$,
$d>1$, are more stable than is the case for $d=1$, (where the unitary
one-parameter groups are acting in $\mathscr{H}_{2}$.) In short,
\textquotedblleft{}unitary motion\textquotedblright{} in $\mathbb{C}^{d}$
for $d>1$ does not get trapped at \textquotedblleft{}points.\textquotedblright{} \end{rem}
\begin{proof}[Proof of Theorem \ref{thm:zdzmd}]
To prove the theorem, we may use a result of Nelson \cite{Nel59}
showing now instead that 
\begin{equation}
\sum_{j=1}^{d}H_{j}^{2}
\end{equation}
is essentially selfadjoint on $\mathbb{D}_{0}$. 

We must therefore show that, if $g\in\mathscr{H}_{d}^{(AD)}$, and
\begin{equation}
\left\langle g,\left(I+\sum_{j=1}^{d}H_{j}^{2}\right)f_{c}\right\rangle _{\mathscr{H}_{d}^{(AD)}}=0\label{eq:Nel}
\end{equation}
for all $f_{c}\in\mathbb{D}_{0}$, then it follows that $g=0$. 

Hence, suppose (\ref{eq:Nel}) holds for some $g\in\mathscr{H}_{d}^{(AD)}$,
${\displaystyle g(z)=\sum_{\alpha\in\mathbb{N}_{0}}b(\alpha)z^{\alpha}}$,
then
\begin{equation}
\sum_{\alpha\in\mathbb{N}_{0}^{d}}\binom{\left|\alpha\right|}{\alpha}^{-1}\overline{b(\alpha)}\left(1+\sum_{j=1}^{d}\alpha_{j}^{2}\right)c(\alpha)=0\label{eq:Nel1}
\end{equation}
for all $f_{c}\in\mathbb{D}_{0}$.

Consider $\alpha\in\mathbb{N}_{0}^{d}\backslash(0)$ fixed, and set:
\begin{equation}
c(\beta)=\begin{cases}
-1 & \beta=\left(0,0,\ldots,0\right)\\
1 & \beta=\alpha\\
0 & \beta\in\mathbb{N}_{0}^{d}\backslash\{0,\alpha\},
\end{cases}
\end{equation}
and set $n=\left|\alpha\right|$. Then, by (\ref{eq:Nel1}), we have
\begin{equation}
b(\alpha)=b(0)\frac{{\displaystyle \binom{n}{\alpha}}}{{\displaystyle 1+\sum_{j=1}^{d}\alpha_{j}^{2}}},\label{eq:Nel3}
\end{equation}
and therefore (by (\ref{eq:mul1}) \& (\ref{eq:mul3})), that 
\begin{equation}
\left\Vert g\right\Vert _{\mathscr{H}_{d}^{(AD)}}^{2}=\left|b(0)\right|^{2}\sum_{\alpha\in\mathbb{N}_{0}^{d}}\frac{{\displaystyle \binom{\left|\alpha\right|}{\alpha}}}{\left({\displaystyle 1+\sum_{j=1}^{d}\alpha_{j}^{2}}\right)^{2}}.\label{eq:Nel4}
\end{equation}
Since ${\displaystyle \sum_{\left|\alpha\right|=n}\binom{n}{\alpha}=d^{n}}$,
we conclude from (\ref{eq:Nel3}) \& (\ref{eq:Nel4}) that $b(0)=0$,
and therefore that $g=0$ in $\mathscr{H}_{d}^{(AD)}$.

Nelson's theorem implies that $\sum_{j=1}^{d}H_{j}^{2}$ is essentially
selfadjoint on $\mathbb{D}_{0}$ (in (\ref{eq:ADdom})), and the desired
conclusion follows.
\end{proof}

\section*{Acknowledgments}

The co-authors, some or all, had helpful conversations with many colleagues,
and wish to thank especially Professors Daniel Alpay, Ilwoo Cho, Dorin
Dutkay, Alex Iosevich, Paul Muhly, Yang Wang, and Qingbo Huang. And
going back in time, Bent Fuglede (PJ, SP), and Robert T. Powers, Ralph
S. Phillips, Derek Robinson (PJ). The first named author was supported
in part by the National Science Foundation, via a VIGRE grant.

\bibliographystyle{amsalpha}
\bibliography{number4}

\end{document}